\patchcmd{\section}{\scshape}{\bfseries}{}{}
\patchcmd{\subsection}{-.5em}{.5em}{}{}
\renewcommand{\@secnumfont}{\bfseries}
\newtheorem{theorem}{Theorem}[section]
\newtheorem{lemma}[theorem]{Lemma}
\newtheorem{proposition}[theorem]{Proposition}
\newtheorem{corollary}[theorem]{Corollary}
\newtheorem{definition}[theorem]{Definition}
\newtheorem{example}[theorem]{Example}
\begin{document}
\mathchardef\mhyphen="2D

\title[Formalizing Groups in Type Theory]{Formalizing Groups in Type Theory}

\author[F. Kachapova]{Farida Kachapova}
\address{Department of Mathematical Sciences
\\Auckland University of Technology
\\New Zealand}

\email{farida.kachapova@aut.ac.nz}

\date{} 

\begin{abstract} In this paper we formalize some foundation concepts and theorems of group theory in a variant of type theory called the Calculus of Constructions with Definitions. In this theory we introduce definition of a group, which is both general and simple enough to use in formal proofs. Based on this definition, we formalize the concepts of subgroup, coset, conjugate, normal subgroup, and quotient group, and formally derive some related theorems. We aim to keep these formalizations transparent and concise, and as close as possible to the standard mathematical theory. The results can be implemented in proof assistants that are based on calculus of constructions. 
\end{abstract}

\subjclass[2020]{Primary 03B30; Secondary 03B38}

\keywords{Type theory, calculus of constructions, flag-style derivation, group, subgroup, coset, normal subgroup,  quotient group.}

\maketitle

\section{Introduction}
Formalization of mathematics in type theory has a long and well known history, from B. Russell \cite{Russ96}
and A. Church \cite{Chur40} to P. Martin-L{\"o}f \cite{Mart85} and T. Coquand \cite{Coq88}. Type theory is used as foundation for proofs assistants, which are tools for semi-automatic development and checking of mathematical proofs. 
Current proof assistants are not very user-friendly, it takes a lot of time to master any of them and the results are not easily transferred from one of them to another. No doubt, they will improve in future. Meanwhile it seems advantageous to do formalizations in one of the underlying formal theories: proofs there are more transparent, easier to understand and possibly transfer to several proof assistants. 
For our formalizations we choose so called Calculus of Constructions with Definitions $\lambda D$ developed in \cite{Ned14}. It has a clear and expressive language and useful features that include decidability of type checking and proof checking, and strongly normalizing property (when no additional axioms are present). As any calculus of constructions, $\lambda D$ benefits from PAT interpretation (proofs-as-terms and propositions-as-types).The lack of inductive types in $\lambda D$ is compensated by its higher order logic and axiomatic approach to mathematical definitions.
  
In Section 2 we give a brief overview of the theory $\lambda D$, including its derived logic rules, equality, and iota descriptor used for introducing new objects with given properties. This section also gives formalization of a few basic concepts relating to sets and binary relations. Based on material in \cite{Ned14}, this section introduces some useful derived rules and abbreviations in order to streamline formal proofs.

R. Nederpelt and H. Geuvers started formalization of mathematics in $\lambda D$ by formalizaing natural numbers and integer arithmetic and providing a formal proof of B\'{e}zout's lemma \cite{Ned14}. We continued this line of research in paper \cite{Kach20} on binary relations. In the current paper we start formalizing group theory in $\lambda D$. The first important step is to choose definition of a group that is general and works well in formal proofs. We do that in Section 3, where we consider different approaches to formally defining a group in $\lambda D$. First we define a group as a type. It is easy to do but not general enough, e.g. when we need to consider subgroups. So we introduce a formal definition of a group as a set, that is subset of a type. Since type theory is designed to deal mostly with functions/ operations, sets are not naturally formalized in it. As any set in $\lambda D$, a group is regarded a subset of some type $S$ and is defined as a predicate on $S$. Then multiplication of elements $x$ and $y$ of group $G$ has to take as extra arguments the proofs that $x$ and $y$ belong to $G$. That makes formal proofs long and unclear. The situation is similar with the group operation of taking inverse. 

In order to avoid these complications we modify the definition of a group $G$, so that it is a subset of a type $S$ but the group operations of multiplication and taking inverse are defined on the entire $S$, with specific group axioms required only for elements of $G$. Using classical logic, we formally prove in $\lambda D$ that operations on a subset of type $S$ can be extended to the entire $S$ (Theorems \ref{theorem:extend_function_1} and \ref{theorem:extend_function_2}
). This means that the modified definition covers the case of the original, seemingly more general definition. In the rest of the paper we work with the modified definition of a group. 
In Section 3 we also derive usual consequences of group axioms and formally prove that the set of all permutations on a type with operation of composition is a group.

In Section 4 we formally define subgroups, set products and cosets, and derive their properties, including a theorem about permutable groups. We prove that if $H$ is a subgroup of a group $G$, then the relation \\$R_H:=\lambda x,y.(x^{-1}\cdot y)\;\varepsilon\;H$ is an equivalence relation on $G$, and we study its connection to cosets of $H$.

In Section 5 we formalize the concepts of normal subgroup and quotient group $G/H$ and formally derive the following facts:
\begin{itemize}
\item being conjugate is an equivalence relation on elements of the group and on its subsets;
\item criteria of a normal subgroup;
\item normal subgroups of a fixed group are permutable; 
\item a product of normal subgroups is also a normal subgroup;
\item a quotient group is indeed a group;
\item correspondence between subgroups of $G/H$ and  subgroups of $G$ containing $H$ (Correspondence Theorem).
\end{itemize}

 In our formalizations we aim to keep the language and theorems as close as possible to the ones of the standard mathematics. We follow the standard mathematical approach to group theory (as for example, in textbook \cite{Hum96}).
 
We construct formal definitions and proofs in flag format introduced in \cite{Jas67} and \cite{Fitch52} and explained more in \cite{Ned11}. In this format each "flag" contains a variable declaration or an assumption; its scope is determined by the "flag pole" where definitions and proofs are developed. Several flags are often combined to make formal deductions more concise. 
Each formal proof generates a proof term; in proof of theorem, proposition or lemma with number $n$ the resulting term is denoted with $\boldsymbol{term_n}$ for future references.
All proofs are given in full in Appendix, with only proof sketches in the main part of the paper.

\section{Type Theory $\lambda D$}

Details of the language and derivation rules of the theory $\lambda D$ can be found in \cite{Ned14} and \cite{Kach20}. Judgments are formally derived in $\lambda D$ using the derivation rules. Here we only list the features that will be extensively used in this paper.
\medskip

The following flag diagrams show three main derivation rules of $\lambda D$:  (\textit{form}),  (\textit{abst}), and (\textit{appl}).

\setlength{\derivskip}{4pt}
\begin{flagderiv}
\introduce*{}{A:s_1\;|\;B:s_2}{}

\introduce*{}{x : A}{}

\conclude*{}{\Pi x:A.B\;:\;s_2}{\textbf{(form)}}

\introduce*{}{x : A}{}
\skipsteps*{\dots}{}
\step*{}{M:B}{}
\conclude*{}{\lambda x:A.M\;:\;(\Pi x:A.B)}{\textbf{(abst)}}

\introduce*{}{M\;:\;(\Pi x:A.B)\;|\;N:A}{}

\step*{}{MN:B}{\textbf{(appl)}}
\end{flagderiv}

Letters $s, s_1,s_2,\ldots$ are variables for sorts $*$ or $\square$. 
There is only one type $*$ in $\lambda D$. But informally we often use $*_p$ for propositions and $*_s$ for sets to make proofs more readable.

Arrow type $A\rightarrow B$ is a particular case of the dependent product $\Pi x:A.B$, when $x$ is not a free variable in $B$. It has the following derived rules:

\begin{flagderiv}
\introduce*{}{A:s_1\;|\;B:s_2}{}

\step*{}{A\rightarrow B:s_2}{}

\introduce*{}{x : A}{}
\skipsteps*{\dots}{}

\step*{}{M:B}{}

\conclude*{}{\lambda x:A.M\;:\;A\rightarrow B}{}

\assume*{}{u:A\rightarrow B\;|\;v:A}{}

\step*{}{uv:B}{}
\end{flagderiv}

In $\lambda D$ arrows are right associative, that is $A\rightarrow B\rightarrow C$ is a shorthand for $A\rightarrow (B\rightarrow C)$.

\subsection{Logic in $\lambda D$ }
\label{sect_logic}

As shown in \cite{Ned14}, intuitionistic logic is naturally formalized in $\lambda D$; we briefly describe that here. 

\subsubsection{}
\textit{Implication} $A\Rightarrow B$ is defined as the arrow type $A\rightarrow B$ and it has the same derived rules. 

\subsubsection{}
\textit{Falsity} $\bot$ is defined by:
\[\bot:=\Pi A:*_p.A\;:\;*_p\]

and has the derived rule:

\begin{flagderiv}
\introduce*{}{B:*_p\;|\;u:\bot}{}
\step*{}{uB:B}{}
\end{flagderiv}

\subsubsection{}
\textit{Negation} is defined by: 
\[\neg A:= A\Rightarrow \bot.\]

Next we list derived rules for other logical connectives and quantifiers.
We abbreviate the proof terms for some of the connectives and quantifiers to avoid too many repetitions in formal derivations; these abbreviations do not introduce any ambiguity.

\subsubsection{}
\textit{Conjunction} $\wedge$ has the following rules: 

\begin{flagderiv}
\introduce*{}{A,B:*_p}{}
\assume*{}{u:A\;|\;v:B}{}
\step*{}{\textbf{-in}(A,B,u,v)  \;:\;A\wedge B}{This is shortened to $\boldsymbol{\wedge(u,v):A\wedge B}$}
\done
\assume*{}{w:A\wedge B}{}
\step*{}{\boldsymbol{\wedge}\textbf{-el}_1(A,B,w)  \;:\;A}{This is shortened to $\boldsymbol{\wedge_1(w):A}$}
\step*{}{\boldsymbol{\wedge}\textbf{-el}_2(A,B,w)  \;:\;B}{This is shortened to $\boldsymbol{\wedge_2(w):B}$}
\end{flagderiv}

\newpage
\subsubsection{}

\textit{Disjunction} $\vee$ has the following rules:
  
\begin{flagderiv}
\introduce*{}{A,B:*_p}{}
\assume*{}{u:A}{}
\step*{}{\boldsymbol{\vee}\textbf{-in}_1(A,B,u)  \;:\;A\vee B}{This is shortened to $\boldsymbol{\vee_1(u):A\vee B}$}
\done
\assume*{}{u:B}{}
\step*{}{\boldsymbol{\vee}\textbf{-in}_2(A,B,u)  \;:\;A\vee B}{This is shortened to $\boldsymbol{\vee_2(u):A\vee B}$}
\done
\assume*{}{C:*_p}{}
\assume*{}{u:A\vee B\;|\;v:A\Rightarrow C\;|\;w:B\Rightarrow C}{}
\step*{}{\boldsymbol{\vee}\textbf{-el}(A,B,C,u,v,w)  \;:\;C}{This is shortened to $\boldsymbol{\vee(u,v,w):C}$}
\end{flagderiv}

\subsubsection{}

\textit{Bi-implication} $\Leftrightarrow$ is defined by: 
\[(A\Leftrightarrow B):=(A\Rightarrow B)\wedge (B\Rightarrow A).\]

\subsubsection{}
\textit{Universal quantifier} $\forall$ is defined by: 

\begin{flagderiv}
\introduce*{}{S:*_s\;|\;P:S\rightarrow *_p}{}
\step*{}{\text{Definition  }\forall (S,P)\;:=\Pi x:S.Px\;:\;*_p}{}
\step*{} {\text{Notation}: \mathbf{(\forall x:S.Px)} \text{ for }\forall (S,P)}{}
\end{flagderiv}

\subsubsection{}

\textit{Existential Quantifier} $\exists$ has the following rules:
\begin{flagderiv}
\introduce*{}{S:*_s\;|\;P:S\rightarrow *_p}{}
\introduce*{}{y:S\;|\;u:Py}{}
\step*{}{\boldsymbol{\exists}\textbf{-in}(S,P,y,u)  \;:\;(\exists x:S.Px)}{This is shortened to $\boldsymbol{\exists_1(P,y,u):(\exists x:S.Px)}$}
\done
\assume*{}{C:*_p}{}
\assume*{}{u:(\exists x:S.Px)\;|\;v:(\forall x:S.(Px\Rightarrow C))}{}
\step*{}{\boldsymbol{\exists}\textbf{-el}(S,P,u,C,v)  \;:\;C}{This is shortened to $\boldsymbol{\exists_2(u,v):C}$}
\end{flagderiv}

Here $x$ is not a free variable in $C$. 

The latter of these rules is usually used in this form:
\begin{flagderiv}
\introduce*{}{S:*_s\;|\;P:S\rightarrow *_p\;|\;C:*_p}{}
\skipsteps*{\dots}{}{}
\step*{}{u:(\exists x:S.Px)}{}
\assume*{}{x:S\;|\;w:Px}{}
\skipsteps*{\dots}{}{}
\step*{}{b:C}{}
\conclude*{}{v:=\lambda x:S.\lambda w:Px.b\;:\;(\forall x:S.(Px\Rightarrow C))}{}
\step*{}{\exists_2(u,v)\;:\;C}{}
\end{flagderiv}

We shorten it to the following:
\begin{flagderiv}
\introduce*{}{S:*_s\;|\;P:S\rightarrow *_p\;|\;C:*_p}{}
\skipsteps*{\dots}{}{}
\step*{}{u:(\exists x:S.Px)}{}
\assume*{}{x:S\;|\;w:Px}{}
\skipsteps*{\dots}{}{}
\step*{}{b:C}{}

\conclude*{}{\boldsymbol{ \exists_3(u,b)}\;:\;C}{}
\end{flagderiv}

\subsubsection{Classical Logic}
We aim to use intuitionistic logic in most proofs. In some cases classical logic is needed (e.g., in subsection \ref{sect_extensions}, 
where a function is extended from a subset of a type to the entire type). In these cases we use $\lambda D$ with addition of the following \textbf{Axiom of Excluded Third}:

\begin{flagderiv}
\introduce*{}{A:*_p}{}
\step*{}{\textbf{exc-thrd}(A):=\Bot \;:\;A\vee\neg A}{}
\end{flagderiv}

The axiom is introduced by a primitive definition with the symbol $\Bot$ replacing a non-existing proof term.

\subsection{Intensional Equality in $\lambda D$ }
\label{sect_int_equality}
Intensional equality is introduced in 
\cite{Ned14} as follows: 
\begin{flagderiv}
\introduce*{}{S:*\;|\;x,y: S}{}

\step*{}{eq(S,x,y):=\Pi P:S\rightarrow *_p. (Px\Rightarrow Py):*_p}{}
\step*{} {\text{Notation}: \boldsymbol{x=_Sy} \text{ for } eq(S,x,y)}{\textbf{Intensional equality}}
\end{flagderiv}
We will call the intensional equality just equality. 
Next we list derived rules of equality from \cite{Ned14}. We add some obvious rules that help to shorten formal proofs when symmetry property is necessary.

\subsubsection{}
\textit{Reflexivity}:

\begin{flagderiv}
\introduce*{}{S:*\;|\;x: S}{}
\step*{}{\boldsymbol{eq\mhyphen refl}(S,x):x=_Sx}
{This is shortened to $\boldsymbol{eq\mhyphen refl:x=_Sx}$}
\end{flagderiv}

\subsubsection{}
\textit{Symmetry}:

\begin{flagderiv}
\introduce*{}{S:*\;|\;x,y: S\;|\;u:x=_Sy}{}

\step*{}{\boldsymbol{eq\mhyphen sym
}(S,x,y,u):y=_Sx}{This is shortened to $\boldsymbol{eq\mhyphen sym(u):y=_Sx}$}
\end{flagderiv}

\subsubsection{}
\textit{Transitivity}:

\begin{flagderiv}
\introduce*{}{S:*\;|\;x,y,z: S}{}
\introduce*{}{u:x=_Sy\;|\;v:y=_Sz}{}
\step*{}{\boldsymbol{eq\mhyphen trans_1}
(S,x,y,z,u,v):x=_Sz}{This is shortened to $\boldsymbol{eq\mhyphen trans_1(u,v):x=_Sz}$}
\done

\introduce*{}{u:x=_Sy\;|\;v:z=_Sy}{}
\step*{}{a_1:=eq\mhyphen sym(v):y=_Sz}{}
\step*{}{\boldsymbol{eq\mhyphen trans_2}
(S,x,y,z,u,v):=trans_1(u,a_1)
\;:\;x=_Sz}{This is shortened to $\boldsymbol{eq\mhyphen trans_2(u,v):x=_Sz}$}
\done

\introduce*{}{u:y=_Sx\;|\;v:y=_Sz}{}
\step*{}{a_2:=eq\mhyphen sym(u):x=_Sy}{}
\step*{}{\boldsymbol{eq\mhyphen trans_3}
(S,x,y,z,u,v):=trans_1(a_2,v)
\;:\;x=_Sz}{This is shortened to $\boldsymbol{eq\mhyphen trans_3(u,v):x=_Sz}$}
\end{flagderiv}

\subsubsection{}
\textit{Substitutivity}:

\begin{flagderiv}
\introduce*{}{S:*\;|\;P: S\rightarrow *_p\;|\;x,y: S\;|\;u:x=_Sy}{}
\introduce*{}{v:Px}{}
\step*{}{\boldsymbol{eq\mhyphen subs_1}
(S,P,x,y,u,v):Py}{This is shortened to $\boldsymbol{eq\mhyphen subs_1(P,u,v):Py}$}
\done
\introduce*{}{v:Py}{}
\step*{}{a_1:=eq\mhyphen sym(u):y=_Sx}{}
\step*{}{\boldsymbol{eq\mhyphen subs_2}
(S,P,x,y,u,v):=eq\mhyphen subs_1(P,a_1,v)\;:\;
Px}{This is shortened to $\boldsymbol{eq\mhyphen subs_2(P,u,v):Px}$}
\end{flagderiv}

\subsubsection{}
\textit{Congruence}:

\begin{flagderiv}
\introduce*{}{Q,S:*\;|\;f: Q\rightarrow S\;|\;x,y: Q\;|\;u:x=_Qy}{}

\step*{}{\boldsymbol{eq\mhyphen cong_1}
(Q,S,f,x,y,u):fx=_Sfy
\\
\hspace{5cm}\text{This is shortened to }\boldsymbol{eq\mhyphen cong_1(f,u):fx=_Sfy}}{}

\step*{}{a_1:=eq\mhyphen sym(u):y=_Qx}{}

\step*{}{\boldsymbol{eq\mhyphen cong_2}
(Q,S,f,x,y,u):=eq\mhyphen cong_1(f,a_1)\;:\;
fy=_Sfx
\\
\hspace{5cm}\text{This is shortened to }\boldsymbol{eq\mhyphen cong_2(f,u):fy=_Sfx}}{}
\end{flagderiv}

We will usually omit an index in $=$.

\bigskip
\subsection{Sets in $\lambda D$}
\label{section:sets}
Sets are introduced in \cite{Ned14} as follows. In particular, a subset of type $S$ is defined as a predicate on $S$.

\begin{flagderiv}
\introduce*{}{S:*_s}{}
\step*{}{\boldsymbol{ps(S)}:=S\rightarrow *_p\;:\;\square
\quad\quad \textbf{Power set of S}}{}
\introduce*{}{V:ps(S)}{}
\step*{}{\text{Notation: }
\boldsymbol{\{x:S\;|\;Vx\}}\text{ for }\lambda x:S.Vx
}{}
\introduce*{}{x: S}{}
\step*{}{\boldsymbol{element}(S,x,V):=Vx:*_p}{}
\step*{}{\text{Notation: }x\varepsilon_S V \text{ or }x\varepsilon V\text{ for }element(S,x,V)}{}
\done[2]
\step*{}{\varnothing(S):=\{x:S\;|\;\bot\}:ps(S)}{}
\step*{}{\text{Notation: } \varnothing_S\text{ or } \varnothing\text{ for } \varnothing(S)}{}
\step*{}{\boldsymbol{full\mhyphen set}(S)
:=\{x:S\;|\;\neg\bot\}:ps(S)}{}

\introduce*{}{X,Y:ps(S)}{}

\step*{}{\text{Definition  }\cap(S,X,Y)\;:=\{x:S\;|\;x\varepsilon X\wedge x\varepsilon Y\}\;:\;ps(S)}{}
\step*{} {\text{Notation}: \boldsymbol{X\cap Y} \text{ for } \cap(S,X,Y)}{\textbf{Intersection of two sets}}

\step*{}{\text{Definition  }\cup(S,X,Y)\;:=\{x:S\;|\;x\varepsilon X\vee x\varepsilon Y\}\;:\;ps(S)}{}
\step*{} {\text{Notation}: \boldsymbol{X\cup Y} \text{ for } \cup(S,X,Y)}{\textbf{Union of two sets}}
\end{flagderiv}

Next we define intersection and union of a collection of sets.
\begin{flagderiv}
\introduce*{}{S: *_s\;|\;U:ps(ps(S))}{}
\step*{}{\text{Definition  }\cap(S,U)\;:=\{x:S\;|\;\forall Z:ps(S).(Z\varepsilon U\Rightarrow x\varepsilon Z)\}\;:\;ps(S)}{}
\step*{} {\text{Notation}: \boldsymbol{\cap U} \text{ for } \cap(S,U)}{\textbf{Intersection of collection of sets}}
\step*{}{\text{Definition  }\cup(S,U)\;:=\{x:S\;|\;\exists Z:ps(S).(Z\varepsilon U\wedge x\varepsilon Z)\}\;:\;ps(S)}{}
\step*{} {\text{Notation}: \boldsymbol{\cup U} \text{ for } \cup(S,U)}{\textbf{Union of collection of sets}}
\end{flagderiv}

\textit{Remark}: According to this definition, the union of an empty collection of sets is $\varnothing$ and its intersection is $full\mhyphen set(S)$.

Next diagram defines extensional equality of sets.

\begin{flagderiv}
\introduce*{}{S: *_s\;|\;X,Y:ps(S)}{}
\step*{}{\text{Definition  }\subseteq(S,X,Y)\;:=(\forall x:S.(x\varepsilon X\Rightarrow x\varepsilon Y))\;:\;*_p}{}
\step*{} {\text{Notation}: \boldsymbol{X\subseteq Y} \text{ for } \subseteq(S,X,Y)}{}
\step*{}{\text{Definition  }Ex\mhyphen eq(S,X,Y)\;:=X\subseteq Y\wedge Y\subseteq X\;:\;*_p}{}
\step*{} {\text{Notation}: \boldsymbol{X=_{ext}Y} \text{ for } Ex\mhyphen eq(S,X,Y)}{\textbf{ Extensional equality}}
\end{flagderiv}

Corresponding axiom is added to $\lambda D$:

\begin{flagderiv}
\introduce*{}{S: *_s\;|\;X,Y:ps(S);|\;u:X=_{ext}Y}{}

\step*{}{\boldsymbol{ext\mhyphen axiom}(S,X,Y,u)\;:=\Bot\;:\; X=_{ps(S)}Y}{\textbf{Extensionality Axiom}}
\end{flagderiv}

The Extensionality Axiom identifies the two types of set equality. 

We also add to $\lambda D$ the following axiom of extensionality for functions:
\begin{flagderiv}
\introduce*{}{Q,S: *_s\;|\;f,g:Q\rightarrow S\;|\;u:(\forall x:Q.fx=_Sgx)}{}

\step*{}{\boldsymbol{ext\mhyphen axiom\mhyphen fun}(Q,S,f,g,u)\;:=\Bot\;:\; f=_{Q\rightarrow S}g}{\textbf{Extensionality Axiom for Functions}}
\end{flagderiv}

We will usually omit an index in = and we will not elaborate on details of applying either Extensionality Axiom  when converting between different types of equality.

\subsection{Binary Relations}

In \cite{Ned14} a binary relation on a type $S$ is regarded as an object of type $S\rightarrow S\rightarrow  *_p$. In \cite{Kach20} we formalized in $\lambda D$ the standard theory of binary relations on a type. Next we define a binary relation on a set.

\begin{definition}
Here we define a binary relation $R$ on a subset $G$ of a type $S$ and a property $\textsf{consistent}_0(S,G,R)$, which means that the value of $R$ on $x,y\varepsilon G$ does not depend on the proofs of $x\varepsilon G$ and $y\varepsilon G$.

\begin{flagderiv}
\introduce*{}{S: *_s\;|\;G:ps(S)}{}
\introduce*{}{R:\Pi x:S.[(x\varepsilon G)\rightarrow \Pi y:S.((y\varepsilon G)\rightarrow *_p)]}{}
\step*{}{ consistent_0(S,G,R):=
\forall x:S.\Pi p,q: x\varepsilon G.\forall y:S.\Pi u,v: y\varepsilon G.(Rxpyu\Leftrightarrow  Rxqyv)\;:\;*_p}{}
\end{flagderiv}
\label{def:relation-0}
\end{definition}

\begin{lemma}
Here we construct an extension $Q$ of the binary relation $R$ to the type $S$.
\label{lemma:relation-0}
\end{lemma}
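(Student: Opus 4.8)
The plan is to define $Q$ on all of $S$ by cases, using the Axiom of Excluded Third to decide, for given $x,y:S$, whether both $x\,\varepsilon\,G$ and $y\,\varepsilon\,G$ hold. If they do, we want $Q\,x\,y$ to return the value of $R$ applied to $x$, $y$ together with (some) proofs of $x\,\varepsilon\,G$ and $y\,\varepsilon\,G$; if either fails, we set $Q\,x\,y:=\bot$ (any fixed proposition would do, but $\bot$ is the natural choice, matching the $\varnothing$/$full\mhyphen set$ conventions used earlier). So the statement to prove is really: there exists $Q:S\rightarrow S\rightarrow *_p$ such that for all $x,y:S$ and all proofs $p:x\,\varepsilon\,G$, $q:y\,\varepsilon\,G$ we have $Q\,x\,y\Leftrightarrow R\,x\,p\,y\,q$ — and this will need the hypothesis $consistent_0(S,G,R)$ to be well-posed, since otherwise the choice of which proof $p,q$ to feed to $R$ could matter.

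First I would apply \textbf{exc-thrd} to the proposition $x\,\varepsilon\,G$ to get $h_1:(x\,\varepsilon\,G)\vee\neg(x\,\varepsilon\,G)$, and similarly $h_2:(y\,\varepsilon\,G)\vee\neg(y\,\varepsilon\,G)$. Then I would use $\vee$-elimination (the $\boldsymbol{\vee(u,v,w)}$ rule, targeting the sort $*_p$, which is legitimate since $*_p$ is a type) to produce a proposition: in the case where both disjuncts are the left ones, witnessed by $p_0:x\,\varepsilon\,G$ and $q_0:y\,\varepsilon\,G$, output $R\,x\,p_0\,y\,q_0$; in every other case output $\bot$. Doing the two eliminations nested gives the desired term; abstracting over $x$ and $y$ gives $Q:=\lambda x:S.\lambda y:S.(\dots):S\rightarrow S\rightarrow *_p$. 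I expect the cleanest way to package this is to first prove a small auxiliary lemma producing, for each $x:S$, a function $S\rightarrow *_p$, and then abstract over $x$; but it can also be done in one block of nested case splits.

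The main obstacle — and the reason $consistent_0$ is in the hypotheses — is the verification step: given arbitrary $x,y:S$ and arbitrary proofs $p:x\,\varepsilon\,G$, $q:y\,\varepsilon\,G$, I must show $Q\,x\,y\Leftrightarrow R\,x\,p\,y\,q$. Unfolding $Q\,x\,y$ forces another case analysis on the very same $h_1,h_2$ used in its definition. In the "both-left" branch, $Q\,x\,y$ reduces to $R\,x\,p_0\,y\,q_0$ for the \emph{internally chosen} witnesses $p_0,q_0$, which need not be definitionally equal to the externally supplied $p,q$; here $consistent_0$ is exactly what bridges the gap, giving $R\,x\,p_0\,y\,q_0\Leftrightarrow R\,x\,p\,y\,q$. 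In the remaining branches one of $h_1,h_2$ is a \emph{right} disjunct, i.e. a proof of $\neg(x\,\varepsilon\,G)$ or $\neg(y\,\varepsilon\,G)$; applying it to $p$ or $q$ respectively yields a proof of $\bot$, from which both directions of the bi-implication follow trivially (via the $\bot$-elimination rule $u\,B:B$). The bookkeeping is delicate because the reduction behaviour of a term built by $\vee$-elimination depends on which injection the scrutinee is, so the case split in the proof must mirror the case split in the definition exactly; but conceptually there is nothing beyond the classical case analysis plus one appeal to $consistent_0$.

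Finally I would record the resulting proof term as $\boldsymbol{term}$ for this lemma (following the paper's convention), and note that when it is later convenient one can replace the intensional bi-implication by genuine equality in $*_p$ using the extensionality machinery, though the statement as given only asserts the $\Leftrightarrow$.
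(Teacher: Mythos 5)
There is a genuine gap at the very first step: the definition of $Q$ by case analysis is not available in $\lambda D$. You propose to build the proposition $Q\,x\,y$ by applying $\vee$-elimination to $h_1:(x\,\varepsilon\,G)\vee\neg(x\,\varepsilon\,G)$ "targeting the sort $*_p$". But the derived rule $\vee\textbf{-el}(A,B,C,u,v,w)$ requires $C:*_p$, because disjunction is encoded impredicatively as $\Pi C:*_p.((A\Rightarrow C)\Rightarrow(B\Rightarrow C)\Rightarrow C)$; you cannot instantiate $C$ with $*_p$ itself, since $*_p:\square$ and the quantifier ranges only over inhabitants of $*_p$. In other words, there is no large elimination: a disjunction lets you produce a \emph{proof} of a proposition by cases, not a \emph{proposition} by cases. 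The same obstruction blocks the fallback of routing through Theorem \ref{theorem:extend_function_1} with $T:=*_p$, since that theorem (and the iota descriptor it relies on) requires $T:*_s$, which $*_p$ is not. So the object $Q:=\lambda x,y:S.(\dots\text{case split}\dots)$ cannot be formed, and the rest of your argument (which correctly identifies where $consistent_0$ would be needed, and correctly handles the negative branches) has nothing to attach to.

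The paper sidesteps this entirely and uses no classical logic here: it defines $Q$ uniformly as
\[
Q:=\lambda x,y:S.\bigl(x\,\varepsilon\,G\wedge y\,\varepsilon\,G\wedge \Pi p:x\,\varepsilon\,G.\Pi q:y\,\varepsilon\,G.\,Rxpyq\bigr),
\]
which is a single well-formed proposition by the (form) rule, quantifying over all membership proofs rather than choosing one. The forward direction of $Qxy\Leftrightarrow Rxpyq$ is then just instantiation of the $\Pi$ at the given $p,q$, and only the backward direction uses $consistent_0$ (to pass from the given $Rxpyq$ to $Rxp_1yq_1$ for arbitrary $p_1,q_1$). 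If you want to repair your proof, replace the case-split definition with a uniform one of this kind; the verification you sketched then goes through with the roles of $consistent_0$ essentially as you described.
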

\begin{proof}
This is a sketch of the proof: 
\begin{flagderiv}
\introduce*{}{S: *_s\;|\;G:ps(S)\;|\;R:\Pi x:S.[(x\varepsilon G)\rightarrow \Pi y:S.((y\varepsilon G)\rightarrow *_p)]}{}

\skipsteps*{\dots}{}{}
\step*{}{Q:=\ldots\;:\;S\rightarrow S\rightarrow *_p}{}
\step*{}{\boldsymbol{ term_{ \ref{lemma:relation-0}.1}}(S,G,R):=Q\;:\;S\rightarrow S\rightarrow *_p\quad\quad\quad\quad\textbf{Extension of $R$ to $S$}}{}

\assume*{}{u:consistent_0(S,G,R)}{}

\step*{}{\boldsymbol{ term_{ \ref{lemma:relation-0}.2}}(S,G,R,u):=
\ldots
\;:\;\boldsymbol{
\forall x,y:S.(x\varepsilon G\Rightarrow y\varepsilon G
\Rightarrow \Pi p:x\varepsilon G.\Pi q:y\varepsilon G.(Qxy\Leftrightarrow Rxpyq))}}{}
\end{flagderiv}
The full proof is in the Appendix, pg. 32.
\end{proof}

Due to this lemma, we can replace the previous definition of a binary relation on  a set, without loss of generality, by the following more convenient definition. 

\begin{definition}
Definition of a binary relation $R$ on a subset $G$ of a type $S$, an equivalence relation on $G$, and a partition of $G$. 

\begin{flagderiv}
\introduce*{}{S: *_s\;|\;G:ps(S)}{}
\introduce*{}{R:S\rightarrow S\rightarrow *_p}{\textbf{R is a binary relation}}
\step*{}{\text{Definition  }\boldsymbol{refl}(S,G,R):=\forall x:S.(x\varepsilon G\Rightarrow Rxx):*_p}{\textbf{R is reflexive on G}}

\step*{}{\text{Definition  }\boldsymbol{sym}(S,G,R):=\forall x:S.[x\varepsilon G\Rightarrow\forall y:S.(y\varepsilon G\Rightarrow Rxy\Rightarrow Ryx)]:*_p}{\textbf{R is symmetric on G}}

\step*{}{\text{Definition  }\boldsymbol{trans}(S,G,R):=\forall x:S.[x\varepsilon G\Rightarrow\forall y:S.(y\varepsilon G\Rightarrow\forall z:S.(z\varepsilon G
\\\quad\quad
\Rightarrow
 Rxy\Rightarrow Ryz\Rightarrow Rxz))]:*_p}{\textbf{R is transitive on G}}

\step*{}{\text{Definition  }\boldsymbol{equiv\mhyphen rel}(S,G,R):=refl(S,G,R)\wedge sym(S,G,R)\wedge trans(S,G,R)}{}

\step*{}{\text{Definition  }\boldsymbol{partition}(S,G,R):=\forall x:S.(x\varepsilon G\Rightarrow
x\varepsilon Rx)
\wedge
\forall x:S.[x\varepsilon G
\\\quad\quad
\Rightarrow\forall y:S.(y\varepsilon G\Rightarrow\forall z:S.(z\varepsilon G
\Rightarrow
z\varepsilon Rx\Rightarrow z\varepsilon Ry\Rightarrow (Rx)\cap G=(Ry)\cap G))]:*_p}{\textbf{R is a partition of G}}
\end{flagderiv}
\label{definition:partition}
\end{definition}

In this definition the binary relation $R$ has type $S\rightarrow S\rightarrow *_p$, i.e. $R$ is defined on the whole type $S$, not just the subset $G$. But all the properties of $R$ are defined only for elements of $G$ because we are interested in the behavior of $R$ on $G$ only.
We will call a binary relation just a relation.

\begin{proposition}
Any equivalence relation on a set $G$ is a partition and vice versa.
\label{lemma:partition}
\end{proposition}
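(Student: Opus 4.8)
The plan is to prove the bi-implication $equiv\mhyphen rel(S,G,R)\Leftrightarrow partition(S,G,R)$, working in the flag context with $S:*_s$, $G:ps(S)$ and $R:S\rightarrow S\rightarrow *_p$ fixed, and to establish the two implications separately before joining them with $\wedge$-introduction. Throughout I read the curried object $Rx:S\rightarrow *_p$ as the set $\{w:S\;|\;Rxw\}$, so that $w\,\varepsilon\,Rx$ unfolds definitionally to $Rxw$; this identification is exactly what links the relational statements occurring in $equiv\mhyphen rel$ with the membership statements occurring in $partition$.

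For the direction $equiv\mhyphen rel\Rightarrow partition$, assume $h:equiv\mhyphen rel(S,G,R)$ and extract $h_r:refl(S,G,R)$, $h_s:sym(S,G,R)$, $h_t:trans(S,G,R)$ by $\wedge$-elimination. The first conjunct of $partition$ is immediate: for $x:S$ and $p:x\,\varepsilon\,G$ the required $x\,\varepsilon\,Rx$ is just $Rxx=h_r\,x\,p$. For the second conjunct, fix $x,y,z:S$ with proofs that each lies in $G$, together with $a:z\,\varepsilon\,Rx$ (i.e.\ $Rxz$) and $b:z\,\varepsilon\,Ry$ (i.e.\ $Ryz$); I must produce a proof of $(Rx)\cap G=_{ps(S)}(Ry)\cap G$. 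First derive $Rxy$ from $Rxz$ and $Rzy$ (the latter obtained from $b$ by $h_s$) using $h_t$, and symmetrically $Ryx$. Then for an arbitrary $w\,\varepsilon\,G$ the implication $Rxw\Rightarrow Ryw$ follows from $h_t$ applied to $Ryx$ and $Rxw$, and $Ryw\Rightarrow Rxw$ from $h_t$ applied to $Rxy$ and $Ryw$, the $\varepsilon\,G$ component carrying across unchanged. This yields $(Rx)\cap G\subseteq(Ry)\cap G$ and the reverse inclusion, hence $(Rx)\cap G=_{ext}(Ry)\cap G$, and one application of $ext\mhyphen axiom$ converts this to the intensional equality demanded by the definition.

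For the converse $partition\Rightarrow equiv\mhyphen rel$, assume $h:partition(S,G,R)$ and split it into $h_1:\forall x:S.(x\,\varepsilon\,G\Rightarrow x\,\varepsilon\,Rx)$ and $h_2$, the second (block) conjunct. Reflexivity is exactly $h_1$ reread as $Rxx$. For symmetry, fix $x,y\,\varepsilon\,G$ and $c:Rxy$; then $y\,\varepsilon\,Rx$ from $c$ and $y\,\varepsilon\,Ry$ from $h_1$, so instantiating $h_2$ with its last bound variable set equal to $y$ gives $(Rx)\cap G=(Ry)\cap G$; since $x\,\varepsilon\,Rx$ by $h_1$ and $x\,\varepsilon\,G$, transporting $x$ across this equality (after converting it to $=_{ext}$) yields $x\,\varepsilon\,Ry$, i.e.\ $Ryx$. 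Transitivity is analogous: from $x,y,z\,\varepsilon\,G$, $Rxy$ and $Ryz$, the same instantiation of $h_2$ (last bound variable $y$) gives $(Rx)\cap G=(Ry)\cap G$, and since $z$ lies in $(Ry)\cap G$ we transport it to obtain $z\,\varepsilon\,Rx$, i.e.\ $Rxz$. Joining the two implications by $\wedge$-introduction produces the bi-implication together with the accompanying proof term.

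The step I expect to be the real work is not conceptual but bookkeeping: keeping straight which of the several nested universally quantified variables (three in $trans$, and the $x,y,z$ block in $partition$) receives which witness, consistently unfolding $w\,\varepsilon\,Rx$ as $Rxw$, and moving cleanly between extensional equality $=_{ext}$ on $ps(S)$ and intensional equality via $ext\mhyphen axiom$ (with the same small unfold needed for $\cap$). The only place that needs a genuine idea is the set-equality step in the forward direction: one cannot read off $(Rx)\cap G=(Ry)\cap G$ directly, but must first combine the hypotheses $Rxz$, $Ryz$ with symmetry and transitivity to establish $Rxy$ and $Ryx$, and only then deduce the coincidence of the $G$-relativized classes.
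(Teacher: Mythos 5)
Your proposal is correct and follows essentially the same route as the paper's proof: both directions use the same reflexivity/symmetry/transitivity manipulations, the same instantiation of the partition clause at $z:=y$, and the same transport across the (extensionally established, then axiom-converted) equality of relativized classes. The only differences are cosmetic instantiation choices (e.g.\ in transitivity the paper equates $(Rz)\cap G$ with $(Rx)\cap G$ and transports $z\,\varepsilon\,Rz$, while you equate $(Rx)\cap G$ with $(Ry)\cap G$ and transport $z\,\varepsilon\,Ry$), which are interchangeable.
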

\begin{proof}
This is a sketch of the proof: 
\begin{flagderiv}
\introduce*{}{S: *_s\;|\;G:ps(S)\;|\;
R: S\rightarrow S\rightarrow *_p}{}
\skipsteps*{\dots}{}{}
\step*{}{\boldsymbol{ term_{ \ref{lemma:partition}}}(S,G,R):=\ldots\;:\;
\boldsymbol{equiv\mhyphen relation(S,G,R)\Leftrightarrow
partition(S,G,R)}}{}
\end{flagderiv}

The full proof is in the Appendix, pg. 32.
\end{proof}

\bigskip 
\subsection{The Iota Descriptor}
\label{sect_iota}

The following diagram defines existence with uniqueness in the language of $\lambda D$.

\begin{flagderiv}
\introduce*{}{S:*\;|\;P:S\rightarrow *_p}{}
\step*{}{\boldsymbol{\exists ^1x:S.Px}:=\exists x:S.Px\wedge \forall x,y:S.(Px\Rightarrow Py\Rightarrow x=y)}{}
\end{flagderiv}

Iota descriptor is introduced in $\lambda D$ (\cite{Ned14}, pg. 272 - 273) as a primitive definition:
\begin{flagderiv}
\introduce*{}{S:*\;|\;P:S\rightarrow *_p\;|\;u:(\exists ^1x:S.Px)}{}
\step*{}{\boldsymbol{\iota(S,P,u)}:=\Bot\;:\;S}{}
\step*{}{\boldsymbol{\iota\mhyphen prop(S,P,u)}:=\Bot\;:\;P(\iota(S,P,u))}{}
\end{flagderiv}

The following property of the iota descriptor is proven in \cite{Ned14}:

\begin{flagderiv}
\introduce*{}{S:*\;|\;P:S\rightarrow *_p\;|\;u:\exists ^1x:S.Px}{}
\skipsteps*{\dots}{}{}

\step*{}{\boldsymbol{\iota\mhyphen unique(S,P,u)}:=\ldots\;:\;\forall y:S.(Py\;\Rightarrow \;
y=\iota(S,P,u))}{}
\end{flagderiv}

In particular, this implies its proof irrelevance: the term $\iota(S,P,u)$ depends only on the existence of the proof $u$, not its content.

\bigskip
\subsection{Applications of the Iota Descriptor}
\label{sect_extensions}
In the following two theorems we use the iota descriptor to extend a function defined on a subset $G$ of a type $S$, to the entire $S$.

\begin{definition}
Here we introduce some preliminary predicates.

For a function $f$ from a set $G$ to type $T$:
\begin{itemize}
\item $\boldsymbol{consistent}_1(S,T,G,f)$ means that the value of $f$ on $x\varepsilon G$ does not depend on the proof of $x\varepsilon G$;
\item $\boldsymbol{Ext\mhyphen prop}_1(S,T,G,f,b,g)$ means that $g$ is an extension of $f$ to the type $S$, with gx=b when x is outside $G$.
\end{itemize}
\medskip

Similar predicates $\boldsymbol{consistent}_2(S,T,G,h)$ and $\boldsymbol{Ext\mhyphen prop}_2(S,T,G,h,b,g)$
are defined for a function $h$ of two variables from a set $G$ to type $T$.

\begin{flagderiv}
\introduce*{}{S,T: *_s\;|\;
G:ps(S)}{}
\introduce*{}{f:\Pi x:S.((x\varepsilon G)\rightarrow T)}{}
\step*{}{\text{Definition  }\boldsymbol{consistent}_1(S,T,G,f):=\forall x:S.\Pi p,q: x\varepsilon G.(fxp=fxq)\;:\;*_p}{}
\introduce*{}{b:T\;|\;g:S\rightarrow T}{}

\step*{}{\text{Definition  }\boldsymbol{Ext\mhyphen prop}_1(S,T,G,f,b,g):=
\forall x:S.[\Pi p:x\varepsilon G.\;(gx=fxp)\wedge (\neg(x\varepsilon G)\Rightarrow gx=b)]\;:\;*_p}{}
\done[2]

\introduce*{}{h:\Pi x:S.[(x\varepsilon G)\rightarrow \Pi y:S.((y\varepsilon G)\rightarrow T)]}{}
\step*{}{\text{Definition  }\boldsymbol{consistent}_2(S,T,G,h):=\forall x:S.\Pi p,q: x\varepsilon G.\forall y:S.\Pi u,v: y\varepsilon G.(hxpyu=hxqyv)\;:\;*_p}{}
\introduce*{}{b:T\;|\;g:S\rightarrow S\rightarrow T}{}

\step*{}{\text{Definition  }\boldsymbol{Ext\mhyphen prop}_2(S,T,G,h,b,g):=
\forall x,y:S.[\Pi p:x\varepsilon G.\Pi q:y\varepsilon G.\;(gxy=hxpyq)
\\\quad
\wedge (\neg(x\varepsilon G\wedge y\varepsilon G)\Rightarrow gxy=b)]\;:\;*_p}{}
\end{flagderiv}
\label{def:preliminary}
\end{definition}

\begin{theorem}
Suppose $S$ and $T$ are types, $G$ is a subset of $S$, $b:T$, and $f$ is a function from $G$ to $T$. Here we construct an extension $f^*:S\rightarrow T$ such that 
\[f^*x=\begin{cases}
fx &\text{ if}\quad x\varepsilon G,\\
b &\text{ if}\quad\neg (x\varepsilon G).
\end{cases}\]

\label
{theorem:extend_function_1}
\end{theorem}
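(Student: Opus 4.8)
The plan is to build $f^*$ pointwise using the iota descriptor. For each $x:S$ I want to specify a unique element of $T$: if I have a proof $p:x\,\varepsilon\,G$ then the value should be $fxp$, and if $x$ is outside $G$ the value should be $b$. Concretely, for a fixed $x:S$ I would consider the predicate $P_x:=\lambda t:T.\,[\,\Pi p:x\,\varepsilon\,G.\,(t=fxp)\,]\wedge(\neg(x\,\varepsilon\,G)\Rightarrow t=b)$ on $T$ and define $f^*x:=\iota(T,P_x,\cdot)$, which forces me to supply a proof of $\exists^1 t:T.\,P_x t$. The definition is then $f^*:=\lambda x:S.\,\iota(T,P_x,\mathit{witness}_x)$.

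First I would discharge the existence-with-uniqueness obligation for each $x:S$. Using the Axiom of Excluded Third on the proposition $x\,\varepsilon\,G$, I split into two cases. In the case $p_0:x\,\varepsilon\,G$, the witness is $fxp_0$: it satisfies the first conjunct because for any $p:x\,\varepsilon\,G$ we have $fxp_0=fxp$ — but this step needs $consistent_1(S,T,G,f)$, so I would take that as an added hypothesis of the theorem (matching Definition \ref{def:preliminary}); the second conjunct holds vacuously since $\neg(x\,\varepsilon\,G)$ is contradicted by $p_0$, using the $\bot$-elimination rule. For uniqueness in this case, if $t$ satisfies $P_x$ then $t=fxp_0$ directly from the first conjunct applied to $p_0$. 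In the case $n_0:\neg(x\,\varepsilon\,G)$, the witness is $b$: the first conjunct holds vacuously (from any $p:x\,\varepsilon\,G$ and $n_0$ derive $\bot$, hence $t=fxp$), and the second conjunct gives $b=b$ by reflexivity; uniqueness follows since any $t$ with $P_x t$ gives $t=b$ from the second conjunct applied to $n_0$.

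Having the witness, I set $\mathit{term}_{\ref{theorem:extend_function_1}.1}(S,T,G,f,b,\cdot):=f^*:S\rightarrow T$ and then derive $Ext\mhyphen prop_1(S,T,G,f,b,f^*)$ as $\mathit{term}_{\ref{theorem:extend_function_1}.2}$ by simply unfolding: for each $x:S$, $\iota\mhyphen prop$ tells us $P_x(f^*x)$ holds, and the two conjuncts of $P_x(f^*x)$ are exactly the two clauses of $Ext\mhyphen prop_1$ at $x$. So this second part is essentially immediate from the construction.

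The main obstacle is the bookkeeping around proof irrelevance: the whole construction only makes sense because $\iota$ is proof-irrelevant (the paper notes this after $\iota\mhyphen unique$), so $f^*x$ does not secretly depend on which proof of $x\,\varepsilon\,G$ or $\neg(x\,\varepsilon\,G)$ we fed to the case split — and establishing uniqueness of the witness, which is what legitimizes the iota, is precisely where the $consistent_1$ hypothesis is indispensable. Everything else is routine case analysis with $\bot$-elimination to kill the impossible branches. I expect the analogous Theorem \ref{theorem:extend_function_2} for binary functions to go through the same way, with the predicate now ranging over pairs and $consistent_2$ playing the role of $consistent_1$.
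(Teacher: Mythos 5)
Your construction is exactly the paper's: the same predicate $P_x$ on $T$, the same case split via the Axiom of Excluded Third with witnesses $fxp_0$ and $b$, the same use of $consistent_1$ (which the paper indeed carries as the hypothesis $u$) to discharge the first conjunct and the uniqueness clause, and the same final appeal to $\iota\mhyphen prop$ to obtain $Ext\mhyphen prop_1$. This matches the Appendix proof essentially step for step, so there is nothing to correct.
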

\begin{proof}
The construction uses classical logic and the iota descriptor. It has the form:
\begin{flagderiv}
\introduce*{}{S,T: *_s\;|\;G:ps(S)\;|\;f:\Pi x:S.((x\varepsilon G)\rightarrow T)}{}

\assume*{}{b:T\;|\;u:consistent_1(S,T,G,f)}{}
\skipsteps*{\dots}{}

\step*{}{f^*:=\ldots
\;:\;S\rightarrow T}{}
\step*{}{ \boldsymbol{Ext}_1(S,T,G,f,b,u):=f^*
\;:\;\boldsymbol{S\rightarrow T}}{}

\step*{}{\boldsymbol{Ext\mhyphen proof}_1(S,T,G,f,b,u):=\ldots
\;:\;\boldsymbol{Ext\mhyphen prop_1}(S,T,G,f,b,f^*)}{}
\end{flagderiv}

The full proof is in the Appendix, pg. 34. 
\end{proof}

The following is a similar theorem for a function of two variables.
\begin{theorem}
Suppose $S$ and $T$ are types, $G$ is a subset of $S$, $b:T$, and $f$ is a function of two variables on $G$. Here we construct an extension $f^*:S\rightarrow S\rightarrow T$ such that 
\[f^*xy=\begin{cases}
fxy &\text{ if}\quad x\varepsilon G\wedge y\varepsilon G,\\
b &\text{ if}\quad\neg (x\varepsilon G\wedge y\varepsilon G).
\end{cases}\]

\label
{theorem:extend_function_2}
\end{theorem}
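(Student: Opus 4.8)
The plan is to mimic the construction of Theorem~\ref{theorem:extend_function_1} but with two arguments, so I would begin by setting up, for fixed $x,y:S$, the predicate that singles out the intended value of $f^*xy$. Working in $\lambda D$ with the Axiom of Excluded Third, I would first derive the disjunction $(x\varepsilon G\wedge y\varepsilon G)\vee\neg(x\varepsilon G\wedge y\varepsilon G)$ by applying \textbf{exc-thrd} to the proposition $x\varepsilon G\wedge y\varepsilon G$. Then I would define a predicate $P_{x,y}:T\rightarrow *_p$ by something like $P_{x,y}:=\lambda t:T.\,[\,\Pi p:x\varepsilon G.\Pi q:y\varepsilon G.(t=hxpyq)\,]\wedge(\neg(x\varepsilon G\wedge y\varepsilon G)\Rightarrow t=b)$, where $h$ is the given two-variable function. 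The point of the consistency hypothesis $consistent_2(S,T,G,h)$ is exactly that $hxpyq$ does not depend on the chosen proofs $p,q$, so $P_{x,y}$ is satisfied by a unique element of $T$.

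Next I would prove $\exists^1 t:T.\,P_{x,y}t$ by case analysis on the excluded-third disjunction. In the case where we have a proof $w:x\varepsilon G\wedge y\varepsilon G$, set $p:=\wedge_1(w)$, $q:=\wedge_2(w)$ and take the witness $t:=hxpyq$; the first conjunct of $P_{x,y}$ follows from $consistent_2$, and the second holds vacuously since its hypothesis is contradicted by $w$. In the case $\neg(x\varepsilon G\wedge y\varepsilon G)$, take $t:=b$; the first conjunct holds vacuously (its hypothesis, together with $\wedge$-introduction, contradicts the negation), and the second is immediate. Uniqueness follows in each case by the same split: two elements both satisfying $P_{x,y}$ must be equal to a common term ($hxpyq$ or $b$), so transitivity/symmetry of $=$ finishes it. This gives a proof term $u_{x,y}:\exists^1 t:T.\,P_{x,y}t$.

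Then I would define $f^*xy:=\iota(T,P_{x,y},u_{x,y})$, abstract over $x$ and $y$ to get $f^*:S\rightarrow S\rightarrow T$, and set $\boldsymbol{Ext}_2(S,T,G,h,b,u):=f^*$. The required property $\boldsymbol{Ext\mhyphen prop}_2(S,T,G,h,b,f^*)$ unfolds to: for all $x,y:S$, for all $p:x\varepsilon G$, $q:y\varepsilon G$ we have $f^*xy=hxpyq$, and if $\neg(x\varepsilon G\wedge y\varepsilon G)$ then $f^*xy=b$. Both follow from $\iota\mhyphen prop(T,P_{x,y},u_{x,y}):P_{x,y}(f^*xy)$ by taking the appropriate conjunct and instantiating: the first conjunct of $P_{x,y}$ applied to $p,q$ yields $f^*xy=hxpyq$, and the second yields $f^*xy=b$ under the negation hypothesis.

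The main obstacle I anticipate is bookkeeping rather than conceptual: carefully threading the pair of membership proofs $p,q$ through the nested $\Pi$-types, and making sure the consistency hypothesis is invoked with the right four proof arguments when comparing $hxpyq$ against $hxp'yq'$ in the uniqueness argument and in the case split. There is also mild care needed in forming $\wedge(p,q):x\varepsilon G\wedge y\varepsilon G$ to feed the negation in the vacuous cases, and in keeping the two-variable predicate $P_{x,y}$ well-typed as we abstract. None of this is deep, but it roughly doubles the length of the one-variable proof, which is why the full derivation is deferred to the Appendix.
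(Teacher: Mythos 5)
Your proposal matches the paper's Appendix proof essentially step for step: the same predicate $Pxyz$ combining $\Pi p:x\varepsilon G.\Pi q:y\varepsilon G.(z=fxpyq)$ with the negative clause, the same application of \textbf{exc-thrd} to $x\varepsilon G\wedge y\varepsilon G$, the same two-case existence and uniqueness arguments yielding $\exists^1 z:T.Pxyz$, and the same use of $\iota$ and $\iota\mhyphen prop$ to define $f^*$ and extract $Ext\mhyphen prop_2$. The anticipated bookkeeping (pairing $p,q$ into $\wedge(p,q)$ for the vacuous cases, and invoking $consistent_2$ with four proof arguments) is exactly what the paper's derivation does.
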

\begin{proof}
The construction is similar to the construction of the previous theorem and has the form:

\begin{flagderiv}
\introduce*{}{S,T: *_s\;|\;G:ps(S)\;|\;f:\Pi x:S.[(x\varepsilon G)\rightarrow \Pi y:S.((y\varepsilon G)\rightarrow T)]}{}

\assume*{}{b:T\;|\;u:consistent_2(S,T,G,f)}{}
\skipsteps*{\dots}{}

\step*{}{f^*:=\ldots
\;:\;S\rightarrow S\rightarrow T}{}

\step*{}{ \boldsymbol{Ext}_2(S,T,G,f,b,u):=f^*
\;:\;\boldsymbol{S\rightarrow S\rightarrow T}}{}

\step*{}{ \boldsymbol{Ext\mhyphen proof}_2(S,T,G,f,b,u):=\ldots
\;:\;\boldsymbol{Ext\mhyphen prop_2}(S,T,G,f,b, f^*)}{}
\end{flagderiv}

The full proof is in the Appendix, pg. 36.  
\end{proof}

\section{Definition of a Group. Properties and Examples\label{section:group}}
Here we introduce in $\lambda D$ and compare three versions of definition of a group. Then  we choose the best one for formalizing group theory.

\subsection{Group as a Type}

\begin{definition}
Definition of a group and other corresponding definitions are given in the following flag diagram.

\setlength{\derivskip}{4pt}
\begin{flagderiv}
\introduce*{}{S: *_s\;|\;\cdot: S\rightarrow S\rightarrow S}{}
\step*{}{\text{Notation } \boldsymbol{x\cdot y}\;\; for\; \cdot xy}{}

\step*{}{\text{Definition  }\boldsymbol{assoc}(S,\cdot)\;:=\forall x,y,z:S.((x\cdot y)\cdot z=x\cdot (y\cdot z))\;:\;*_p}{}

\step*{}{\text{Definition  }\boldsymbol{commut}(S,\cdot)\;:=\forall x,y:S.(x\cdot y=y\cdot x)\;:\;*_p}{}

\step*{}{\text{Definition  }\boldsymbol{semi\mhyphen group}(S,\cdot)\;:=assoc(S,\cdot)\;:\;*_p}{}

\introduce*{}{e:S}{}
\step*{}{\text{Definition  }\boldsymbol{identity}(S,\cdot,e)\;:=\forall x:S.(x\cdot e=x\wedge e\cdot x=x)\;:\;*_p}{}
\step*{}{\text{Definition  }\boldsymbol{monoid}(S,\cdot,e)\;:=semi\mhyphen group(S,\cdot)\wedge identity(S,\cdot,e)\;:\;*_p}{}
\introduce*{}{x,y:S}{}
\step*{}{\text{Definition  }\boldsymbol{inverse}(S,\cdot,e,x,y)\;:=(x\cdot y=e\wedge y\cdot x=e)\;:\;*_p}{}
\done

\introduce*{}{x:S}{}
\step*{}{\text{Definition  }\boldsymbol{invertible}(S,\cdot,e,x)\;:=\exists y:S.inverse(S,\cdot,e,x,y)\;:\;*_p}{}
\done

\introduce*{}{^{-1}:S\rightarrow S}{}
\step*{}{\text{Notation } \boldsymbol{x^{-1}}\;\; for\; ^{-1}x}{}
\step*{}{\text{Definition  }\boldsymbol{group}(S,\cdot,e,^{-1})\;:=monoid(S,\cdot,e)\wedge \forall x:S.inverse(S,\cdot,e,x,x^{-1})\;:\;*_p}{}
\step*{}{\text{Definition  }\boldsymbol{abelian\mhyphen group}(S,\cdot,e,^{-1})\;:=group(S,\cdot,e,^{-1})\wedge commut(S,\cdot)\;:\;*_p}{}
\end{flagderiv}
\label{def:group_type}
\end{definition}

\begin{proposition}
1) In a type with a binary operation the identity element (if exists) is unique.

2) In a monoid the inverse of any element (if exists) is unique.
\label{theorem:unique}
\end{proposition}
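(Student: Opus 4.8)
The plan is to prove both parts by the classical algebraic argument, transcribed into $\lambda D$ using the equality rules (reflexivity, symmetry, transitivity, congruence, substitutivity) listed in Subsection~\ref{sect_int_equality} together with the $\exists$-elimination rule in the form $\exists_3$.

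For part~1), I would assume two identity elements $e_1,e_2:S$, each with a proof of $\mathit{identity}(S,\cdot,e_1)$ and $\mathit{identity}(S,\cdot,e_2)$. Applying the first identity hypothesis to $e_2$ gives $e_2\cdot e_1 = e_2$ (using the right-identity conjunct, via $\wedge_1$ or $\wedge_2$ on the instantiation and then the universal instantiation of $e_2$). Applying the second identity hypothesis to $e_1$ gives $e_2 \cdot e_1 = e_1$ (using the left-identity conjunct). Then $eq\mhyphen trans_3$ (or a symmetry step followed by $eq\mhyphen trans_1$) on these two equations yields $e_1 = e_2$. This packages into a term of type $\forall e_1,e_2:S.(\mathit{identity}(S,\cdot,e_1)\Rightarrow \mathit{identity}(S,\cdot,e_2)\Rightarrow e_1=e_2)$, abstracting over the declared variables and assumptions along the flag pole.

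For part~2), I work inside a monoid $\mathit{monoid}(S,\cdot,e)$, so I have associativity and the identity laws available via $\wedge_1,\wedge_2$. Fix $x:S$ and assume $y_1,y_2:S$ with $\mathit{inverse}(S,\cdot,e,x,y_1)$ and $\mathit{inverse}(S,\cdot,e,x,y_2)$; unpacking the conjunctions gives $x\cdot y_1 = e$, $y_1\cdot x = e$, $x\cdot y_2 = e$, $y_2\cdot x = e$. The standard computation is $y_1 = y_1\cdot e = y_1\cdot(x\cdot y_2) = (y_1\cdot x)\cdot y_2 = e\cdot y_2 = y_2$. Each link is an equality obtained as follows: $y_1 = y_1\cdot e$ from the right-identity law (with a symmetry step); $y_1\cdot e = y_1\cdot(x\cdot y_2)$ by $eq\mhyphen cong_1$ applied to the function $\lambda t:S.\,y_1\cdot t$ and the equality $e = x\cdot y_2$ (again a symmetry of $x\cdot y_2 = e$); the associativity step is a direct instantiation of $\mathit{assoc}$; $(y_1\cdot x)\cdot y_2 = e\cdot y_2$ by $eq\mhyphen cong_1$ applied to $\lambda t:S.\,t\cdot y_2$ and $y_1\cdot x = e$; and $e\cdot y_2 = y_2$ by the left-identity law. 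Chaining these five equalities with repeated $eq\mhyphen trans_1$ gives $y_1 = y_2$, which abstracts to the desired uniqueness statement.

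The only mildly delicate point—and the "main obstacle," such as it is—is the bookkeeping: choosing the right variant among $eq\mhyphen trans_1,eq\mhyphen trans_2,eq\mhyphen trans_3$ and $eq\mhyphen cong_1,eq\mhyphen cong_2$ so that the orientations of the intermediate equalities match up without inserting stray $eq\mhyphen sym$ steps, and correctly extracting the two identity conjuncts and the semigroup/identity components of $\mathit{monoid}$ with $\wedge_1,\wedge_2$. Getting the congruence arguments right means being careful that the supplied function is literally a $\lambda$-abstraction $\lambda t:S.\,y_1\cdot t$ (resp.\ $\lambda t:S.\,t\cdot y_2$) so that the application $\beta$-reduces to the intended terms; in $\lambda D$ this is automatic up to definitional equality, so no extra work is needed there. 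There is no conceptual difficulty: both statements are immediate once the equality-reasoning combinators are in place, which is exactly why they are stated as a single proposition here.
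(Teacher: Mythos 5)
Your proposal is correct and follows essentially the same route as the paper: part 1) is the two-line argument producing $e_2\cdot e_1=e_2$ and $e_2\cdot e_1=e_1$ from the two identity hypotheses and closing with $eq\mhyphen trans_3$, and part 2) is the standard chain $y_1=y_1\cdot e=y_1\cdot(x\cdot y_2)=(y_1\cdot x)\cdot y_2=e\cdot y_2=y_2$, which is the mirror image of the paper's chain $x=e\cdot x=(y\cdot b)\cdot x=y\cdot(b\cdot x)=y\cdot e=y$. The only differences are cosmetic (orientation of the sandwich and hence which $eq\mhyphen cong_i$/$eq\mhyphen trans_i$ variants get used).
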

\begin{proof}
These are sketches of the proofs.

1)
\setlength{\derivskip}{4pt}
\begin{flagderiv}
\introduce*{}{S: *_s\;|\;\cdot: S\rightarrow S\rightarrow S}{}
\introduce*{}{e,d:S\;|\;u:identity(S,\cdot,e)\;|\;v:identity(S,\cdot,d)}{}
\skipsteps*{\dots}{}{}
\step*{}{\boldsymbol{term_{\ref{theorem:unique}.1}}(S,\cdot,e,d,u,v):=\ldots\;:\;\boldsymbol{e=d}}{}
\end{flagderiv}

2) 
\vspace{-0.2cm}
\begin{flagderiv}
\introduce*{}{S: *_s\;|\;\cdot: S\rightarrow S\rightarrow S\;|\;e:S\;|\;u: monoid(S,\cdot,e)}{}

\introduce*{}{b,x,y:S\;|\;v:inverse (S,\cdot,e, b, x)\;|\;w:inverse(S,\cdot, e, b, y)}{}

\skipsteps*{\dots}{}{} \step*{}{\boldsymbol{term_{\ref{theorem:unique}.2}}(S,\cdot,e,u,b,x,y, v,w):=\ldots\;:\;\boldsymbol{x=y}}{}
\end{flagderiv}

The full proofs are in the Appendix, pg. 38.
\end{proof}

\begin{example} Suppose $M$ is a type.

1) The power set $ps(M)$ with operation of intersection is a commutative monoid.

2) The power set $ps(M)$ with operation of union is a commutative monoid.
\label{example:monoid}
\end{example}
\begin{proof}
These are sketches of the proofs.

\begin{flagderiv}
\introduce*{}{M: *_s}{}
\step*{}{S:=ps(M)\;:\;\square}{}

\step*{}{\cap:=\lambda X,Y:S.X\cap Y\;:\; S\rightarrow S\rightarrow S}{}

\step*{}{\cup:=\lambda X,Y:S.X\cup Y\;:\;S\rightarrow S\rightarrow S}{}

\step*{}{E:=full\mhyphen set(M)\;:\;S}{}
\step*{}{e:=\varnothing\;:\;S}{}
\skipsteps*{\dots}{}{}
\step*{}{ \boldsymbol{ term_{ \ref{example:monoid}.1}}(M):=\ldots\;:\;\boldsymbol{monoid(S,\cap,E)\wedge commut(S,\cap)}}{}
\step*{}{ \boldsymbol{ term_{ \ref{example:monoid}.2}}(M):=\ldots\;:\;\boldsymbol{monoid(S,\cup,e)\wedge commut(S,\cup)}}{}
\end{flagderiv}

The full proofs are in the Appendix, pg. 38.
\end{proof}

\begin{definition} 
Here we define the identity function on a type $M$ and composition of functions:
\begin{flagderiv}
\introduce*{}{M: *_s}{}
\step*{}{\boldsymbol{id_M}:=\lambda x:M.x\;:\;M\rightarrow M}{\textbf{Identity function}}
\step*{}{\boldsymbol{\circ}:=\lambda f,g:M\rightarrow M.\lambda x:M.f(gx)\;:\;(M\rightarrow M)\rightarrow  M\rightarrow M}{\textbf{Operation  of composition}}
\end{flagderiv}
\label{def:composition}
\end{definition}

\begin{example} Suppose $M$ is a type.
Then $M\rightarrow M$ with operation of composition is a monoid.
\label{example: monoid_functions}
\end{example}
\begin{proof}
This is a sketch of the proof: 

\begin{flagderiv}
\introduce*{}{M: *_s}{}
\skipsteps*{\dots}{}{}
\step*{}{\boldsymbol{ term_{\ref{example: monoid_functions}}}(M):=\ldots
\;:\;\boldsymbol{monoid(M\rightarrow M,\circ,id_M)}}{}
\end{flagderiv}

The full proof is in the Appendix, pg. 42.
\end{proof}

\begin{example}
The type $\mathbb{Z}$ of all integers with operation of addition is an abelian group.
\label{example:Z}
\end{example}
\begin{proof}
In \cite{Ned14} the type $\mathbb{Z}$ and operations of addition and multiplication on $\mathbb{Z}$ are introduced axiomatically in $\lambda D$; also the integer arithmetic is developed in $\lambda D$. Using that it is easy to prove that $\mathbb{Z}$ with operation of addition is an abelian group, where 0 is the identity and for any $x$, $-x$ is regarded as its inverse. We skip the formal proof.
\end{proof}

\bigskip
\subsection{Group as a Set}
\label{section:group_subset}

\begin{definition}
This definition is for a group when it is a subset of a type. We use the formalization of a function on a subset from \cite{Ned14}.

\begin{flagderiv}
\introduce*{}{S: *_s\;|\;
G:ps(S)}{}
\introduce*{}{inv:\Pi x:S.((x\varepsilon G)\rightarrow S)}{}

\step*{}{\text{Definition  }\boldsymbol{closure}_1(S,G,inv):=\forall x:S.\Pi p: x\varepsilon G.((inv\; xp)\,\varepsilon \;G)\;:\;*_p}{}

\done
\introduce*{}{mult:\Pi x:S.[(x\varepsilon G)\rightarrow \Pi y:S.((y\varepsilon G)\rightarrow S)]}{}

\step*{}{\text{Definition  }\boldsymbol{closure}_2(S,G,mult):=\forall x:S.\Pi p: x\varepsilon G.\forall y:S.\Pi q: y\varepsilon G.((mult\; xpyq)\,\varepsilon \;G)\;:\;*_p}{}
\step*{}{\text{Definition  }\boldsymbol{assoc}(S,G,mult):=\forall x:S.\Pi p:x\varepsilon G.\forall y:S.\Pi q:y\varepsilon G.\Pi u: (mult\; xpyq)\,\varepsilon G.
\\\quad\quad
\forall z:S.\Pi r:z\varepsilon G.
\Pi v: (mult\; yqzr)\,\varepsilon G.\;
mult(mult\; xpyq)uzr=mult\;xp(mult\; yqzr)v
\;:\;*_p}{}
\step*{}{\text{Definition  }\boldsymbol{semi\mhyphen group}(S,G,mult):=
consistent_2(S,S,G,mult)
\\\quad\quad
\wedge\; closure_2(S,G,mult)
\wedge assoc(S,G,mult)\;:\;*_p}{}

\introduce*{}{e:S}{}
\step*{}{\text{Definition  }\boldsymbol{identity}(S,G,mult,e):=e\varepsilon G
\\\quad\quad
\wedge\forall x:S.\Pi p: x\varepsilon G.\Pi q: e\varepsilon G.(mult\; xpeq=x\wedge mult\; eqxp=x)\;:\;*_p}{}
\step*{}{\text{Definition  }\boldsymbol{monoid}(S,G,mult,e):=semi\mhyphen group(S,G,mult)\wedge identity(S,G,mult,e)\;:\;*_p}{}

\introduce*{}{inv:\Pi x:S.((x\varepsilon G)\rightarrow S)}{}
\step*{}{\text{Definition  }\boldsymbol{inverse_1}(S,G,mult,e,inv):=\forall x:S.\Pi p: x\varepsilon G.\Pi q: (inv\, xp)\;\varepsilon \;G.
[mult\; xp(inv\,xp)q=e
\\\quad\quad
\wedge\; mult\; (inv\,xp)qxp=e]
\;:\;*_p}{}

\step*{}{\text{Definition  }\boldsymbol{inverse\mhyphen prop}(S,G,mult,e,inv):=
consistent_1(S,S,G,inv)
\wedge closure_1(S,G,inv)
\\\quad\quad
\wedge \;inverse_1(S,G,mult,e, inv)\;:\;*_p}{}

\step*{}{\text{Definition  }\boldsymbol{group}(S,G,mult,e,inv):
\\\quad\quad=
monoid(S,G,mult,e)
\wedge inverse\mhyphen prop(S,G,mult,e,inv)
\;:\;*_p}{}

\end{flagderiv}
\label{def:group_subset_0}
\end{definition}

The conditions $consistent_1$ and $consistent_2$ were defined in Definition \ref{def:preliminary}. Here the condition $consistent_1$ is added for the operation $inv$ of taking inverse; this condition states that the value of 
$inv$ on any element $x$ from $G$ depends only on the existence of proof of $(x\varepsilon G)$, not on its content. The similar condition $consistent_2$ is added for the operation $mult$ of multiplication on $G$. These are particular cases of \textit{proof irrelevance}. As noted in \cite{Ned14}, proof irrelevance is desirable often but not always, therefore it is not added to $\lambda D$ as a general principle.

Definition \ref{def:group_subset_0} is more general than Definition \ref{def:group_type} but it makes proofs quite tedious. Instead we use the following more convenient definition, when a group is a set but the operations of multiplication and taking inverse are defined on the entire type $S$, with group axioms specified only for the set. 
We will show that the new definition covers the cases of the previous two definitions.

\bigskip
\begin{definition}
\textbf{Main Definition of Group}.

The following diagram defines that a subset $G$ of a type $S$ is a group.

\setlength{\derivskip}{4pt}
\begin{flagderiv}
\introduce*{}{S: *_s}{}

\introduce*{}{^{-1}:S\rightarrow S}{}
\step*{}{\text{Notation } \boldsymbol{x^{-1}}\;\; for\; ^{-1}x}{}

\introduce*{}{G:ps(S)}{}

\step*{}{\text{Definition  }\boldsymbol{Closure_1}(S,G,^{-1}):=\forall x:S.(x\varepsilon G\Rightarrow x^{-1}\varepsilon G)\;:\;*_p}{}
\done[2]

\introduce*{}{\cdot: S\rightarrow S\rightarrow S}{}

\step*{}{\text{Notation } \boldsymbol{x\cdot y}\;\; for\; \cdot xy}{}

\introduce*{}{e,x,y:S}{}

\step*{}{\text{Definition  }\boldsymbol{Inverse}_0(S,\cdot,e,x,y):=(x\cdot y=e\wedge y\cdot x=e)\;:\;*_p}{}
\done

\introduce*{}{G:ps(S)}{}

\step*{}{\text{Definition  }\boldsymbol{Closure_2}(S,G,\cdot):=\forall x:S.[x\varepsilon G\Rightarrow \forall y:S.(y\varepsilon G\Rightarrow (x\cdot y)\;\varepsilon\; G)]\;:\;*_p}{}

\step*{}{\text{Definition  }\boldsymbol{Assoc}(S,G,\cdot):=\forall x:S.[x\varepsilon G\Rightarrow \forall y:S.[y\varepsilon G
\Rightarrow \forall z:S.(z\varepsilon G\Rightarrow(x\cdot y)\cdot z=x\cdot (y\cdot z))]]\;:\;*_p}{}

\step*{}{\text{Definition  }\boldsymbol{Commut}(S,G,\cdot):=\forall x:S.[x\varepsilon G\Rightarrow \forall y:S.(y\varepsilon G\Rightarrow x\cdot y=y\cdot x)]\;:\;*_p}{}

\step*{}{\text{Definition  }\boldsymbol{Semi\mhyphen group}(S,G,\cdot):=Closure_2(S,G,\cdot)\wedge Assoc(S,G,\cdot)\;:\;*_p}{}

\introduce*{}{e:S}{}
\step*{}{\text{Definition  }\boldsymbol{Identity}(S,G,\cdot,e):=e\varepsilon G\wedge\forall x:S.[x\varepsilon G\Rightarrow (x\cdot e=x\wedge e\cdot x=x)]\;:\;*_p}{}
\step*{}{\text{Definition  }\boldsymbol{Monoid}(S,G,\cdot,e):=Semi\mhyphen group(S,G,\cdot)\wedge Identity(S,G,\cdot,e)\;:\;*_p}{}

\introduce*{}{^{-1}:S\rightarrow S}{}
\step*{}{\text{Definition  }\boldsymbol{Inverse}(S,G,\cdot,e,^{-1}):=\forall x:S.(x\varepsilon G\Rightarrow Inverse_0(S,\cdot,e,x,x^{-1}))
\;:\;*_p}{}
\step*{}{\text{Definition  }\boldsymbol{Inverse\mhyphen prop}(S,G,\cdot,e,^{-1}):=Closure_1(S,G,^{-1})\wedge Inverse(S,G,\cdot,e,^{-1})
\;:\;*_p}{}

\step*{}{\text{Definition  }\boldsymbol{Group}(S,G,\cdot,e,^{-1}):=Monoid(S,G,\cdot,e)\wedge 
Inverse\mhyphen prop(S,G,\cdot,e,^{-1})\;:\;*_p}{}

\step*{}{\text{Definition  }\boldsymbol{Abelian\mhyphen group}(S,G,\cdot,e,^{-1}):=Group(S,G,\cdot,e,^{-1})\wedge Commut(S,G,\cdot)\;:\;*_p}{}
\end{flagderiv}
\label{def:group_subset}
\end{definition}
\medskip

We use the usual shorthand for multiplication: $x\cdot y\cdot z$ means $(x\cdot y)\cdot z$.

\begin{example}
Suppose a type $S$ is a monoid. Then the set of all its invertible elements is a group.
\label{example:invert-monoid}
\end{example}

\begin{proof}
This is a sketch of the proof.
\setlength{\derivskip}{4pt}
\begin{flagderiv}
\introduce*{}{S: *_s\;|\;\cdot: S\rightarrow S\rightarrow S\;|\;e:S}{}

\step*{}{\text{Definition }\boldsymbol{Inv\mhyphen set}(S,\cdot,e):=\{x:S\;|\;invertible(S,\cdot,e,x)\}\;:\;ps(S)}{}
\step*{}{\text{Notation }G:=Inv\mhyphen set(S,\cdot,e)\;:\;ps(S)}{}

\assume*{}{u:monoid(S,\cdot,e)}{}

\skipsteps*{\dots}{}{}

\step*{}{\text{Definition }\boldsymbol{Inv}(S,\cdot,e,u):=
\ldots
\;:\;S\rightarrow S}{}
\step*{}{\text{Notation } ^{-1} \text{ for }Inv(S,\cdot,e,u)}{}

\skipsteps*{\dots}{}{}
\step*{}{\boldsymbol{term_{\ref{example:invert-monoid}}}(S,\cdot,e,u):=
\ldots
\;:\;Group(S,G,\cdot,e,^{-1})}{}
\end{flagderiv}

The full proof is in the Appendix, pg. 42.
\end{proof}

\begin{lemma} Definition \ref{def:group_type} is a particular case of  Definition \ref {def:group_subset}. 
\label{lemma:definitions}
\end{lemma}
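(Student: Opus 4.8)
The plan is to show that the ``group as a type'' notion of Definition~\ref{def:group_type} is subsumed by the ``group as a set'' notion of Definition~\ref{def:group_subset} by instantiating the subset $G$ with the full set $full\mhyphen set(S)$. Concretely, given $S:*_s$, $\cdot:S\rightarrow S\rightarrow S$, $e:S$, and ${}^{-1}:S\rightarrow S$, I would take $G:=full\mhyphen set(S)$ and prove the implication
\[
group(S,\cdot,e,{}^{-1})\ \Rightarrow\ Group(S,full\mhyphen set(S),\cdot,e,{}^{-1}).
\]
The resulting proof term would be recorded as $\boldsymbol{term_{\ref{lemma:definitions}}}(S,\cdot,e,{}^{-1})$, of type
\[
\forall \ldots\ group(S,\cdot,e,{}^{-1})\Rightarrow Group(S,full\mhyphen set(S),\cdot,e,{}^{-1}).
\]

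The key observation that makes this work is that $x\,\varepsilon\, full\mhyphen set(S)$ unfolds to $\neg\bot$, which is inhabited by $\lambda h:\bot.h$ (or any proof), so every hypothesis of the form ``$x\varepsilon G$'' appearing in the unfolding of $Group$ is trivially dischargeable, and every conclusion of the form ``$(x\cdot y)\varepsilon G$'' or ``$e\varepsilon G$'' is trivially provable. Thus I would proceed componentwise through the conjuncts of $Group(S,G,\cdot,e,{}^{-1})=Monoid(S,G,\cdot,e)\wedge Inverse\mhyphen prop(S,G,\cdot,e,{}^{-1})$: (i) $Closure_2$ and $Closure_1$ hold because membership in the full set is vacuous; (ii) $Assoc(S,G,\cdot)$ follows from $assoc(S,\cdot)=\wedge_1(\wedge_1 u)$ by stripping off the (inhabited) membership premises and applying associativity at the relevant triple; (iii) $Identity(S,G,\cdot,e)$ has first conjunct $e\varepsilon G$ (trivial) and second conjunct obtained from $identity(S,\cdot,e)$; (iv) $Inverse(S,G,\cdot,e,{}^{-1})$ follows from the conjunct $\forall x:S.\,inverse(S,\cdot,e,x,x^{-1})$ of $group$, again after discharging the $x\varepsilon G$ premise. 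Assembling these with the $\wedge$-introduction terms $\wedge(u,v)$ gives the full proof term.

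I expect the proof to be essentially routine term-juggling with no real mathematical obstacle; the main nuisance will be bookkeeping. In particular one must be careful that the definitions $Monoid$, $Semi\mhyphen group$, etc.\ in Definition~\ref{def:group_subset} are nested conjunctions, so the extracted term is a somewhat deep tree of $\wedge$-introductions, and the quantifier structure in $Assoc$, $Identity$, and $Inverse$ interleaves $\forall x:S$ with implications $x\varepsilon G\Rightarrow\cdots$, each of which needs an explicit $\lambda$-abstraction in the proof term even though the body ignores the membership proof. A secondary point is the matching of $Inverse_0$ (from Definition~\ref{def:group_subset}) with $inverse$ (from Definition~\ref{def:group_type}): these are definitionally the same predicate $(x\cdot y=e\wedge y\cdot x=e)$, so no conversion work is needed there, but one should note it explicitly. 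No classical logic or iota descriptor is required — unlike Theorems~\ref{theorem:extend_function_1} and~\ref{theorem:extend_function_2}, this reduction is purely constructive, since restricting a globally-defined operation to the full set changes nothing. The full formal proof would be deferred to the Appendix, with only this sketch in the main text.
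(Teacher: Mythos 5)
Your proposal matches the paper's proof exactly: the Appendix proof also sets $G:=full\mhyphen set(S)$, uses $a_5:=\lambda v:\bot.v\;:\;\neg\bot$ to discharge every membership hypothesis and conclusion, and assembles $Assoc$, $Identity$, $Closure_1$, $Closure_2$, and $Inverse$ componentwise from the conjuncts of $group(S,\cdot,e,^{-1})$. The approach and all key observations (including that no classical logic is needed) are the same.
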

\begin{proof}
This is a sketch of the proof.

\begin{flagderiv}
\introduce*{}{S: *_s\;|\;\;\cdot: S\rightarrow S\rightarrow S\;|\;e:S\;|\;^{-1}:S\rightarrow S\;|\;u:group(S,\cdot,e,^{-1})}{}
\step*{}{G:=full\mhyphen set(S)\;:\;ps(S)}{}
\skipsteps*{\dots}{}{}
\step*{}{\boldsymbol {term_{\ref {lemma:definitions}}}(S,\cdot,e,^{-1},u):=\ldots\;:\;Group(S,G,\cdot,e,^{-1})}{}
\end{flagderiv}

The full proof is in the Appendix, pg. 45.
\end{proof}

\begin{lemma} Definition \ref{def:group_subset_0} reduces to Definition \ref {def:group_subset}: 
if subset $G$ of type $S$ is a group in terms of Definition \ref{def:group_subset_0}, then the group operations of $G$ can be extended to $S$ and $G$ becomes a group in terms of Definition \ref{def:group_subset}. 
\label{lemma:two-definitions}
\end{lemma}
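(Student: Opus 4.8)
The plan is to use the two extension theorems (Theorems~\ref{theorem:extend_function_1} and~\ref{theorem:extend_function_2}) to replace the partial operations $mult$ and $inv$ of Definition~\ref{def:group_subset_0} by total operations on $S$, and then to verify, clause by clause, that these extended operations satisfy Definition~\ref{def:group_subset}. Concretely, I would begin by assuming $u:group(S,G,mult,e,inv)$ in the sense of Definition~\ref{def:group_subset_0} and unpacking it with the $\wedge$-elimination rules, so as to have separately on hand: $consistent_2(S,S,G,mult)$, $closure_2(S,G,mult)$, $assoc(S,G,mult)$, the fact $e\varepsilon G$ together with the two identity laws, and $consistent_1(S,S,G,inv)$, $closure_1(S,G,inv)$, $inverse_1(S,G,mult,e,inv)$. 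Since $e\varepsilon G$ yields $e:S$, the element $b:=e$ can serve as the base point in both extension theorems.

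Next I would apply Theorem~\ref{theorem:extend_function_2} to $mult$ (with $T:=S$, $b:=e$, and the extracted proof of $consistent_2$) to obtain a total operation $\cdot:=Ext_2(S,S,G,mult,e,\ldots):S\rightarrow S\rightarrow S$ together with a proof of $Ext\mhyphen prop_2(S,S,G,mult,e,\cdot)$, and apply Theorem~\ref{theorem:extend_function_1} to $inv$ to obtain ${}^{-1}:=Ext_1(S,S,G,inv,e,\ldots):S\rightarrow S$ together with a proof of $Ext\mhyphen prop_1(S,S,G,inv,e,{}^{-1})$. The first conjuncts of these two predicates give exactly what the rest of the argument needs: for $x,y\varepsilon G$ with witnesses $p,q$ one has $x\cdot y=mult\;x\,p\,y\,q$, and for $x\varepsilon G$ with witness $p$ one has $x^{-1}=inv\;x\,p$.

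It then remains to establish $Group(S,G,\cdot,e,{}^{-1})$ in the sense of Definition~\ref{def:group_subset}, i.e.\ $Closure_2$, $Assoc$, $Identity$, $Closure_1$, and $Inverse$. Each follows the same pattern: rewrite every occurrence of the extended operation as the corresponding application of the partial operation using the equalities above (transporting memberships and equalities along them with $eq\mhyphen subs$, $eq\mhyphen cong$, $eq\mhyphen trans$), and then invoke the matching clause of Definition~\ref{def:group_subset_0}. For instance, $Closure_2$: for $x,y\varepsilon G$, $x\cdot y=mult\;x\,p\,y\,q\,\varepsilon\,G$ by $closure_2$, and substitutivity transports this back to $x\cdot y$. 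For $Identity$: $e\varepsilon G$ is already in hand, and for $x\varepsilon G$ one gets $x\cdot e=mult\;x\,p\,e\,q=x$ and $e\cdot x=mult\;e\,q\,x\,p=x$ from the old identity laws. For $Closure_1$ and $Inverse$: for $x\varepsilon G$, $x^{-1}=inv\;x\,p\,\varepsilon\,G$ by $closure_1$, whence $x\cdot x^{-1}=mult\;x\,p\,(inv\;x\,p)\,q=e$ and $x^{-1}\cdot x=mult\;(inv\;x\,p)\,q\,x\,p=e$ by $inverse_1$, which is precisely $Inverse_0(S,\cdot,e,x,x^{-1})$.

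I expect the associativity clause to be the main obstacle, since it is the only place where nested products occur. Proving $(x\cdot y)\cdot z=x\cdot(y\cdot z)$ for $x,y,z\varepsilon G$ requires first producing membership proofs for the intermediate products $x\cdot y$ and $y\cdot z$ (from $closure_2$ together with the rewrites above), then expressing $(x\cdot y)\cdot z$ as $mult\;(mult\;x\,p\,y\,q)\,r\,z\,s$ and $x\cdot(y\cdot z)$ as $mult\;x\,p\,(mult\;y\,q\,z\,s')\,r'$ for suitable witnesses, and finally appealing to $assoc(S,G,mult)$. The delicate point is that the membership witnesses one happens to obtain for $x\cdot y$ and $y\cdot z$ need not be the ones that $assoc(S,G,mult)$ expects, so $consistent_2$ must be used to bridge the gap. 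Threading these proof arguments correctly, with repeated applications of $eq\mhyphen subs$, $eq\mhyphen cong$ and $eq\mhyphen trans$, is the bulk of the formal work; conceptually, everything reduces to the single fact that the extensions agree with $mult$ and $inv$ on $G$, which is the content of the $Ext\mhyphen prop$ predicates, so no genuinely new idea is needed beyond the two extension theorems.
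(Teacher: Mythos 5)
Your proposal matches the paper's proof essentially step for step: the paper likewise unpacks $group(S,G,mult,e,inv)$, takes $b:=e$ as the default value, forms $\cdot:=Ext_2(S,S,G,mult,e,\ldots)$ and $^{-1}:=Ext_1(S,S,G,inv,e,\ldots)$, and then verifies each clause of Definition~\ref{def:group_subset} by rewriting the extended operations back to $mult$ and $inv$ via the $Ext\mhyphen prop$ equalities. The one small inaccuracy is your worry about needing $consistent_2$ to reconcile membership witnesses in the associativity step: since both $Ext\mhyphen prop_2$ and the $assoc$ clause of Definition~\ref{def:group_subset_0} are $\Pi$-quantified over \emph{all} membership proofs, one simply instantiates both with the witness produced by $closure_2$, so no bridging is required — but this does not affect the correctness of your plan.
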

\begin{proof}
The statement seems obvious (when using Theorems \ref{theorem:extend_function_1} 
 and \ref{theorem:extend_function_2}) but the formal proof is quite long.
This is a sketch of the proof:
\setlength{\derivskip}{4pt}
\begin{flagderiv}
\introduce*{}{S: *_s\;|\;G:ps(S)\;|\;mult:\Pi x:S.[(x\varepsilon G)\rightarrow \Pi y:S.((y\varepsilon G)\rightarrow S)]\;|\;inv:\Pi x:S.((x\varepsilon G)\rightarrow S)}{}
\assume*{}{e:S\;|\;u:group(S,G,mult,e,inv)}{}
\skipsteps*{\dots}{}

\step*{}{c_1:=\ldots
\;:\; consistent_2 (S,S,G,mult)}{}
\step*{}{c_2:=\ldots
\;:\; consistent_1( S,S,G,inv)}{}

\skipsteps*{\dots}{}
\step*{}{\text{Definition }\; ^{-1}:=
Ext_1(S,S,G,inv,e,c_2)\;:\;S\rightarrow S}{}
\step*{}{\text{Definition }\; \cdot:=
Ext_2(S,S,G,mult,e,c_1)\;:\;S\rightarrow S \rightarrow S}{}
\skipsteps*{\dots}{}
\step*{}{c_3:=\ldots\;:\;Group(S,G,\cdot,e,^{-1})}{}
\end{flagderiv}

The full proof is in the Appendix, pg. 46.
\end{proof}

According to Lemmas \ref{lemma:definitions} and \ref{lemma:two-definitions}, we do not need Definitions \ref{def:group_subset_0} and \ref {def:group_type}. In the rest of the paper we will use only Definition \ref  {def:group_subset}, the Main Definition of Group.

\subsection{Example: Group of Permutations}
\begin{definition}
We use the definition of bijection from \cite{Ned14}.

\setlength{\derivskip}{4pt}
\begin{flagderiv}
\introduce*{}{S,T: *_s\;|\;f: S\rightarrow T}{}
\step*{}{\text{Definition  }\boldsymbol{inj}(S,T,f):=\forall x_1,x_2:S.(fx_1=fx_2\;\Rightarrow\; x_1=x_2)\;:\;*_p}{\textbf{Injective function}}

\step*{}{\text{Definition  }\boldsymbol{surj}(S,T,f):=\forall y:T.\exists x:S.(fx=y)\;:\;*_p}{\textbf{Surjective function}}

\step*{}{\text{Definition  }\boldsymbol{bij}(S,T,f):=
inj(S,T,f)\wedge surj(S,T,f)\;:\;*_p}{\textbf{Bijective function}}
\end{flagderiv}
\label{def:bijection}
\end{definition}

\begin{definition}
Here we define \textbf{permutation} on a type $M$ as a bijection of $M$ to itself.

\setlength{\derivskip}{4pt}
\begin{flagderiv}
\introduce*{}{M: *_s}{}
\introduce*{}{f: M\rightarrow M}{}
\step*{}{\text{Definition  }\boldsymbol{permutation}(M,f):=bij(M,M,f)\;:\;*_p}{\textbf{Permutation on M}}
\done
\step*{}{\text{Definition  }\boldsymbol{Perm(M)}:=\{f: M\rightarrow M\;|\; permutation(M,f)\}
\;:\;ps(M\rightarrow M)}{}
\end{flagderiv}
\label{def:permutation}
\end{definition}

\begin{lemma}
For each permutation $f$ on the type $M$ we construct its inverse $\textsf{invrs(M,f)}$. This is a sketch of the construction.
\setlength{\derivskip}{4pt}
\begin{flagderiv}
\introduce*{}{M: *_s\;|\;f: M\rightarrow M\;|\;u:f\varepsilon Perm(M)}{}
\skipsteps*{\dots}{}{}

\step*{}{\boldsymbol{ invrs}(M ,f, u):=\ldots\;:\;M\rightarrow M\hspace{2cm} \textbf{Inverse of permutation f}}{}

\step*{}{\text{Notation }\; g:=invrs(M ,f, u)\;:\;M\rightarrow M}{}
\skipsteps*{\dots}{}{}

\step*{}{ \boldsymbol{term_{ \ref{def:inverse}}}(M,f,u):=\ldots
\;:\;\boldsymbol{inverse(M\rightarrow M,\circ,id_M, f,g)}}{}
\end{flagderiv}
\label{def:inverse}

The full construction is in the Appendix, pg. 48.
\end{lemma}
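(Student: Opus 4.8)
The plan is to invert $f$ pointwise using the iota descriptor. First I would unpack the hypothesis $u : f\,\varepsilon\,\mathrm{Perm}(M)$, which by Definitions~\ref{def:permutation} and~\ref{def:bijection} unfolds to $\mathrm{bij}(M,M,f)$, hence to its two components $u_1 := \wedge_1(u) : \mathrm{inj}(M,M,f)$ and $u_2 := \wedge_2(u) : \mathrm{surj}(M,M,f)$.

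For each $y : M$ put $P_y := \lambda x:M.(fx =_M y) : M\to *_p$. I would establish $\exists^1 x:M.\,P_y x$ as follows: existence is just $u_2\,y : \exists x:M.(fx = y)$; for uniqueness, given $x_1,x_2:M$ with $fx_1 = y$ and $fx_2 = y$, symmetry and transitivity of equality give $fx_1 =_M fx_2$, and then $u_1$ applied to $x_1,x_2$ delivers $x_1 =_M x_2$. Packaging these with the conjunction rule and a $\forall$-abstraction over $y$ yields a term $w : \forall y:M.(\exists^1 x:M.\,P_y x)$; write $w_y := w\,y$. Now define
\[ g := \mathsf{invrs}(M,f,u) := \lambda y:M.\,\iota(M,P_y,w_y) \;:\; M\to M. \]

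It remains to produce a proof of $\mathrm{inverse}(M\rightarrow M,\circ,\mathrm{id}_M,f,g)$, i.e.\ of $f\circ g =_{M\rightarrow M}\mathrm{id}_M$ together with $g\circ f =_{M\rightarrow M}\mathrm{id}_M$. For the first equality, $\iota\mhyphen prop(M,P_y,w_y) : P_y(gy)$ is definitionally $f(gy) =_M y$; since $(f\circ g)\,y$ $\beta$-reduces to $f(gy)$ and $\mathrm{id}_M\,y$ to $y$, this gives $\forall y:M.\,((f\circ g)\,y =_M \mathrm{id}_M\,y)$, and the Extensionality Axiom for Functions converts it to $f\circ g =_{M\rightarrow M}\mathrm{id}_M$. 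For the second, fix $x:M$, instantiate the pointwise fact at $y:=fx$ to get $f(g(fx)) =_M fx$, and apply injectivity $u_1$ to $g(fx)$ and $x$ to obtain $g(fx) =_M x$, i.e.\ $(g\circ f)\,x =_M \mathrm{id}_M\,x$; function extensionality again yields $g\circ f =_{M\rightarrow M}\mathrm{id}_M$. Conjoining the two equalities gives $\boldsymbol{term_{\ref{def:inverse}}}(M,f,u)$. The bulk of the work is the equality bookkeeping ($\mathrm{eq}$-symmetry and $\mathrm{eq}$-transitivity, $\beta$-unfolding of $\circ$, and the shuttling between pointwise and intensional function equality via \textbf{ext-axiom-fun}); the one genuinely delicate point is discharging the uniqueness clause so that the iota descriptor is applicable — this is exactly where injectivity of $f$ enters, and it must be invoked with its arguments in the correct order.
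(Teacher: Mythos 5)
Your construction matches the paper's proof essentially step for step: the same predicate $P_y := \lambda x{:}M.(fx=y)$, existence from surjectivity, uniqueness from injectivity via symmetry and transitivity of equality, the iota descriptor to define $g$, $\iota\mhyphen prop$ for $f\circ g=id_M$, and injectivity applied at $y:=fx$ for $g\circ f=id_M$. The proposal is correct; the only cosmetic difference is that you invoke the function-extensionality axiom explicitly where the paper leaves that conversion implicit.
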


\begin{proposition} Suppose $f$ is an element of the monoid $(M\rightarrow M,\circ,id_M)$ from Example \ref{example: monoid_functions}. Then
\[f \text{ is invertible }\Leftrightarrow f\text{ is a permuation on }M.\]
\label{lemma:permutations}
\end{proposition}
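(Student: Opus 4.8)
The plan is to prove the biconditional in the two usual directions, unfolding the definitions of \emph{invertible} (Definition \ref{def:group_type}), \emph{inverse} (Definition \ref{def:group_type}), and \emph{permutation} (Definition \ref{def:permutation}), all relative to the monoid $(M\rightarrow M,\circ,id_M)$ established in Example \ref{example: monoid_functions}. Throughout, the key computational fact is that for $f,g:M\rightarrow M$, the statement $inverse(M\rightarrow M,\circ,id_M,f,g)$ unfolds (via the definition of $\circ$ and $id_M$ from Definition \ref{def:composition}, together with the Extensionality Axiom for Functions) to the pointwise identities $\forall x:M.\,f(gx)=x$ and $\forall x:M.\,g(fx)=x$.

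\medskip
For the direction ($\Leftarrow$), assume $f$ is a permutation, i.e. $bij(M,M,f)$, equivalently $inj(M,M,f)\wedge surj(M,M,f)$. I would invoke Lemma \ref{def:inverse}: from a proof $u:f\,\varepsilon\,Perm(M)$ it constructs $g:=invrs(M,f,u)$ together with $term_{\ref{def:inverse}}(M,f,u):inverse(M\rightarrow M,\circ,id_M,f,g)$. Hence $\exists g:M\rightarrow M.\,inverse(M\rightarrow M,\circ,id_M,f,g)$, which is exactly $invertible(M\rightarrow M,\circ,id_M,f)$ after supplying the $\exists$-introduction. So this direction is essentially immediate once Lemma \ref{def:inverse} is in hand; the only care needed is to convert $f\,\varepsilon\,Perm(M)$ to and from $bij(M,M,f)$, which holds definitionally since $Perm(M):=\{f:M\rightarrow M\mid permutation(M,f)\}$ and $permutation(M,f):=bij(M,M,f)$.

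\medskip
For the direction ($\Rightarrow$), assume $invertible(M\rightarrow M,\circ,id_M,f)$, so by $\exists$-elimination fix $g:M\rightarrow M$ with $v:inverse(M\rightarrow M,\circ,id_M,f,g)$, i.e. $f\circ g=id_M$ and $g\circ f=id_M$. I need to show $inj(M,M,f)$ and $surj(M,M,f)$. For injectivity: given $x_1,x_2:M$ with $fx_1=fx_2$, apply $g$ via $eq\mhyphen cong_1(g,\cdot)$ to get $g(fx_1)=g(fx_2)$; then rewrite both sides using (the pointwise form of) $g\circ f=id_M$ and $eq\mhyphen trans$/$eq\mhyphen subs$ to conclude $x_1=x_2$. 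For surjectivity: given $y:M$, take the witness $x:=gy$; then $fx=f(gy)$, and the pointwise form of $f\circ g=id_M$ gives $f(gy)=y$, so $\exists x:M.\,fx=y$. Conjoining these yields $bij(M,M,f)$, hence $f\,\varepsilon\,Perm(M)$.

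\medskip
The main obstacle is bureaucratic rather than mathematical: carefully passing between the ``composed function equals $id_M$'' form of $inverse$ and its pointwise consequences. Concretely, from $f\circ g=_{M\rightarrow M}id_M$ one obtains $\forall x:M.\,(f\circ g)x=_M id_M\,x$ by substitutivity ($eq\mhyphen subs_1$ with the predicate $\lambda h:M\rightarrow M.\,\forall x:M.\,hx=id_M\,x$ applied to $eq\mhyphen refl$), and then $(f\circ g)x$ $\beta$-reduces to $f(gx)$ and $id_M\,x$ to $x$; the reverse passage uses $ext\mhyphen axiom\mhyphen fun$. Managing these conversions, and threading the $\exists$-introduction/elimination (via $\exists_1$ and $\exists_3$) and the $\wedge$-introduction/elimination terms through the proof, is where essentially all the length of the formal derivation lives. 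I would structure the formal proof as: first a helper establishing the equivalence of the two forms of $inverse$, then the two implications as above, then combine them with $\wedge(\cdot,\cdot)$ into the stated bi-implication, producing $term_{\ref{lemma:permutations}}(M,f):f\text{ is invertible}\Leftrightarrow f\,\varepsilon\,Perm(M)$.
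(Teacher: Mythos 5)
Your proposal is correct and follows essentially the same route as the paper: the ($\Leftarrow$) direction invokes Lemma \ref{def:inverse} and $\exists$-introduction, and the ($\Rightarrow$) direction fixes a two-sided inverse $g$, proves injectivity by applying $g$ via $eq\mhyphen cong_1$ and rewriting with $g\circ f=id_M$, and proves surjectivity with witness $gy$. The only cosmetic difference is that the paper obtains the pointwise identities directly by $eq\mhyphen cong_1(\lambda h.hx,\cdot)$ applied to the function-level equalities rather than through a separate helper equivalence, and it never needs $ext\mhyphen axiom\mhyphen fun$ in this proof.
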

\begin{proof}
This is a sketch of the proof.

\setlength{\derivskip}{4pt}
\begin{flagderiv}
\introduce*{}{M: *_s\;|\;f: M\rightarrow M}{}

\skipsteps*{\dots}{}{}

\step*{}{ \boldsymbol{term_{ \ref{lemma:permutations}}}(M,f):=\ldots
\;:\;\boldsymbol{invertible(M\rightarrow M,\circ,id_M,f)\Leftrightarrow
f\varepsilon Perm(M)}}{}
\end{flagderiv}

The full proof is in the Appendix, pg. 49.
\end{proof}

\begin{theorem} The set of all permutations on a type $M$ with operation of composition is a group.
\label{theorem:permutations}
\end{theorem}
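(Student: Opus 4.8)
The plan is to apply the Main Definition of Group (Definition~\ref{def:group_subset}) with $S := M\rightarrow M$, the set $G := \mathrm{Perm}(M)$, the binary operation $\cdot := \circ$ (composition from Definition~\ref{def:composition}), the identity $e := id_M$, and the inverse operation $^{-1} := \lambda f:M\rightarrow M.\,\text{(inverse of $f$ when $f$ is a permutation, something arbitrary otherwise)}$. The last ingredient is the only nontrivial one: $invrs(M,f,u)$ from Lemma~\ref{def:inverse} takes a proof $u:f\,\varepsilon\,\mathrm{Perm}(M)$ as an argument, so to get a global function $S\rightarrow S$ I would invoke Theorem~\ref{theorem:extend_function_1} (extension of a function from a subset to the whole type) with $T := M\rightarrow M$, $b := id_M$, and $f := \lambda f.\lambda u.\,invrs(M,f,u)$, after first checking the hypothesis $consistent_1(S,T,G,invrs)$ --- i.e.\ that $invrs(M,f,u)$ does not depend on the particular proof $u$. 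Consistency holds because, by $term_{\ref{def:inverse}}$, $invrs(M,f,u)$ satisfies $inverse(M\rightarrow M,\circ,id_M,f,invrs(M,f,u))$, and by Proposition~\ref{theorem:unique}.2 the inverse in a monoid is unique, so any two choices of proof yield equal results. Set $^{-1} := Ext_1(\ldots)$ and record the extension property $Ext\text{-}prop_1$ for later use.

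With these data fixed, I would verify the conjuncts of $Group(S,G,\circ,id_M,{}^{-1})$ one by one. For $Monoid$: the $Semi\text{-}group$ part needs $Closure_2$ (composition of two bijections is a bijection --- injectivity and surjectivity are both easy, and surjectivity may already be available from the bijection material in \cite{Ned14}) and $Assoc$ (composition of functions is associative on all of $M\rightarrow M$, which follows from Example~\ref{example: monoid_functions} where $(M\rightarrow M,\circ,id_M)$ is shown to be a monoid; restricting to $G$ is immediate). The $Identity$ part needs $id_M\,\varepsilon\,\mathrm{Perm}(M)$, i.e.\ that $id_M$ is a bijection (trivial), together with $f\circ id_M = f$ and $id_M\circ f = f$ for $f\in G$ --- again inherited from Example~\ref{example: monoid_functions}. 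For $Inverse\text{-}prop$: $Closure_1$ says $f^{-1}\,\varepsilon\,\mathrm{Perm}(M)$ whenever $f\in G$; here I use the $Ext\text{-}prop_1$ equation $f^{-1} = invrs(M,f,p)$ for $p:f\,\varepsilon\,\mathrm{Perm}(M)$, and then note that $invrs(M,f,p)$ is a bijection --- either because it is constructed as one in Lemma~\ref{def:inverse}, or because a two-sided inverse of a function is automatically bijective. Finally $Inverse(S,G,\circ,id_M,{}^{-1})$ requires $f\circ f^{-1} = id_M$ and $f^{-1}\circ f = id_M$ for $f\in G$; rewriting $f^{-1}$ as $invrs(M,f,p)$ via $Ext\text{-}prop_1$, this is exactly $term_{\ref{def:inverse}}$.

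The main obstacle is bookkeeping around the proof-irrelevance / extension machinery rather than any genuine mathematical difficulty: one must carefully thread proofs of $f\,\varepsilon\,\mathrm{Perm}(M)$ through the statements, use $consistent_1$ to discharge the hypothesis of Theorem~\ref{theorem:extend_function_1}, and every time the symbol $^{-1}$ appears rewrite it to $invrs(M,f,p)$ using $Ext\text{-}prop_1$ (and the fact that this is legitimate independently of which proof $p$ is chosen, by $consistent_1$ again combined with $\iota$-proof-irrelevance inside $Ext_1$). The bijection facts (composition of bijections is a bijection, $id_M$ is a bijection, a two-sided inverse is a bijection) are routine and likely already formalized in \cite{Ned14}; associativity and the identity laws are inherited wholesale from Example~\ref{example: monoid_functions}. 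So the proof is essentially: (1) build $^{-1}$ via the extension theorem, checking consistency; (2) assemble the monoid axioms from Example~\ref{example: monoid_functions} plus closure of $\mathrm{Perm}(M)$ under composition; (3) assemble the inverse axioms from Lemma~\ref{def:inverse} after rewriting $^{-1}$; (4) bundle everything with repeated $\wedge$-introduction to produce $term_{\ref{theorem:permutations}}(M) : Group(M\rightarrow M,\mathrm{Perm}(M),\circ,id_M,{}^{-1})$.
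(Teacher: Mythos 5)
Your proposal is correct, but it takes a different route from the paper. The paper does not verify the group axioms for $Perm(M)$ directly: it instantiates Example~\ref{example: monoid_functions} to get the monoid $(M\rightarrow M,\circ,id_M)$, invokes Example~\ref{example:invert-monoid} (the set of invertible elements of a monoid is a group, with $^{-1}:=Inv(M\rightarrow M,\circ,id_M,\ldots)$ already built there via the iota descriptor and $Ext_1$), then uses Proposition~\ref{lemma:permutations} to show $Inv\mhyphen set(M\rightarrow M,\circ,id_M)=Perm(M)$ as sets, and finally transports the group structure along this equality with $eq\mhyphen subs_1$ (via the Extensionality Axiom). Your plan instead rebuilds the inverse operation from $invrs$ of Lemma~\ref{def:inverse} using $consistent_1$ and Theorem~\ref{theorem:extend_function_1}, and checks each conjunct of $Group(M\rightarrow M,Perm(M),\circ,id_M,{}^{-1})$ directly. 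Both are sound; the paper's factoring buys heavy reuse (the entire extension/consistency bookkeeping and the closure-under-product argument live once in Example~\ref{example:invert-monoid}) and a very short final proof, while yours is more self-contained but duplicates that machinery and additionally needs a direct proof that a composite of bijections is a bijection. One small imprecision: Lemma~\ref{def:inverse} does not establish that $invrs(M,f,u)$ is itself a bijection, only that it is a two-sided inverse of $f$; your $Closure_1$ step therefore rests on your second justification (a two-sided inverse is automatically bijective), which is exactly the forward implication of Proposition~\ref{lemma:permutations} applied to $invrs(M,f,u)$ with inverse $f$ --- so you end up needing that proposition anyway, just as the paper does.
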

\begin{proof}
This is a sketch of the proof.
\setlength{\derivskip}{4pt}
\begin{flagderiv}
\introduce*{}{M: *_s}{}
\step*{}{a_1:=term_{\ref{example: monoid_functions}}(M)\;:\;monoid(M\rightarrow M,\circ,id_M)}{}

\step*{}{^{-1}:=Inv(M\rightarrow M, \circ,id_M,a_1)\;:\;(M\rightarrow M)\rightarrow M\rightarrow M}{}

\skipsteps*{\dots}{}{}
\step*{}{ \boldsymbol{term_{\ref{theorem:permutations}}}(M):=\ldots
\;:\;\boldsymbol{ Group(M\rightarrow M,Perm(M), \circ,id_M,^{-1})}}{}
\end{flagderiv}

The full proof is in the Appendix, pg. 50.
\end{proof}

\bigskip
\subsection{Basic Properties of Groups}

\begin{lemma}
The flag diagram in this technical lemma produces proof terms that will be used in future proofs.

\setlength{\derivskip}{4pt}
\begin{flagderiv}
\introduce*{}{S: *_s\;|\;
G:ps(S)\;|\;
\cdot: S\rightarrow S\rightarrow S\;|\;e:S\;|\; ^{-1}:S\rightarrow S\;|\;u:Group(S,G,\cdot,e,^{-1})}{}
\step*{}{a_1:=\wedge_1(\wedge_1(u))
\;:\;Semi\mhyphen group(S,G,\cdot)}{}
\step*{}{a_2:=\wedge_2(\wedge_1(u))
\;:\;Identity(S,G,\cdot,e)}{}
\step*{}{a_3:=\wedge_1(\wedge_2(u))
\;:\;Closure_1(S,G,^{-1})}{}
\step*{}{a_4:=\wedge_2(\wedge_2(u))
\;:\;Inverse(S,G,\cdot,e,^{-1})}{}
\step*{}{a_5:=\wedge_1(a_1)
\;:\;Closure_2(S,G,\cdot)}{}

\step*{}{\boldsymbol{Gr_1}(S,G,\cdot,e,^{-1}, u):=
\wedge_1(a_2)\;:\;
\boldsymbol{e\varepsilon G}}{}

\step*{}{\boldsymbol{Gr_2}(S,G,\cdot,e,^{-1}, u):=\wedge_2(a_1)\;:\;\boldsymbol{Assoc}(S,G,\cdot)}{}

\introduce*{}{x:S\;|\;v:x\varepsilon G}{}

\step*{}{a_6:=\wedge_2(a_2)xv
\;:\;x\cdot e=x\wedge e\cdot x=x}{}

\step*{}{\boldsymbol{Gr_3}(S,G,\cdot,e,^{-1}, u,x,v):=a_3xv\;:\;\boldsymbol{x^{-1}\varepsilon G}}{}
\step*{}{\boldsymbol{Gr_4}(S,G,\cdot,e,^{-1},u, x,v):=\wedge_1(a_6)\;:\;\boldsymbol{x\cdot e=x}}{}
\step*{}{\boldsymbol{Gr_5}(S,G,\cdot,e,^{-1},u, x,v):=\wedge_2(a_6)\;:\;\boldsymbol{e\cdot x=x}}{}
\step*{}{\boldsymbol{Gr_6}(S,G,\cdot,e,^{-1},u, x,v):=a_4xv\;:\;\boldsymbol{Inverse}_0(S,\cdot,e,x,x^{-1})}{}
\step*{}{\boldsymbol{Gr_7}(S,G,\cdot,e,^{-1},u,x,v):=
\wedge_1(a_4xv)\;:\;\boldsymbol{x\cdot x^{-1}=e}}{}
\step*{}{\boldsymbol{Gr_8}(S,G,\cdot,e,^{-1},u, x,v):=\wedge_2(a_4xv)\;:\; \boldsymbol{x^{-1}\cdot x=e}}{}
\introduce*{}{y:S\;|\;w:y\varepsilon G}{}
\step*{}{\boldsymbol{Gr_9}(S,G,\cdot,e,^{-1},u,x, y,v,w):=a_5xvyw \;:\; \boldsymbol{(x\cdot y)\;\varepsilon G}}{}
\end{flagderiv}
\label{lemma:technical_1}
\end{lemma}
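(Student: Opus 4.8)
This lemma has no real mathematical content: each $Gr_i$ is produced by mechanically unfolding the definition of $Group$, projecting out the appropriate conjunct, and --- where a pointwise statement is wanted --- instantiating a universal quantifier at the given element and discharging the membership hypothesis by function application. So the plan is pure bookkeeping.

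First I would peel apart the nested conjunctions. Since $Group(S,G,\cdot,e,{}^{-1})$ unfolds to $Monoid(S,G,\cdot,e)\wedge Inverse\mhyphen prop(S,G,\cdot,e,{}^{-1})$, with $Monoid$ itself equal to $Semi\mhyphen group(S,G,\cdot)\wedge Identity(S,G,\cdot,e)$ and $Inverse\mhyphen prop$ equal to $Closure_1(S,G,{}^{-1})\wedge Inverse(S,G,\cdot,e,{}^{-1})$, applying the projections $\wedge_1$ and $\wedge_2$ to $u$ twice along each branch yields the intermediate terms $a_1:Semi\mhyphen group(S,G,\cdot)$, $a_2:Identity(S,G,\cdot,e)$, $a_3:Closure_1(S,G,{}^{-1})$, $a_4:Inverse(S,G,\cdot,e,{}^{-1})$. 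One more projection gives $a_5:=\wedge_1(a_1):Closure_2(S,G,\cdot)$ and $Gr_2:=\wedge_2(a_1):Assoc(S,G,\cdot)$; and since $Identity$ begins with the conjunct $e\varepsilon G$, the term $Gr_1:=\wedge_1(a_2)$ has type $e\varepsilon G$ directly.

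Next I would extract the pointwise facts under the flags $x:S$, $v:x\varepsilon G$. The second conjunct of $a_2$ is $\forall x:S.(x\varepsilon G\Rightarrow x\cdot e=x\wedge e\cdot x=x)$, so $\wedge_2(a_2)\,x\,v$ gives $a_6:x\cdot e=x\wedge e\cdot x=x$, whose two projections are $Gr_4$ and $Gr_5$. Applying the universally quantified implications $a_3$ and $a_4$ at $x$ and $v$ yields $Gr_3:=a_3\,x\,v:x^{-1}\varepsilon G$ and $Gr_6:=a_4\,x\,v:Inverse_0(S,\cdot,e,x,x^{-1})$, and the two projections of $Gr_6$ are $Gr_7:x\cdot x^{-1}=e$ and $Gr_8:x^{-1}\cdot x=e$. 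Finally, adding the flags $y:S$, $w:y\varepsilon G$ and applying the nested universal $a_5$ (which unfolds $Closure_2$) produces $Gr_9:=a_5\,x\,v\,y\,w:(x\cdot y)\,\varepsilon\, G$.

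There is no genuine obstacle; the only care needed is to track the nesting of the conjunctions so that each $\wedge_1/\wedge_2$ really lands on the claimed type, and to feed the quantifier instantiations their arguments in the right order (the element first, then its membership proof). The lemma exists purely to package these routine projections so that the later proofs can cite $Gr_1,\dots,Gr_9$ without re-deriving them each time.
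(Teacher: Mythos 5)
Your proposal is correct and matches the paper's own derivation exactly: the same double projections $\wedge_1(\wedge_1(u)),\dots,\wedge_2(\wedge_2(u))$ to obtain $a_1,\dots,a_4$, the further projection $a_5:=\wedge_1(a_1)$, and the same quantifier instantiations $a_3xv$, $a_4xv$, $\wedge_2(a_2)xv$, and $a_5xvyw$ for $Gr_3$ through $Gr_9$. Nothing to add.
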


\begin{proposition}
For any elements $x,y,z$ of a group $G$ with identity $e$:

1) $x\cdot y=z\;\Rightarrow\; y=x^{-1}\cdot z$,
\medskip

2) $y\cdot x=z\;\Rightarrow\; y=z\cdot x^{-1}$,
\medskip

3) $x\cdot y=e\;\Rightarrow\; y=x^{-1}$,
\medskip

4) $y\cdot x=e\;\Rightarrow\; y=x^{-1}$,
\medskip

5) $(x\cdot y)^{-1}=y^{-1}\cdot x^{-1}$,
\medskip

6) $(x^{-1})^{-1}=x$,
\medskip

7) $x\cdot x=x\;\Rightarrow\; x=e$,
\medskip

8) $e^{-1}=e$.
\label{theorem:axiom_corollary}
\end{proposition}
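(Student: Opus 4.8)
The plan is to prove the eight identities in sequence, using the technical proof terms $Gr_1,\dots,Gr_9$ from Lemma~\ref{lemma:technical_1} together with the equality rules (transitivity, substitutivity, congruence) from Subsection~\ref{sect_int_equality}. Each item is a separate flag derivation under the common assumptions $u:Group(S,G,\cdot,e,^{-1})$ and declarations $x,y,z:S$ with proofs $p:x\varepsilon G$, $q:y\varepsilon G$, $r:z\varepsilon G$ as needed; the resulting proof terms would be named $term_{\ref{theorem:axiom_corollary}.1},\dots,term_{\ref{theorem:axiom_corollary}.8}$.

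For item~1, assuming $w:x\cdot y=z$, I would compute $y = e\cdot y$ (by $Gr_5$ applied to $y$, using symmetry), then $e\cdot y = (x^{-1}\cdot x)\cdot y$ (by $Gr_8$ applied to $x$, via congruence in the left argument of $\cdot$), then $(x^{-1}\cdot x)\cdot y = x^{-1}\cdot(x\cdot y)$ (by associativity $Gr_2$, noting $x^{-1}\varepsilon G$ from $Gr_3$ so all three factors are in $G$), and finally $x^{-1}\cdot(x\cdot y) = x^{-1}\cdot z$ (by congruence using $w$). Chaining these with $eq\mhyphen trans_1$ gives $y = x^{-1}\cdot z$. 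Item~2 is the mirror image, using $Gr_4$, $Gr_7$, and associativity read the other way. Items~3 and~4 then follow immediately by specializing $z:=e$ in items~1 and~2 respectively and simplifying $x^{-1}\cdot e = x^{-1}$ via $Gr_4$ (resp. $e\cdot x^{-1}=x^{-1}$ via $Gr_5$), since $x^{-1}\varepsilon G$.

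For item~5, I would apply item~3 (or item~4) with the role of "$x$" played by $x\cdot y$ and the role of "$y$" played by $y^{-1}\cdot x^{-1}$: it suffices to show $(x\cdot y)\cdot(y^{-1}\cdot x^{-1}) = e$, which unfolds by associativity (all the relevant elements lie in $G$ by $Gr_3$ and $Gr_9$) to $x\cdot((y\cdot y^{-1})\cdot x^{-1}) = x\cdot(e\cdot x^{-1}) = x\cdot x^{-1} = e$, using $Gr_7$ twice and $Gr_5$. Item~6 follows from item~4 applied with "$x$" $:= x^{-1}$ and "$y$" $:= x$, since $x\cdot x^{-1}=e$ by $Gr_7$ and we read off $x = (x^{-1})^{-1}$. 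Item~7 assumes $w:x\cdot x=x$ and applies item~1 with $z:=x$ to get $x = x^{-1}\cdot x = e$ by $Gr_8$. Item~8 follows from item~3 with "$x$" $:=e$ and "$y$" $:=e$, since $e\cdot e = e$ by $Gr_4$ (and $e\varepsilon G$ by $Gr_1$).

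The main obstacle is bookkeeping rather than mathematics: every application of associativity and of the identity/inverse laws requires supplying the membership proofs ($x\varepsilon G$, $x^{-1}\varepsilon G$, $(x\cdot y)\varepsilon G$, etc.) in the right argument positions, and every equality step must be threaded through the correct variant of transitivity ($eq\mhyphen trans_1$, $_2$, or $_3$) and congruence ($eq\mhyphen cong_1$ vs. $eq\mhyphen cong_2$) depending on the orientation of the equations produced by $Gr_4$–$Gr_8$. Keeping the chains of $eq\mhyphen trans$ applications correctly oriented, and making sure the closure lemmas $Gr_3$ and $Gr_9$ are invoked before each associativity step so that the hypotheses of $Gr_2$ are met, is where the formal proof becomes long; the full derivations are deferred to the Appendix.
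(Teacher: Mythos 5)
Your proposal is correct and follows essentially the same route as the paper's Appendix proof: items 1 and 2 by multiplying the hypothesis by $x^{-1}$ and simplifying with associativity and the identity/inverse laws, items 3, 4, 7, 8 by specialization, and items 5 and 6 by exhibiting a product equal to $e$ and invoking the uniqueness consequences 3)/4). The only (immaterial) difference is that in items 5 and 6 you verify the mirror-image product ($(x\cdot y)\cdot(y^{-1}\cdot x^{-1})=e$ via item 3, and $x\cdot x^{-1}=e$ via item 4) where the paper checks $(y^{-1}\cdot x^{-1})\cdot(x\cdot y)=e$ via item 4 and $x^{-1}\cdot x=e$ via item 3; both variants go through with the same membership bookkeeping.
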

\begin{proof}
This is a sketch of the proof for all parts.
\begin{flagderiv}
\introduce*{}{S: *_s\;|\;G:ps(S)\;|\;\cdot: S\rightarrow S\rightarrow S\;|\;e:S\;|\;^{-1}:S\rightarrow S\;|\;u:Group(S,G,\cdot,e,^{-1})}{}

\introduce*{}{x:S\;|\;v:x\varepsilon G}{}

\introduce*{}{y:S\;|\;w:y\varepsilon G}{}

\introduce*{}{z:S\;|\;r:z\varepsilon G}{}

\skipsteps*{\dots}{}
\step*{}{\boldsymbol{term_{\ref{theorem:axiom_corollary}.1}}(S,G,\cdot,e,^{-1},u,
x,y,z,v,w,r):=\ldots\;:\;\boldsymbol{(x\cdot y=z\Rightarrow y=x^{-1}\cdot z)}}{}
\skipsteps*{\dots}{}
\step*{}{\boldsymbol{term_{\ref{theorem:axiom_corollary}.2}}(S,G,\cdot,e,^{-1},u,
x,y,z,v,w,r):=\ldots\;:\;\boldsymbol{(y\cdot x=z\Rightarrow y=z\cdot x^{-1})}}{}

\done

\skipsteps*{\dots}{}

\step*{}{\boldsymbol{term_{\ref{theorem:axiom_corollary}.3}}(S,G,\cdot,e,^{-1},u,
x,y,v,w):=\ldots\;:\;\boldsymbol{(x\cdot y=e\Rightarrow y=x^{-1})}}{}

\skipsteps*{\dots}{}

\step*{}{\boldsymbol{term_{\ref{theorem:axiom_corollary}.4}}(S,G,\cdot,e,^{-1},u,
x,y,v,w):=\ldots\;:\;\boldsymbol{(y\cdot x=e\Rightarrow y=x^{-1})}}{}

\skipsteps*{\dots}{}

\step*{}{\boldsymbol{ term_{\ref{theorem:axiom_corollary}.5}}(S,G,\cdot,e,^{-1},u,x,y,v,w):=
\ldots
\;:\;\boldsymbol{(x\cdot y)^{-1}=y^{-1}\cdot x^{-1}}}{}

\done

\skipsteps*{\dots}{}

\step*{}{\boldsymbol{term_{\ref{theorem:axiom_corollary}.6}}(S,G,\cdot,e,^{-1},u,x,v):=\ldots\;:\;\boldsymbol{(x^{-1})^{-1}= x}}{}

\skipsteps*{\dots}{}

\step*{}{\boldsymbol{term_{\ref{theorem:axiom_corollary}.7}}(S,G,\cdot,e,^{-1},u,x,v):=\ldots
\;:\;\boldsymbol{(x\cdot x=x\Rightarrow x=e)}}{}

\done

\skipsteps*{\dots}{}

\step*{}{\boldsymbol{ term_{ \ref{theorem:axiom_corollary}.8}}(S,G,\cdot,e, ^{-1},u):=\ldots\;:\;e^{-1}=e}{}
\end{flagderiv}

The full proof is in the Appendix, pg. 50. 
\end{proof}

\bigskip
\section{Subgroups}

\subsection{Definitions and Examples}
\begin{definition}
Here we define a subgroup $H$ of a group $G$ and a corresponding relation $R_H$ on $G$.
\begin{flagderiv}
\introduce*{}{S: *_s\;|\;
\cdot: S\rightarrow S\rightarrow S\;|\;G:ps(S)\;|\;^{-1}:S\rightarrow S}{}
\introduce*{}{H:ps(S)}{}

\step*{}{\text{Definition }\boldsymbol{R}(S,\cdot,^{-1}, H):=\lambda x,y:S.(x^{-1}\cdot y)\;\varepsilon \,H\;:\,S\rightarrow S\rightarrow *_p}{}

\step*{}{\text{Notation }
\boldsymbol{R_H}\text{ for } R(S,\cdot,^{-1},H)}{}

\introduce*{}{e:S}{}

\step*{}{\text{Definition }\boldsymbol{Subgroup}(S,G,\cdot,e,^{-1},H):=H\subseteq G\wedge
e\varepsilon H\wedge Closure_1(S,H,^{-1})\wedge Closure_2(S,H,\cdot)\;:\;*_p}{}
\step*{}{\text{Notation: }
\boldsymbol{H\leqslant G}\text{ for } Subgroup(S,G,\cdot,e,^{-1},H)}{}

\end{flagderiv}
\label{def:subgroup}
\end{definition}

Here we used the predicates \textit{Closure}$_1$ and \textit{Closure}$_2$ introduced in Definition \ref{def:group_subset}.

\begin{proposition}
1) A subgroup is a group itself.

2) If $B$ and $C$ are subgroups of a group $G$ and $C\subseteq B$, then $C$ is a subgroup of $B$.
\label{lemma:subgroup}
\end{proposition}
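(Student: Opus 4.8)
The plan is to unpack both halves of the proposition directly from the Main Definition of Group (Definition \ref{def:group_subset}) and the definition of \textit{Subgroup} (Definition \ref{def:subgroup}), relying on the technical abbreviations of Lemma \ref{lemma:technical_1} to extract the individual group axioms. For part 1), I assume $u : H\leqslant G$, i.e. a proof of $Subgroup(S,G,\cdot,e,^{-1},H)$, which by definition is a fourfold conjunction giving $H\subseteq G$, $e\varepsilon H$, $Closure_1(S,H,^{-1})$ and $Closure_2(S,H,\cdot)$. I also need a proof $w : Group(S,G,\cdot,e,^{-1})$ of the ambient group. The goal is $Group(S,H,\cdot,e,^{-1})$, which unfolds to $Monoid(S,H,\cdot,e)\wedge Inverse\mhyphen prop(S,H,\cdot,e,^{-1})$, and further to the conjunction of $Closure_2(S,H,\cdot)$, $Assoc(S,H,\cdot)$, $Identity(S,H,\cdot,e)$, $Closure_1(S,H,^{-1})$ and $Inverse(S,H,\cdot,e,^{-1})$. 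Two of these ($Closure_2$ on $H$, $Closure_1$ on $H$) are handed to us directly by $u$. The remaining three — associativity on $H$, the identity laws on $H$, and the inverse law on $H$ — follow from the corresponding statements for $G$ by restricting along $H\subseteq G$: given $x,y,z\varepsilon H$, apply $\wedge_1(u) : H\subseteq G$ to get $x,y,z\varepsilon G$, then feed these to $Gr_2$ (for $Assoc$), $Gr_4,Gr_5$ (for the identity laws), and $Gr_6$ (for $Inverse_0$), using $e\varepsilon H$ from $\wedge_1(\wedge_2(u))$ where the identity membership clause of $Identity$ is needed. Reassembling these with the appropriate $\wedge$-introductions yields $term_{\ref{lemma:subgroup}.1}$.

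For part 2), I assume proofs $u_B : B\leqslant G$, $u_C : C\leqslant G$ and $s : C\subseteq B$, and must produce $C\leqslant B$, i.e. $C\subseteq B \wedge e\varepsilon C\wedge Closure_1(S,C,^{-1})\wedge Closure_2(S,C,\cdot)$. Here $C\subseteq B$ is exactly the hypothesis $s$; and $e\varepsilon C$, $Closure_1(S,C,^{-1})$, $Closure_2(S,C,\cdot)$ are the last three conjuncts of $u_C$, copied verbatim. So part 2) is essentially a pure reshuffling of conjunctions — note that the ambient group $G$ and the intermediate subgroup $B$ play no real role beyond type-correctness of the statement; the only genuine input is $u_C$ together with $s$. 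I would phrase this as: extract $\wedge_2(\wedge_1(u_C))$, $\wedge_1(\wedge_2(u_C))$, $\wedge_2(\wedge_2(u_C))$ (matching the nesting in Definition \ref{def:subgroup}), then combine them with $s$ via $\wedge$-introduction to obtain $term_{\ref{lemma:subgroup}.2}$.

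Neither part presents a substantive mathematical obstacle; the work is bookkeeping. The one place that needs mild care is getting the conjunction-projection indices right, because $Subgroup$ is a left-nested fourfold conjunction $((H\subseteq G\wedge e\varepsilon H)\wedge Closure_1)\wedge Closure_2$ (or however the paper's $\wedge$ associates), and similarly $Group$ unfolds through $Monoid$ and $Semi\mhyphen group$ into a deeply nested tree; one must track exactly which $\wedge_1/\wedge_2$ composition lands on each leaf. Lemma \ref{lemma:technical_1} already did this bookkeeping for $Group$, so for part 1) I will reuse $Gr_2$ through $Gr_8$ rather than re-deriving projections, feeding them $x,y,z\varepsilon G$ obtained by applying $H\subseteq G$ to the $H$-membership hypotheses. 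The structure of both formal proofs is a single flag block: declare $S,\cdot,G,e,^{-1}$ and the relevant hypotheses, then a short sequence of $\wedge$-eliminations and $\wedge$-introductions, with part 1) additionally interleaving applications of the $Gr_i$ abbreviations under $x,y,z$-flags. I expect the full Appendix proof to be only a page or so, dominated by the $Identity$ and $Inverse$ reassembly in part 1).
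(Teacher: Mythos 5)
Your proposal matches the paper's proof essentially step for step: part 1) is proved by projecting the four conjuncts out of $H\leqslant G$, pushing $H$-membership to $G$-membership along $H\subseteq G$, and invoking $Gr_2$, $Gr_4$, $Gr_5$, $Gr_6$ to restrict associativity, the identity laws, and the inverse law to $H$; part 2) is the same pure reshuffling of conjuncts from $C\leqslant G$ together with the hypothesis $C\subseteq B$. The only discrepancies are the exact $\wedge_1/\wedge_2$ projection paths (e.g.\ $e\varepsilon H$ is $\wedge_2(\wedge_1(\wedge_1(v)))$ in the paper, not $\wedge_1(\wedge_2(u))$), which you already flagged as bookkeeping to be checked.
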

\begin{proof}
These are sketches of the proofs.

1)
\begin{flagderiv}
\introduce*{}{S: *_s\;|\;G:ps(S)\;|\;\cdot: S\rightarrow S\rightarrow S\;|\;e:S\;|\;^{-1}:S\rightarrow S\;|\;u:Group(S,G,\cdot,e,^{-1})}{}
\introduce*{}{H:ps(S)\;|\;v:H\leqslant G}{}
\skipsteps*{\dots}{}{}
\step*{}{\boldsymbol {term_{\ref{lemma:subgroup}.1}}(S,G,\cdot,e,^{-1},u,H,v):=\ldots\;:\;\boldsymbol{ Group(S,H,\cdot,e,^{-1})}}{}
\end{flagderiv}

\newpage
2)
\begin{flagderiv}
\introduce*{}{S: *_s\;|\;G:ps(S)\;|\;\cdot: S\rightarrow S\rightarrow S\;|\;e:S\;|\;^{-1}:S\rightarrow S\;|\;u:Group(S,G,\cdot,e,^{-1})}{}
\introduce*{}{B,C:ps(S)\;|\;v_1:B\leqslant G\;|\;v_2:C\leqslant G\;|\;w:C\subseteq B}{}

\skipsteps*{\dots}{}{}

\step*{}{\boldsymbol {term_{\ref{lemma:subgroup}.2}}(S,G,\cdot,e,^{-1},u,B,C,v_1,v_2,w):=\ldots\;:\;\boldsymbol{C\leqslant B}}{}

\end{flagderiv}

The full proofs are in the Appendix, pg. 52.
\end{proof}

\begin{proposition}
If $G$ is a group with identity $e$, then the following sets are its subgroups:

1) $G$;

2) $\{e\}$;

3) intersection of any two subgroups of $G$;

4) intersection of any non-empty collection of subgroups of $G$.
\label{lemma:triv-subgroups}
\end{proposition}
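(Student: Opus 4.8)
The plan is to verify each of the four clauses directly against the Main Definition of a subgroup (Definition~\ref{def:subgroup}), namely that $H\leqslant G$ unfolds to the conjunction $H\subseteq G\wedge e\varepsilon H\wedge Closure_1(S,H,{}^{-1})\wedge Closure_2(S,H,\cdot)$. The overall structure is a single flag diagram opening with the usual declarations $S:*_s$, $G:ps(S)$, $\cdot:S\rightarrow S\rightarrow S$, $e:S$, ${}^{-1}:S\rightarrow S$, and the hypothesis $u:Group(S,G,\cdot,e,^{-1})$; from $u$ we extract $Gr_1(\dots):e\varepsilon G$ and the closure facts via Lemma~\ref{lemma:technical_1}. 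Each of the four parts then produces its own proof term $term_{\ref{lemma:triv-subgroups}.k}$, and the obligations are discharged by $\wedge$-introduction from four sub-proofs.

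For part~1, $H:=G$: the inclusion $G\subseteq G$ is reflexivity of $\subseteq$ (a $\lambda$-abstraction sending $x,v\mapsto v$), $e\varepsilon G$ is $Gr_1$, and $Closure_1(S,G,^{-1})$, $Closure_2(S,G,\cdot)$ are exactly $a_3$ and $a_5$ from the technical lemma (up to unfolding the predicate definitions). For part~2, $H:=\{x:S\mid x=e\}$ (the singleton $\{e\}$): inclusion in $G$ follows because if $x=e$ then substituting into $e\varepsilon G$ gives $x\varepsilon G$ ($eq\mhyphen subs_2$); $e\varepsilon\{e\}$ is $eq\mhyphen refl$; closure under inverse uses part~8 of Proposition~\ref{theorem:axiom_corollary}, $e^{-1}=e$, combined with congruence to get $x^{-1}=e$ from $x=e$; closure under product uses $Gr_4$ (or $Gr_5$), $e\cdot e=e$, again via substitution. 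For parts 3 and 4, given subgroups $B,C\leqslant G$ (resp.\ a collection $U:ps(ps(S))$ all of whose members are subgroups, plus a witness that $U$ is non-empty), set $H:=B\cap C$ (resp.\ $H:=\cap U$); the four subgroup conditions for the intersection each reduce to applying the corresponding condition of every subgroup in the family and re-packaging with $\wedge$-introduction — e.g.\ if $x\varepsilon B\cap C$ then $\wedge_1$ and $\wedge_2$ give $x\varepsilon B$ and $x\varepsilon C$, closure of each yields $x^{-1}\varepsilon B$ and $x^{-1}\varepsilon C$, and $\wedge$ repackages these into $x^{-1}\varepsilon B\cap C$. For part~4 the non-emptiness hypothesis is needed only to obtain $e\varepsilon\cap U$: pick any $Z\varepsilon U$, it is a subgroup so $e\varepsilon Z$, but since every member of $U$ is a subgroup we in fact get $e\varepsilon Z$ for all $Z\varepsilon U$, which is the definition of $e\varepsilon\cap U$ — so strictly speaking non-emptiness is dispensable here except that it matches the standard statement and guards against the vacuous reading flagged in the Remark after the collection-intersection definition.

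The routine obstacle will be bookkeeping: each membership claim like $x\varepsilon\cap U$ must be unfolded through the definition $\{x:S\mid\forall Z:ps(S).(Z\varepsilon U\Rightarrow x\varepsilon Z)\}$, so the proof terms involve nested $\Pi$-abstractions over $Z:ps(S)$ and proofs of $Z\varepsilon U$, and one must thread the hypothesis ``every member of $U$ is a subgroup'' (a term of type $\forall Z:ps(S).(Z\varepsilon U\Rightarrow Z\leqslant G)$) correctly through those binders. The only genuinely non-trivial ingredient is part~2, where closure of $\{e\}$ under the two operations is not formal but needs $e^{-1}=e$ and $e\cdot e=e$ (the latter instance of $Gr_4$ at $x:=e$), so Proposition~\ref{theorem:axiom_corollary} must already be available. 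I expect the main write-up effort to be in part~4 because of the quantifier over the collection; parts 1--3 are essentially mechanical applications of Lemma~\ref{lemma:technical_1} and $\wedge$-introduction.
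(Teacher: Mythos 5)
Your overall plan coincides with the paper's proof: each part is checked directly against the four conjuncts of Definition~\ref{def:subgroup}, part 1 from $Gr_1$ and the closure projections of $u$, part 2 via $e^{-1}=e$, $e\cdot e=e$ and substitution along $x=e$, and parts 3--4 by projecting the subgroup conditions of the members and re-packaging with $\wedge$-introduction. Up to bookkeeping this is exactly what the Appendix does.

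There is, however, one concrete error in your analysis of part 4: you claim the non-emptiness hypothesis is needed only for $e\;\varepsilon\;\cap U$ and is in fact dispensable. The opposite is true. As you yourself observe, $e\;\varepsilon\;\cap U$ needs no witness at all, since it unfolds to $\forall Z:ps(S).(Z\varepsilon U\Rightarrow e\varepsilon Z)$ and every member of $U$ is assumed to be a subgroup. Where non-emptiness is genuinely required is the first conjunct $\cap U\subseteq G$: given $x\;\varepsilon\;\cap U$, to conclude $x\varepsilon G$ you must exhibit \emph{some} $Z\varepsilon U$, use $x\varepsilon Z$ and $Z\subseteq G$, and then eliminate the existential --- this is precisely the step $a_{11}:=\exists_3(v,a_9)$ in the paper's proof, where $v$ is the non-emptiness witness. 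If $U$ is empty, $\cap U=full\mhyphen set(S)$ by the Remark in Subsection~\ref{section:sets}, and $full\mhyphen set(S)\subseteq G$ fails for a proper subset $G$, so the statement itself is false without the hypothesis. Your blanket recipe of ``applying the corresponding condition of every subgroup in the family'' does not produce a proof of $\cap U\subseteq G$; you need to single out this conjunct and route the existential witness through it. With that correction the rest of your write-up goes through as planned.
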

\begin{proof}
These are sketches of the proofs.

1)
\begin{flagderiv}
\introduce*{}{S: *_s\;|\;G:ps(S)\;|\;\cdot: S\rightarrow S\rightarrow S\;|\;e:S\;|\;^{-1}:S\rightarrow S\;|\;u:Group(S,G,\cdot,e,^{-1})}{}
\skipsteps*{\dots}{}{}
\step*{}{\boldsymbol{term_{\ref{lemma:triv-subgroups}.1}}(S,G,\cdot, e, ^{-1},u):=\ldots\;:\;\boldsymbol{G\leqslant G}}{}
\end{flagderiv}

2) 
\begin{flagderiv}
\introduce*{}{S: *_s\;|\;G:ps(S)\;|\;\cdot: S\rightarrow S\rightarrow S\;|\;e:S\;|\;^{-1}:S\rightarrow S\;|\;u:Group(S,G,\cdot,e,^{-1})}{}
\step*{}{\text{Notation }H:=\{x:S\;|\;x=e\}
\;:\;ps(S)}{}
\skipsteps*{\dots}{}{}
\step*{}{\boldsymbol{term_{\ref{lemma:triv-subgroups}.2}}(S,G, \cdot,e,^{-1},u):=\ldots\;:\;\boldsymbol{H\leqslant G}}{}
\end{flagderiv}

3) 
\begin{flagderiv}
\introduce*{}{S: *_s\;|\;G:ps(S)\;|\;\cdot: S\rightarrow S\rightarrow S\;|\;e:S\;|\;^{-1}:S\rightarrow S\;|\;u:Group(S,G,\cdot,e,^{-1})}{}

\introduce*{}{B,C:ps(S)\;|\;v:B\leqslant G\;|\;w:C\leqslant G}{}

\skipsteps*{\dots}{}{}
\step*{}{\boldsymbol{term_{\ref{lemma:triv-subgroups}.3}}(S,G,\cdot, e, ^{-1},u,B,C,v,w):=\ldots\;:\;\boldsymbol{B\cap C\leqslant G}}{}
\end{flagderiv}

4) 
\begin{flagderiv}
\introduce*{}{S: *_s\;|\;G:ps(S)\;|\;\cdot: S\rightarrow S\rightarrow S\;|\;e:S\;|\;^{-1}:S\rightarrow S\;|\;u:Group(S,G,\cdot,e,^{-1})}{}

\introduce*{}{U:ps(ps(S))\;|\;v:(\exists X:ps(S).X\varepsilon U)\;|\;w:[\forall X:ps(S).(X\varepsilon U\Rightarrow X\leqslant G)]}{}
\skipsteps*{\dots}{}{}
\step*{}{\boldsymbol{term_{\ref{lemma:triv-subgroups}.4}}(S,G, \cdot,e,^{-1},u,U,v,w):=\ldots\;:\;\boldsymbol{\cap U\leqslant G}}{}
\end{flagderiv}

The full proofs are in the Appendix, pg. 53.  
\end{proof}

\begin{example}
Suppose $m$ is a positive integer. In the abelian additive group $\mathbb{Z}$ from Example \ref{example:Z} the set $m\mathbb{Z}$ of all multiples of $m$ is a subgroup.
\begin{proof}
We define formally
\[m\mathbb{Z}:=\{x:\mathbb{Z}\;|\;\exists n:\mathbb{Z}.(x=mn)\}.\]
As in Example \ref{example:Z}, we refer to the formalization of integer arithmetic in \cite{Ned14} and skip the formal proof.
\end{proof}
\label{example:mZ}
\end{example}

\begin{proposition}
For any subgroup $H$ of a group $G$ the predicate $R_H$ is an equivalence relation on $G$.
\label{lemma:equiv}
\end{proposition}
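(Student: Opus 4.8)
The plan is to verify the three conditions in the definition of \textbf{equiv-rel} for $R_H := \lambda x,y.(x^{-1}\cdot y)\;\varepsilon\;H$, working entirely within the flag context where $S:*_s$, $G:ps(S)$, $\cdot:S\to S\to S$, $e:S$, $^{-1}:S\to S$, $u:Group(S,G,\cdot,e,^{-1})$, $H:ps(S)$, and $v:H\leqslant G$. First I would unpack $v$ using the conjunction eliminators to extract $H\subseteq G$, $e\varepsilon H$, $Closure_1(S,H,^{-1})$, and $Closure_2(S,H,\cdot)$, and unpack $u$ via Lemma \ref{lemma:technical_1} to get access to $Gr_1,\dots,Gr_9$ and to the identity/inverse/associativity facts on $G$. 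A recurring subtask will be: given $x,y\varepsilon G$, show $(x^{-1}\cdot y)\varepsilon G$ — this follows from $Gr_3$ (so $x^{-1}\varepsilon G$) and $Gr_9$ (closure of $\cdot$ on $G$) — which is needed repeatedly because the group laws on $G$ only apply to elements already known to be in $G$.

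For reflexivity, given $x:S$ with $x\varepsilon G$, I must show $(x^{-1}\cdot x)\varepsilon H$. By $Gr_8$, $x^{-1}\cdot x = e$, and since $e\varepsilon H$, substitutivity ($eq\mhyphen subs_2$) applied to the predicate $H$ and the equation $x^{-1}\cdot x = e$ gives $(x^{-1}\cdot x)\varepsilon H$. For symmetry, given $x,y\varepsilon G$ and a proof $p:(x^{-1}\cdot y)\varepsilon H$, I must show $(y^{-1}\cdot x)\varepsilon H$. The key identity is $(x^{-1}\cdot y)^{-1} = y^{-1}\cdot (x^{-1})^{-1} = y^{-1}\cdot x$, using Proposition \ref{theorem:axiom_corollary}.5 (applied to $x^{-1}$ and $y$, both in $G$) and Proposition \ref{theorem:axiom_corollary}.6; note this requires knowing $x^{-1}\cdot y\varepsilon G$ so that part 5 applies. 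Then $Closure_1(S,H,^{-1})$ applied to $p$ gives $(x^{-1}\cdot y)^{-1}\varepsilon H$, and substitutivity along the identity transports this to $(y^{-1}\cdot x)\varepsilon H$.

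For transitivity, given $x,y,z\varepsilon G$, $p:(x^{-1}\cdot y)\varepsilon H$, $q:(y^{-1}\cdot z)\varepsilon H$, I must show $(x^{-1}\cdot z)\varepsilon H$. The identity is $(x^{-1}\cdot y)\cdot(y^{-1}\cdot z) = x^{-1}\cdot z$, proved by associativity on $G$ (all the relevant products lie in $G$ by closure), $Gr_7$ giving $y\cdot y^{-1}=e$, and $Gr_4$ giving $x^{-1}\cdot e = x^{-1}$. Then $Closure_2(S,H,\cdot)$ applied to $p$ and $q$ yields $(x^{-1}\cdot y)\cdot(y^{-1}\cdot z)\varepsilon H$, and substitutivity transports this to $(x^{-1}\cdot z)\varepsilon H$. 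Finally I would assemble the three proof terms with $\wedge$-introduction to conclude $equiv\mhyphen rel(S,G,R_H)$. The main obstacle, as usual in this setting, is bookkeeping: every application of a group law on $G$ must be fed the correct membership proofs, so the algebraic identities above must each be reorganized into a chain of $eq\mhyphen trans$ steps whose intermediate terms are all certified to lie in $G$ — the mathematics is routine, but the formal proof is long precisely because of this membership-tracking overhead.
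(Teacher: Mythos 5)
Your proposal matches the paper's proof essentially step for step: reflexivity via $Gr_8$ and substitutivity with $e\varepsilon H$; symmetry via Proposition \ref{theorem:axiom_corollary}.5 and .6 to rewrite $(x^{-1}\cdot y)^{-1}$ as $y^{-1}\cdot x$ combined with $Closure_1(S,H,^{-1})$; and transitivity via $Closure_2(S,H,\cdot)$ plus the associativity/cancellation computation collapsing $(x^{-1}\cdot y)\cdot(y^{-1}\cdot z)$ to $x^{-1}\cdot z$ (the paper happens to simplify via $e\cdot z=z$ rather than $x^{-1}\cdot e=x^{-1}$, an immaterial difference). The membership-bookkeeping you flag is exactly what occupies most of the paper's formal derivation.
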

\begin{proof}
This is a sketch of the proof:
\begin{flagderiv}
\introduce*{}{S: *_s\;|\;G:ps(S)\;|\;\cdot: S\rightarrow S\rightarrow S\;|\;e:S\;|\;^{-1}:S\rightarrow S\;|\;u:Group(S,G,\cdot,e,^{-1})}{}
\introduce*{}{H:ps(S)\;|\;v:H\leqslant G}{}
\skipsteps*{\dots}{}{}
\step*{}{\boldsymbol{term_{\ref{lemma:equiv}}}(S,G,\cdot,e,^{-1},u, H, v):=\ldots\;:\;\boldsymbol{equiv\mhyphen rel(S,G,R_H)}}{}
\end{flagderiv}

The full proof is in the Appendix, pg. 56.
\end{proof}

\begin{proposition}
Suppose $H$ is a subgroup of a group $G$. Then for any elements $x,y$ of $G$:
\[R_Hxy\Leftrightarrow (R_Hx\cap G=R_Hy\cap G).\]
\label{lemma:equiv-1}
\end{proposition}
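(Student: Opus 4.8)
The plan is to reduce the statement to two results already in hand: Proposition~\ref{lemma:equiv}, which gives that $R_H$ is an equivalence relation on $G$, and Proposition~\ref{lemma:partition}, which identifies equivalence relations on $G$ with partitions of $G$. First I would open the standard flags --- $S$, $G$, $\cdot$, $e$, ${}^{-1}$, $u:Group(S,G,\cdot,e,{}^{-1})$, then $H:ps(S)$ with $v:H\leqslant G$, and finally $x,y:S$ together with $p:x\varepsilon G$ and $q:y\varepsilon G$ --- apply $term_{\ref{lemma:equiv}}$ to obtain a proof of $equiv\mhyphen rel(S,G,R_H)$, and feed it, via $\wedge_1$ applied to $term_{\ref{lemma:partition}}(S,G,R_H)$, into the first half of that bi-implication to obtain $partition(S,G,R_H)$. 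I would then write $par_1$ for its first conjunct, the reflexivity clause $\forall z:S.(z\varepsilon G\Rightarrow z\varepsilon R_Hz)$, and $par_2$ for its second conjunct. The goal is a bi-implication, so I would assemble it from the two implications using $\wedge$-introduction.

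For the direction $R_Hxy\Rightarrow(R_Hx\cap G=R_Hy\cap G)$ I would assume $h:R_Hxy$. After unfolding $R_H$, both $R_Hxy$ and $y\varepsilon R_Hx$ $\beta$-reduce to $(x^{-1}\cdot y)\varepsilon H$, so $h$ already serves as a proof of $y\varepsilon R_Hx$; and $par_1\,y\,q$ is a proof of $y\varepsilon R_Hy$. Instantiating $par_2$ at $x,p$, then $y,q$, then $z:=y,q$ produces $y\varepsilon R_Hx\Rightarrow y\varepsilon R_Hy\Rightarrow R_Hx\cap G=R_Hy\cap G$; applying this to $h$ and $par_1\,y\,q$ gives the required equality (invoking the Extensionality Axiom if one wishes to read it as intensional equality in $ps(S)$).

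For the converse $(R_Hx\cap G=R_Hy\cap G)\Rightarrow R_Hxy$, assume $h$ for the set equality. From $par_1\,y\,q:y\varepsilon R_Hy$ and $q:y\varepsilon G$, the pair $\wedge(par_1\,y\,q,\,q)$ is a proof of $y\varepsilon(R_Hy\cap G)$, since the latter unfolds to $y\varepsilon R_Hy\wedge y\varepsilon G$. Applying substitutivity $eq\mhyphen subs_2$ with the predicate $\lambda Z:ps(S).\,y\varepsilon Z$ and the equality $h$ transports this to a proof of $y\varepsilon(R_Hx\cap G)$, whose first conjunct, extracted with $\wedge_1$, is $y\varepsilon R_Hx$, that is $R_Hxy$.

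There is no genuine obstacle here; the only thing to watch is the bookkeeping of conversions --- between $R_Hxy$ and $y\varepsilon R_Hx$, between $y\varepsilon(A\cap B)$ and $y\varepsilon A\wedge y\varepsilon B$, and between extensional and intensional equality of subsets --- which are discharged respectively by $\beta$-reduction, the $\wedge$-rules, and the Extensionality Axiom. If one preferred not to route through the partition characterization, the forward direction can be proved directly: from $x^{-1}\cdot y\,\varepsilon\,H$ one gets $y^{-1}\cdot x=(x^{-1}\cdot y)^{-1}\,\varepsilon\,H$ using Proposition~\ref{theorem:axiom_corollary}(5,6) and closure of $H$ under inverse, and then for any $z\varepsilon G$ the identity $y^{-1}\cdot z=(y^{-1}\cdot x)\cdot(x^{-1}\cdot z)$ (associativity in $G$ together with $x\cdot x^{-1}=e$ and $e\cdot z=z$) plus closure of $H$ under products yields $z\varepsilon R_Hx\Leftrightarrow z\varepsilon R_Hy$; but going through Propositions~\ref{lemma:equiv} and~\ref{lemma:partition} is markedly shorter.
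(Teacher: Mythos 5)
Your proof is correct, but it takes a different route from the paper's. The paper's Appendix proof of Proposition~\ref{lemma:equiv-1} does not pass through Proposition~\ref{lemma:partition} at all: it extracts $refl$, $sym$, and $trans$ directly from $term_{\ref{lemma:equiv}}$ and then, for the forward direction, proves the inclusion $R_Hx\cap G\subseteq R_Hy\cap G$ by hand --- from $R_Hxy$ it derives $R_Hyx$ by symmetry, and for each $z\,\varepsilon\,(R_Hx\cap G)$ chains $R_Hyx$ and $R_Hxz$ by transitivity to get $R_Hyz$ --- obtaining the reverse inclusion by instantiating the same parametrized term with the roles of $x$ and $y$ swapped. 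Your version instead converts $equiv\mhyphen rel(S,G,R_H)$ into $partition(S,G,R_H)$ via $\wedge_1(term_{\ref{lemma:partition}}(S,G,R_H))$ and then reads the forward direction off the second partition clause instantiated at $z:=y$, with $h:R_Hxy$ (qua $y\,\varepsilon\,R_Hx$) and $par_1\,y\,q:y\,\varepsilon\,R_Hy$ as the two required memberships; this is legitimate because the symmetry-and-transitivity work is already packaged inside the proof of Proposition~\ref{lemma:partition}, so you avoid redoing it. The converse directions are essentially identical in both treatments (reflexivity gives $y\,\varepsilon\,(R_Hy\cap G)$, transport along the set equality, project out $y\,\varepsilon\,R_Hx$); the only cosmetic difference is that you transport with $eq\mhyphen subs_2$ where the paper applies $\wedge_2$ of the equality read extensionally, and both are sanctioned by the paper's stated convention of not elaborating conversions between the two equalities. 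Your route yields a shorter formal term at the cost of depending on Proposition~\ref{lemma:partition}; the paper's is self-contained modulo Proposition~\ref{lemma:equiv}. Your sketched direct alternative for the forward direction is also sound and is essentially the computation already carried out inside the proof of Proposition~\ref{lemma:equiv}.
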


\begin{proof}
This is a sketch of the proof:
\begin{flagderiv}
\introduce*{}{S: *_s\;|\;G:ps(S)\;|\;\cdot: S\rightarrow S\rightarrow S\;|\;e:S\;|\;^{-1}:S\rightarrow S\;|\;u:Group(S,G,\cdot,e,^{-1})}{}
\introduce*{}{H:ps(S)\;|\;v:H\leqslant G}{}
\introduce*{}{x,y:S\;|\; w_1:x\varepsilon G\;|\; w_2:y\varepsilon G}{}
\skipsteps*{\dots}{}{}
\step*{}{\boldsymbol {term_{\ref{lemma:equiv-1}}}(S,G,\cdot,e,^{-1},u, H, v,x,y,w_1,w_2):=\ldots\;:\;\boldsymbol{R_Hxy\Leftrightarrow (R_Hx\cap G=R_Hy\cap G)}}{}
\end{flagderiv}

The full proof is in the Appendix, pg. 57.
\end{proof}

\bigskip
\subsection{Set Products and Cosets}
\begin{definition}
In the following diagram we define products $g\cdot B$, $B\cdot g$, $B\cdot C$, and $B^{-1}$, where $g$ is an element of type $S$, and $B$ and $C$ are subsets of $S$.
\begin{flagderiv}
\introduce*{}{S: *_s}{}
\introduce*{}{\cdot: S\rightarrow S\rightarrow S}{}
\introduce *{}{B:ps(S)\;|\;g:S}{}
\step*{}{\text{Definition  }mt_1(S,\cdot,B,g):=
\{x:S\;|\;\exists b:S.(b\varepsilon B\wedge x=g\cdot b)\}:ps(S)}{}

\step*{}{\text{Notation  \;}\boldsymbol{g\cdot B} \;\text{ for }mt_1(S,\cdot,B,g)}{}

\step*{}{\text{Definition  }mt_2(S,\cdot,B,g):=
\{x:S\;|\;\exists b:S.(b\varepsilon B\wedge x=b\cdot g)\}:ps(S)}{}

\step*{}{\text{Notation  \;}\boldsymbol{B\cdot g} \;\text{ for }mt_2(S,\cdot,B,g)}{}
\done
\introduce *{}{B,C :ps(S)}{}

\step*{}{\text{Definition  }Mt_1(S,\cdot,B,C):=
\{x:S\;|\;\exists c:S.(c\varepsilon C\wedge x\varepsilon (B\cdot c)\}:ps(S)}{}
\step*{}{\text{Notation  \;}\boldsymbol{B\cdot C} \;\text{ for }Mt_1(S,\cdot,B,C)}{}
\step*{}{\text{Definition  }Mt_2(S,\cdot,B,C):=
\{x:S\;|\;\exists b:S.(b\varepsilon B\wedge x\varepsilon (b\cdot C)\}:ps(S)}{}
\done[2]

\introduce*{}{^{-1}:S\rightarrow S\;|\;B:ps(S)}{}

\step*{}{\text{Definition  } Iv(S,\,^{-1},B):=
\{x:S\;|\;\exists b:S.(b\varepsilon B\wedge x=b^{-1})\}:ps(S)}{}

\step*{}{\text{Notation  \;}\boldsymbol{B^{-1}} \;\text{ for }Iv(S,\,^{-1},B)}{}
\end{flagderiv}

If $H$ is a subgroup of a group $G$, then sets of the form $(x\cdot H)$ are called \textbf{left cosets} and of the form $(H\cdot x)$ - \textbf{right cosets}.
\label{def:set-product}
\end{definition}

\textit{Remark:} the multiplication on $S$ and the dots in $g\cdot C$, $B\cdot g$, $B\cdot C$ are all different operations but for brevity we use the same symbol for all of them. We use a similar convention for set inverse.

\begin{lemma}
Under conditions of Definition \ref{def:set-product}:
\[Mt_1(S,\cdot,B,C)=Mt_2(S,\cdot,B,C).\]
\label{lemma:set-mult}
\end{lemma}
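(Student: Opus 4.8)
The plan is to prove the set equality $Mt_1(S,\cdot,B,C)=Mt_2(S,\cdot,B,C)$ by showing both inclusions and then invoking the Extensionality Axiom for sets. Recall that, after unfolding the notations in Definition \ref{def:set-product}, an element $x:S$ lies in $Mt_1(S,\cdot,B,C)$ iff $\exists c:S.(c\,\varepsilon\,C\wedge x\,\varepsilon\,(B\cdot c))$, and $x\,\varepsilon\,(B\cdot c)$ in turn unfolds to $\exists b:S.(b\,\varepsilon\,B\wedge x=b\cdot c)$; symmetrically, $x\,\varepsilon\,Mt_2(S,\cdot,B,C)$ iff $\exists b:S.(b\,\varepsilon\,B\wedge x\,\varepsilon\,(b\cdot C))$, where $x\,\varepsilon\,(b\cdot C)$ unfolds to $\exists c:S.(c\,\varepsilon\,C\wedge x=b\cdot c)$. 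So after fully unfolding, both sides describe exactly the set $\{x:S\mid \exists b:S.\exists c:S.(b\,\varepsilon\,B\wedge c\,\varepsilon\,C\wedge x=b\cdot c)\}$, and the whole content of the lemma is a rearrangement of two nested existential quantifiers together with a regrouping of conjunctions.

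Concretely, I would set up a flag diagram introducing $S:*_s$, $\cdot:S\rightarrow S\rightarrow S$, and $B,C:ps(S)$, and prove the two inclusions separately. For $Mt_1\subseteq Mt_2$: assume $x:S$ and $h:x\,\varepsilon\,Mt_1(S,\cdot,B,C)$; apply the existential elimination rule $\exists_3$ to $h$ to obtain $c:S$ with a proof of $c\,\varepsilon\,C\wedge x\,\varepsilon\,(B\cdot c)$; split this conjunction with $\wedge_1,\wedge_2$; apply $\exists_3$ again to the second component $x\,\varepsilon\,(B\cdot c)$ to obtain $b:S$ with a proof of $b\,\varepsilon\,B\wedge x=b\cdot c$; split again. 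Now I have $b,c:S$ together with proofs of $b\,\varepsilon\,B$, $c\,\varepsilon\,C$, and $x=b\cdot c$. To build the required witness for $x\,\varepsilon\,Mt_2(S,\cdot,B,C)$, I first construct a proof of $x\,\varepsilon\,(b\cdot C)$ by $\exists$-introduction with witness $c$ and the conjunction $\wedge$ of the proofs of $c\,\varepsilon\,C$ and $x=b\cdot c$; then I apply $\exists$-introduction once more with witness $b$ and the conjunction of the proof of $b\,\varepsilon\,B$ and the just-built proof of $x\,\varepsilon\,(b\cdot C)$. This yields the desired element of $Mt_2(S,\cdot,B,C)$. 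The inclusion $Mt_2\subseteq Mt_1$ is entirely symmetric, exchanging the roles of $b$ and $c$. Combining the two inclusions with $\wedge$ gives $Mt_1=_{ext}Mt_2$, and one application of $ext\mhyphen axiom$ converts this to the stated equality $Mt_1(S,\cdot,B,C)=Mt_2(S,\cdot,B,C)$ in $ps(S)$; the resulting proof term is recorded as $\boldsymbol{term_{\ref{lemma:set-mult}}}$.

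There is no real mathematical obstacle here — the statement is essentially the commutativity of $\exists b\,\exists c$ combined with an associativity/commutativity shuffle of conjuncts — so the "hard part" is purely bookkeeping: keeping the nested $\exists_3$ eliminations and $\exists_1$ introductions straight, making sure each conjunction is split and reassembled in the order expected by the target formula, and threading the equality proof $x=b\cdot c$ through unchanged (no use of symmetry or substitutivity of equality is needed, since the same equality literal appears on both sides). One minor point worth care is the exact parenthesization in the definitions of $Mt_1$ and $Mt_2$ as written — $\{x:S\mid \exists c:S.(c\,\varepsilon\,C\wedge x\,\varepsilon\,(B\cdot c)\}$ — so that the unfolding of $x\,\varepsilon\,(B\cdot c)$ via the definition of $mt_2(S,\cdot,B,c)$ is applied at the right subterm. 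The full, spelled-out flag derivation is relegated to the Appendix.
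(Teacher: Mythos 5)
Your proposal is correct and follows essentially the same route as the paper's proof: two inclusions established by nested $\exists_3$ eliminations, conjunction splitting and reassembly, $\exists_1$ introductions with witnesses $c$ then $b$ (and symmetrically), combined by $\wedge$ and converted to equality via the extensionality convention. The only cosmetic difference is that you make the final appeal to $ext\mhyphen axiom$ explicit, whereas the paper leaves that conversion implicit per its stated convention.
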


\vspace{-0.7cm}

\begin{proof}
This is a sketch of the proof: 
\begin{flagderiv}
\introduce*{}{S: *_s\;|\;\cdot: S\rightarrow S\rightarrow S\;|\;B,C:ps(S)}{}
\skipsteps*{\dots}{}{}

\step*{}{\boldsymbol {term_{\ref{lemma:set-mult}}}(S,\cdot, B,C) := \ldots\;:\;\boldsymbol{Mt_1(S,\cdot,B,C)= Mt_2(S,\cdot,B,C)}}{}
\end{flagderiv}

The full proof is in the Appendix, pg. 58. 
\end{proof}

Due to this lemma, we can use the notation $B\cdot C$ for both $Mt_1(S,\cdot,B,C)$ and $Mt_2(S,\cdot,B,C)$. We will do that for brevity and we will not elaborate on details of substituting one of them with the other.

\begin{lemma}
Suppose $B$ and $C$ are subsets of type $S$, $b\varepsilon B$, $c\varepsilon C$, and $g$ is an element of $S$. Then the following hold.
\medskip

1) $b^{-1}\varepsilon \;B^{-1}$.
\medskip

2) $(g\cdot b)\;\varepsilon \;(g\cdot B)$.
\medskip

3) $(b\cdot g)\;\varepsilon \;(B\cdot g)$.
\medskip

4) $(b\cdot c)\;\varepsilon \;(B\cdot C)$.
\label{lemma:mult-triv}
\end{lemma}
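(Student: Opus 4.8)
The plan is to observe that each of the four membership claims becomes, after unfolding the relevant defined notations from Definition \ref{def:set-product}, an existential statement that admits a one-step proof by $\exists$-introduction: the equality conjunct is handled by reflexivity of equality and the membership conjunct by the corresponding hypothesis. No group axioms or closure properties are needed here; the only machinery is the definitions of $mt_1$, $mt_2$, $Mt_1$, $Iv$, together with the rules $\exists_1$, $\wedge(\cdot,\cdot)$, and $eq\mhyphen refl$.

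For part 1, $b^{-1}\varepsilon\,B^{-1}$ unfolds to $\exists b':S.(b'\varepsilon B\wedge b^{-1}={b'}^{-1})$; I would take $b$ itself as the witness, form the pair $\wedge(w_B,\,eq\mhyphen refl)$ where $w_B:b\varepsilon B$, and conclude with $\exists_1$. Parts 2 and 3 have exactly the same shape: $(g\cdot b)\,\varepsilon\,(g\cdot B)$ unfolds to $\exists b':S.(b'\varepsilon B\wedge g\cdot b=g\cdot b')$ and $(b\cdot g)\,\varepsilon\,(B\cdot g)$ to $\exists b':S.(b'\varepsilon B\wedge b\cdot g=b'\cdot g)$, and in each case $b$ is the witness with $eq\mhyphen refl$ supplying the equality. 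For part 4, $(b\cdot c)\,\varepsilon\,(B\cdot C)$ unfolds (via $Mt_1$) to $\exists c':S.(c'\varepsilon C\wedge (b\cdot c)\,\varepsilon\,(B\cdot c'))$; here I would take $c$ as the witness, supply $c\varepsilon C$ for the first conjunct, and obtain the second conjunct $(b\cdot c)\,\varepsilon\,(B\cdot c)$ by instantiating part 3 with $g:=c$. Pairing and applying $\exists_1$ then finishes it; the resulting proof terms would be recorded as $term_{\ref{lemma:mult-triv}.1}$ through $term_{\ref{lemma:mult-triv}.4}$.

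I do not expect a genuine obstacle: the whole lemma is elementary existence-introduction, and the only care required is the bookkeeping one of unfolding the definitional abbreviations ($mt_1$, $mt_2$, $Mt_1$, $Iv$, and the $\varepsilon$ notation for $element$) in the right order, and of making part 4 cite part 3 rather than re-deriving it from scratch. If one preferred to state part 4 through $Mt_2$ instead of $Mt_1$, Lemma \ref{lemma:set-mult} allows free conversion between the two; but it is cleanest to work with $Mt_1$ directly, exactly as the definition of $B\cdot C$ was originally given.
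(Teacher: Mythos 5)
Your proposal matches the paper's proof exactly: parts 1--3 are each a single $\exists_1$-introduction with witness $b$, the equality conjunct discharged by $eq\mhyphen refl$, and part 4 takes $c$ as the witness for the $Mt_1$ existential and obtains the conjunct $(b\cdot c)\;\varepsilon\;(B\cdot c)$ by instantiating part 3 with $g:=c$. No differences worth noting.
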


\begin{proof}
This is a sketch of the proof for all parts.

\begin{flagderiv}
\introduce*{}{S: *_s}{}

\introduce*{}{^{-1}:S\rightarrow S\;|\;B:ps(S)\;|\;b:S\;|\;u:b\varepsilon B}{}

\skipsteps*{\dots}{}{}

\step*{}{\boldsymbol {term_{\ref{lemma:mult-triv}.1}}(S,^{-1},B,b,u):=\ldots
\;:\;\boldsymbol{b^{-1}\varepsilon B^{-1}}}{}

\done

\introduce*{}{\cdot: S\rightarrow S\rightarrow S\;|\; B:ps(S)\;|\;g,b:S\;|\;u:b\varepsilon B}{}

\skipsteps*{\dots}{}{}

\step*{}{\boldsymbol {term_{\ref{lemma:mult-triv}.2}}(S,\cdot,B,g,b,u):=\ldots
\;:\;\boldsymbol{(g\cdot b)\;\varepsilon \;(g\cdot B)}}{}

\step*{}{\boldsymbol {term_{\ref{lemma:mult-triv}.3}}(S,\cdot,B,g,b,u):=\ldots
\;:\;\boldsymbol{(b\cdot g)\;\varepsilon \;(B\cdot g)}}{}

\done

\introduce*{}{\cdot: S\rightarrow S\rightarrow S\;|\; B,C:ps(S)\;|\;b,c:S\;|\;u:b\varepsilon B\;|\;v:c\varepsilon C}{}

\skipsteps*{\dots}{}{}

\step*{}{\boldsymbol {term_{\ref{lemma:mult-triv}.4}}(S,\cdot,B,C,b,c,u,v):=\ldots\;:\; \boldsymbol{(b\cdot c)\;\varepsilon \;(B\cdot C)}}{}
\end{flagderiv}

The full proof is in the Appendix, pg. 59. 
\end{proof}

\begin{proposition}
Suppose $H$ is a subgroup of a group $G$ and $x,y$ are elements of $G$. Then the following hold.
\medskip

1) $x\cdot H=R_Hx\cap G$;
\medskip

2) $x\cdot H=y\cdot H\;\Leftrightarrow\;R_Hxy$.
\label{lemma:eq-class}
\end{proposition}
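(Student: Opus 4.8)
The plan is to prove both parts by unfolding the definitions of $R_H$, of the left coset $x\cdot H = mt_1(S,\cdot,H,x)$, and of set intersection, and then shuffling group-theoretic identities using Proposition \ref{theorem:axiom_corollary} and Lemma \ref{lemma:technical_1}. Throughout we work in the flag context declaring $S$, $\cdot$, $e$, ${}^{-1}$, $u:Group(S,G,\cdot,e,^{-1})$, then $H$, $v:H\leqslant G$, then $x,y:S$ with $w_1:x\varepsilon G$, $w_2:y\varepsilon G$. From $v$ we extract $H\subseteq G$, $e\varepsilon H$, $Closure_1(S,H,^{-1})$, $Closure_2(S,H,\cdot)$; from Proposition \ref{lemma:subgroup}.1 we know $H$ is itself a group, so we may apply the cancellation facts of Proposition \ref{theorem:axiom_corollary} inside $H$ as well as inside $G$.

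For part 1, I would prove the extensional equality $x\cdot H =_{ext} R_Hx\cap G$ and then invoke the Extensionality Axiom. For the left-to-right inclusion, take $z\varepsilon(x\cdot H)$: by definition there is $b:S$ with $b\varepsilon H$ and $z=x\cdot b$. Since $H\subseteq G$ we get $b\varepsilon G$, hence $x\cdot b\varepsilon G$ (by $Closure_2$ on $G$, i.e. $Gr_9$), so $z\varepsilon G$. It remains to show $z\varepsilon R_Hx$, i.e. $R_Hxz$, i.e. $(x^{-1}\cdot z)\varepsilon H$; but $x^{-1}\cdot z = x^{-1}\cdot(x\cdot b) = (x^{-1}\cdot x)\cdot b = e\cdot b = b$ using associativity, $Gr_8$ ($x^{-1}\cdot x=e$), and $Gr_5$ ($e\cdot b=b$), and $b\varepsilon H$. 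For the reverse inclusion, take $z$ with $z\varepsilon G$ and $(x^{-1}\cdot z)\varepsilon H$; set $b:=x^{-1}\cdot z$, so $b\varepsilon H$, and check $z = x\cdot b = x\cdot(x^{-1}\cdot z) = (x\cdot x^{-1})\cdot z = e\cdot z = z$ using associativity, $Gr_7$, and $Gr_5$; therefore $z\varepsilon(x\cdot H)$. (Here I am tacitly using $x^{-1}\varepsilon G$, which is $Gr_3$.)

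For part 2, the $\Leftarrow$ direction: assume $R_Hxy$, i.e. $(x^{-1}\cdot y)\varepsilon H$. By part 1, $x\cdot H = R_Hx\cap G$ and $y\cdot H = R_Hy\cap G$, so it suffices to show $R_Hx\cap G = R_Hy\cap G$; but Proposition \ref{lemma:equiv} tells us $R_H$ is an equivalence relation on $G$, and Proposition \ref{lemma:equiv-1} then gives exactly $R_Hxy \Rightarrow (R_Hx\cap G = R_Hy\cap G)$. Combining with part 1 yields $x\cdot H = y\cdot H$. For $\Rightarrow$: assume $x\cdot H = y\cdot H$. Since $e\varepsilon H$ and $x = x\cdot e$ (by $Gr_4$), we have $x\varepsilon(x\cdot H)$, hence $x\varepsilon(y\cdot H)$; so there is $b\varepsilon H$ with $x = y\cdot b$, whence $x^{-1}\cdot y = b^{-1}$ by Proposition \ref{theorem:axiom_corollary}.2 (or a direct computation), and $b^{-1}\varepsilon H$ by $Closure_1$ on $H$; thus $(x^{-1}\cdot y)\varepsilon H$, i.e. $R_Hxy$. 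Alternatively one could chain part 1 with Proposition \ref{lemma:equiv-1} in both directions, which is cleaner: $x\cdot H = y\cdot H \Leftrightarrow R_Hx\cap G = R_Hy\cap G \Leftrightarrow R_Hxy$.

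The routine bookkeeping — tracking the membership proofs $x\varepsilon G$, $x^{-1}\varepsilon G$, $b\varepsilon G$ so that every application of associativity and of the $Gr_i$ lemmas typechecks, and converting between $=_{ext}$ and $=_{ps(S)}$ via the Extensionality Axiom — is the main source of length, but it is mechanical. The one genuinely substantive step is part 1, namely recognizing that $R_Hx\cap G$ is precisely the coset $x\cdot H$; once that bridge is built, part 2 is almost immediate from Propositions \ref{lemma:equiv} and \ref{lemma:equiv-1}. So I expect the chief obstacle to be merely organizing the equalities $x^{-1}\cdot(x\cdot b)=b$ and $x\cdot(x^{-1}\cdot z)=z$ cleanly with all the ambient membership hypotheses in place.
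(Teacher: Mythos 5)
Your proposal is correct and follows essentially the same route as the paper: part 1 by double inclusion using the computations $x^{-1}\cdot(x\cdot b)=b$ and $x\cdot(x^{-1}\cdot z)=z$ (the paper cites $term_{\ref{theorem:axiom_corollary}.1}$ for the forward inclusion rather than recomputing, but this is cosmetic), and part 2 by the chain $x\cdot H=y\cdot H\Leftrightarrow R_Hx\cap G=R_Hy\cap G\Leftrightarrow R_Hxy$ via part 1 and Proposition \ref{lemma:equiv-1}, which is exactly the ``cleaner'' alternative you identify at the end.
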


\begin{proof}
This is a sketch of the proof.

\begin{flagderiv}
\introduce*{}{S: *_s\;|\;G:ps(S)\;|\;\cdot: S\rightarrow S\rightarrow S\;|\;e:S\;|\;^{-1}:S\rightarrow S\;|\;u:Group(S,G,\cdot,e,^{-1})}{}

\introduce*{}{H:ps(S)\;|\;v:H\leqslant G\;|\;x:S\;|\;w:x\varepsilon G}{}
\skipsteps*{\dots}{}{}
\step*{}{\boldsymbol {term_{\ref{lemma:eq-class}.1}}(S,G,\cdot,e,^{-1},u, H, v,x,w):=\ldots
\;:\;\boldsymbol{x\cdot H=R_Hx\cap G}}{}
\introduce*{}{y:S\;|\;r:y\varepsilon G}{}
\skipsteps*{\dots}{}{}

\step*{}{\boldsymbol {term_{\ref{lemma:eq-class}.2}}(S,G,\cdot,e,^{-1},u, H,v,x,y,w,r):=\ldots
\;:\;\boldsymbol{x\cdot H=y\cdot H\;\Leftrightarrow \;R_Hxy}}{}
\end{flagderiv}

The full proof is in the Appendix, pg. 59. 
\end{proof}

\begin{lemma}
Suppose $G$ is a group with identity $e$. Then for any element $g$ of $G$ and subsets $B,C$ of $G$ the following hold.

1) $e\cdot B=B$.
\medskip

2) $B\cdot e=B$.
\medskip

3) $B^{-1}\subseteq G$.
\medskip

4) $(g\cdot B)\subseteq G$.
\medskip

5) $(B\cdot g)\subseteq G$.
\medskip

6) $(g\cdot B)^{-1}=B^{-1}\cdot g^{-1}$.
\medskip

7) $(B\cdot g)^{-1}=g^{-1}\cdot B^{-1}$.
\medskip

8) $(B\cdot C)\subseteq G$.
\medskip

9) $(B\cdot C)^{-1}=C^{-1}\cdot B^{-1}$.
\label{lemma:mult}
\end{lemma}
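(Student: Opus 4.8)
The plan is to treat the nine parts uniformly: unfold the set-product definitions from Definition \ref{def:set-product}, prove the one or two set-inclusions required, and (for the six parts that assert an equality) pass from extensional to intensional equality by the Extensionality Axiom, which by the convention of Section \ref{section:sets} I will not spell out in detail. The tools are the basic group facts collected in Lemma \ref{lemma:technical_1} (notably $Gr_3$, $Gr_4$, $Gr_5$, $Gr_9$), Proposition \ref{theorem:axiom_corollary} (chiefly part 5, $(x\cdot y)^{-1}=y^{-1}\cdot x^{-1}$), the ``introduction'' facts of Lemma \ref{lemma:mult-triv} for membership in $g\cdot B$, $B\cdot g$, $B\cdot C$ and $B^{-1}$, and Lemma \ref{lemma:set-mult} to switch between the $Mt_1$ and $Mt_2$ descriptions of a double product whenever convenient.

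For parts 1 and 2: since $B\subseteq G$, any $b\,\varepsilon\,B$ satisfies $b\,\varepsilon\,G$, so $e\cdot b=b$ by $Gr_5$ and $b\cdot e=b$ by $Gr_4$; this immediately gives $e\cdot B\subseteq B$ and $B\cdot e\subseteq B$, and for the reverse inclusions a given $x\,\varepsilon\,B$ is exhibited as $e\cdot x$ (resp. $x\cdot e$) using the same identities together with symmetry of $=$. Parts 3, 4, 5 are closure statements: an element of $B^{-1}$ is $b^{-1}$ with $b\,\varepsilon\,G$, hence in $G$ by $Gr_3$; an element of $g\cdot B$ or $B\cdot g$ is $g\cdot b$ or $b\cdot g$ with $g,b\,\varepsilon\,G$, hence in $G$ by $Gr_9$. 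Part 8 reduces to part 5: reading $B\cdot C$ via $Mt_1$, an element lies in $B\cdot c$ for some $c\,\varepsilon\,C\subseteq G$, and $B\cdot c\subseteq G$ by part 5 applied with $g:=c$.

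The real content is in parts 6, 7, 9. For part 6, an element of $(g\cdot B)^{-1}$ is $a^{-1}$ with $a=g\cdot b$, $b\,\varepsilon\,B$; since $g,b\,\varepsilon\,G$, Proposition \ref{theorem:axiom_corollary}.5 rewrites $a^{-1}=(g\cdot b)^{-1}$ as $b^{-1}\cdot g^{-1}$, and then $b^{-1}\,\varepsilon\,B^{-1}$ (Lemma \ref{lemma:mult-triv}.1) yields $b^{-1}\cdot g^{-1}\,\varepsilon\,B^{-1}\cdot g^{-1}$ (Lemma \ref{lemma:mult-triv}.3); conversely an element of $B^{-1}\cdot g^{-1}$ has the form $b^{-1}\cdot g^{-1}=(g\cdot b)^{-1}$ with $g\cdot b\,\varepsilon\,g\cdot B$ (Lemma \ref{lemma:mult-triv}.2), hence lies in $(g\cdot B)^{-1}$ by Lemma \ref{lemma:mult-triv}.1. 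Part 7 is the mirror image, using Lemma \ref{lemma:mult-triv}.2 and \ref{lemma:mult-triv}.3 in the other order. For part 9 I will present $B\cdot C$ by its $Mt_2$ form, so an element of $(B\cdot C)^{-1}$ is $(b\cdot c)^{-1}$ with $b\,\varepsilon\,B$ and $c\,\varepsilon\,C$; since $b,c\,\varepsilon\,G$, Proposition \ref{theorem:axiom_corollary}.5 turns it into $c^{-1}\cdot b^{-1}$, which lands in $C^{-1}\cdot B^{-1}$ by Lemma \ref{lemma:mult-triv}.1 and \ref{lemma:mult-triv}.4, and the converse runs the same chain backwards, placing $b\cdot c$ in $B\cdot C$ by Lemma \ref{lemma:mult-triv}.4 and taking its inverse by Lemma \ref{lemma:mult-triv}.1.

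The main obstacle is bookkeeping rather than ideas. Every application of Proposition \ref{theorem:axiom_corollary}.5 demands proof terms witnessing that its arguments belong to $G$; these must be produced by threading the hypotheses $B\subseteq G$ and $C\subseteq G$ through the existential witnesses that are extracted with $\exists_3$, and carried along. The nested existentials defining $B\cdot C$ and $(B\cdot C)^{-1}$ have to be opened and reconstructed with care, and Lemma \ref{lemma:set-mult} must be invoked at precisely the points where the $Mt_1$/$Mt_2$ form needs to match the membership lemma being applied; managing these certificates, together with the symmetry-of-equality steps needed to orient each rewrite, is what makes the formal proof long.
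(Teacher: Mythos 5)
Your proposal is correct and follows essentially the same route as the paper: unfold the set-product existentials, use $Gr_3$--$Gr_9$ and Proposition \ref{theorem:axiom_corollary}.5 together with the membership facts of Lemma \ref{lemma:mult-triv}, and prove two inclusions for each equality. The only (harmless) divergence is in part 9, where the paper reuses parts 6 and 7 to rewrite $(B\cdot c)^{-1}$ and $(b\cdot C)^{-1}$ at the intermediate level, whereas you decompose all the way to single elements and apply $(b\cdot c)^{-1}=c^{-1}\cdot b^{-1}$ directly; both work.
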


\begin{proof}
These are sketches of the proofs.

1) - 3)
\vspace{-0.2cm}
\begin{flagderiv}
\introduce*{}{S: *_s\;|\;G:ps(S)\;|\;\cdot: S\rightarrow S\rightarrow S\;|\;e:S\;|\;^{-1}:S\rightarrow S\;|\;u:Group(S,G,\cdot,e,^{-1})}{}

\introduce*{}{B:ps(S)\;|\;v:B\subseteq G}{}

\skipsteps*{\dots}{}{}

\step*{}{\boldsymbol {term_{ \ref{lemma:mult}.1}}(S,G,\cdot,e,^{-1},u, B,v):=\ldots 
\;:\;\boldsymbol{e\cdot B=B}}{}
\skipsteps*{\dots}{}{}
\step*{}{\boldsymbol{ term_{\ref{lemma:mult}.2}}(S,G,\cdot,e,^{-1},u, B,v):=\ldots
\;:\;\boldsymbol{B\cdot e=B}}{}
\skipsteps*{\dots}{}{}
\step*{}{\boldsymbol{ term_{\ref{lemma:mult}.3}}(S,G,\cdot,e,^{-1},u,B,v):=
\ldots
\;:\;\boldsymbol{B^{-1}\subseteq G}}{}
\end{flagderiv}

4) - 7) 
\begin{flagderiv}
\introduce*{}{S: *_s\;|\;G:ps(S)\;|\;\cdot: S\rightarrow S\rightarrow S\;|\;e:S\;|\;^{-1}:S\rightarrow S\;|\;u:Group(S,G,\cdot,e,^{-1})}{}

\introduce*{}{B:ps(S)\;|\;v:B\subseteq G\;|\;g:S\;|\;w:g\varepsilon G}{}
\skipsteps*{\dots}{}{}

\step*{}{\boldsymbol {term_{ \ref{lemma:mult}.4}}(S,G,\cdot,e,^{-1},u, B,v,g,w):=\ldots
\;:\;\boldsymbol{(g\cdot B)\subseteq G}}{}
\skipsteps*{\dots}{}{}

\step*{}{\boldsymbol {term_{ \ref{lemma:mult}.5}}(S,G,\cdot,e,^{-1},u, B,v,g,w):=\ldots
\;:\;\boldsymbol{(B\cdot g)\subseteq G}}{}
\skipsteps*{\dots}{}{}

\step*{}{\boldsymbol {term_{ \ref{lemma:mult}.6}}(S,G,\cdot,e,^{-1},u, B,v,g,w):=\ldots
\;:\;\boldsymbol{(g\cdot B)^{-1}=B^{-1}\cdot g^{-1}}}{}
\skipsteps*{\dots}{}{}

\step*{}{\boldsymbol {term_{ \ref{lemma:mult}.7}}(S,G,\cdot,e,^{-1},u, B,v,g,w):=\ldots
\;:\;\boldsymbol{(B\cdot g)^{-1}=g^{-1}\cdot B^{-1}}}{}
\end{flagderiv}

8) - 9)  
\begin{flagderiv}
\introduce*{}{S: *_s\;|\;G:ps(S)\;|\;\cdot: S\rightarrow S\rightarrow S\;|\;e:S\;|\;^{-1}:S\rightarrow S\;|\;u:Group(S,G,\cdot,e,^{-1})}{}

\introduce*{}{B,C:ps(S)\;|\;v:B\subseteq G\;|\;w:B\subseteq G}{}
\skipsteps*{\dots}{}{}
\step*{}{\boldsymbol {term_{ \ref{lemma:mult}.8}}(S,G,\cdot,e,^{-1},u, B,C,v,w):=\ldots
\;:\;\boldsymbol{(B\cdot C)\subseteq G}}{}
\skipsteps*{\dots}{}{}
\step*{}{\boldsymbol {term_{ \ref{lemma:mult}.9}}(S,G,\cdot,e,^{-1},u, B,C,v,w):=\ldots
\;:\;\boldsymbol{(B\cdot C)^{-1}=C^{-1}\cdot B^{-1}}}{}
\end{flagderiv}

The full proofs are in the Appendix, pg. 61. 
\end{proof}

\begin{proposition}
For any elements $g,h$ of a group $G$ and subsets $B,C,D$ of $G$ the following hold.
\medskip

1) $(B\cdot g)\cdot h=B\cdot (g\cdot h)$;
\medskip

2) $(g\cdot B)\cdot h=g\cdot (B\cdot h)$;
\medskip

3) $(g\cdot h)\cdot B=g\cdot (h\cdot B)$;
\medskip

4) $(B\cdot C)\cdot g=B\cdot (C\cdot g)$;
\medskip

5) $(B\cdot g)\cdot C=B\cdot (g\cdot C)$;
\medskip

6) $(g\cdot B)\cdot C=g\cdot (B\cdot C)$;
\medskip

7) $(B\cdot C)\cdot D=B\cdot (C\cdot D)$.
\label{lemma:set-assoc}
\end{proposition}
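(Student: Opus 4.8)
The plan is to prove each of the seven identities as an extensional equality of subsets of $S$, and then convert it to $=_{ps(S)}$ by the Extensionality Axiom, exactly as in the earlier set-product lemmas. I would open one flag diagram declaring the group $u:Group(S,G,\cdot,e,^{-1})$ together with the data $B,C,D\subseteq G$ and $g,h\varepsilon G$, extract $Assoc(S,G,\cdot)$ as $Gr_2$ from Lemma \ref{lemma:technical_1}, and then handle the seven parts in turn, each by showing the two inclusions $X\subseteq Y$ and $Y\subseteq X$. The common engine for every part is a single application of $Gr_2$: all the identities say that reassociating a product of (group) elements does not change it.

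For a representative case, part 1, $(B\cdot g)\cdot h=B\cdot(g\cdot h)$: given $x\,\varepsilon\,(B\cdot g)\cdot h$, unfold $mt_2$ twice to get $y\,\varepsilon\,(B\cdot g)$ with $x=y\cdot h$, then $b\,\varepsilon\,B$ with $y=b\cdot g$; since $B\subseteq G$ we have $b\,\varepsilon\,G$, and with $g,h\,\varepsilon\,G$ the term $Gr_2$ gives $x=(b\cdot g)\cdot h=b\cdot(g\cdot h)$, and Lemma \ref{lemma:mult-triv}(3) (with $g\cdot h$ in place of $g$) supplies the witness that $x\,\varepsilon\,B\cdot(g\cdot h)$; the reverse inclusion is symmetric. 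Parts 2--6 follow the same recipe, the only change being which of $mt_1$, $mt_2$, $Mt_1$, $Mt_2$ is unfolded and in which order, with Lemma \ref{lemma:set-mult} invoked freely to pass between $Mt_1$ and $Mt_2$ whenever the notation $B\cdot C$ appears, and Lemma \ref{lemma:mult-triv}(2)--(4) used to rebuild memberships. Part 7, $(B\cdot C)\cdot D=B\cdot(C\cdot D)$, is the deepest: an element of the left side unwinds (via $Mt_1$ then $Mt_2$) to $x=(b\cdot c)\cdot d$ with $b\,\varepsilon\,B$, $c\,\varepsilon\,C$, $d\,\varepsilon\,D$, and an element of the right side to $x=b\cdot(c\cdot d)$ with the same kind of witnesses; $Gr_2$ applied to $b,c,d\,\varepsilon\,G$ closes the gap, and Lemma \ref{lemma:mult-triv}(4) reconstructs the membership on the opposite side.

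The mathematical content here is minimal, so the main obstacle is purely the combinatorial bookkeeping of nested existentials: each unfolding of a set product produces an existential that must be eliminated with $\exists_3$ and then a fresh witness that must be reintroduced on the other side, and one must carry along the chain of $\subseteq G$ facts so that the three factors handed to $Gr_2$ genuinely lie in $G$. Keeping the proof terms manageable comes down to treating the two inclusions uniformly, reusing the forward computation for the backward direction, and applying the $Mt_1/Mt_2$ conversions in a fixed pattern; there is no genuine difficulty beyond that.
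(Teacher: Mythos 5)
Your proof is correct and rests on the same engine as the paper's: two inclusions per identity, a single appeal to $Gr_2$ (group associativity) at the core, Lemma~\ref{lemma:mult-triv} to rebuild memberships, and the Extensionality Axiom to pass from the pair of inclusions to $=_{ps(S)}$; parts 1)--3) match the appendix essentially step for step. The one genuine difference is the decomposition of parts 4)--7): you flatten everything to the element level (for 7) you unwind both sides all the way down to $x=(b\cdot c)\cdot d$, resp.\ $x=b\cdot(c\cdot d)$, and apply $Gr_2$ once to $b,c,d\,\varepsilon\,G$), whereas the paper proves 4)--6) by invoking the already-established parts 1)--3) as set-level rewriting lemmas --- e.g.\ in 4) an element of $(B\cdot C)\cdot g$ is first located in $(B\cdot c)\cdot g$ for some $c\,\varepsilon\,C$ and then transported by $term_{\ref{lemma:set-assoc}.1}$ --- and proves 7) from parts 4) and 6) without ever extracting the innermost witness $b$. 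Both routes are sound; yours keeps each part self-contained at the cost of one extra layer of existential elimination and reintroduction per part, while the paper's layering shortens the later derivations and keeps the proof term of 7) from growing another level of nesting. The bookkeeping issues you flag --- carrying the $\subseteq G$ facts so that the three arguments handed to $Gr_2$ really lie in $G$, and applying the $Mt_1/Mt_2$ conversion of Lemma~\ref{lemma:set-mult} in a fixed pattern --- are exactly where the formal derivation gets long, and the appendix handles them the same way.
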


\newpage
\begin{proof}
These are sketches of the proofs.

1) - 3) 
\begin{flagderiv}
\introduce*{}{S: *_s\;|\;G:ps(S)\;|\;\cdot: S\rightarrow S\rightarrow S\;|\;e:S\;|\;^{-1}:S\rightarrow S\;|\;u:Group(S,G,\cdot,e,^{-1})}{}

\introduce*{}{B:ps(S)\;|\;v:B\subseteq G\;|\;g,h:S\;|\;w_1:g\varepsilon G\;|\;w_2:h\varepsilon G}{}

\skipsteps*{\dots}{}{}
\step*{}{\boldsymbol {term_{ \ref{lemma:set-assoc}.1}}(S,G,\cdot,e,^{-1},u, B,v,g,h,w_1,w_2):=\ldots
\;:\;\boldsymbol{(B\cdot g)\cdot h=B\cdot(g\cdot h)}}{}

\skipsteps*{\dots}{}{}
\step*{}{\boldsymbol {term_{ \ref{lemma:set-assoc}.2}}(S,G,\cdot,e,^{-1},u, B,v,g,h,w_1,w_2):=\ldots
\;:\;\boldsymbol{(g\cdot B)\cdot h=g\cdot(B\cdot h)}}{}
\skipsteps*{\dots}{}{}
\step*{}{\boldsymbol {term_{ \ref{lemma:set-assoc}.3}}(S,G,\cdot,e, ^{-1},u, B,v,g,h,w_1, w_2):=\ldots
\;:\;\boldsymbol{(g\cdot h)\cdot B=g\cdot(h\cdot B)}}{}
\end{flagderiv}

4) - 6) 
 
\begin{flagderiv}
\introduce*{}{S: *_s\;|\;G:ps(S)\;|\;\cdot: S\rightarrow S\rightarrow S\;|\;e:S\;|\;^{-1}:S\rightarrow S\;|\;u:Group(S,G,\cdot,e,^{-1})}{}
\introduce*{}{B,C:ps(S)\;|\;v_1:B\subseteq G\;|\;v_2:C\subseteq G\;|\;g:S\;|\;w:g\varepsilon G}{}

\skipsteps*{\dots}{}{}
\step*{}{\boldsymbol {term_{ \ref{lemma:set-assoc}.4}}(S,G,\cdot,e, ^{-1},u, B,C,v_1, v_2,g, w):=\ldots
\;:\;\boldsymbol{(B\cdot C)\cdot g=B\cdot(C\cdot g)}}{}
\skipsteps*{\dots}{}{}
\step*{}{\boldsymbol {term_{ \ref{lemma:set-assoc}.5}}(S,G,\cdot,e, ^{-1},u, B,C,v_1, v_2,g, w):=\ldots
\;:\;\boldsymbol{(B\cdot g)\cdot C=B\cdot(g\cdot C)}}{}
\skipsteps*{\dots}{}{}
\step*{}{\boldsymbol {term_{ \ref{lemma:set-assoc}.6}}(S,G,\cdot,e, ^{-1},u, B,C,v_1, v_2,g, w):=\ldots
\;:\;\boldsymbol{(g\cdot B)\cdot C=g\cdot(B\cdot C)}}{}
\end{flagderiv}

7) 
\begin{flagderiv}
\introduce*{}{S: *_s\;|\;G:ps(S)\;|\;\cdot: S\rightarrow S\rightarrow S\;|\;e:S\;|\;^{-1}:S\rightarrow S\;|\;u:Group(S,G,\cdot,e,^{-1})}{}
\introduce*{}{B,C,D:ps(S)\;|\;v_1:B\subseteq G\;|\;v_2:C\subseteq G\;|\;v_3:D\subseteq G}{}
\skipsteps*{\dots}{}{}
\step*{}{\boldsymbol {term_{ \ref{lemma:set-assoc}.7}}(S,G,\cdot,e, ^{-1},u, B,C,D,v_1, v_2,v_3):=\ldots
\;:\;\boldsymbol{(B\cdot C)\cdot D=B\cdot(C\cdot D)}}{}
\end{flagderiv}

The full proofs are in the Appendix, pg. 67.
\end{proof}

\begin{proposition}
Suppose $H$ and $D$ are subgroups of a group $G$ and $g$ is an element of $G$. Then the following hold.
\medskip

1) $H^{-1}=H$.
\medskip

2) $H\cdot H=H$.
\medskip

3) $(H\cdot D)^{-1}=D\cdot H$.
\medskip

4) $(g\cdot H)^{-1}=H\cdot g^{-1}$.
\medskip

5) $(H\cdot g)^{-1}=g^{-1}\cdot H$.
\medskip

6) $(g^{-1}\cdot H\cdot g)$ is a subgroup of $G$.
\medskip

7) $g\cdot H=H\;\Leftrightarrow\;g\varepsilon H$.
\label{lemma:coset}
\end{proposition}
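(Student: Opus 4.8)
The plan is to prove all seven parts in a single flag diagram that opens with the standard group preamble $S:*_s$, $G:ps(S)$, $\cdot$, $e$, $^{-1}$, $u:Group(S,G,\cdot,e,^{-1})$, then declares subgroups $H,D:ps(S)$ with $v:H\leqslant G$, $w:D\leqslant G$, and an element $g:S$ with $p:g\varepsilon G$. Each part is largely a matter of feeding earlier lemmas the right arguments. For part 1, $H^{-1}=H$: since $H\leqslant G$, Proposition \ref{lemma:subgroup}.1 makes $H$ a group, so $Closure_1$ applies to $H$; one inclusion sends $b^{-1}$ (with $b\varepsilon H$) into $H$ by closure, the other writes any $x\varepsilon H$ as $(x^{-1})^{-1}$ using Proposition \ref{theorem:axiom_corollary}.6 and then $x^{-1}\varepsilon H$ by $Closure_1$, so $x=(x^{-1})^{-1}\varepsilon H^{-1}$ by Lemma \ref{lemma:mult-triv}.1. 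Extensionality then gives the set equality. For part 2, $H\cdot H=H$: the inclusion $H\subseteq H\cdot H$ uses $e\varepsilon H$ (from $Gr_1$ applied to the subgroup) together with $x=e\cdot x$ (from $Gr_5$) and Lemma \ref{lemma:mult-triv}.4; the reverse inclusion $H\cdot H\subseteq H$ is exactly $Closure_2(S,H,\cdot)$ unwound through the definition of $Mt_1$.

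Parts 3--5 are the set-inverse computations. Part 3, $(H\cdot D)^{-1}=D\cdot H$: apply Lemma \ref{lemma:mult}.9, which gives $(H\cdot D)^{-1}=D^{-1}\cdot H^{-1}$ (the hypotheses $H\subseteq G$, $D\subseteq G$ come from unpacking $H\leqslant G$, $D\leqslant G$), then rewrite $D^{-1}=D$ and $H^{-1}=H$ by part 1 of this proposition. Similarly part 4 is Lemma \ref{lemma:mult}.6 with $B:=H$ followed by $H^{-1}=H$, giving $(g\cdot H)^{-1}=H^{-1}\cdot g^{-1}=H\cdot g^{-1}$; part 5 is Lemma \ref{lemma:mult}.7 the same way. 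These only need $H\subseteq G$ and $g\varepsilon G$, both available.

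Part 6, that $(g^{-1}\cdot H\cdot g)$ is a subgroup of $G$, is the one requiring real work. I would first observe $g^{-1}\varepsilon G$ via $Gr_3$, so all the set products stay inside $G$ by Lemma \ref{lemma:mult}.4--5, giving $g^{-1}\cdot H\cdot g\subseteq G$. For $e\varepsilon g^{-1}\cdot H\cdot g$: since $e\varepsilon H$, and $g^{-1}\cdot e\cdot g = g^{-1}\cdot g = e$ using $Gr_5$ and $Gr_8$, the witness $b:=e$ shows $e$ lies in the product (threading through Lemma \ref{lemma:mult-triv}.2--4 and the associativity rearrangements of Proposition \ref{lemma:set-assoc}). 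For $Closure_1$: if $x = g^{-1}\cdot b\cdot g$ with $b\varepsilon H$, then by Proposition \ref{theorem:axiom_corollary}.5 (twice, since elements of $G$ are closed under products) $x^{-1} = g^{-1}\cdot b^{-1}\cdot (g^{-1})^{-1} = g^{-1}\cdot b^{-1}\cdot g$ using part 6 of \ref{theorem:axiom_corollary}, and $b^{-1}\varepsilon H$ by $Closure_1$ on $H$. For $Closure_2$: if $x=g^{-1}\cdot b\cdot g$ and $y=g^{-1}\cdot c\cdot g$ with $b,c\varepsilon H$, then $x\cdot y = g^{-1}\cdot b\cdot (g\cdot g^{-1})\cdot c\cdot g = g^{-1}\cdot (b\cdot c)\cdot g$ using $Gr_7$ and associativity, and $b\cdot c\varepsilon H$ by $Closure_2$ on $H$. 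The bookkeeping here --- moving between the element-level group axioms and the set-product notation, and repeatedly invoking Proposition \ref{lemma:set-assoc} to reassociate --- is the main obstacle; the mathematics is routine, but the formalization is long.

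Part 7, $g\cdot H=H\Leftrightarrow g\varepsilon H$: for the forward direction, from $g\cdot H=H$ and $e\varepsilon H$ we get $g\cdot e\varepsilon g\cdot H=H$ (Lemma \ref{lemma:mult-triv}.2), and $g\cdot e=g$ by $Gr_4$, so $g\varepsilon H$. For the reverse, assume $g\varepsilon H$; then $g\cdot H\subseteq H$ by $Closure_2$ on $H$ (unwinding $mt_1$), and $H\subseteq g\cdot H$ because any $x\varepsilon H$ equals $g\cdot(g^{-1}\cdot x)$ with $g^{-1}\cdot x\varepsilon H$ (using $g^{-1}\varepsilon H$ from $Closure_1$ on $H$, then $Closure_2$, then $Gr_5$ and $Gr_7$ to check $g\cdot(g^{-1}\cdot x)=x$), so $x\varepsilon g\cdot H$ by Lemma \ref{lemma:mult-triv}.2. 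Extensionality finishes it. Throughout, I would package the $\wedge$-eliminations of $u$ and $v$ once at the top (mirroring Lemma \ref{lemma:technical_1}) to avoid repetition.
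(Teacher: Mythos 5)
Your proposal is correct, and for parts 1--5 and the forward direction of 7 it is essentially the paper's proof: parts 3--5 are Lemma \ref{lemma:mult}.6, .7, .9 followed by rewriting $H^{-1}=H$ and $D^{-1}=D$ via part 1, exactly as in the Appendix, and parts 1--2 differ only in bookkeeping (the paper gets the inclusions $H^{-1}\subseteq H$ and $H\cdot H\subseteq H$ by applying Lemma \ref{lemma:mult}.3 and \ref{lemma:mult}.8 to $H$ viewed as a group via Proposition \ref{lemma:subgroup}.1, rather than unwinding $Closure_1$ and $Closure_2$ by hand, and it uses $x=x\cdot e$ where you use $x=e\cdot x$).

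The one place you take a genuinely different route is part 6. You verify $Closure_1$ and $Closure_2$ for $g^{-1}\cdot H\cdot g$ element-wise: unpack $x\;\varepsilon\;(g^{-1}\cdot H\cdot g)$ to a witness $b\;\varepsilon\;H$ with $x=g^{-1}\cdot b\cdot g$, then compute $x^{-1}=g^{-1}\cdot b^{-1}\cdot g$ via Proposition \ref{theorem:axiom_corollary}.5--6 and $x\cdot y=g^{-1}\cdot(b\cdot c)\cdot g$ via $Gr_7$ and $Assoc$. The paper instead stays at the level of set identities: it first proves $(g^{-1}\cdot H\cdot g)^{-1}=g^{-1}\cdot H\cdot g$ and $(g^{-1}\cdot H\cdot g)\cdot(g^{-1}\cdot H\cdot g)=g^{-1}\cdot H\cdot g$ using Lemma \ref{lemma:mult}, Proposition \ref{lemma:set-assoc}, and parts 2 and 4 of this very proposition, and then reads off both closure properties from these equations with Lemma \ref{lemma:mult-triv}. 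Your version is more elementary but pays for it with a double existential elimination and repeated element-level reassociation inside each closure proof; the paper's version front-loads the work into two set equations and makes the closure steps one-liners, at the cost of depending on the whole set-product calculus. A similar minor divergence occurs in the reverse direction of part 7, where you prove $H\subseteq g\cdot H$ directly by exhibiting $x=g\cdot(g^{-1}\cdot x)$, while the paper routes through the characterization $g\cdot H=R_Hg\cap G$ of Proposition \ref{lemma:eq-class}.1; both are sound.
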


\begin{proof}
These are sketches of the proofs.

1) - 3)
\begin{flagderiv}
\introduce*{}{S: *_s\;|\;G:ps(S)\;|\;\cdot: S\rightarrow S\rightarrow S\;|\;e:S\;|\;^{-1}:S\rightarrow S\;|\;u:Group(S,G,\cdot,e,^{-1})}{}
\introduce*{}{H:ps(S)\;|\;v:H\leqslant G}{}

\skipsteps*{\dots}{}{}

\step*{}{\boldsymbol {term_{\ref{lemma:coset}.1}}(S,G,\cdot,e,^{-1},u, H, v):=\ldots\;:\;\boldsymbol{H^{-1}=H}}{}

\step*{}{\boldsymbol {term_{\ref{lemma:coset}.2}}(S,G,\cdot,e,^{-1},u, H, v):=\ldots\;:\;\boldsymbol{H\cdot H=H}}{}

\introduce*{}{D:ps(S)\;|\;w:D\leqslant G}{}
\skipsteps*{\dots}{}{}
\step*{}{\boldsymbol {term_{\ref{lemma:coset}.3}}(S,G,\cdot,e,^{-1},u,H,D,v, w):=\ldots
\;:\;\boldsymbol{(H\cdot D)^{-1}=D\cdot H}}{}
\end{flagderiv}

4) - 6) 

\begin{flagderiv}
\introduce*{}{S: *_s\;|\;G:ps(S)\;|\;\cdot: S\rightarrow S\rightarrow S\;|\;e:S\;|\;^{-1}:S\rightarrow S\;|\;u:Group(S,G,\cdot,e,^{-1})}{}
\introduce*{}{H:ps(S)\;|\;v:H\leqslant G\;|\;g:S\;|\;w:g\varepsilon G}{}
\skipsteps*{\dots}{}{}
\step*{}{\boldsymbol {term_{\ref{lemma:coset}.4}}(S,G,\cdot,e,^{-1},u, H, v,g,w):=\ldots
\;:\;\boldsymbol{(g\cdot H)^{-1}=H\cdot g^{-1}}}{}

\step*{}{\boldsymbol {term_{\ref{lemma:coset}.5}}(S,G,\cdot,e,^{-1},u, H, v,g,w):=\ldots
\;:\;\boldsymbol{(H\cdot g)^{-1}=g^{-1}\cdot H}}{}
\skipsteps*{\dots}{}{}

\step*{}{\boldsymbol{
term_{\ref{lemma:coset}.6}}(S,G,\cdot, e,^{-1},u,H,v,g,w) 
:=\ldots
\;:\;\boldsymbol{
(g^{-1} \cdot H\cdot g)\leqslant G} }{}
\end{flagderiv}

7) 

\begin{flagderiv}
\introduce*{}{S: *_s\;|\;G:ps(S)\;|\;\cdot: S\rightarrow S\rightarrow S\;|\;e:S\;|\;^{-1}:S\rightarrow S\;|\;u:Group(S,G,\cdot,e,^{-1})}{}

\introduce*{}{H:ps(S)\;|\;v:H\leqslant G\;|\;g:S\;|\;w:g\varepsilon G}{}

\skipsteps*{\dots}{}{}

\step*{}{\boldsymbol{
term_{\ref{lemma:coset}.7}}(S,G,\cdot, e,^{-1},u,H,v,g,w) 
:=\ldots
\;:\;\boldsymbol{
(g\cdot H=H)\;\Leftrightarrow\;g\varepsilon H}}{}
\end{flagderiv}

The full proofs are in the Appendix, pg. 74.
\end{proof}

\begin{theorem}
Let $B$ and $C$ be subgroups of a group $G$. Then $(B\cdot C)$ is a subgroup of $G$ if and only if $B\cdot C=C\cdot B$. In this case $B$ and $C$ are called \textbf{permutable}.

\label{theorem:permutable}
\end{theorem}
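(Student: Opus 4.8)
The statement is a bi-implication, so the plan is to build, inside the usual flag context (with $S$, $\cdot$, $e$, $^{-1}$, $u:Group(S,G,\cdot,e,^{-1})$, $B,C:ps(S)$, $v:B\leqslant G$, $w:C\leqslant G$ declared), the two implications separately and combine them by $\wedge$-introduction. In both halves I will routinely extract $B\subseteq G$ and $C\subseteq G$ from $v$ and $w$ by conjunction elimination, since nearly every set-product lemma invoked below carries these as side conditions.

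For the direction $(B\cdot C)\leqslant G\Rightarrow B\cdot C=C\cdot B$: assuming $p:(B\cdot C)\leqslant G$, the set $B\cdot C$ is a subgroup, so Proposition~\ref{lemma:coset}.1 gives $(B\cdot C)^{-1}=B\cdot C$, while Proposition~\ref{lemma:coset}.3 applied to the subgroups $B$ and $C$ gives $(B\cdot C)^{-1}=C\cdot B$. Chaining these with $eq\mhyphen sym$ and $eq\mhyphen trans$ yields $B\cdot C=C\cdot B$. This half is short once the coset lemmas are in hand.

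For the converse, assume $q:B\cdot C=C\cdot B$; I must produce a term of type $Subgroup(S,G,\cdot,e,^{-1},B\cdot C)$, a conjunction of four facts. (i) $(B\cdot C)\subseteq G$ is exactly Lemma~\ref{lemma:mult}.8. (ii) $e\,\varepsilon\,(B\cdot C)$: from the subgroup axioms $e\,\varepsilon\,B$ and $e\,\varepsilon\,C$, Lemma~\ref{lemma:mult-triv}.4 gives $(e\cdot e)\,\varepsilon\,(B\cdot C)$, and $e\cdot e=e$ by $Gr_4$ (since $e\,\varepsilon\,G$), so substitutivity closes it. (iii) $Closure_1(S,B\cdot C,^{-1})$: for $x\,\varepsilon\,(B\cdot C)$, Lemma~\ref{lemma:mult-triv}.1 gives $x^{-1}\,\varepsilon\,(B\cdot C)^{-1}$, and $(B\cdot C)^{-1}=C\cdot B=B\cdot C$ by Proposition~\ref{lemma:coset}.3 and $q$, so substitutivity gives $x^{-1}\,\varepsilon\,(B\cdot C)$. (iv) $Closure_2(S,B\cdot C,\cdot)$: for $x,y\,\varepsilon\,(B\cdot C)$, Lemma~\ref{lemma:mult-triv}.4 gives $(x\cdot y)\,\varepsilon\,((B\cdot C)\cdot(B\cdot C))$, so it remains to prove the set identity $(B\cdot C)\cdot(B\cdot C)=B\cdot C$ and conclude by substitutivity; then $\wedge$-introduction of (i)--(iv) yields $(B\cdot C)\leqslant G$.

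The core of the argument --- and the step I expect to be the most laborious --- is this identity $(B\cdot C)\cdot(B\cdot C)=B\cdot C$. I would chase it through Proposition~\ref{lemma:set-assoc}.7 (associativity of set products), the idempotence facts $B\cdot B=B$ and $C\cdot C=C$ from Proposition~\ref{lemma:coset}.2, and the hypothesis $q$, schematically:
\begin{align*}
(B\cdot C)\cdot(B\cdot C)&=B\cdot\bigl(C\cdot(B\cdot C)\bigr)=B\cdot\bigl((C\cdot B)\cdot C\bigr)\\
&=B\cdot\bigl((B\cdot C)\cdot C\bigr)=B\cdot\bigl(B\cdot(C\cdot C)\bigr)\\
&=B\cdot(B\cdot C)=(B\cdot B)\cdot C=B\cdot C,
\end{align*}
where each equality is a use of \ref{lemma:set-assoc}.7 (with the relevant subset-of-$G$ hypotheses), of $q$, or of \ref{lemma:coset}.2, glued with $eq\mhyphen trans$ and $eq\mhyphen cong$; conversions between extensional and intensional set equality via the Extensionality Axiom are left implicit, as elsewhere in the paper. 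Bookkeeping of these side conditions and equality manipulations, rather than any conceptual difficulty, is what makes the full proof long.
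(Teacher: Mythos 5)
Your proposal is correct and follows essentially the same route as the paper's proof: the forward direction via $term_{\ref{lemma:coset}.1}$ and $term_{\ref{lemma:coset}.3}$ applied to $(B\cdot C)$, and the converse by verifying the four subgroup conditions using Lemma~\ref{lemma:mult}.8, Lemma~\ref{lemma:mult-triv}, the identity $(B\cdot C)^{-1}=B\cdot C$, and the idempotence computation $(B\cdot C)\cdot(B\cdot C)=B\cdot C$ chased through Proposition~\ref{lemma:set-assoc}.7, Proposition~\ref{lemma:coset}.2, and the hypothesis. The only difference is the exact order of rewrites in that chain, which is immaterial.
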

\begin{proof}
This is a sketch of the proof:
\begin{flagderiv}
\introduce*{}{S: *_s\;|\;G:ps(S)\;|\;\cdot: S\rightarrow S\rightarrow S\;|\;e:S\;|\;^{-1}:S\rightarrow S\;|\;u:Group(S,G,\cdot,e,^{-1})}{}
\introduce*{}{B,C:ps(S)\;|\;v:B\leqslant G\;|\;w:C\leqslant G}{}

\skipsteps*{\dots}{}{}

\step*{}{\boldsymbol {term_{\ref{theorem:permutable}}}(S,G,\cdot,e,^{-1},u, B,C,v,w):=\ldots
\;:\;\boldsymbol{(B\cdot C)\leqslant G
\Leftrightarrow B\cdot C=C\cdot B}}{}
\end{flagderiv}

The full proof is in the Appendix, pg. 78.
\end{proof}

\section{Normal Subgroups and Quotient Groups}
\subsection{Normal Subgroups}

\begin{definition}
Let $B$ be a subset of a group $G$ and $x,g$ be elements of $G$. 

1) $(g^{-1}\cdot x\cdot g)$ is called a \textbf{conjugate} of $x$.

2) $(g^{-1}\cdot B\cdot g)$ is called a \textbf{conjugate} of $B$.
\label{def:conjugate}
\end{definition}

\begin{proposition}
1) Being conjugate is an equivalence relation on a fixed group.

2) Being conjugate is an equivalence relation on subsets of a fixed group.

3) Being conjugate is an equivalence relation on subgroups of a fixed group.
\label{prop:conjugate}
\end{proposition}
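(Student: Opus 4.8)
The plan is to prove the three equivalence-relation statements in parallel, since in every case reflexivity, symmetry and transitivity reduce to exhibiting an explicit conjugator in $G$ and then simplifying with the group axioms; the three conjugators are always $e$ (for reflexivity), $g^{-1}$ (for symmetry), and $g\cdot h$ (for transitivity).

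For part 1, \emph{being conjugate} on elements of $G$ is the relation $\lambda x,y:S.\exists g:S.(g\varepsilon G\wedge y=g^{-1}\cdot x\cdot g)$, and I would verify the three clauses of $equiv\mhyphen rel$ directly. Reflexivity: with $g:=e$ (in $G$ by $Gr_1$ of Lemma \ref{lemma:technical_1}), the equality $e^{-1}\cdot x\cdot e=x$ follows from $e^{-1}=e$ (Proposition \ref{theorem:axiom_corollary}.8) together with $Gr_5$ and $Gr_4$. Symmetry: from $y=g^{-1}\cdot x\cdot g$ with $g\varepsilon G$, take the conjugator $g^{-1}$ (in $G$ by $Gr_3$); since $(g^{-1})^{-1}=g$ (Proposition \ref{theorem:axiom_corollary}.6), the identity $(g^{-1})^{-1}\cdot y\cdot g^{-1}=x$ is obtained by substituting for $y$, regrouping with $Gr_2$ ($Assoc$), and cancelling via $Gr_7$, $Gr_8$ and the unit laws $Gr_4$, $Gr_5$. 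Transitivity: from $y=g^{-1}\cdot x\cdot g$ and $z=h^{-1}\cdot y\cdot h$ with $g,h\varepsilon G$, take the conjugator $g\cdot h$ (in $G$ by $Gr_9$); substituting and regrouping yields $z=(h^{-1}\cdot g^{-1})\cdot x\cdot(g\cdot h)$, and $h^{-1}\cdot g^{-1}=(g\cdot h)^{-1}$ by Proposition \ref{theorem:axiom_corollary}.5.

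Part 2 follows the same three-conjugator pattern one level up, at the level of subsets: the relation is $\lambda B,C:ps(S).\exists g:S.(g\varepsilon G\wedge C=g^{-1}\cdot B\cdot g)$ taken over the collection $\{B:ps(S)\;|\;B\subseteq G\}$. Each element-level manipulation is replaced by its set analogue — the unit laws $e\cdot B=B$ and $B\cdot e=B$ of Lemma \ref{lemma:mult}.1--2, and the seven regrouping identities of Proposition \ref{lemma:set-assoc} — and at each intermediate step one must also record, via Lemma \ref{lemma:mult}.4,5,8, that the relevant product stays $\subseteq G$ so those identities apply, invoking $Gr_3,Gr_7,Gr_8$ and Proposition \ref{theorem:axiom_corollary}.5,6,8 exactly where part 1 did. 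For part 3 I would take the same relation over the collection $\{H:ps(S)\;|\;H\leqslant G\}$: the three clauses are inherited verbatim from part 2 because $H\leqslant G$ entails $H\subseteq G$, and I would additionally cite Proposition \ref{lemma:coset}.6 to record that a conjugate of a subgroup is again a subgroup, so the relation is genuinely defined on subgroups. Conversions between $=_{ext}$ and $=_{ps(S)}$ are left to the Extensionality Axiom throughout.

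The main obstacle is bookkeeping rather than mathematics: in parts 2 and 3 each equality must be threaded through Proposition \ref{lemma:set-assoc} in precisely the right order while carrying along the $\subseteq G$ side-conditions demanded by Lemma \ref{lemma:mult}. I expect the symmetry clause of part 2 — showing $g\cdot(g^{-1}\cdot B\cdot g)\cdot g^{-1}=B$ — to be the most delicate single computation, requiring three successive regroupings and two inverse cancellations, each with its own closure obligation, before the two unit laws finish it.
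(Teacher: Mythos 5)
Your proposal is correct and follows essentially the same route as the paper: the same three conjugators $e$, $g^{-1}$, $g\cdot h$ in all three parts, the same supporting facts ($Gr_1$--$Gr_9$ from Lemma \ref{lemma:technical_1}, Proposition \ref{theorem:axiom_corollary}.5, .6, .8, the set unit laws of Lemma \ref{lemma:mult} and the regroupings of Proposition \ref{lemma:set-assoc} with their $\subseteq G$ side-conditions), and the same derivation of part 3 from part 2 by restricting along $H\leqslant G\Rightarrow H\subseteq G$. The only (harmless) deviation is your appeal to Proposition \ref{lemma:coset}.6 in part 3: since $equiv\mhyphen rel$ only quantifies the arguments of $R_s$ over the given collection and never requires the conjugate itself to lie in it, the paper neither needs nor uses the fact that a conjugate of a subgroup is again a subgroup.
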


\begin{proof}
This is a sketch of the proof for all parts.
\begin{flagderiv}
\introduce*{}{S: *_s\;|\;G:ps(S)}{}

\step*{}{\text{Definition  }Subs(S,G):=
\{B:ps(S)|B\subseteq G\}
\;:\;ps(ps((S))}{}

\step*{}{\text{Notation }M\;\text{ for }Subs(S,G)}{}

\introduce*{}{\cdot: S\rightarrow S\rightarrow S\;|\;^{-1}:S\rightarrow S}{}

\step*{}{\text{Definition  }R_c:=\lambda x,y:S.\exists g:S.(g\varepsilon G\wedge y=g^{-1}\cdot x\cdot g)\;:\;S\rightarrow S\rightarrow *_p}{}

\step*{}{\text{Definition  }R_s:=\lambda B,C:ps(S).\exists g:S.(g\varepsilon G\wedge C=g^{-1}\cdot B\cdot g)\;:\;ps(S)\rightarrow ps(S)\rightarrow *_p}{}

\introduce*{}{e:S}{}

\step*{}{\text{Definition  }Subg(S,G,\cdot,e,^{-1}):=
\{B:ps(S)\;|\;B\leqslant G\}
\;:\;ps(ps((S))}{}

\step*{}{\text{Notation }K\;\text{ for }Subg(S,G,\cdot,e,^{-1})}{}

\assume*{}{u:Group(S,G,\cdot,e,^{-1})}{}

\skipsteps*{\dots}{}{}

\step*{}{\boldsymbol {term_{\ref{prop:conjugate}.1}}(S,G,\cdot,e,^{-1},u):= 
\ldots
\;:\;\boldsymbol{equiv\mhyphen rel(S,G,R_c)}}{}

\skipsteps*{\dots}{}{}

\step*{}{\boldsymbol {term_{\ref{prop:conjugate}.2}}(S,G,\cdot,e,^{-1},u):= 
\ldots
\;:\;\boldsymbol{equiv\mhyphen rel(ps(S),M,R_s)}}{}

\skipsteps*{\dots}{}{}

\step*{}{\boldsymbol {term_{\ref{prop:conjugate}.3}}(S,G,\cdot,e,^{-1},u):= 
\ldots
\;:\;\boldsymbol{equiv\mhyphen rel(ps(S),K,R_s)}}{}
\end{flagderiv}

The full proof is in the Appendix, pg. 79.  
\end{proof}

\begin{definition}
\textbf{Normal subgroup.}
\begin{flagderiv}
\introduce*{}{S: *_s\;|\;
\cdot: S\rightarrow S\rightarrow S\;|\;G:ps(S)\;|\;e:S\;|\;^{-1}:S\rightarrow S}{}
\introduce*{}{H:ps(S)}{}
\step*{}{\text{Definition }\boldsymbol{Normal\mhyphen subgroup}(S,G,\cdot,e,^{-1},H):=H\leqslant G
\\\quad\quad
\wedge
\forall g:S.[g\varepsilon G\Rightarrow\forall h:S.(h\varepsilon H\Rightarrow (g^{-1}\cdot h\cdot g) \;\varepsilon\;H)]
\;:\;*_p}{}

\step*{}{\text{Notation: }
\boldsymbol{H\triangleleft G}\text{ for } Normal\mhyphen subgroup(S,G,\cdot,e,^{-1},H)}{}
\end{flagderiv}
\label{def:normal}
\end{definition}

\begin{proposition}
Let $G$ be a group with identity $e$. Then

1) $G$ is a normal subgroup of $G$;

2) $\{e\}$ is a normal subgroup of $G$.
\label{prop:normal-triv}
\end{proposition}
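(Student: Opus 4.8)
The plan is to unfold the definition $\textbf{Normal\mhyphen subgroup}(S,G,\cdot,e,^{-1},H)$ in each case and supply the two conjuncts: that $H$ is a subgroup of $G$, and that conjugation preserves $H$. For part (1), take $H := G$; the fact $G \leqslant G$ is exactly $\boldsymbol{term_{\ref{lemma:triv-subgroups}.1}}$. For the conjugation clause with $H = G$, given $g, h \varepsilon G$ I must show $(g^{-1}\cdot h\cdot g)\,\varepsilon\,G$: using $\boldsymbol{Gr_3}$ to get $g^{-1}\varepsilon G$, then $\boldsymbol{Gr_9}$ twice (first for $g^{-1}\cdot h$, then for $(g^{-1}\cdot h)\cdot g$) gives closure, after rewriting with $Assoc$ if needed to match the bracketing convention $g^{-1}\cdot h\cdot g = (g^{-1}\cdot h)\cdot g$. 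For part (2), take $H := \{x:S\;|\;x=e\}$; the fact $H\leqslant G$ is $\boldsymbol{term_{\ref{lemma:triv-subgroups}.2}}$.

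The only real content is the conjugation clause of part (2): given $g\varepsilon G$ and $h\varepsilon H$, i.e. $h = e$, I must show $(g^{-1}\cdot h\cdot g)\,\varepsilon\,H$, i.e. $g^{-1}\cdot h\cdot g = e$. First I would rewrite $h$ to $e$ via $eq\mhyphen subs$, reducing the goal to $g^{-1}\cdot e\cdot g = e$. Then $\boldsymbol{Gr_4}$ applied to $g^{-1}$ (which lies in $G$ by $\boldsymbol{Gr_3}$) gives $g^{-1}\cdot e = g^{-1}$, so by congruence $g^{-1}\cdot e\cdot g = g^{-1}\cdot g$; and $\boldsymbol{Gr_8}$ applied to $g$ gives $g^{-1}\cdot g = e$. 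Chaining these equalities with $eq\mhyphen trans$ yields the result, and then $eq\mhyphen subs$ back through the membership predicate of $H$ finishes.

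I expect the main obstacle to be purely bookkeeping: keeping track of which elements have been shown to lie in $G$ (so that the $\boldsymbol{Gr_i}$ lemmas apply), handling the implicit associativity rebracketing that the notation $x\cdot y\cdot z = (x\cdot y)\cdot z$ hides, and threading the membership/equality substitutions cleanly. There is no conceptual difficulty — both statements follow from the basic group properties in Lemma \ref{lemma:technical_1} and Proposition \ref{theorem:axiom_corollary} — so the formal proof will be a straightforward but somewhat tedious sequence of flag-style steps, most of whose length comes from carrying the many fixed parameters $S,G,\cdot,e,^{-1},u$ through every application. The sketch in the paper will therefore just record the two resulting proof terms $\boldsymbol{term_{\ref{prop:normal-triv}.1}}$ and $\boldsymbol{term_{\ref{prop:normal-triv}.2}}$ with the full derivations deferred to the Appendix.
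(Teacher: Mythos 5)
Your proposal matches the paper's proof essentially step for step: part (1) combines $\boldsymbol{term_{\ref{lemma:triv-subgroups}.1}}$ with $Gr_3$ and two applications of $Gr_9$ (no $Assoc$ rewriting is in fact needed, since $g^{-1}\cdot h\cdot g$ abbreviates $(g^{-1}\cdot h)\cdot g$), and part (2) combines $\boldsymbol{term_{\ref{lemma:triv-subgroups}.2}}$ with the chain $g^{-1}\cdot h\cdot g=g^{-1}\cdot e\cdot g=g^{-1}\cdot g=e$ via $Gr_4$, $Gr_8$, congruence and transitivity. The only cosmetic difference is that the paper rewrites $h$ to $e$ with $eq\mhyphen cong_1$ rather than $eq\mhyphen subs$; the argument is otherwise identical.
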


\begin{proof}
These are sketches of the proofs.

1)
\begin{flagderiv}
\introduce*{}{S: *_s\;|\;G:ps(S)\;|\;\cdot: S\rightarrow S\rightarrow S\;|\;e:S\;|\;^{-1}:S\rightarrow S\;|\;u:Group(S,G,\cdot,e,^{-1})}{}

\skipsteps*{\dots}{}{}

\step*{}{\boldsymbol {term_{\ref{prop:normal-triv}.1}}(S,G,\cdot,e,^{-1},u):= 
\ldots
\;:\;\boldsymbol{G\triangleleft G}}{}
\end{flagderiv}

2)
\begin{flagderiv}
\introduce*{}{S: *_s\;|\;G:ps(S)\;|\;\cdot: S\rightarrow S\rightarrow S\;|\;e:S\;|\;^{-1}:S\rightarrow S\;|\;u:Group(S,G,\cdot,e,^{-1})}{}

\step*{}{\text{Notation }H:=\{x:S\;|\;x=e\}
\;:\;ps(S)}{}

\skipsteps*{\dots}{}{}

\step*{}{\boldsymbol {term_{\ref{prop:normal-triv}.2}}(S,G,\cdot,e,^{-1},u):= 
\ldots
\;:\;\boldsymbol{H\triangleleft G}}{}
\end{flagderiv}

The full proofs are in the Appendix, pg. 84. 
\end{proof}

\begin{proposition}
Any subgroup of an abelian group is normal.
\label{prop:normal-abel}
\end{proposition}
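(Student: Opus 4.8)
The plan is to unfold the goal $Normal\mhyphen subgroup(S,G,\cdot,e,^{-1},H)$, which is the conjunction of $H\leqslant G$ and $\forall g:S.[g\varepsilon G\Rightarrow\forall h:S.(h\varepsilon H\Rightarrow (g^{-1}\cdot h\cdot g)\varepsilon H)]$. So after opening flags for $S$, $\cdot$, $G$, $e$, $^{-1}$, then $u:Abelian\mhyphen group(S,G,\cdot,e,^{-1})$, then $H$ and $v:H\leqslant G$, the first conjunct is just $v$. From $u$ I split off $u_1:=\wedge_1(u):Group(S,G,\cdot,e,^{-1})$ and $u_2:=\wedge_2(u):Commut(S,G,\cdot)$, and I will feed $u_1$ to the proof terms $Gr_2$ (associativity on $G$), $Gr_3$ ($x^{-1}\varepsilon G$), $Gr_4$ ($x\cdot e=x$) and $Gr_8$ ($x^{-1}\cdot x=e$) from Lemma \ref{lemma:technical_1}.

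For the second conjunct I introduce $g:S$, $g_1:g\varepsilon G$, $h:S$, $h_1:h\varepsilon H$, and must produce $(g^{-1}\cdot h\cdot g)\varepsilon H$, where $g^{-1}\cdot h\cdot g$ abbreviates $(g^{-1}\cdot h)\cdot g$. First I obtain $h_2:h\varepsilon G$ by applying the $H\subseteq G$ component of $v$ to $h$ and $h_1$, and $g_2:g^{-1}\varepsilon G$ from $Gr_3$. Then I assemble the equality chain
\[(g^{-1}\cdot h)\cdot g\;=\;(h\cdot g^{-1})\cdot g\;=\;h\cdot(g^{-1}\cdot g)\;=\;h\cdot e\;=\;h,\]
whose four links are: $eq\mhyphen cong_1(\lambda t:S.t\cdot g,\,-)$ applied to the commutativity instance $u_2\,g^{-1}\,g_2\,h\,h_2$; $Gr_2$ applied to $h,g^{-1},g$ together with $h_2,g_2,g_1$; $eq\mhyphen cong_1(\lambda t:S.h\cdot t,\,-)$ applied to the instance of $Gr_8$ giving $g^{-1}\cdot g=e$; and $Gr_4$ applied to $h$ and $h_2$. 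Stitching these with $eq\mhyphen trans_1$ gives $c:(g^{-1}\cdot h)\cdot g=_S h$.

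Finally I transport membership with $eq\mhyphen subs_2(\lambda t:S.\,t\varepsilon H,\,c,\,h_1):(g^{-1}\cdot h\cdot g)\varepsilon H$. Discharging the $h$ and $g$ flags yields the universally quantified second conjunct, and $\wedge$-introduction with $v$ gives $Normal\mhyphen subgroup(S,G,\cdot,e,^{-1},H)$; abstracting over $u$, $H$, $v$ produces $term_{\ref{prop:normal-abel}}$.

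There is no genuinely hard step: the content is simply that in an abelian group the conjugate $g^{-1}\cdot h\cdot g$ collapses to $h$. The only things to be careful about are bookkeeping --- matching each membership proof ($g_1,g_2,h_1,h_2$) to the lemma instance that needs it, choosing the one-hole context correctly in each $eq\mhyphen cong_1$ ($\lambda t.t\cdot g$ versus $\lambda t.h\cdot t$), and using $eq\mhyphen subs_2$ rather than $eq\mhyphen subs_1$, since we are going from $h\varepsilon H$ and $(g^{-1}\cdot h)\cdot g=h$ to $((g^{-1}\cdot h)\cdot g)\varepsilon H$.
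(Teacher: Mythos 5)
Your proof is correct and follows essentially the same route as the paper's: split off commutativity and the group structure from $u$, take the first conjunct to be $v$, and for the conjugation condition build exactly the chain $(g^{-1}\cdot h)\cdot g=(h\cdot g^{-1})\cdot g=h\cdot(g^{-1}\cdot g)=h\cdot e=h$ via $Commut$, $Gr_2$, $Gr_8$, $Gr_4$, finishing with $eq\mhyphen subs_2$. The membership bookkeeping and the choice of congruence contexts also match the paper's derivation.
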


\begin{proof}
This is a sketch of the proof: 

\begin{flagderiv}
\introduce*{}{S: *_s\;|\;G:ps(S)\;|\;\cdot: S\rightarrow S\rightarrow S\;|\;e:S\;|\;^{-1}:S\rightarrow S\;|\;u:Abelian\mhyphen group(S,G,\cdot,e,^{-1})}{}

\introduce*{}{H:ps(S)\;|\;v:H\leqslant G}{}

\skipsteps*{\dots}{}{}

\step*{}{\boldsymbol {term_{\ref{prop:normal-abel}}}(S,G,\cdot,e,^{-1},u,H,v):= 
\ldots
\;:\;\boldsymbol{H\triangleleft G}}{}
\end{flagderiv}

The full proof is in the Appendix, pg. 85. 
\end{proof}

\begin{theorem}
Suppose $H$ is a subgroup of a group $G$. Then intersection of all sets of the form $(x^{-1}\cdot H\cdot x)$, $x\varepsilon G$, is a normal subgroup of $G$.
\label{prop:normal-inters}
\end{theorem}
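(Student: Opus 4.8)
The plan is to show that the set $N := \cap\, U$, where $U := \{Z : ps(S) \mid \exists x\, (x \varepsilon G \wedge Z =_{ext} x^{-1}\cdot H\cdot x)\}$ is the collection of all conjugates of $H$, is a normal subgroup of $G$. First I would observe that each set $x^{-1}\cdot H\cdot x$ is a subgroup of $G$: this is exactly $term_{\ref{lemma:coset}.6}$ applied with $g$ replaced by $x$ (note $g^{-1}\cdot H\cdot g$ for $g = x^{-1}$ gives $x^{-1}\cdot H\cdot x$ after using $(x^{-1})^{-1}=x$ from Proposition \ref{theorem:axiom_corollary}.6, so it is cleanest to just take $g = x$ in the statement of \ref{lemma:coset}.6 directly). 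The collection $U$ is non-empty, since taking $x := e$ and using $e^{-1}=e$ (Proposition \ref{theorem:axiom_corollary}.8) together with $e\cdot H = H$ and $H\cdot e = H$ (Lemma \ref{lemma:mult}.1--2, using $H\subseteq G$) shows $H$ itself is a conjugate of $H$, hence $H \varepsilon U$. Then by Proposition \ref{lemma:triv-subgroups}.4, the intersection $N = \cap\, U$ of this non-empty collection of subgroups is a subgroup of $G$.

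It remains to prove the normality condition: for every $g\varepsilon G$ and every $h\varepsilon N$, we need $(g^{-1}\cdot h\cdot g)\varepsilon N$. Unfolding the definition of $\cap\, U$, this means: given that $h \varepsilon Z$ for every $Z\varepsilon U$, show $(g^{-1}\cdot h\cdot g)\varepsilon Z'$ for every $Z'\varepsilon U$. So fix an arbitrary $Z' \varepsilon U$, say $Z' =_{ext} x^{-1}\cdot H\cdot x$ for some $x\varepsilon G$. The key trick is that $g\cdot x \varepsilon G$ (by $Closure_2$, i.e. $Gr_9$), so the conjugate $(g\cdot x)^{-1}\cdot H\cdot(g\cdot x)$ is also a member of $U$; call it $Z''$. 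By hypothesis on $h$, we have $h\varepsilon Z''$, i.e. $h \varepsilon (g\cdot x)^{-1}\cdot H\cdot(g\cdot x)$. Now using Proposition \ref{theorem:axiom_corollary}.5, $(g\cdot x)^{-1} = x^{-1}\cdot g^{-1}$, and using the set-product associativity laws of Proposition \ref{lemma:set-assoc} (parts 3 and 4, and part 1), one rewrites $(g\cdot x)^{-1}\cdot H\cdot(g\cdot x) = x^{-1}\cdot g^{-1}\cdot H\cdot g\cdot x = x^{-1}\cdot(g^{-1}\cdot H\cdot g)\cdot x$. So $h = x^{-1}\cdot k\cdot x$ for some $k\varepsilon g^{-1}\cdot H\cdot g$, where in turn $k = g^{-1}\cdot m\cdot g$ for some $m\varepsilon H$ (unfolding the set-product definitions via Lemma \ref{lemma:mult-triv} in reverse, i.e. membership extraction).

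The final computation is then to verify $g^{-1}\cdot h\cdot g \varepsilon x^{-1}\cdot H\cdot x = Z'$. Substituting, $g^{-1}\cdot h\cdot g$; but this seems to go the wrong way — so instead the cleaner route is: from $h\varepsilon Z''$ with $Z'' = x^{-1}\cdot(g^{-1}\cdot H\cdot g)\cdot x$, extract $h = x^{-1}\cdot k \cdot x$ with $k = g^{-1}\cdot m\cdot g$, $m\varepsilon H$. We want $g^{-1}\cdot h\cdot g\varepsilon x^{-1}\cdot H\cdot x$. Hmm, $g^{-1}\cdot h\cdot g = g^{-1}\cdot x^{-1}\cdot k\cdot x\cdot g$, which is not obviously in $x^{-1}\cdot H\cdot x$. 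So the right choice is the reverse: to show $(g^{-1}\cdot h\cdot g)\varepsilon Z'$ where $Z' = x^{-1}\cdot H\cdot x$, use instead the member $Z^* \varepsilon U$ corresponding to $x' := g\cdot x^{-1}$... this requires care. The honest approach: show $(g^{-1}\cdot h\cdot g)\varepsilon x^{-1}\cdot H\cdot x$ by noting this is equivalent (since $x^{-1}\cdot H\cdot x$ is a group and we can conjugate) to $h \varepsilon x\cdot(g^{-1}\cdot h\cdot g \text{ stuff})$... Let me restate: the clean statement is $(g^{-1}\cdot h\cdot g) \varepsilon (x^{-1}\cdot H\cdot x) \Leftrightarrow h\varepsilon (g\cdot x^{-1}\cdot H\cdot x\cdot g^{-1}) = ((g^{-1})^{-1}\cdot(x\cdot g^{-1})^{-1}\text{-conjugate})$, and $(x\cdot g^{-1})^{-1} = g\cdot x^{-1}$... one checks $g\cdot x^{-1}\cdot H\cdot x\cdot g^{-1}$ is the conjugate $y^{-1}\cdot H\cdot y$ with $y = x\cdot g^{-1}\varepsilon G$, hence lies in $U$, hence contains $h$. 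This conjugation-equivalence step, packaging the cancellation $g\cdot g^{-1}=e$, $x^{-1}\cdot x = e$ and the set-associativity rewrites (Proposition \ref{lemma:set-assoc} and Lemma \ref{lemma:mult}), is the main obstacle — it is conceptually routine but the formal bookkeeping of set-product associativity, the identity/inverse laws for set products, and extracting and re-introducing membership witnesses is where all the length of the proof lives. The full formal proof is in the Appendix.
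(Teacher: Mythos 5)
Your proposal is correct and follows essentially the same route as the paper: the subgroup part via Proposition \ref{lemma:triv-subgroups}.4 applied to the non-empty collection of conjugates (each a subgroup by Proposition \ref{lemma:coset}.6), and normality by instantiating the hypothesis $h\varepsilon N$ at the conjugate indexed by $y:=x\cdot g^{-1}$, which is exactly the witness the paper's Appendix proof uses. Your initial detour through $g\cdot x$ indeed goes the wrong way, but your corrected choice and the cancellation $g^{-1}\cdot(y^{-1}\cdot H\cdot y)\cdot g=(y\cdot g)^{-1}\cdot H\cdot(y\cdot g)=x^{-1}\cdot H\cdot x$ is precisely the paper's computation.
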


\begin{proof}
This is a sketch of the proof: 

\begin{flagderiv}
\introduce*{}{S: *_s\;|\;G:ps(S)\;|\;\cdot: S\rightarrow S\rightarrow S\;|\;e:S\;|\;^{-1}:S\rightarrow S\;|\;u:Group(S,G,\cdot,e,^{-1})}{}

\introduce*{}{H:ps(S)\;|\;v:H\leqslant G}{}

\step*{}{U:=\{Z:ps(S)\;|\;\exists x:S.(x\varepsilon G\wedge Z=x^{-1}\cdot H\cdot x)\}\;:\;ps(ps(S))}{}

\step*{}{N:=\cap U: ps(S)}{}

\skipsteps*{\dots}{}{}

\step*{}{\boldsymbol {term_{\ref{prop:normal-inters}}}(S,G,\cdot,e,^{-1},u,H,v):= 
\ldots
\;:\;\boldsymbol{N\triangleleft G}}{}
\end{flagderiv}

The full proof is in the Appendix, pg. 86. 
\end{proof}

\begin{proposition}
Suppose $H$ is a subgroup of a group $G$. Then the following conditions are equivalent.

1) $H\triangleleft G$; 

2) for any element $g$ of $G$: $\quad(g^{-1}\cdot H\cdot g)=H$;
\medskip

3) for any element $g$ of $G$: $\quad(g\cdot H\cdot g^{-1})=H$;
\medskip

4) for any element $g$ of $G$: $\quad g\cdot H=H\cdot g$;
\medskip

5) for any element $g$ of $G$: $\quad(g^{-1}\cdot H\cdot g)\subseteq H$.
\label{prop:normal-criteria}
\end{proposition}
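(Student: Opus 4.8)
The plan is to establish the five conditions mutually equivalent by a single cyclic chain of implications,
\[1)\Rightarrow 5)\Rightarrow 2)\Rightarrow 3)\Rightarrow 4)\Rightarrow 1),\]
developed inside one flag that introduces $S:*_s$, the operations $\cdot$ and $^{-1}$, $e:S$, a proof $u:Group(S,G,\cdot,e,^{-1})$, the subset $H:ps(S)$, and $v:H\leqslant G$. Since $H\leqslant G$ is a standing hypothesis, condition 1) amounts to $H\leqslant G$ together with the pointwise clause of $Normal\mhyphen subgroup$ (Definition \ref{def:normal}), while 5) is exactly that pointwise clause re-expressed with a set product; so $1)\Leftrightarrow 5)$ is essentially bookkeeping, but it is convenient to keep 5) inside the cycle. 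Throughout I will draw on the packaged group facts $Gr_1$--$Gr_9$ of Lemma \ref{lemma:technical_1} (in particular $x\varepsilon G\Rightarrow x^{-1}\varepsilon G$, $x^{-1}\cdot x=e$, $x\cdot e=x$, $e\cdot x=x$, and associativity), on $(x^{-1})^{-1}=x$ from Proposition \ref{theorem:axiom_corollary}.6, on the membership facts of Lemma \ref{lemma:mult-triv}, on $e\cdot B=B$, $B\cdot e=B$ and $(g\cdot B)\subseteq G$ from Lemma \ref{lemma:mult} (their side conditions $H\subseteq G$ coming from $v$, and $g^{-1}\varepsilon G$ from $Gr_3$), on the set-product associativity identities of Proposition \ref{lemma:set-assoc}, and on Lemma \ref{lemma:set-mult} to pass freely between the two readings of any triple product such as $g^{-1}\cdot H\cdot g$.

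For $1)\Rightarrow 5)$ I fix $g\varepsilon G$ and $x\varepsilon(g^{-1}\cdot H\cdot g)$, strip the two existentials of $mt_2$ and $mt_1$ to obtain $h\varepsilon H$ with $x=(g^{-1}\cdot h)\cdot g$, and apply the pointwise clause of 1) to get $x\varepsilon H$; by the multiplication shorthand this $x$ is literally $g^{-1}\cdot h\cdot g$, so no rewriting is needed. For $5)\Rightarrow 2)$ one inclusion of the set equality is 5) at $g$; for the other I take $h\varepsilon H$, note $h=g^{-1}\cdot(g\cdot h\cdot g^{-1})\cdot g$ by a short equational computation (associativity, $g^{-1}\cdot g=e$, and the identity laws), observe that $g\cdot h\cdot g^{-1}=(g^{-1})^{-1}\cdot h\cdot g^{-1}\varepsilon H$ by applying 5) at $g^{-1}\varepsilon G$ (using Proposition \ref{theorem:axiom_corollary}.6) together with Lemma \ref{lemma:mult-triv}.2,3, and conclude $h\varepsilon(g^{-1}\cdot H\cdot g)$ by Lemma \ref{lemma:mult-triv}; the two inclusions are combined and the Extensionality Axiom yields equality in $ps(S)$. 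For $2)\Rightarrow 3)$ I apply 2) at $g^{-1}\varepsilon G$, obtaining $((g^{-1})^{-1}\cdot H\cdot g^{-1})=H$, and rewrite $(g^{-1})^{-1}$ to $g$ via $eq\mhyphen cong$ and Proposition \ref{theorem:axiom_corollary}.6. For $3)\Rightarrow 4)$ I multiply the set equation $g\cdot H\cdot g^{-1}=H$ on the right by $g$ (congruence of $=$ under $X\mapsto X\cdot g$) and simplify the left side to $g\cdot H$ using Proposition \ref{lemma:set-assoc}.1, $g^{-1}\cdot g=e$ and $B\cdot e=B$. Finally, for $4)\Rightarrow 1)$ the conjunct $H\leqslant G$ is $v$; for the pointwise clause I fix $g\varepsilon G$, $h\varepsilon H$, apply 4) at $g^{-1}\varepsilon G$ to get $g^{-1}\cdot H=H\cdot g^{-1}$, transport $g^{-1}\cdot h\varepsilon(g^{-1}\cdot H)$ (Lemma \ref{lemma:mult-triv}.2) across this equality to $g^{-1}\cdot h\varepsilon(H\cdot g^{-1})$, extract $h'\varepsilon H$ with $g^{-1}\cdot h=h'\cdot g^{-1}$, and compute $(g^{-1}\cdot h)\cdot g=(h'\cdot g^{-1})\cdot g=h'\cdot(g^{-1}\cdot g)=h'\cdot e=h'\varepsilon H$.

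I expect the only real difficulty to be bureaucratic rather than conceptual: every line of the cycle involves peeling off and re-introducing the nested existential quantifiers built into the set products $g\cdot H$, $H\cdot g$ and $g^{-1}\cdot H\cdot g$, repeatedly discharging the "$\subseteq G$" side conditions of Lemmas \ref{lemma:mult} and \ref{lemma:set-assoc} from $v$ and $Gr_3$, and threading the symmetry variants of $eq\mhyphen subs$, $eq\mhyphen trans$ and $eq\mhyphen cong$ together with the conversions between $=_{ext}$ and $=_{ps(S)}$; none of this is delicate, but it makes the formal derivation long. The five implications are then glued with the implication, conjunction and bi-implication rules into a single proof term $term_{\ref{prop:normal-criteria}}$ witnessing that all of 1)--5) are mutually equivalent; the complete flag-style derivation is deferred to the Appendix.
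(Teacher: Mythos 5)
Your proposal is correct and follows essentially the same approach as the paper: a cyclic chain of implications among the five conditions, using the same toolkit ($Gr_1$--$Gr_9$, Proposition \ref{theorem:axiom_corollary}.6, Lemmas \ref{lemma:mult-triv}, \ref{lemma:mult} and \ref{lemma:set-assoc}). The only difference is the cyclic order — you prove $1)\Rightarrow5)\Rightarrow2)\Rightarrow3)\Rightarrow4)\Rightarrow1)$ while the paper proves $1)\Rightarrow2)\Rightarrow3)\Rightarrow4)\Rightarrow5)\Rightarrow1)$ — which merely regroups the same sub-arguments (your $1)\Rightarrow5)$ plus $5)\Rightarrow2)$ is the paper's $1)\Rightarrow2)$ split in two, and your $4)\Rightarrow1)$ fuses the paper's $4)\Rightarrow5)$ and $5)\Rightarrow1)$).
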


\begin{proof}
It is sufficient to prove the following implications:
\[1) \Rightarrow 2) \Rightarrow 3) \Rightarrow 4) \Rightarrow 5) \Rightarrow 1).\]

This is a sketch of the proof: 

\begin{flagderiv}
\introduce*{}{S: *_s\;|\;G:ps(S)\;|\;\cdot: S\rightarrow S\rightarrow S\;|\;e:S\;|\;^{-1}:S\rightarrow S\;|\;u:Group(S,G,\cdot,e,^{-1})}{}

\introduce*{}{H:ps(S)\;|\;v:H\leqslant G}{}

\step*{}{\text{Notation }\;
A:=\forall g:S.(g\varepsilon G
\Rightarrow (g^{-1}\cdot H\cdot g)=H)\;:\;*_p}{}

\step*{}{\text{Notation }\;
B:=\forall g:S.(g\varepsilon G
\Rightarrow (g\cdot H\cdot g^{-1})=H)\;:\;*_p}{}

\step*{}{\text{Notation }\;
C:=\forall g:S.(g\varepsilon G
\Rightarrow g\cdot H=H\cdot g)\;:\;*_p}{}

\step*{}{\text{Notation }\;
D:=\forall g:S.(g\varepsilon G
\Rightarrow (g^{-1}\cdot H\cdot g)\subseteq H)\;:\;*_p}{}

\skipsteps*{\dots}{}{}

\step*{}{\boldsymbol {term_{\ref{prop:normal-criteria}.1}}(S,G,\cdot,e,^{-1},u,H,v):= 
\ldots\;:\;\boldsymbol {H\triangleleft G\Rightarrow A}}{}

\skipsteps*{\dots}{}{}

\step*{}{\boldsymbol {term_{\ref{prop:normal-criteria}.2}}(S,G,\cdot,e,^{-1},u,H,v):= 
\ldots\;:\;\boldsymbol {A\Rightarrow B}}{}

\skipsteps*{\dots}{}{}

\step*{}{\boldsymbol {term_{\ref{prop:normal-criteria}.3}}(S,G,\cdot,e,^{-1},u,H,v):= 
\ldots\;:\;\boldsymbol {B\Rightarrow C}}{}

\skipsteps*{\dots}{}{}

\step*{}{\boldsymbol {term_{\ref{prop:normal-criteria}.4}}(S,G,\cdot,e,^{-1},u,H,v):= 
\ldots\;:\;\boldsymbol {C\Rightarrow D}}{}

\skipsteps*{\dots}{}{}

\step*{}{\boldsymbol {term_{\ref{prop:normal-criteria}.5}}(S,G,\cdot,e,^{-1},u,H,v):= 
\ldots\;:\;\boldsymbol {D\Rightarrow H\triangleleft G}
}{}
\end{flagderiv}

The full proof is in the Appendix, pg. 88. 
\end{proof}

\begin{corollary}
Suppose $H$ is a subgroup of a group $G$. Then 
\[H\triangleleft G\quad\Leftrightarrow\quad
\forall g:S.(g\varepsilon G\;\Rightarrow g\;\cdot H=H\cdot g).\]
\label{corol:normal}
\end{corollary}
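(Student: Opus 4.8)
This corollary is an immediate consequence of Proposition \ref{prop:normal-criteria}. There, under the standing hypotheses $u:Group(S,G,\cdot,e,^{-1})$ and $v:H\leqslant G$, the auxiliary proposition $C$ was defined to be exactly $\forall g:S.(g\varepsilon G\Rightarrow g\cdot H=H\cdot g)$, which is the right-hand side of the equivalence we must prove. So the plan is simply to extract the equivalence $H\triangleleft G\Leftrightarrow C$ from the chain of implications $1)\Rightarrow 2)\Rightarrow 3)\Rightarrow 4)\Rightarrow 5)\Rightarrow 1)$ already established there.

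First I would open the usual flag block introducing $S:*_s$, $G:ps(S)$, $\cdot:S\rightarrow S\rightarrow S$, $e:S$, $^{-1}:S\rightarrow S$, $u:Group(S,G,\cdot,e,^{-1})$, then $H:ps(S)$, $v:H\leqslant G$, and re-introduce the abbreviations $A$, $B$, $C$, $D$ as in Proposition \ref{prop:normal-criteria}. For the forward direction I would compose the proof terms $term_{\ref{prop:normal-criteria}.1}:H\triangleleft G\Rightarrow A$, $term_{\ref{prop:normal-criteria}.2}:A\Rightarrow B$, and $term_{\ref{prop:normal-criteria}.3}:B\Rightarrow C$ by ordinary function composition (implication being the arrow type), obtaining a term $p_1:H\triangleleft G\Rightarrow C$, e.g.\ $p_1:=\lambda w:H\triangleleft G.\,term_{\ref{prop:normal-criteria}.3}(\cdots)\bigl(term_{\ref{prop:normal-criteria}.2}(\cdots)\bigl(term_{\ref{prop:normal-criteria}.1}(\cdots)w\bigr)\bigr)$.

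Symmetrically, for the backward direction I would compose $term_{\ref{prop:normal-criteria}.4}:C\Rightarrow D$ with $term_{\ref{prop:normal-criteria}.5}:D\Rightarrow H\triangleleft G$ to get a term $p_2:C\Rightarrow H\triangleleft G$. Finally I would apply $\wedge$-introduction, $\boldsymbol{\wedge}(p_1,p_2):(H\triangleleft G\Rightarrow C)\wedge(C\Rightarrow H\triangleleft G)$, which is by definition $H\triangleleft G\Leftrightarrow C$; this is the term $term_{\ref{corol:normal}}$. There is essentially no mathematical obstacle here — the only thing to be careful about is threading the long argument lists $(S,G,\cdot,e,^{-1},u,H,v)$ correctly through each invoked proof term and making sure the locally declared $C$ is definitionally the same predicate that appears in the corollary's statement, so that the final type reduces to the displayed bi-implication without further conversion.
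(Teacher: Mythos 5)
Your proposal is correct and follows essentially the same route as the paper: both directions are obtained by chaining the implication terms $term_{\ref{prop:normal-criteria}.1}$ through $term_{\ref{prop:normal-criteria}.3}$ (for $H\triangleleft G\Rightarrow C$) and $term_{\ref{prop:normal-criteria}.4}$, $term_{\ref{prop:normal-criteria}.5}$ (for $C\Rightarrow H\triangleleft G$), with $C$ definitionally equal to the displayed right-hand side. The only cosmetic difference is that the paper records the two directions as separate named terms (extracting $H\leqslant G$ from $v:H\triangleleft G$ via $\wedge_1$ in the forward case) rather than packaging them into a single $\wedge$-introduction.
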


\vspace{-0.6cm}
\begin{proof}
This is a sketch of the proof: 

\begin{flagderiv}
\introduce*{}{S: *_s\;|\;G:ps(S)\;|\;\cdot: S\rightarrow S\rightarrow S\;|\;e:S\;|\;^{-1}:S\rightarrow S\;|\;u:Group(S,G,\cdot,e,^{-1})}{}

\introduce*{}{H:ps(S)}{}

\assume*{}{v:H\triangleleft G}{}

\skipsteps*{\dots}{}{}

\step*{}{\boldsymbol {term_{\ref{corol:normal}.1}}(S,G,\cdot,e,^{-1},u,H,v):=
\ldots\;:\;
\boldsymbol {[\forall g:S.(g\varepsilon G
\Rightarrow g\cdot H=H\cdot g)]}}{}

\done

\assume*{}{v:H\leqslant G\;|\;w:[\forall g:S.(g\varepsilon G\Rightarrow g\cdot H=H\cdot g)]}{}

\skipsteps*{\dots}{}{}

\step*{}{\boldsymbol {term_{\ref{corol:normal}.2}}(S,G,\cdot,e,^{-1},u,H,v,w):=
\ldots\;:\;\boldsymbol {H\triangleleft G}}{}
\end{flagderiv}

The full proof is in the Appendix, pg. 90. 
\end{proof}

\begin{lemma}
For any normal subgroup $H$ of a group $G$ and elements $x,y$ of $G$:
\medskip

1) $(x\cdot H)^{-1}=x^{-1}\cdot H$;
\medskip

2) $(x\cdot H)\cdot(y\cdot H)=(x\cdot y)\cdot H$.
\label{lemma:normal-subgroup}
\end{lemma}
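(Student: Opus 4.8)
The plan is to obtain both identities purely by combining the set-product laws already established—the associativity family in Proposition~\ref{lemma:set-assoc}, the containment and inverse laws in Lemma~\ref{lemma:mult} and Proposition~\ref{lemma:coset}—with the normality criterion $g\cdot H=H\cdot g$ furnished by Corollary~\ref{corol:normal}. So the first move, under the flags $S,G,\cdot,e,{}^{-1},u,H$ together with $x,y:S$, $x\varepsilon G$, $y\varepsilon G$, and $v:H\triangleleft G$, is to unpack $v$: from $\wedge_1(v):H\leqslant G$ we recover $H\subseteq G$, and $term_{\ref{corol:normal}.1}$ converts $v$ into $n:\forall g:S.(g\varepsilon G\Rightarrow g\cdot H=H\cdot g)$, which is the engine of the whole argument. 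We will also use $Gr_3$ from Lemma~\ref{lemma:technical_1} to get $x^{-1}\varepsilon G$ and $Gr_9$ to get $(x\cdot y)\varepsilon G$.

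For part~1), I would apply $term_{\ref{lemma:coset}.4}$ to get $(x\cdot H)^{-1}=H\cdot x^{-1}$, then instantiate $n$ at $g:=x^{-1}$ (with the membership proof $Gr_3$) to get $x^{-1}\cdot H=H\cdot x^{-1}$; taking $eq\mhyphen sym$ of the latter and chaining with $eq\mhyphen trans_1$ yields $(x\cdot H)^{-1}=x^{-1}\cdot H$. The only subtlety is the usual silent passage between extensional and intensional set equality when feeding these equalities into one another.

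For part~2), I would rewrite $(x\cdot H)\cdot(y\cdot H)$ from the outside in. First $term_{\ref{lemma:set-assoc}.6}$ (with $g:=x$, $B:=H$, $C:=y\cdot H$, the latter a subset of $G$ by $term_{\ref{lemma:mult}.4}$) gives $(x\cdot H)\cdot(y\cdot H)=x\cdot\bigl(H\cdot(y\cdot H)\bigr)$. The inner factor is then simplified by a short chain: $H\cdot(y\cdot H)=(H\cdot y)\cdot H$ by $term_{\ref{lemma:set-assoc}.5}$; $(H\cdot y)\cdot H=(y\cdot H)\cdot H$ by congruence applied to $n\,y$; $(y\cdot H)\cdot H=y\cdot(H\cdot H)$ by $term_{\ref{lemma:set-assoc}.6}$; and $y\cdot(H\cdot H)=y\cdot H$ by congruence applied to $term_{\ref{lemma:coset}.2}$ ($H\cdot H=H$). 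Hence $H\cdot(y\cdot H)=y\cdot H$, so $(x\cdot H)\cdot(y\cdot H)=x\cdot(y\cdot H)$ by congruence, and one final application of $term_{\ref{lemma:set-assoc}.3}$ read right-to-left gives $x\cdot(y\cdot H)=(x\cdot y)\cdot H$. Assembling all the links with $eq\mhyphen trans_1$ completes part~2).

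The main obstacle is organizational rather than conceptual: every cited lemma carries side conditions ($B\subseteq G$, $C\subseteq G$, elementhood proofs for $x$, $y$, $x^{-1}$, $x\cdot y$, and closure of $G$ under $\cdot$ and ${}^{-1}$), so the bulk of the formal derivation is producing the right witnesses at each step and threading the long chain of set equalities through $eq\mhyphen trans_1$ (and $eq\mhyphen cong_1$ for the rewrites inside $x\cdot(-)$ and $(-)\cdot H$). One point worth double-checking is that $y\varepsilon G$ is available where the normality criterion is invoked at $g:=y$; it is, since $y$ is assumed to be an element of $G$.
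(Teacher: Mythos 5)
Your proposal is correct and matches the paper's own proof in all essentials: part 1) is identical (Proposition~\ref{lemma:coset}.4 plus the normality criterion of Corollary~\ref{corol:normal} instantiated at $x^{-1}$, joined by transitivity-with-symmetry), and part 2) uses exactly the same ingredients ($y\cdot H=H\cdot y$, $H\cdot H=H$, and the associativity family of Proposition~\ref{lemma:set-assoc}), differing only in the order of re-association — the paper first converts $y\cdot H$ to $H\cdot y$ and collapses $H\cdot H$ in the middle of the four-fold product, while you first pull $x$ to the front and simplify $H\cdot(y\cdot H)$. Both chains are valid and all side conditions you list are indeed available.
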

\begin{proof}
This is a sketch of the proof for both parts.

\begin{flagderiv}
\introduce*{}{S: *_s\;|\;G:ps(S)\;|\;\cdot: S\rightarrow S\rightarrow S\;|\;e:S\;|\;^{-1}:S\rightarrow S\;|\;u:Group(S,G,\cdot,e,^{-1})}{}

\introduce*{}{H:ps(S)\;|\;v:H\triangleleft G}{}

\introduce*{}{x:S\;|\;w:x\varepsilon G}{}

\skipsteps*{\dots}{}{}

\step*{}{\boldsymbol {term_{\ref{lemma:normal-subgroup}.1}}(S,G,\cdot,e,^{-1},u,H,v,x,w):= 
\ldots\;:\;\boldsymbol {(x\cdot H)^{-1}=x^{-1}\cdot H}}{}

\done

\introduce*{}{x,y:S\;|\;w_1:x\varepsilon G\;|\;w_2:y\varepsilon G}{}

\step*{}{\boldsymbol {term_{\ref{lemma:normal-subgroup}.2}}(S,G,\cdot,e,^{-1},u,H,v,x,y,w_1,w_2):=\ldots
\;:\;\boldsymbol {(x\cdot H)\cdot(y\cdot H)=x\cdot y\cdot H}}{}
\end{flagderiv}

The full proof is in the Appendix, pg. 91. 
\end{proof}

\begin{theorem}
Suppose $B$ is a subgroup of a group $G$ and $H$ is a  normal subgroup of $G$. Then the following hold.

1) $B\cap H$ is a normal subgroup of $B$.
\medskip

2) $B\cdot H=H\cdot B$.
\medskip

3) $(B\cdot H)$ is a subgroup of $G$.
\label{theorem:product-subnormal}
\end{theorem}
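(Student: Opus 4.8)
The plan is to treat the three parts in the order 1, 2, 3, arranging matters so that parts 2 and 3 reduce almost entirely to results already established, and only part 1 needs direct verification.

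For part 1, I would first get that $B\cap H$ is a subgroup of $G$ from Proposition~\ref{lemma:triv-subgroups}.3 (intersection of two subgroups of $G$ is a subgroup of $G$), noting that $H$, being a normal subgroup, is in particular a subgroup. Since $B\cap H\subseteq B$ trivially and $B\leqslant G$, Proposition~\ref{lemma:subgroup}.2 upgrades this to $B\cap H\leqslant B$. It then remains to check the conjugation-closure clause of Definition~\ref{def:normal} relative to $B$: given $g\varepsilon B$ and $h\varepsilon(B\cap H)$, I split the hypothesis on $h$ into $h\varepsilon B$ and $h\varepsilon H$. From $g,h\varepsilon B$ together with the fact that $B$ is itself a group (Proposition~\ref{lemma:subgroup}.1), so that $g^{-1}\varepsilon B$ and products stay in $B$, I obtain $(g^{-1}\cdot h\cdot g)\varepsilon B$. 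From $g\varepsilon B\subseteq G$, $h\varepsilon H$, and $H\triangleleft G$, I obtain $(g^{-1}\cdot h\cdot g)\varepsilon H$. Combining the two memberships gives $(g^{-1}\cdot h\cdot g)\varepsilon(B\cap H)$, which finishes part 1.

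For part 2, the key input is the normality criterion: by Corollary~\ref{corol:normal} (equivalently condition~4 of Proposition~\ref{prop:normal-criteria}), $H\triangleleft G$ yields $g\cdot H=H\cdot g$ for every $g\varepsilon G$. To prove $B\cdot H=H\cdot B$ I would establish extensional equality in both directions and then pass to intensional equality via the Extensionality Axiom. Unfolding Definition~\ref{def:set-product} (using $Mt_2$ for $B\cdot H$ and $Mt_1$ for $H\cdot B$, legitimate by Lemma~\ref{lemma:set-mult}), an element $x$ lies in $B\cdot H$ exactly when there is some $b\varepsilon B$ with $x\varepsilon(b\cdot H)$; since $b\varepsilon B\subseteq G$, the criterion rewrites $b\cdot H$ as $H\cdot b$, which witnesses $x\varepsilon(H\cdot B)$. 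The reverse inclusion is symmetric. Part 3 then needs nothing new: Theorem~\ref{theorem:permutable} says $(B\cdot H)\leqslant G$ if and only if $B\cdot H=H\cdot B$, and the latter is precisely part 2.

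I expect the main obstacle to be bookkeeping rather than mathematics. In the formal $\lambda D$ development, part 2 requires careful handling of the two representations $Mt_1$ and $Mt_2$ of a set product, of the existential witnesses hidden inside the set-builder notation, and of the conversion between extensional and intensional set equality. In part 1 the delicate point is lining up the quantifier and membership-proof arguments so that the group structure of $B$ (obtained from Proposition~\ref{lemma:subgroup}.1) can actually be applied to $g$ and $h$ inside $B$; this is the step most likely to require extra care in the formal proof.
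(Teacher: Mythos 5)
Your proposal is correct and follows essentially the same route as the paper: part 1 via Proposition~\ref{lemma:triv-subgroups}.3 and Proposition~\ref{lemma:subgroup}.2 plus a direct check of the conjugation clause, part 2 via Corollary~\ref{corol:normal} applied to each witness $b\varepsilon B\subseteq G$, and part 3 by feeding part 2 into Theorem~\ref{theorem:permutable}. The only cosmetic difference is that in part 1 the paper reads the closure-under-product and closure-under-inverse clauses directly off the definition of $B\leqslant G$ rather than detouring through Proposition~\ref{lemma:subgroup}.1.
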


\begin{proof}
This is a sketch of the proof for all parts.

\begin{flagderiv}
\introduce*{}{S: *_s\;|\;G:ps(S)\;|\;\cdot: S\rightarrow S\rightarrow S\;|\;e:S\;|\;^{-1}:S\rightarrow S\;|\;u:Group(S,G,\cdot,e,^{-1})}{}

\introduce*{}{B,H:ps(S)\;|\;v:B\leqslant G\;|\;w:H\triangleleft G}{}

\skipsteps*{\dots}{}{}

\step*{}{\boldsymbol {term_{\ref{theorem:product-subnormal}.1}}(S,G,\cdot,e,^{-1},u,B,H,v,w):= 
\ldots\;:\;\boldsymbol {(B\cap H)\triangleleft B}}{}

\skipsteps*{\dots}{}{}

\step*{}{\boldsymbol {term_{\ref{theorem:product-subnormal}.2}}(S,G,\cdot,e,^{-1},u,B,H,v,w):= 
\ldots\;:\;\boldsymbol {B\cdot H=H\cdot B}}{}

\skipsteps*{\dots}{}{}

\step*{}{\boldsymbol {term_{\ref{theorem:product-subnormal}.3}}(S,G,\cdot,e, ^{-1},u, B,H,v,w):= 
\;:\;\boldsymbol {(B\cdot H)\leqslant G}}{}
\end{flagderiv}

The full proof is in the Appendix, pg. 92. 
\end{proof}

\begin{theorem}
Suppose $B$ and $C$ are normal subgroups of a group $G$. Then the following hold.

1) $B\cdot C$ is a normal subgroup of $G$.
\medskip

2) If $B\cap C=\{e\}$, then for any $b\varepsilon B$ and $c\varepsilon C$:
\[b\cdot c=c\cdot b.\]
\label{theorem:product-normal}
\end{theorem}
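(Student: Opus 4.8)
The plan is to prove the two parts separately, leaning on Theorem~\ref{theorem:product-subnormal} and on the group identities collected in Lemma~\ref{lemma:technical_1} and Proposition~\ref{theorem:axiom_corollary}. For part~1, observe first that a normal subgroup is in particular a subgroup, so from $v:B\triangleleft G$ and $w:C\triangleleft G$ one extracts $B\leqslant G$ and $C\triangleleft G$, and Theorem~\ref{theorem:product-subnormal}.3 already yields $(B\cdot C)\leqslant G$. What remains is the normality clause of Definition~\ref{def:normal}: for arbitrary $g\varepsilon G$ and $h\varepsilon(B\cdot C)$ we must produce a proof of $(g^{-1}\cdot h\cdot g)\,\varepsilon\,(B\cdot C)$. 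Unfolding $h\varepsilon(B\cdot C)$ through Definition~\ref{def:set-product} (using Lemma~\ref{lemma:set-mult} where convenient) gives $b\varepsilon B$ and $c\varepsilon C$ with $h=b\cdot c$; here $b,c$ lie in $G$ since $B,C\subseteq G$, and $g^{-1}\varepsilon G$ by $Gr_3$. Using associativity ($Gr_2$), $g\cdot g^{-1}=e$ ($Gr_7$) and $x\cdot e=x$ ($Gr_4$), chained via the congruence and transitivity rules for equality, one rewrites $g^{-1}\cdot h\cdot g$ as $(g^{-1}\cdot b\cdot g)\cdot(g^{-1}\cdot c\cdot g)$. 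Normality of $B$ applied to $g,b$ gives $(g^{-1}\cdot b\cdot g)\varepsilon B$, normality of $C$ applied to $g,c$ gives $(g^{-1}\cdot c\cdot g)\varepsilon C$, so Lemma~\ref{lemma:mult-triv}.4 yields $(g^{-1}\cdot b\cdot g)\cdot(g^{-1}\cdot c\cdot g)\,\varepsilon\,(B\cdot C)$, and transporting back along the equality finishes part~1.

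For part~2 add the hypothesis $B\cap C=\{e\}$ (so any element of $B\cap C$ equals $e$) and fix $b\varepsilon B$, $c\varepsilon C$. The idea is to show that the commutator $k:=b^{-1}\cdot c^{-1}\cdot b\cdot c$ lies in $B\cap C$. Regrouping $k$ as $b^{-1}\cdot(c^{-1}\cdot b\cdot c)$, normality of $B$ (with $g:=c$, $h:=b$) gives $(c^{-1}\cdot b\cdot c)\varepsilon B$, and closure of $B$ under $^{-1}$ and $\cdot$ (from $B\leqslant G$, Definition~\ref{def:subgroup}) then gives $k\varepsilon B$. Regrouping $k$ instead as $(b^{-1}\cdot c^{-1}\cdot b)\cdot c$, normality of $C$ (with $g:=b$, $h:=c^{-1}$, where $c^{-1}\varepsilon C$ by closure) gives $(b^{-1}\cdot c^{-1}\cdot b)\varepsilon C$, and closure of $C$ gives $k\varepsilon C$. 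Hence $k\varepsilon(B\cap C)$, so $k=e$. Writing $k=(b^{-1}\cdot c^{-1})\cdot(b\cdot c)$ by associativity, Proposition~\ref{theorem:axiom_corollary}.3 applied to the elements $b^{-1}\cdot c^{-1}$ and $b\cdot c$ of $G$ yields $b\cdot c=(b^{-1}\cdot c^{-1})^{-1}$; Proposition~\ref{theorem:axiom_corollary}.5 rewrites the right-hand side as $(c^{-1})^{-1}\cdot(b^{-1})^{-1}$, and two applications of Proposition~\ref{theorem:axiom_corollary}.6 collapse it to $c\cdot b$, giving $b\cdot c=c\cdot b$.

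The genuinely routine work is the bookkeeping: every element of $B$ or $C$ that is fed into a group operation must be accompanied by its proof of membership in $G$, drawn from the $\subseteq$ components of $v$ and $w$, and the associativity rearrangements have to be spelled out explicitly as chains of $eq\mhyphen cong$ and $eq\mhyphen trans$ rather than taken for granted. The one step that needs a moment's thought is the choice of the two parenthesizations of the commutator in part~2, arranged so that normality of $B$ and of $C$ is each used exactly once; after that, both parts follow directly from the definitions together with the identities already available in Lemma~\ref{lemma:technical_1}, Proposition~\ref{theorem:axiom_corollary}, and Lemma~\ref{lemma:mult-triv}.
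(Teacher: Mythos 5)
Your proof is correct, and part 2 is essentially the paper's own argument: the same commutator $b^{-1}\cdot c^{-1}\cdot b\cdot c$, placed in $B$ and in $C$ by exactly the same two regroupings and the same two applications of normality, hence equal to $e$; your endgame (Proposition~\ref{theorem:axiom_corollary}.3 to get $b\cdot c=(b^{-1}\cdot c^{-1})^{-1}$, then \ref{theorem:axiom_corollary}.5 and \ref{theorem:axiom_corollary}.6) differs only cosmetically from the paper's, which instead rewrites $b^{-1}\cdot c^{-1}$ as $(c\cdot b)^{-1}$ and cancels by multiplying on the left by $c\cdot b$.

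For part 1 you take a genuinely different route. Both you and the paper obtain $(B\cdot C)\leqslant G$ from Theorem~\ref{theorem:product-subnormal}.3, but for normality the paper stays entirely at the level of set products: it proves $g\cdot(B\cdot C)=(B\cdot C)\cdot g$ for every $g\,\varepsilon\,G$ by chaining the set-associativity identities of Proposition~\ref{lemma:set-assoc} with $g\cdot B=B\cdot g$ and $g\cdot C=C\cdot g$ (from Corollary~\ref{corol:normal}.1), and then invokes Corollary~\ref{corol:normal}.2. You instead verify the elementwise clause of Definition~\ref{def:normal} directly: decompose $h=b\cdot c$, insert $g\cdot g^{-1}=e$ to rewrite $g^{-1}\cdot h\cdot g$ as $(g^{-1}\cdot b\cdot g)\cdot(g^{-1}\cdot c\cdot g)$, apply normality of $B$ and $C$ once each, and close with Lemma~\ref{lemma:mult-triv}.4. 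Your version is the more textbook-flavoured computation and avoids Corollary~\ref{corol:normal} altogether, at the price of an existential elimination on $h\,\varepsilon\,(B\cdot C)$ and a longer chain of element-level $eq\mhyphen cong$/$eq\mhyphen trans$ steps; the paper's version reuses its already-built set-product calculus and keeps the derivation short once that machinery is in place. Both are fully justified within the formal framework, since all the elements you manipulate lie in $G$, so every appeal to $Assoc(S,G,\cdot)$ is legitimate.
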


\newpage
\begin{proof}
This is a sketch of the proof for both parts.
\begin{flagderiv}
\introduce*{}{S: *_s\;|\;G:ps(S)\;|\;\cdot: S\rightarrow S\rightarrow S\;|\;e:S\;|\;^{-1}:S\rightarrow S\;|\;u:Group(S,G,\cdot,e,^{-1})}{}

\introduce*{}{B,C:ps(S)\;|\;v:B\triangleleft G\;|\;w:C\triangleleft G}{}

\skipsteps*{\dots}{}{}

\step*{}{\boldsymbol {term_{\ref{theorem:product-normal}.1}}(S,G,\cdot,e,^{-1},u,B,C,v,w):= 
\ldots\;:\;\boldsymbol {(B\cdot C)\triangleleft G}}{}\

\assume*{}{r_1:[\forall g:S.(
g\varepsilon B\Rightarrow g\varepsilon C\Rightarrow g=e)]}{}

\introduce*{}{b:S\;|\;r_2:b\varepsilon B\;|\;c:S\;|\;r_3:c\varepsilon C}{}

\skipsteps*{\dots}{}{}

\step*{}{\boldsymbol {term_{\ref{theorem:product-normal}.2}}(S,G,\cdot,e,^{-1},u,B,C,v,w,r_1, b,c,r_2,r_3):= 
\ldots
\;:\;\boldsymbol {b\cdot c=c\cdot b}}{}
\end{flagderiv}

The full proof is in the Appendix, pg. 93.  
\end{proof}

\bigskip
\subsection{Quotient Groups}

\begin{definition}
\textbf{Quotient group} $G/H$ is defined for a normal subgroup $H$ of a group $G$.
\begin{flagderiv}
\introduce*{}{S: *_s\;|\;
\cdot: S\rightarrow S\rightarrow S\;|\;G:ps(S)\;|\;e:S\;|\;^{-1}:S\rightarrow S}{}
\introduce*{}{H:ps(S)}{}

\step*{}{\text{Definition }\;\boldsymbol{Quotient\mhyphen group}(S,G,\cdot,e,^{-1},
H):=\{X:ps(S)\;|\;\exists x:S.(x\varepsilon G\wedge X=x\cdot H)\}\;:\;ps(ps(S))}{}

\step*{}{\text{Notation }\;
\boldsymbol{G/H} \text{ for }Quotient\mhyphen group(S,G,\cdot,e,^{-1},H)}{}

\step*{}{\text{Notation }\;
\boldsymbol{E}:=(e\cdot H)}{}
\end{flagderiv}
\label{def:quotient}
\end{definition}

\begin{theorem}
If $H$ is a normal subgroup of a group $G$, then the set $G/H$ is a group with identity $E$, where the operations of set multiplication and taking set inverse are from Definition \ref{def:set-product}.
\label{theorem:quotient}
\end{theorem}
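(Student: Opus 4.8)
The plan is to instantiate the Main Definition of Group (Definition~\ref{def:group_subset}) over the ambient type $ps(S)$, taking $G/H$ as the carrier subset, the curried set product $\lambda B,C:ps(S).(B\cdot C)$ of Definition~\ref{def:set-product} as the binary operation, the set inverse $\lambda B:ps(S).B^{-1}$ as the inverse operation, and $E=(e\cdot H)$ as the proposed identity. So $\boldsymbol{term_{\ref{theorem:quotient}}}(S,G,\cdot,e,{}^{-1},u,H,v)$ will have conclusion type $Group(ps(S),G/H,\cdot,E,{}^{-1})$ with those set operations in the last two slots, and it is assembled by discharging, in turn, the obligations obtained from unfolding $Group$: closure of $G/H$ under the product, associativity of the product on $G/H$, membership $E\,\varepsilon\,G/H$, the two identity laws, closure of $G/H$ under inverse, and the two inverse laws. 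Note that in the Main Definition the operations live on the whole of $ps(S)$, so no $consistent_i$ side conditions appear — which is exactly why this definition is the convenient one to use here. Throughout, from $v:H\triangleleft G$ one extracts $H\leqslant G$ and hence $H\subseteq G$, $e\,\varepsilon\,H$, and $Closure_1,Closure_2$ for $H$, while from $u$ one has $Gr_1,\dots,Gr_9$ and $Assoc$ via Lemma~\ref{lemma:technical_1}.

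Most obligations reduce directly to Lemma~\ref{lemma:normal-subgroup}. Given $X,Y\,\varepsilon\,G/H$ one uses $\exists_3$ to produce $x,y:S$ with $x,y\,\varepsilon\,G$, $X=x\cdot H$, $Y=y\cdot H$; then by congruence $X\cdot Y=(x\cdot H)\cdot(y\cdot H)$, which equals $(x\cdot y)\cdot H$ by Lemma~\ref{lemma:normal-subgroup}.2, and since $x\cdot y\,\varepsilon\,G$ by $Gr_9$ the latter lies in $G/H$ by definition, so $X\cdot Y\,\varepsilon\,G/H$ after one more equality substitution. For $E\,\varepsilon\,G/H$ take the witness $e$ with $Gr_1$ and reflexivity. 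For the identity laws, $X\cdot E=(x\cdot H)\cdot(e\cdot H)=(x\cdot e)\cdot H=x\cdot H=X$ using Lemma~\ref{lemma:normal-subgroup}.2, $Gr_1$, $Gr_4$ and substitution, and symmetrically $E\cdot X=X$ using $Gr_5$. For inverses, $X^{-1}=(x\cdot H)^{-1}=x^{-1}\cdot H$ by Lemma~\ref{lemma:normal-subgroup}.1, which lies in $G/H$ because $x^{-1}\,\varepsilon\,G$ by $Gr_3$; and $X\cdot X^{-1}=(x\cdot H)\cdot(x^{-1}\cdot H)=(x\cdot x^{-1})\cdot H=e\cdot H=E$ combining both parts of Lemma~\ref{lemma:normal-subgroup} with $Gr_7$, and likewise $X^{-1}\cdot X=E$ with $Gr_8$. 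Associativity is the one obligation that needs the set-level result rather than Lemma~\ref{lemma:normal-subgroup}: from $X,Y,Z\,\varepsilon\,G/H$ each is a left coset $x\cdot H$ with $x\,\varepsilon\,G$, hence $X,Y,Z\subseteq G$ by Lemma~\ref{lemma:mult}.4 (using $H\subseteq G$), and then $(X\cdot Y)\cdot Z=X\cdot(Y\cdot Z)$ is exactly Proposition~\ref{lemma:set-assoc}.7.

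I expect the main obstacle to be bookkeeping rather than mathematics. Every obligation requires peeling off one or two existential quantifiers with $\exists_3$, threading the witnesses $x,y$ and their membership proofs through, and rewriting the coset equalities $X=x\cdot H$ (and $X\cdot Y=(x\cdot y)\cdot H$, etc.) forwards and backwards with $eq\mhyphen subs$ and $eq\mhyphen cong$; and since Lemma~\ref{lemma:normal-subgroup} and Definition~\ref{def:set-product} mix intensional set equality with extensional equality, one must silently invoke the Extensionality Axiom to pass between them, as the paper's stated convention allows. A minor thing to keep straight is that $E=(e\cdot H)$ is a definitional abbreviation, so wherever $E$ must be recognized as a coset it unfolds on the nose. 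None of the deep content is reproved here: normality being precisely what collapses $(x\cdot H)\cdot(y\cdot H)$ to a single coset, and associativity of the set product on $G$, are already packaged in Lemmas~\ref{lemma:normal-subgroup} and \ref{lemma:mult} and Proposition~\ref{lemma:set-assoc}.
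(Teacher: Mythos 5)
Your proposal is correct and follows essentially the same route as the paper's proof: the identity, inverse, and closure obligations are all discharged via Lemma~\ref{lemma:normal-subgroup} together with the $Gr_i$ terms, $E$ is witnessed by $e$, and associativity is obtained by first deriving $X,Y,Z\subseteq G$ from Lemma~\ref{lemma:mult}.4 and then applying Proposition~\ref{lemma:set-assoc}.7. The bookkeeping concerns you flag (peeling existentials with $\exists_3$, rewriting coset equalities, and silently converting between the two notions of set equality) are exactly the overhead the paper's formal derivation carries.
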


\begin{proof}
This is a sketch of the proof: 

\begin{flagderiv}
\introduce*{}{S: *_s\;|\;G:ps(S)\;|\;\cdot: S\rightarrow S\rightarrow S\;|\;e:S\;|\;^{-1}:S\rightarrow S\;|\;u:Group(S,G,\cdot,e,^{-1})}{}

\introduce*{}{H:ps(S)\;|\;v:H\triangleleft G}{}

\skipsteps*{\dots}{}{}

\step*{}{\boldsymbol {term_{\ref{theorem:quotient}}}(S,G,\cdot,e,^{-1},u,H,v):= 
\ldots
\;:\;\boldsymbol{Group(ps(S),G/H,\cdot,E,^{-1})}}{}
\end{flagderiv}

The full proof is in the Appendix, pg. 95.  
\end{proof}

\begin{example}
In Example \ref{example:Z}
we considered the abelian additive group $\mathbb{Z}$
and in Example  \ref{example:mZ} its subgroup $m\mathbb{Z}$, where $m$ is an arbitrary positive integer. 

By Proposition \ref{prop:normal-abel}, $m\mathbb{Z}$ is a normal subgroup of $\mathbb{Z}$ and by Theorem \ref{theorem:quotient} the quotient group $\mathbb{Z}/(m\mathbb{Z})$ is a group; it is denoted $\mathbb{Z}_m$.
\end{example}

\begin{theorem}
\textbf{Correspondence Theorem}. Suppose $H$ is a normal subgroup of a group $G$. Then the following hold.

1) If $B$ is a subgroup of $G$ containing $H$, then $H\triangleleft B$ and $B/H\leqslant G/H$.

2) If $C$ is a subgroup of $G/H$, then there is a subgroup $B$ of $G$ such that $H\triangleleft B$ and $C=B/H$.

\label{theorem:correspondence}
\end{theorem}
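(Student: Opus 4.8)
The plan is to prove the two parts separately; each comes down to checking the four clauses of the definition of $Subgroup$ (Definition~\ref{def:subgroup}) on the type $ps(S)$, equipped with the set multiplication and set inverse of Definition~\ref{def:set-product}, together with the coset lemmas of Section~4 and Lemma~\ref{lemma:normal-subgroup}. For part~1, assume $B\leqslant G$ with $H\subseteq B$. I would first establish $H\triangleleft B$: from $H\leqslant G$, $B\leqslant G$ and $H\subseteq B$, Proposition~\ref{lemma:subgroup}.2 yields $H\leqslant B$, and the conjugation clause of $H\triangleleft B$ (Definition~\ref{def:normal}) is inherited from $H\triangleleft G$ because every $g\,\varepsilon\,B$ lies in $G$. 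Then $B/H$ is a group by Theorem~\ref{theorem:quotient} applied to $B$, and for $B/H\leqslant G/H$ I check Definition~\ref{def:subgroup}: $B/H\subseteq G/H$ is immediate from $B\subseteq G$; $E=e\cdot H\,\varepsilon\,B/H$ because $e\,\varepsilon\,B$; and closure of $B/H$ under set inverse and set product follows by rewriting $(x\cdot H)^{-1}=x^{-1}\cdot H$ and $(x\cdot H)\cdot(y\cdot H)=(x\cdot y)\cdot H$ (Lemma~\ref{lemma:normal-subgroup}, valid since $x,y\,\varepsilon\,G$) and using that $B$ is closed under $^{-1}$ and $\cdot$.

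For part~2, given $C\leqslant G/H$, I would take $B:=\cup C$, the union of the collection $C:ps(ps(S))$. To show $B\leqslant G$: every $X\,\varepsilon\,C$ is a coset $g\cdot H$ with $g\,\varepsilon\,G$ and $g\cdot H\subseteq G$ by Lemma~\ref{lemma:mult}.4, so $B\subseteq G$; the identity $E$ of $G/H$ satisfies $E=e\cdot H=H$ by Lemma~\ref{lemma:mult}.1, belongs to $C$, and contains $e$ by Lemma~\ref{lemma:mult-triv}.2, so $e\,\varepsilon\,B$; if $x\,\varepsilon\,X\,\varepsilon\,C$ then $x^{-1}\,\varepsilon\,X^{-1}$ (Lemma~\ref{lemma:mult-triv}.1) with $X^{-1}\,\varepsilon\,C$, and if $x\,\varepsilon\,X$, $y\,\varepsilon\,Y$ with $X,Y\,\varepsilon\,C$ then $x\cdot y\,\varepsilon\,X\cdot Y$ (Lemma~\ref{lemma:mult-triv}.4) with $X\cdot Y\,\varepsilon\,C$. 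Since $H=E\,\varepsilon\,C$ we also get $H\subseteq B$, whence $H\triangleleft B$ exactly as in part~1. It then remains to show $C=B/H$, which I would do via extensional equality and the Extensionality Axiom: for $C\subseteq B/H$, any $X\,\varepsilon\,C$ equals some $g\cdot H$ with $g\,\varepsilon\,G$; as $X=g\cdot H$ contains $g$ (Lemma~\ref{lemma:mult-triv}.2) and $X\,\varepsilon\,C$, we get $g\,\varepsilon\,B$, so $X\,\varepsilon\,B/H$. For $B/H\subseteq C$, any $X\,\varepsilon\,B/H$ equals $x\cdot H$ with $x\,\varepsilon\,B=\cup C$, so $x\,\varepsilon\,Y$ for some $Y=g\cdot H\,\varepsilon\,C$; from $x\,\varepsilon\,g\cdot H$ and $g\cdot H=R_Hg\cap G$ (Proposition~\ref{lemma:eq-class}.1) together with Proposition~\ref{lemma:equiv-1} (or symmetry of $R_H$ from Proposition~\ref{lemma:equiv} with Proposition~\ref{lemma:eq-class}.2) one gets $x\cdot H=g\cdot H=Y$, so $X\,\varepsilon\,C$. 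The resulting terms would be $\boldsymbol{term_{\ref{theorem:correspondence}.1}}$ and $\boldsymbol{term_{\ref{theorem:correspondence}.2}}$.

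The expected obstacle is bookkeeping rather than ideas: the whole argument lives in $ps(ps(S))$, so nearly every step requires unfolding the definitions of $G/H$, of the union of a collection, of $Subgroup$, and of the set operations, threading membership proofs along, and converting repeatedly between extensional and intensional set equality. Within this, the single genuinely load-bearing fact is the implication $x\,\varepsilon\,g\cdot H\Rightarrow x\cdot H=g\cdot H$ — that a coset is determined by any one of its elements — which is what makes the inclusion $B/H\subseteq C$ go through.
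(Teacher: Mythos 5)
Your proposal is correct, and part 1 matches the paper's argument almost exactly (the paper reaches $H\triangleleft B$ slightly differently, via Theorem~\ref{theorem:product-subnormal}.1 applied to $B$ and $H$ followed by the observation that $B\cap H=H$ when $H\subseteq B$, whereas you use Proposition~\ref{lemma:subgroup}.2 plus inheritance of the conjugation clause; both are fine). The genuine divergence is in part~2: you take $B:=\cup C$, the union of the cosets in $C$, while the paper takes $B:=\{x:S\;|\;x\varepsilon G\wedge (x\cdot H)\;\varepsilon\;C\}$, i.e.\ the preimage of $C$ under $x\mapsto x\cdot H$. The two sets coincide, but the choices distribute the work differently. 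With the paper's $B$, both inclusions of $C=B/H$ unwind almost definitionally --- $X\varepsilon B/H$ gives $X=x\cdot H$ with $x\cdot H\;\varepsilon\;C$ immediately, and conversely --- and the coset-determination fact is not needed there; the closure properties of $B$ instead lean on Lemma~\ref{lemma:normal-subgroup} ($(x\cdot H)^{-1}=x^{-1}\cdot H$, $(x\cdot H)\cdot(y\cdot H)=(x\cdot y)\cdot H$) together with closure of $C$. With your $B=\cup C$, the closure properties of $B$ come cheaply from Lemma~\ref{lemma:mult-triv} and closure of $C$, but the inclusion $B/H\subseteq C$ now carries the real content, namely the implication $x\;\varepsilon\;(g\cdot H)\Rightarrow x\cdot H=g\cdot H$, which you correctly identify as the load-bearing step and which is indeed available via Proposition~\ref{lemma:eq-class} and Proposition~\ref{lemma:equiv-1}. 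In a formal development the paper's preimage construction is the more economical of the two, since it avoids a second pass through the $R_H$ machinery, but your version is equally valid and arguably closer to the textbook picture of $B$ as the union of the cosets constituting $C$.
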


\begin{proof}
These are sketches of the proofs.

1) 
\begin{flagderiv}
\introduce*{}{S: *_s\;|\;G:ps(S)\;|\;\cdot: S\rightarrow S\rightarrow S\;|\;e:S\;|\;^{-1}:S\rightarrow S\;|\;u:Group(S,G,\cdot,e,^{-1})}{}

\introduce*{}{H,B:ps(S)\;|\;v_1:H\triangleleft G\;|\;
v_2:B\leqslant G
\;|\;w:H\subseteq B}{}

\skipsteps*{\dots}{}{}

\step*{}{\boldsymbol {term_{ \ref{theorem:correspondence}.1}}(S,G,\cdot,e,^{-1},u,H,B,v_1, v_2,w):= 
\ldots
\;:\;\boldsymbol{(H\triangleleft B)
\wedge (B/H\leqslant G/H)}}{}
\end{flagderiv}

2)
\begin{flagderiv}
\introduce*{}{S: *_s\;|\;G:ps(S)\;|\;\cdot: S\rightarrow S\rightarrow S\;|\;e:S\;|\;^{-1}:S\rightarrow S\;|\;u:Group(S,G,\cdot,e,^{-1})}{}

\introduce*{}{H:ps(S)\;|\;v:H\triangleleft G\;|\;
C:ps(ps(S))
\;|\;w:C\leqslant G/H
}{}

\step*{}{\text{Notation }\;
B:=\{x:S\;|\;x\varepsilon G\wedge x\cdot H\;\varepsilon\;C\}\;:\;ps(S)}{}
\skipsteps*{\dots}{}{}

\step*{}{\boldsymbol {term_{ \ref{theorem:correspondence}.2}}(S,G,\cdot,e,^{-1},u,H,C,v,w)
:=\ldots
\;:\;\boldsymbol{B\leqslant G\wedge (H\triangleleft B)
\wedge (C=B/H)}}{}
\end{flagderiv}

The full proofs are in the Appendix, pg. 98. 
\end{proof}

\section{Conclusion}
Using the approach from \cite{Ned14} to formalizing sets in type theory as predicates on a type, we formalize some foundation concepts of group theory in $\lambda D$, the Calculus of Constructions with Definitions. The theory $\lambda D$ is chosen because of its beneficial features such as decidability of type checking and proof checking, and proofs-as-terms interpretation.

In type theory it is not easy to deal with sets which are not proper types. For groups we overcame this difficulty as follows. For a group $G$, which is a subset of a type $S$ we assume that its operations of multiplication and taking inverse are defined on the entire type $S$, while specific groups axioms hold only on $G$. We show that this causes no loss of generality if classical logic is used. This approach simplifies formal derivations in $\lambda D$ relating to groups.

We formalize in $\lambda D$ the concepts of group, 
subgroup, coset, conjugate, normal subgroup, and quotient group, and derive some related theorems. In particular, we formally derive in $\lambda D$:
\begin{itemize}
\item criteria for a normal subgroup;
\item the necessary and sufficient condition for a product of subgroups to be a subgroup;
\item the theorem about correspondence between subgroups of $G$ containing a normal subgroup $H$, and subgroups of the quotient group $G/H$.
\end{itemize}

The results can be implemented in proof assistants based on calculus of constructions. 
Due to the abstract nature of group theory, the results can be used for formalizing other parts of mathematics. 

We keep our formalizations as close as possible to the standard mathematical practice. In formal definitions and proofs we use the flag format. In $\lambda D$ we added some derived logical rules and abbreviations for existing derived rules to streamline formal proofs
and make them more concise and readable.

Next we plan to continue formalizing group theory in $\lambda D$ including finite groups, homomorphisms, generators, and related theorems, and to formalize other algebraic structures in $\lambda D$.

\newpage
\section*{Appendix}

\subsection*{Proof of Lemma \ref{lemma:relation-0}}

\begin{flagderiv}
\introduce*{}{S: *_s\;|\;G:ps(S)\;|\;R:\Pi x:S.[(x\varepsilon G)\rightarrow \Pi y:S.((y\varepsilon G)\rightarrow *_p)]}{}

\step*{}{\text{Definition }Q:=\lambda x,y: S.(x\varepsilon G\wedge y\varepsilon G\wedge \Pi p:x\varepsilon G.\Pi q:y\varepsilon G.Rxpyq)\;:\;S\rightarrow S\rightarrow *_p}{}
\step*{}{\boldsymbol{ term_{ \ref{lemma:relation-0}.1}}(S,G,R):=Q\;:\;S\rightarrow S\rightarrow *_p\quad\quad\quad\quad\textbf{Extension of $R$ to $S$}}{}

\assume*{}{u:consistent_0(S,G,R)}{}

\introduce*{}{x,y:S\;|\;v_1:x\varepsilon G\;|\;v_2:y\varepsilon G}{}
\introduce*{}{p:x\varepsilon G\;|\;q:y\varepsilon G}{}
\assume*{}{w:Qxy}{}
\step*{}{a_1:=\wedge_2(w)pq\;:\;Rxpyq}{}
\conclude*{}{a_2:=\lambda w:Qxy.a_1\;:\;Qxy\Rightarrow Rxpyq}{}
\assume*{}{w:Rxpyq}{}
\introduce*{}{p_1:x\varepsilon G\;|\;q_1:y\varepsilon G}{}
\step*{}{a_3:= uxpp_1yqq_1 \;:\; (Rxpyq\Leftrightarrow Rxp_1yq_1)}{}
\step*{}{a_4:=\wedge_1(a_3)w\;:\; Rxp_1yq_1}{}
\conclude*{}{a_5:=\lambda p_1:x\varepsilon G.\lambda q_1:y\varepsilon G.a_4 \;:\;(\Pi p_1:x\varepsilon G.\Pi q_1:y\varepsilon G.Rxp_1yq_1)}{}

\step*{}{a_6:=\wedge(\wedge(v_1,v_2),a_5)\;:\;Qxy}{}
\conclude*{}{a_7:=\lambda w:Rxpyq.a_6\;:\;Rxpyq\Rightarrow Qxy}{}
\step*{}{a_8:=\wedge(a_2,a_7)\;:\; Qxy\Leftrightarrow Rxpyq}{}

\conclude*[2]{}{\boldsymbol{ term_{ \ref{lemma:relation-0}.2}}(S,G,R,u):=
\lambda x,y:S.v_1:x\varepsilon G.\lambda v_2:y\varepsilon G.
\lambda p:x\varepsilon G.\lambda q:y\varepsilon G.a_8
\\\quad\quad
\;:\;\boldsymbol{
\forall x,y:S.(
x\varepsilon G\Rightarrow y\varepsilon G
\Rightarrow \Pi p:x\varepsilon G.\Pi q:y\varepsilon G.(Qxy\Leftrightarrow Rxpyq))}}{}
\end{flagderiv}

\subsection*{Proof of Proposition \ref{lemma:partition}}

\begin{flagderiv}
\introduce*{}{S: *_s\;|\;G:ps(S)\;|\;
R: S\rightarrow S\rightarrow *_p}{}
\step*{}{\text{Notation }A:=\forall x:S.(x\varepsilon G\Rightarrow
x\varepsilon Rx)\;:\;*_p}{}
\step*{}{\text{Notation }B:=\forall x:S.[x\varepsilon G
\Rightarrow\forall y:S.(y\varepsilon G\Rightarrow\forall z:S.(z\varepsilon G
\Rightarrow
z\varepsilon Rx\Rightarrow z\varepsilon Ry\Rightarrow (Rx)\cap G=(Ry)\cap G))]\;:\;*_p}{}
\assume*{}{u:equiv\mhyphen rel(S,G,R)}{}
\step*{}{a_1:=\wedge_1 (\wedge_1(u))\;:\;refl(S,G,R)}{}
\step*{}{a_2:=\wedge_2(\wedge_1(u))\;:\;sym(S,G,R)}{}
\step*{}{a_3:=\wedge_2(u)\;:\;trans(S,G,R)}{}

\introduce*{}{x:S\;|\;v:x\varepsilon G}{}
\step*{}{a_4:=a_1xv:Rxx}{}
\step*{}{a_4:(x\varepsilon Rx)}{}
\conclude*{}{a_5:=\lambda x:S.\lambda v:x\varepsilon G.a_4\;:\;A}{}

\introduce*{}{x:S\;|\;v_1:x\varepsilon G\;|\;y:S\;|\;v_2:y\varepsilon G\;|\;z:S\;|\;v_3:z\varepsilon G\;|\;w_1:z\varepsilon Rx\;|\;w_2:z\varepsilon Ry}{}
\step*{}{w_1:Rxz}{}
\step*{}{w_2:Ryz}{}
\step*{}{a_6:= a_2xv_1zv_3w_1 :Rzx}{}
\step*{}{a_7:= a_3yv_2zv_3xv_1w_2a_6 :Ryx}{}
\introduce*{}{t:S\;|\; w_4:t\varepsilon (Rx)\cap G}{}

\step*{}{a_8:=\wedge_1(w_4)\;:\;t\varepsilon Rx}{}
\step*{}{a_9:=\wedge_2(w_4)\;:\;t\varepsilon G}{}
\step*{}{a_8:Rxt}{}
\step*{}{a_{10}:= a_3yv_2xv_1ta_9a_7a_8 :Ryt}{}
\step*{}{a_{10}:t\varepsilon Ry}{}
\step*{}{a_{11}:=\wedge(a_{10},a_9)\;:\;t\varepsilon (Ry)\cap G}{}
\conclude*{}{a_{12}(x,v_1,y,v_2,z,v_3,w_1, w_2):=\lambda t:S. \lambda w_4:t\varepsilon (Rx)\cap G.a_{11}
\;:\;(Rx)\cap G\subseteq (Ry)\cap G}{}
\step*{}{a_{13}:=a_{12}(x,v_1,y,v_2,z,v_3,w_1, w_2)\;:\;(Rx)\cap G\subseteq (Ry)\cap G}{}
\step*{}{a_{14}:=a_{12}(y,v_2,x,v_1,z,v_3,w_2, w_1)\;:\;(Ry)\cap G\subseteq (Rx)\cap G}{}
\step*{}{a_{15}:=\wedge( a_{13},a_{14})
\;:\;(Rx)\cap G=(Ry)\cap G}{}

\conclude*{}{a_{16}:=\lambda x:S.\lambda v_1:x\varepsilon G.\lambda y:S.\lambda v_2:y\varepsilon G.\lambda z:S.\lambda v_3:z\varepsilon G.
\lambda w_1:z\varepsilon Rx.
\lambda w_2:z\varepsilon Ry.a_{15}\;:\;B}{}
\step*{}{a_{17}:=\wedge(a_5, a_{16})\;:\;partition(S,G,R)}{}
\conclude*{}{a_{18}:=
\lambda u:equiv\mhyphen rel(S,G,R).a_{17}
\;:\;
equiv\mhyphen rel(S,G,R)\Rightarrow partition(S,G,R)}{}

\assume*{}{u:partition(S,G,R)}{}
\step*{}{u:A\wedge B}{}
\step*{}{a_{19}:=\wedge_1(u)\;:\;A}{}
\step*{}{a_{20}:=\wedge_2(u)\;:\;B}{}
\introduce*{}{x:S\;|\;v:x\varepsilon G}{}
\step*{}{a_{21}(x,v):=a_{19}xv:x\varepsilon Rx}{}
\step*{}{a_{22}:=a_{21}(x,v)\;:\;Rxx}{}
\conclude*{}{a_{23}:=\lambda x:S.\lambda v:x\varepsilon G.a_{22}\;:\;refl(S,G,R)}{}

\introduce*{}{x:S\;|\;v_1:x\varepsilon G\;|\;y:S\;|\;v_2:y\varepsilon G\;|\;w:Rxy}{}

\step*{}{w:y\varepsilon Rx}{}
\step*{}{a_{24}:=a_{21}(y,v_2)
\;:\;y\varepsilon Ry}{}
\step*{}{a_{25}:=a_{20}xv_1yv_2yv_2wa_{24}\;:\;(Rx)\cap G=(Ry)\cap G}{}
\step*{}{a_{26}:=a_{21}(x,v_1)
\;:\;x\varepsilon Rx}{}
\step*{}{a_{27}:=\wedge(a_{26},v_1)\;:\;x\varepsilon (Rx)\cap G}{}

\step*{}{a_{28}:=\wedge_1( a_{25})xa_{27}\;:\;x\varepsilon (Ry)\cap G}{}
\step*{}{a_{29}:=\wedge_1(a_{28})\;:\;
x\varepsilon Ry}{}

\step*{}{a_{29}:Ryx}{}

\conclude*{}{a_{30}:=\lambda x:S.\lambda v_1:x\varepsilon G.\lambda y:S.\lambda v_2:y\varepsilon G.\lambda w:Rxy.
a_{29}\;:\;sym(S,G,R)}{}

\introduce*{}{x:S\;|\;v_1:x\varepsilon G\;|\;y:S\;|\;v_2:y\varepsilon G\;|\;z:S\;|\;v_3:z\varepsilon G\;|\;w_1:Rxy\;|\;w_2:Ryz}{}

\step*{}{w_1:y\varepsilon Rx}{}
\step*{}{a_{31}:=a_{30}yv_2zv_3w_2
\;:\;Rzy}{}
\step*{}{a_{31}:y\varepsilon Rz}{}

\step*{}{a_{32}:=a_{20}zv_3xv_1yv_2a_{31}w_1\;:\;(Rz)\cap G=(Rx)\cap G}{}
\step*{}{a_{33}:=a_{21}(z,v_3)\;:\; z\varepsilon Rz}{}
\step*{}{a_{34}:=\wedge(a_{33},v_3)\;:\;z\varepsilon (Rz)\cap G}{}
\step*{}{a_{35}:=\wedge_1(a_{32})za_{34}\;:\;z\varepsilon (Rx)\cap G}{}
\step*{}{a_{36}:=\wedge_1(a_{35})\;:\;z\varepsilon Rx}{}

\step*{}{a_{36}:Rxz}{}
\conclude*{}{a_{37}:=\lambda x:S.\lambda v_1:x\varepsilon G.\lambda y:S.\lambda v_2:y\varepsilon G.\lambda z:S.\lambda v_3:z\varepsilon G.
\lambda w_1:Rxy.\lambda w_2:Ryz.
a_{36}\;:\;trans(S,G,R)}{}
\step*{}{a_{38}:=\wedge(\wedge(a_{23}, a_{30}), a_{37})\;:\;equiv\mhyphen rel(S,G,R)}{}
\conclude*{}{a_{39}:=
\lambda u:partition(S,G,R).a_{38}
\;:\;partition(S,G,R)\Rightarrow equiv\mhyphen rel(S,G,R)}{}
\step*{}{\boldsymbol{ term_{ \ref{lemma:partition}}}(S,G,R):=\wedge(a_{18},a_{39})
\;:\;
\boldsymbol{equiv\mhyphen relation(S,G,R)\Leftrightarrow
partition(S,G,R)}}{}
\end{flagderiv}

\subsection*{Proof of Theorem \ref{theorem:extend_function_1}}

We use the classical Axiom of Excluded Third with the proof term $exc\mhyphen thrd$ and the iota descriptor.

\begin{flagderiv}
\introduce*{}{S,T: *_s\;|\;G:ps(S)\;|\;f:\Pi x:S.((x\varepsilon G)\rightarrow T)}{}

\introduce*{}{b:T\;|\;u:consistent_1(S,T,G,f)}{}

\step*{}{\text{Notation } P:=\lambda x:S.\lambda y:T.[\Pi p:x\varepsilon G.\;(y=fxp)\wedge (\neg(x\varepsilon G)\Rightarrow y=b)]\;:\;S\rightarrow T\rightarrow *_p}{}

\introduce*{}{x:S}{}
\step*{}{a_1:=exc\mhyphen thrd(x\varepsilon G)\;:\;x\varepsilon G\vee \neg(x\varepsilon G)}{}

\assume*{}{v:\neg(x\varepsilon G)}{}
\step*{}{\text{Notation } y:=b\;:\;T}{}
\step*{}{a_2:=eq\mhyphen refl\;:\;y=b}{}

\step*{}{a_3:=\lambda w:\neg(x\varepsilon G).a_2
\;:\;(\neg(x\varepsilon G)\Rightarrow y=b)}{}
\introduce*{}{p:x\varepsilon G}{}
\step*{}{a_4:=vp\;:\;\bot}{}
\step*{}{a_5:=a_4(y=fxp)\;:\;y=fxp}{}
\conclude*{}{a_6:=\lambda p:x\varepsilon G.a_5
\;:\;(\Pi p:x\varepsilon G.\;(y=fxp))}{}
\step*{}{a_7:=\wedge(a_6,a_3)\;:\;Pxy}{}
\step*{}{a_8:=\exists_1 (Px,y,a_7)\;:\;(\exists y:T.Pxy)}{}
\conclude*{}{a_9:=\lambda v:\neg(x\varepsilon G).a_8\;:\;(\neg(x\varepsilon G)\Rightarrow \exists y:T.Pxy)}{}
\assume*{}{v:x\varepsilon G}{}
\step*{}{\text{Notation } y:=fxv\;:\;T}{}
\assume*{}{w:\neg(x\varepsilon G)}{}
\step*{}{a_{10}:=wv\;:\;\bot}{}
\step*{}{a_{11}:=a_{10}(y=b)\;:\;y=b}{}
\conclude*{}{a_{12}:=\lambda w:\neg(x\varepsilon G).a_{11}\;:\;(\neg(x\varepsilon G)\Rightarrow y=b)}{}

\introduce*{}{p:x\varepsilon G}{}
\step*{}{a_{13}:=uxvp\;:\;fxv=fxp}{}
\step*{}{a_{13}\;:\;y=fxp}{}
\conclude*{}{a_{14}:=\lambda p:x\varepsilon G.a_{13}
\;:\;(\Pi p:x\varepsilon G.\;y=fxp)}{}
\step*{}{a_{15}:=\wedge(a_{14},a_{12})\;:\;Pxy}{}
\step*{}{a_{16}:=\exists_1 (Px,y,a_{15})\;:\;(\exists y:T.Pxy)}{}
\conclude*{}{a_{17}:=\lambda v:x\varepsilon G.a_{16}\;:\;(x\varepsilon G\Rightarrow \exists y:T.Pxy)}{}
\step*{}{a_{18}:=\vee(a_1,a_{17},a_9)\;:\;(\exists y:T.Pxy)}{}

\introduce*{}{y,z:T\;|\;v_1:Pxy\;|\;v_2:Pxz}{}
\step*{}{a_{19}:=\wedge_1(v_1)\;:\;(\Pi p:x\varepsilon G.\;y=fxp)}{}
\step*{}{a_{20}:=\wedge_2(v_1)\;:\;(\neg(x\varepsilon G)\Rightarrow y=b)}{}

\step*{}{a_{21}:=\wedge_1(v_2)\;:\;(\Pi p:x\varepsilon G.\;z=fxp)}{}
\step*{}{a_{22}:=\wedge_2(v_2)\;:\;(\neg(x\varepsilon G)\Rightarrow z=b)}{}

\assume*{}{w:\neg(x\varepsilon G)}{}
\step*{}{a_{23}:=a_{20}w\;:\;y=b}{}
\step*{}{a_{24}:=a_{22}w\;:\;z=b}{}

\step*{}{a_{25}:=eq\mhyphen trans_2(a_{23},a_{24})
\;:\;y=z}{}
\conclude*{}{a_{26}:=\lambda w:\neg(x\varepsilon G).a_{25}\;:\;(\neg(x\varepsilon G)\Rightarrow y=z)}{}

\assume*{}{w:x\varepsilon G}{}
\step*{}{a_{27}:=a_{19}w\;:\;y=fxw}{}
\step*{}{a_{28}:=a_{21}w\;:\;z=fxw}{}

\step*{}{a_{29}:=eq\mhyphen trans_2(a_{27},a_{28})\;:\;y=z}{}

\conclude*{}{a_{30}:=\lambda w:x\varepsilon G.a_{29}\;:\;(x\varepsilon G\Rightarrow y=z)}{}
\step*{}{a_{31}:=\vee(a_1,a_{30},a_{26})\;:\;y=z}{}
\conclude*{}{a_{32}:=\lambda y,z:T.\lambda v_1:Pxy.\lambda  v_2:Pxz.a_{31}\;:\;\forall y,z:T.(Pxy\Rightarrow Pxz\Rightarrow y=z)}{}
\step*{}{a_{33}:=\wedge(a_{18},a_{32})\;:\;(\exists^1 y:T.Pxy)}{}

\step*{}{c(x):=\iota(T,Px,a_{33})\;:\;T}{}

\step*{}{d(x):=\iota\mhyphen prop(T,Px,a_{33})\;:\;Pxc(x)}{}
\conclude*{}{f^*:=\lambda x:S.c(x)\;:\;S\rightarrow T}{}

\step*{}{ \boldsymbol{Ext}_1(S,T,G,f,b,u):=f^*
\;:\;S\rightarrow T}{}

\introduce*{}{x:S}{}
\step*{}{d(x):Pxc(x)}{}
\step*{}{d(x):Px(f^*x)}{}

\conclude*{}{Ext\mhyphen proof_1(S,T,G,f,b,u):=\lambda x:S.d(x)\;:\;[\forall x:S.Px(f^*x)]}{}

\step*{}{ \boldsymbol{Ext\mhyphen proof}_1(S,T,G,f,b,u)
\;:\;\boldsymbol{ Ext\mhyphen prop_1}(S,T,G,f,b,f^*)}{}
\end{flagderiv}

\subsection*{Proof of Theorem \ref{theorem:extend_function_2}
}

\begin{flagderiv}
\introduce*{}{S,T: *_s\;|\;G:ps(S)\;|\;f:\Pi x:S.[(x\varepsilon G)\rightarrow \Pi y:S.((y\varepsilon G)\rightarrow T)]}{}

\assume*{}{b:T\;|\;u:consistent_2(S,T,G,f)}{}

\step*{}{\text{Notation } P:=\lambda x,y:S.\lambda z:T.[\Pi p:x\varepsilon G.\Pi q:y\varepsilon G.\;(z=fxpyq)
\\\quad\quad
\wedge (\neg(x\varepsilon G\wedge y\varepsilon G)\Rightarrow z=b)]\;:\;S\rightarrow S\rightarrow T\rightarrow *_p}{}

\introduce*{}{x,y:S}{}
\step*{}{a_1:=exc\mhyphen thrd(x\varepsilon G\wedge y\varepsilon G)\;:\;(x\varepsilon G\wedge y\varepsilon G)\vee \neg(x\varepsilon G\wedge y\varepsilon G)}{}

\assume*{}{v:x\varepsilon G\wedge y\varepsilon G}{}
\step*{}{a_2:=\wedge_1 (v)\;:\;x\varepsilon G}{}
\step*{}{a_3:=\wedge_2 (v)\;:\;y\varepsilon G}{}
\step*{}{\text{Notation } z:=fxa_2ya_3\;:\;T}{}
\assume*{}{w:\neg(x\varepsilon G\wedge y\varepsilon G)}{}
\step*{}{a_4:=wv\;:\;\bot}{}
\step*{}{a_5:=a_4(z=b)\;:\;z=b}{}
\conclude*{}{a_6:=\lambda w:\neg(x\varepsilon G\wedge y\varepsilon G).a_5\;:\;(\neg(x\varepsilon G\wedge y\varepsilon G)\Rightarrow z=b)}{}
\introduce*{}{p:x\varepsilon G\;|\;q:y\varepsilon G}{}
\step*{}{a_7:=uxa_2pya_3q\;:\;fxa_2ya_3=fxpyq}{}
\step*{}{a_7\;:\;z=fxpyq}{}

\conclude*{}{a_8:=\lambda p:x\varepsilon G.\lambda q:y\varepsilon G.a_7
\;:\;(\Pi p:x\varepsilon G.\Pi q:y\varepsilon G.\;z=fxpyq)}{}
\step*{}{a_9:=\wedge(a_8,a_6)\;:\;Pxyz}{}
\step*{}{a_{10}:=\exists_1(Pxy,z,a_9)\;:\;(\exists z:T.Pxyz)}{}
\conclude*{}{a_{11}:=\lambda v:(x\varepsilon G.\wedge y\varepsilon G).a_{10}\;:\;(x\varepsilon G\wedge y\varepsilon G)\Rightarrow \exists z:T.Pxyz}{}

\assume*{}{v:\neg(x\varepsilon G\wedge y\varepsilon G)}{}
\step*{}{\text{Notation } z:=b\;:\;T}{}
\step*{}{a_{12}:=eq\mhyphen refl\;:\;z=b}{}

\step*{}{a_{13}:=\lambda w:\neg(x\varepsilon G\wedge y\varepsilon G).a_{12}
\;:\;(\neg(x\varepsilon G\wedge y\varepsilon G)\Rightarrow z=b)}{}
\introduce*{}{p:x\varepsilon G\;|\;q: y\varepsilon G}{}
\step*{}{a_{14}:=\wedge(p,q)\;:\;(x\varepsilon G\wedge y\varepsilon G)}{}
\step*{}{a_{15}:=va_{14}\;:\;\bot}{}
\step*{}{a_{16}:=a_{15}(z=fxpyq)\;:\;z=fxpyq}{}
\conclude*{}{a_{17}:=\lambda p:x\varepsilon G.\lambda q:y\varepsilon G.a_{16}
\;:\;(\Pi p:x\varepsilon G.\Pi q:y\varepsilon G.\;z=fxpyq)}{}
\step*{}{a_{18}:=\wedge(a_{17},a_{13})\;:\;Pxyz}{}
\step*{}{a_{19}:=\exists_1(Pxy,z,a_{18})\;:\;(\exists z:T.Pxyz)}{}
\conclude*{}{a_{20}:=\lambda v:\neg(x\varepsilon G\wedge y\varepsilon G).a_{19}\;:\;(\neg(x\varepsilon G\wedge y\varepsilon G)\Rightarrow \exists z:T.Pxyz)}{}
\step*{}{a_{21}:=\vee(a_1,a_{11},a_{20})\;:\;(\exists z:T.Pxyz)}{}

\introduce*{}{z_1,z_2:T\;|\;v_1:Pxyz_1\;|\;v_2:Pxyz_2}{}
\step*{}{a_{22}:=\wedge_1(v_1)
\;:\;(\Pi p:x\varepsilon G.\Pi q:y\varepsilon G.\;z_1=fxpyq)}{}
\step*{}{a_{23}:=\wedge_2(v_1)
\;:\;\neg(x\varepsilon G\wedge y\varepsilon G)\Rightarrow z_1=b}{}

\step*{}{a_{24}:=\wedge_1(v_2)
\;:\;(\Pi p:x\varepsilon G.\Pi q:y\varepsilon G.\;z_2=fxpyq)}{}
\step*{}{a_{25}:=\wedge_2(v_2)
\;:\;\neg(x\varepsilon G\wedge y\varepsilon G)\Rightarrow z_2=b}{}

\assume*{}{w:x\varepsilon G\wedge y\varepsilon G}{}
\step*{}{a_{26}:=\wedge_1(w)\;:\;x\varepsilon G}{}
\step*{}{a_{27}:=\wedge_2(w)\;:\;y\varepsilon G}{}
\step*{}{a_{28}:=a_{22}a_{26}a_{27}\;:\;z_1=fxa_{26}ya_{27}}{}
\step*{}{a_{29}:=a_{24}a_{26}a_{27}\;:\;z_2=fxa_{26}ya_{27}}{}

\step*{}{a_{30}:=eq\mhyphen trans_2(a_{28},a_{29})\;:\;z_1=z_2}{}
\conclude*{}{a_{31}:=\lambda w:( x\varepsilon G\wedge y\varepsilon G).a_{30}\;:\;(x\varepsilon G\wedge y\varepsilon G\Rightarrow z_1=z_2)}{}

\assume*{}{w:\neg(x\varepsilon G\wedge y\varepsilon G)}{}
\step*{}{a_{32}:=a_{23}w\;:\;z_1=b}{}
\step*{}{a_{33}:=a_{25}w\;:\;z_2=b}{}

\step*{}{a_{34}:=eq\mhyphen trans_2(a_{32},a_{33})
\;:\;z_1=z_2}{}
\conclude*{}{a_{35}:=\lambda w:\neg(x\varepsilon G\wedge y\varepsilon G).a_{34}\;:\;(\neg(x\varepsilon G\wedge y\varepsilon G)\Rightarrow z_1=z_2)}{}

\step*{}{a_{36}:=\vee(a_1,a_{31},
a_{35})\;:\;z_1=z_2}{}
\conclude*{}{a_{37}:=\lambda z_1,z_2:T.\lambda v_1:Pxyz_1.\lambda  v_2:Pxyz_2.a_{36}\;:\;[\forall z_1,z_2:T.(Pxyz_1\Rightarrow Pxyz_2\Rightarrow z_1=z_2)]}{}
\step*{}{a_{38}:=\wedge(a_{21},a_{37})\;:\;(\exists^1 z:T.Pxyz)}{}

\step*{}{c(x,y):=\iota(T,Pxy,a_{38})\;:\;T}{}

\step*{}{d(x,y):=\iota\mhyphen prop(T,Pxy,a_{38})\;:\;Pxyc(x,y)}{}

\conclude*{}{f^*:=\lambda x,y:S.c(x,y)\;:\;S\rightarrow S\rightarrow T}{}

\step*{}{ \boldsymbol{Ext}_2(S,T,G,f,b,u):=f^*
\;:\;S\rightarrow S\rightarrow T}{}

\introduce*{}{x,y:S}{}
\step*{}{d(x,y):Pxyc(x,y)}{}
\step*{}{d(x,y):Pxy(f^*xy)}{}

\conclude*{}{Ext\mhyphen proof_2(S,T,G,f,b,u):=\lambda x,y:S.d(x,y)
\;:\;[\forall x,y:S.Pxy(f^*xy)]}{}

\step*{}{ \boldsymbol{Ext\mhyphen proof}_2(S,T,G,f,b,u)
\;:\;\boldsymbol{ Ext\mhyphen prop_2}(S,T,G,f,b,f^*)}{}
\end{flagderiv}

\subsection*{Proof of Proposition \ref{theorem:unique}}

1)
\setlength{\derivskip}{4pt}
\begin{flagderiv}
\introduce*{}{S: *_s\;|\;\cdot: S\rightarrow S\rightarrow S}{}
\introduce*{}{e,d:S\;|\;u:identity(S,\cdot,e)\;|\;v:identity(S,\cdot,d)}{}
\step*{}{a_1:=\wedge_1(ud)\;:\;d\cdot e=d}{}
\step*{}{a_2:=\wedge_2(ve)\;:\;d\cdot e=e}{}

\step*{}{\boldsymbol{term_{\ref{theorem:unique}.1}}(S,\cdot,e,d,u,v):=eq\mhyphen trans_3(a_2,a_1)\;:\;\boldsymbol{e=d}}{}
\end{flagderiv}

2) \begin{flagderiv}
\introduce*{}{S: *_s\;|\;\cdot: S\rightarrow S\rightarrow S\;|\;e:S\;|\;u: monoid(S,\cdot,e)}{}
\step*{}{a_1:=\wedge_1(u)
\;:\;assoc(S,\cdot)}{}
\step*{}{a_2:=\wedge_2(u)
\;:\;identity(S,\cdot,e)}{}
\introduce*{}{b,x,y:S}{}

\step*{}{a_3:=\wedge_2(a_2x)\;:\;e\cdot x=x}{}

\step*{}{a_4:=\wedge_1(a_2y)\;:\;y\cdot e=y}{}
\assume*{}{v:inverse(S,\cdot,e,b,x)\;|\;w:inverse(S,\cdot,e,b,y)}{}
\step*{}{a_5:=\wedge_1(v)
\;:\;b\cdot x=e}{}
\step*{}{a_6:=\wedge_2(w)\;:\;y\cdot b=e}{}

\step*{}{a_7:= a_1ybx\;:\;(y\cdot b)\cdot x=y\cdot (b\cdot x)}{}

\step*{}{a_8:=eq\mhyphen cong_2(\lambda z:S.(z\cdot x),a_6)\;:\;e\cdot x=(y\cdot b)\cdot x}{}

\step*{}{a_9:=eq\mhyphen cong_1(\lambda z:S.(y\cdot z),a_5)\;:\;y\cdot (b\cdot x)=y\cdot e}{}

\step*{}{\boldsymbol{term_{\ref{theorem:unique}.2}}(S,\cdot,e,u,b,x,y, v,w):=eq\mhyphen trans_1(eq\mhyphen trans_1(eq\mhyphen trans_1(eq\mhyphen trans_3(a_3,a_8),a_7), a_9),a_4)\;:\;\boldsymbol{x=y}}{}
\end{flagderiv}

\subsection*{Proof of Example \ref{example:monoid}}

\begin{flagderiv}
\introduce*{}{M: *_s}{}
\step*{}{S:=ps(M)\;:\;\square}{}
\step*{}{\cap:=\lambda X,Y:S.X\cap Y
\;:\;S\rightarrow S\rightarrow S}{}
\step*{}{\cup:=\lambda X,Y:S.X\cup Y\;:\;S\rightarrow S\rightarrow S}{}
\step*{}{E:=full\mhyphen set(M)\;:\;S}{}
\step*{}{e:=\varnothing\;:\;S}{}
\introduce*{}{X,Y:S}{}
\introduce*{}{x:M}{}
\assume*{}{u:x\;\varepsilon\;(X\cap Y)}{}
\step*{}{a_1:=\wedge_1(u)\;:\;x\varepsilon X}{}
\step*{}{a_2:=\wedge_2(u)\;:\;x\varepsilon Y}{}
\step*{}{a_3:=\wedge(a_2,a_1)\;:\;x\varepsilon Y\wedge x\varepsilon X}{}
\conclude*{}{a_4:=\lambda u:x\;\varepsilon\;(X\cap Y).a_3\;:\;(x\;\varepsilon\;(X\cap Y)\Rightarrow x\;\varepsilon \;(Y\cap X))}{}

\assume*{}{u:x\;\varepsilon\;(X\cup Y)}{}
\step*{}{u\;:\;x\varepsilon X\vee x\varepsilon Y}{}
\assume*{}{v:x\varepsilon X}{}

\step*{}{a_5:=\vee_2(v)\;:\;x\;\varepsilon\;(Y\cup X)}{}

\conclude*{}{a_6:=\lambda v:x\varepsilon X.a_5\;:\;(x\varepsilon X\Rightarrow x\;\varepsilon\;(Y\cup X))}{}

\assume*{}{v:x\varepsilon Y}{}

\step*{}{a_7:=\vee_1(v)\;:\;x\;\varepsilon\;(Y\cup X)}{}

\conclude*{}{a_8:=\lambda v:x\varepsilon Y.a_7\;:\;(x\varepsilon Y\Rightarrow x\;\varepsilon\;(Y\cup X))}{}

\step*{}{a_9:=\vee(u,a_6,a_8)\;:\;x\;\varepsilon\;(Y\cup X)}{}
\conclude*{}{a_{10}:=\lambda u:x\;\varepsilon\;(X\cup Y).a_9\;:\;(x\;\varepsilon\;(X\cup Y)\Rightarrow x\;\varepsilon\;(Y\cup X))}{}
\conclude*{}{a_{11}(X,Y):=\lambda x:M.a_4\;:\;X\cap Y\subseteq Y\cap X}{}
\step*{}{a_{12}(X,Y):=\lambda x:M.a_{10}\;:\;X\cup Y\subseteq Y\cup X}{}

\step*{}{a_{13}:=\wedge(a_{11}(X,Y),
a_{11}(Y,X))\;:\;X\cap Y= Y\cap X}{}
\step*{}{a_{14}:=\wedge(a_{12}(X,Y),
a_{12}(Y,X))\;:\;X\cup Y= Y\cup X}{}

\conclude*{}{a_{15}:=\lambda X,Y:S.a_{13}\;:\;commut(S,\cap)}{}
\step*{}{a_{16}:=\lambda X,Y:S.a_{14}\;:\;commut(S,\cup)}{}

\introduce*{}{X:S}{}
\introduce*{}{x:M}{}
\assume*{}{u:x\varepsilon X}{}
\step*{}{a_{17}:=\lambda v:\bot.v\;:\;\bot\rightarrow \bot}{}
\step*{}{a_{17}\;:\;\neg \bot}{}
\step*{}{a_{18}:=
\wedge(u,a_{17})\;:\;x\;\varepsilon\;(X\cap E)}{}
\step*{}{a_{19}:=
\vee_1(u)\;:\;x\;\varepsilon\;(X\cup e)}{}
\conclude*{}{a_{20}:=\lambda u:x\varepsilon X.a_{18}\;:\;(x\varepsilon X\Rightarrow x\;\varepsilon\;(X\cap E))}{}
\step*{}{a_{21}:=\lambda u:x\varepsilon X.a_{19}\;:\;(x\varepsilon X\Rightarrow x\;\varepsilon\;(X\cup e))}{}

\step*{}{a_{22}:=\lambda u:x\;\varepsilon\;(X\cap E).\wedge_1(u)\;:\;(x\;\varepsilon\;(X\cap E)\Rightarrow x\varepsilon X)}{}

\assume*{}{u:x\;\varepsilon\;(X\cup e)}{}
\step*{}{u:x\varepsilon X\vee \bot}{}
\step*{}{a_{23}:=\lambda v:x\varepsilon X.v\;:\;(x\varepsilon X\Rightarrow x\varepsilon X)}{}
\step*{}{a_{24}:=\lambda v:\bot.v(x\varepsilon X)\;:\;(\bot\Rightarrow x\varepsilon X)}{}
\step*{}{a_{25}:=\vee(u,a_{23},a_{24})\;:\;x\varepsilon X}{}
\conclude*{}{a_{26}:=\lambda u:x\;\varepsilon\;(X\cup e).a_{25}\;:\;(x\;\varepsilon \;(X\cup e)\Rightarrow x\varepsilon X)}{}
\conclude*{}{a_{27}:=\lambda x:M.a_{20}\;:\;X\subseteq X\cap E}{}

\step*{}{a_{28}:=\lambda x:M.a_{21}\;:\;X\subseteq X\cup e}{}
\step*{}{a_{29}:=\lambda x:M.a_{22}\;:\;X\cap E\subseteq X}{}
\step*{}{a_{30}:=\lambda x:M.a_{26}\;:\;X\cup e\subseteq X}{}

\step*{}{a_{31}:=\wedge(a_{29},a_{27})\;:\;X\cap E= X}{}
\step*{}{a_{32}:=\wedge(a_{30},a_{28})\;:\;X\cup e= X}{}
\step*{}{a_{33}:=a_{15}EX\;:\;E\cap X= X\cap E}{}
\step*{}{a_{34}:=a_{16}eX\;:\;e\cup X= X\cup e}{}
\step*{}{a_{35}:=eq\mhyphen trans_1(a_{33},a_{31})\;:\;E\cap X= X}{}
\step*{}{a_{36}:=eq\mhyphen trans_1(a_{34},a_{32})\;:\;e\cup X= X}{}
\step*{}{a_{37}:=\wedge(a_{31},a_{35})\;:\;(X\cap E= X)\wedge (E\cap X= X)}{}
\step*{}{a_{38}:=\wedge(a_{32},a_{36})\;:\;(X\cup e= X)\wedge (e\cup X= X)}{}
\conclude*{}{a_{39}:=\lambda X:S.a_{37}\;:\;identity(S,\cap,E)}{}
\step*{}{a_{40}:=\lambda X:S.a_{38}\;:\;identity(S,\cup,e)}{}

\introduce*{}{X,Y,Z:S}{}
\introduce*{}{x:M}{}
\assume*{}{u:x\;\varepsilon\;((X\cap Y)\cap Z)}{}
\step*{}{a_{41}:=\wedge_1(u)\;:\;x\;\varepsilon\;(X\cap Y)}{}
\step*{}{a_{42}:=\wedge_1(a_{41})\;:\;x\varepsilon X}{}
\step*{}{a_{43}:=\wedge_2(a_{41})\;:\;x\varepsilon Y}{}
\step*{}{a_{44}:=\wedge_2(u)\;:\;x\varepsilon Z}{}
\step*{}{a_{45}:=\wedge(a_{42},\wedge(a_{43},a_{44}))\;:\;x\;\varepsilon\;(X\cap(Y\cap Z))}{}
\conclude*{}{a_{46}:=\lambda 
u:x\;\varepsilon\;((X\cap Y)\cap Z).a_{45}\;:\;[x\;\varepsilon\;((X\cap Y)\cap Z)\Rightarrow
x\;\varepsilon\;(X\cap(Y\cap Z))]}{}

\assume*{}{u:x\;\varepsilon\;(X\cap (Y\cap Z))}{}
\step*{}{a_{47}:=\wedge_1(u)\;:\;x\varepsilon X}{}
\step*{}{a_{48}:=\wedge_2(u)\;:\;x\;\varepsilon\;(Y\cap Z)}{}
\step*{}{a_{49}:=\wedge_1(a_{48})\;:\;x\varepsilon Y}{}
\step*{}{a_{50}:=\wedge_2(a_{48})\;:\;x\varepsilon Z}{}

\step*{}{a_{51}:=\wedge(\wedge(a_{47}, a_{49}),a_{50})\;:\;x\;\varepsilon\;((X\cap Y)\cap Z)}{}
\conclude*{}{a_{52}:=\lambda 
u:x\;\varepsilon\;(X\cap (Y\cap Z)).a_{51}\;:\;[x\;\varepsilon\;(X\cap(Y\cap Z))\Rightarrow
x\;\varepsilon\;((X\cap Y)\cap Z)]}{}
\assume*{}{u:x\;\varepsilon\;((X\cup Y)\cup Z)}{}
\step*{}{u:x\;\varepsilon\;(X\cup Y)\vee x\varepsilon Z}{}
\assume*{}{v:x\;\varepsilon\;(X\cup Y)}{}
\step*{}{v:x\varepsilon X\vee x\varepsilon Y}{}

\step*{}{a_{53}:=\lambda w:x\varepsilon X.\vee_1(w)\;:\;(x\varepsilon X\Rightarrow x\;\varepsilon\;(X\cup(Y\cup Z)))}{}
\assume*{}{w:x\varepsilon Y}{}
\step*{}{a_{54}:=\vee_1(w)\;:\;x\;\varepsilon\;(Y\cup Z)}{}

\step*{}{a_{55}:=\vee_2(a_{54})\;:\;x\;\varepsilon\;(X\cup(Y\cup Z))}{}
\conclude*{}{a_{56}:=\lambda w:x\varepsilon Y.a_{55}\;:\;[x\varepsilon Y\Rightarrow x\;\varepsilon\;(X\cup(Y\cup Z))]}{}
\step*{}{a_{57}:=\vee(v,a_{53},a_{56})\;:\;x\;\varepsilon\;(X\cup(Y\cup Z))}{}
\conclude*{}{a_{58}:=\lambda v:x\;\varepsilon \;(X\cup Y).a_{57}\;:\;[x\;\varepsilon \;(X\cup Y)\Rightarrow x\;\varepsilon\;(X\cup(Y\cup Z))]}{}

\assume*{}{v:x\varepsilon Z}{}
\step*{}{a_{59}:=\vee_2(v)\;:\;x\;\varepsilon\;(Y\cup Z)}{}
\step*{}{a_{60}:=\vee_2(a_{59})\;:\;x\;\varepsilon\;(X\cup(Y\cup Z))}{}
\conclude*{}{a_{61}:=\lambda v:x\varepsilon Z.a_{60}\;:\;[x\varepsilon Z\Rightarrow x\;\varepsilon\;(X\cup(Y\cup Z))]}{}
\step*{}{a_{62}:=\vee(u,a_{58},a_{61})\;:\;x\;\varepsilon\;(X\cup(Y\cup Z))}{}
\conclude*{}{a_{63}:=\lambda u:x\varepsilon (X\cup Y)\cup Z.a_{62}\;:\;[x\;\varepsilon\;((X\cup Y)\cup Z)\Rightarrow x\;\varepsilon\;(X\cup(Y\cup Z))]}{}
\conclude*{}{a_{64}:=\lambda x:M.a_{46}\;:\;(X\cap Y)\cap Z\subseteq X\cap(Y\cap Z)}{}
\step*{}{a_{65}:=\lambda x:M.a_{52}\;:\;
X\cap(Y\cap Z)\subseteq 
(X\cap Y)\cap Z}{}

\step*{}{a_{66}(X,Y,Z):=\lambda x:M.a_{63}\;:\;(X\cup Y)\cup Z\subseteq X\cup(Y\cup Z)}{}

\step*{}{a_{67}:=
\wedge(a_{64},a_{65})
\;:\;(X\cap Y)\cap Z= X\cap(Y\cap Z)}{}

\step*{}{a_{68}:=a_{66}(Z,Y,X)\;:\;(Z\cup Y)\cup X\subseteq Z\cup(Y\cup X)}{}
\step*{}{a_{69}:=a_{16}ZY\;:\;Z\cup Y= Y\cup Z}{}
\step*{}{a_{70}:=a_{16}YX\;:\;Y\cup X= X\cup Y}{}
\step*{}{a_{71}:=eq\mhyphen subs_1(\lambda D:S.(D\cup X\subseteq(Z\cup Y)\cup X),a_{69},a_{68})\;:\;
(Y\cup Z)\cup X\subseteq Z\cup(Y\cup X)}{}
\step*{}{a_{72}:=eq\mhyphen subs_1(\lambda D:S.((Y\cup Z)\cup X\subseteq Z\cup D),a_{70}, a_{71}) \;:\;
(Y\cup Z)\cup X\subseteq Z\cup
(X\cup Y)}{}
\step*{}{a_{73}:=a_{16}(Y\cup Z)X\;:\;(Y\cup Z)\cup X= 
X\cup(Y\cup Z)}{}
\step*{}{a_{74}:=a_{16}Z(X\cup Y)\;:\;Z\cup(X\cup Y)=
(X\cup Y)\cup Z}{}
\step*{}{a_{75}:=eq\mhyphen subs_1(\lambda D:S.(D\subseteq Z\cup(X\cup Y)), a_{73}, a_{72}) \;:\;
X\cup(Y\cup Z)\subseteq 
Z\cup(X\cup Y)}{}
\step*{}{a_{76}:=eq\mhyphen subs_1(\lambda D:S.(X\cup(Y\cup Z)\subseteq D),a_{74}, a_{75}) \;:\;
X\cup(Y\cup Z)\subseteq 
(X\cup Y)\cup Z}{}
\step*{}{a_{77}:=\wedge(a_{66}(X,Y,Z),a_{76})\;:\;(X\cup Y)\cup Z=X\cup(Y\cup Z)
}{}
\conclude*{}{a_{78}:=\lambda  X,Y,Z:S.a_{67} \;:\; assoc(S,\cap)}{}
\step*{}{a_{79}:=\lambda  X,Y,Z:S.a_{77} \;:\; assoc(S,\cup)}{}
\step*{}{a_{80}:=\wedge(a_{78},a_{39})\;:\; monoid(S,\cap,E)}{}
\step*{}{a_{81}:=\wedge(a_{79},a_{40})\;:\; monoid(S,\cup,e)}{}

\step*{}{ \boldsymbol{ term_{ \ref{example:monoid}.1}(M)}:=\wedge(a_{80},a_{15})\;:\;\boldsymbol{monoid(S,\cap,E)\wedge commut(S,\cap)}\quad\quad\quad\textsf{1) is proven}}{}
\step*{}{ \boldsymbol{ term_{ \ref{example:monoid}.2}(M)}:=\wedge(a_{81},a_{16})\;:\;\boldsymbol{monoid(S,\cup,e)\wedge commut(S,\cup)}\quad\quad\quad\;\textsf{2) is proven}}{}
\end{flagderiv}

\newpage
\subsection*{Proof of Example \ref{example: monoid_functions}
}
Here we implicitly use the Axiom of Extensionality for functions.

\begin{flagderiv}
\introduce*{}{M: *_s}{}
\introduce*{}{f,g,h:M\rightarrow M}{}
\introduce*{}{x:M}{}
\step*{}{a_1:=eq\mhyphen refl\;:\;((f\circ g)\circ h)x=((f\circ g)\circ h)x
}{}
\step*{}{a_1:((f\circ g)\circ h)x=f(g(hx))}{}
\step*{}{a_2:=eq\mhyphen refl\;:\;(f\circ(g\circ h))x=(f\circ(g\circ h))x
}{}
\step*{}{a_2:(f\circ(g\circ h))x=f(g(hx))}{}

\step*{}{a_3:=eq\mhyphen trans_2(a_1,a_2)\;:\;((f\circ g)\circ h)x=(f\circ(g\circ h))x}{}
\conclude*{}{a_4:=\lambda x: M.a_3\;:\;(f\circ g)\circ h=f\circ(g\circ h)}{}
\conclude*{}{a_5:=\lambda f,g,h:M\rightarrow M.a_4\;:\;assoc(M\rightarrow M,\circ)}{}

\introduce*{}{f:M\rightarrow M}{}
\introduce*{}{x:M}{}
\step*{}{a_6:=eq\mhyphen refl\;:\;(f\circ id_M) x=(f\circ id_M) x}{}
\step*{}{a_6\;:\;(f\circ id_M) x=f(id_M x)}{}
\step*{}{a_7(x):=eq\mhyphen refl\;:\;id_M  x=x}{}
\step*{}{a_8:=eq\mhyphen cong_1(f,a_7(x))\;:\;f(id_M  x)=fx}{}
\step*{}{a_9:=eq\mhyphen trans_1(a_6,a_8)\;:\;(f\circ id_M)x=fx}{}
\step*{}{a_{10}:=eq\mhyphen refl\;:\;(id_M\circ f) x=(id_M\circ f) x}{}
\step*{}{a_{10}\;:\;(id_M\circ f) x=id_M(fx)}{}
\step*{}{a_{11}:=a_7(fx)\;:\;id_M(fx)=fx}{}
\step*{}{a_{12}:=eq\mhyphen trans_1(a_{10},a_{11})\;:\;(id_M\circ f) x=fx}{}
\conclude*{}{a_{13}:=\lambda x:M.a_9\;:\;f\circ id_M=f}{}
\step*{}{a_{14}:=\lambda x:M.a_{12}\;:\;id_M\circ f=f}{}
\step*{}{a_{15}:=\wedge(a_{13},a_{14})\;:\; f\circ id_M=f\wedge id_M\circ f=f}{}
\conclude*{}{a_{16}:=\lambda f:M\rightarrow M.a_{15}
\;:\;identity(M\rightarrow M,\circ,id_M)}{}

\step*{}{\boldsymbol{ term_{\ref{example: monoid_functions}}}(M):=\wedge(a_5,a_{16})
\;:\;\boldsymbol{monoid(M\rightarrow M,\circ,id_M)}}{}
\end{flagderiv}

\subsection*{Proof of Example \ref{example:invert-monoid}}

For the monoid $S$ we use Definition 
\ref{def:group_type}, since $S$  is a type here. To prove that the set of all its invertible elements is a group we show that this set satisfies the conditions of Definition \ref{def:group_subset}, the Main Definition  of Group. In the following proof we use the terms derived in Theorem \ref{theorem:extend_function_1}.

\begin{flagderiv}
\introduce*{}{S: *_s\;|\;\cdot: S\rightarrow S\rightarrow S\;|\;e:S}{}

\step*{}{\text{Definition }\boldsymbol{Inv\mhyphen set}(S,\cdot,e):=\{x:S\;|\;invertible(S,\cdot,e,x)\}\;:\;ps(S)}{}
\step*{}{\text{Notation }G:=Inv\mhyphen set(S,\cdot,e)\;:\;ps(S)}{}

\assume*{}{u:monoid(S,\cdot,e)}{}

\step*{}{a_1:=\wedge_1(u)
\;:\;assoc(S,\cdot)}{}

\step*{}{a_2:=\wedge_2(u)
\;:\;identity(S,\cdot,e)}{}

\introduce*{}{x:S\;|\;v_1:x\varepsilon G\;|\;y:S\;|\;v_2:y\varepsilon G\;|\;z:S\;|\;v_3:z\varepsilon G}{}
\step*{}{a_3:=a_1xyz\;:\;(x\cdot y)\cdot z=x\cdot(y\cdot z)}{}
\conclude*{}{a_4:=\lambda x:S. \lambda v_1:x\varepsilon G.\lambda y:S. \lambda v_2:y\varepsilon G.\lambda z:S. \lambda v_3:z\varepsilon G.
a_3\;:\;Assoc(S,G,\cdot)}{}
\step*{}{a_5:=a_2e\;:\;e\cdot e=e\wedge e\cdot e=e}{}
\step*{}{a_5:inverse(S,\cdot,e,e,e)}{}
\step*{}{a_6:=\exists_1(\lambda y:S.inverse(S,\cdot,e,e,y),e, a_5)\;:\;invertible(S,\cdot,e,e)}{}

\step*{}{a_6\;:\;e\varepsilon G}{}

\step*{}{a_7:=\lambda x:S.\lambda v: x\varepsilon G.a_2x\;:\;[\forall x:S.(x\varepsilon G\Rightarrow x\cdot e=x\wedge e\cdot x=x)]}{}
\step*{}{a_8:=\wedge(a_6,a_7)\;:\;Identity(S,G,\cdot,e)}{}
\introduce*{}{x:S\;|\;v:x\varepsilon G}{}
\step*{}{v:(\exists z:S.(x\cdot z=e\wedge z\cdot x=e))}{}
\introduce*{}{z_1:S\;|\;r_1:(x\cdot z_1=e\wedge z_1\cdot x=e)}{}
\step*{}{a_9:=\wedge_1(r_1)\;:\;x\cdot z_1=e}{}
\step*{}{a_{10}:=\wedge_2(r_1)\;:\;z_1\cdot x=e}{}
\introduce*{}{y:S\;|\;w:y\varepsilon G}{}
\step*{}{w:(\exists z:S.(y\cdot z=e\wedge z\cdot y=e))}{}
\introduce*{}{z_2:S\;|\;r_2:(y\cdot z_2=e\wedge z_2\cdot y=e)}{}
\step*{}{a_{11}:=\wedge_1(r_2)\;:\;y\cdot z_2=e}{}
\step*{}{a_{12}:=\wedge_2(r_2)\;:\;z_2\cdot y=e}{}

\step*{}{a_{13}:=eq\mhyphen cong_1(\lambda t:S.(x\cdot t),a_{11})\;:\; x\cdot(y\cdot z_2)=x\cdot e}{}

\step*{}{a_{14}:=\wedge_1(a_2x)\;:\;x\cdot e=x}{}

\step*{}{a_{15}:= a_1xyz_2\;:\;x\cdot y\cdot z_2=x\cdot (y\cdot z_2)}{}

\step*{}{a_{16}:=eq\mhyphen trans_1(eq\mhyphen trans_1(a_{15},a_{13}),a_{14})\;:\; x\cdot y\cdot z_2=x}{}

\step*{}{a_{17}:=eq\mhyphen cong_1(\lambda t:S.(t\cdot z_1),a_{16})\;:\; x\cdot y\cdot z_2\cdot z_1=x\cdot z_1}{}

\step*{}{a_{18}:= a_1(x\cdot y)z_2z_1\;:\;x\cdot y\cdot z_2\cdot z_1=x\cdot y\cdot  (z_2\cdot z_1)}{}

\step*{}{a_{19}:=eq\mhyphen trans_1(eq\mhyphen trans_3(a_{18},a_{17}),a_9)\;:\;x\cdot y \cdot (z_2\cdot z_1)=e}{}

\step*{}{a_{20}:=eq\mhyphen cong_1(\lambda t:S.(t\cdot y),a_{10})\;:\; z_1\cdot x\cdot y=e\cdot y}{}

\step*{}{a_{21}:=\wedge_2(a_2y)\;:\;e\cdot y=y}{}

\step*{}{a_{22}:= a_1z_1xy\;:\;z_1\cdot x\cdot y=z_1\cdot  (x\cdot y)}{}

\step*{}{a_{23}:=eq\mhyphen trans_1(eq\mhyphen trans_3(a_{22},a_{20}),a_{21})\;:\;z_1\cdot (x\cdot y)=y}{}

\step*{}{a_{24}:=eq\mhyphen cong_1(\lambda t:S.(z_2\cdot t),a_{23})\;:\; z_2\cdot (z_1\cdot(x\cdot y))=z_2\cdot y}{}

\step*{}{a_{25}:= a_1z_2z_1(x\cdot y)\;:\;z_2\cdot z_1\cdot(x\cdot y)=z_2\cdot (z_1\cdot(x\cdot y))}{}

\step*{}{a_{26}:=eq\mhyphen trans_1(eq\mhyphen trans_1(a_{25},a_{24}),a_{12})\;:\;z_2\cdot z_1\cdot(x\cdot y)=e}{}

\step*{}{a_{27}:=\wedge(a_{19},a_{26})\;:\; (x\cdot y)\cdot (z_2\cdot z_1)=e\wedge (z_2\cdot z_1)\cdot (x\cdot y)=e}{}

\step*{}{a_{28}:=\exists_1(\lambda t:S.((x\cdot y)\cdot t=e\wedge t\cdot (x\cdot y)=e),(z_2\cdot z_1), a_{27})\;:\;(x\cdot y)\;\varepsilon \;G}{}
\conclude*{}{a_{29}:=\exists_3(w,a_{28})\;:\;(x\cdot y)\;\varepsilon \;G}{}
\conclude*{}{a_{30}:=\lambda y:S.\lambda w:y\;\varepsilon \;G.a_{29}
\;:\;[\forall y:S.(y\varepsilon G\Rightarrow (x\cdot y)\;\varepsilon \;G)]}{}
\conclude*{}{a_{31}:=\exists_3(v,a_{30})\;:\;[\forall y:S.(y\varepsilon G\Rightarrow (x\cdot y)\;\varepsilon \;G)]}{}

\conclude*{}{a_{32}:=\lambda x:S.\lambda v:x\varepsilon G.a_{31}
\;:\;Closure_2(S,G,\cdot)}{}
\step*{}{a_{33}:=\wedge(\wedge(a_{32},a_4), a_8)\;:\;Monoid(S,G,\cdot)}{}
\step*{}{\text{Notation }P:=\lambda x,y:S.inverse(S,\cdot,e,x,y)\;:\;S\rightarrow S\rightarrow *_p}{}
\introduce*{}{x:S\;|\;v:x\varepsilon G}{}
\step*{}{v:(\exists y:S.Pxy)}{}
\introduce*{}{y_1,y_2:S\;|\; w_1:Pxy_1\;|\;w_2:Pxy_2}{}
\step*{}{w_1:inverse(S,\cdot,e,x,y_1)}{}
\step*{}{w_2:inverse(S,\cdot,e,x,y_2)}{}
\step*{}{a_{34}:=term_{ \ref{theorem:unique}.2}(S,\cdot,e,u,x,y_1,y_2,w_1,w_2)\;:\;y_1=y_2}{}
\conclude*{}{a_{35}:=\lambda y_1,y_2:S.\lambda w_1:Pxy_1.\lambda w_2:Pxy_2.a_{34}
\;:\;[\forall y_1,y_2:S.(Pxy_1\Rightarrow Pxy_2\Rightarrow y_1=y_2)]}{}
\step*{}{a_{36}:=\wedge(v,a_{35})\;:\;(\exists^1y:S.Pxy)}{}
\step*{}{ivs(x,v):=\iota (S,Px,a_{36})\;:\;S}{}
\step*{}{a_{37}(x,v):=\iota \mhyphen prop(S,Px,a_{36})\;:\;Px(ivs(x,v))}{}
\introduce*{}{w:x\varepsilon G}{}
\step*{}{a_{38}:=a_{37}(x,w)\;:\;Px(ivs(x,w))}{}
\step*{}{a_{39} :=\iota \mhyphen unique(S,Px,a_{36})(ivs(x,w))a_{38}\;:\; ivs(x,w)=ivs(x,v)}{}
\step*{}{a_{40} :=eq \mhyphen sym(a_{39})\;:\; ivs(x,v)=ivs(x,w)}{}

\conclude*[2]{}{a_{41}:=\lambda x:S.\lambda v,w:x\varepsilon G.a_{40}
\;:\;[\forall x:S.\Pi v,w:x\varepsilon G.(ivs(x,v)=ivs(x,w))]}{}

\step*{}{Ivs:=
\lambda x:S.\lambda v:x\varepsilon G.ivs(x,v)
\;:\;\Pi x:S.((x\varepsilon G)\rightarrow S)}{}
\step*{}{a_{41}\;:\;consistent_1(S,S,G,Ivs)}{}

\step*{}{\text{Definition }\boldsymbol{Inv}(S,\cdot,e,u):=
Ext_1(S,S,G,Ivs,e,a_{41})
\;:\;S\rightarrow S}{}

\step*{}{\text{Notation } ^{-1} \text{ for }Inv(S,\cdot,e,u)}{}

\step*{}{\text{Notation: }x^{-1}\text{ for }^{-1}x}{}
\step*{}{a_{42}:=
Ext\mhyphen proof_1(S,S,G,Ivs,e,a_{41})
\;:\;
\forall x:s.[\Pi p:x\varepsilon G.(x^{-1}=Ivs\;xp)\wedge (\neg(x\varepsilon G)\Rightarrow x^{-1}=e)]}{}
\introduce*{}{x:S\;|\;v:x\varepsilon G}{}
\step*{}{a_{43}:=\wedge_1(a_{42}x)v\;:\;x^{-1}=Ivs\;xv}{}
\step*{}{a_{43}\;:\;x^{-1}=ivs(x,v)}{}

\step*{}{a_{44}:=a_{37}(x,v)\;:\;Px(ivs(x,v))}{}
\step*{}{a_{45}:=eq\mhyphen subs_2(Px,a_{43},a_{44})
\;:\;Px(x^{-1})}{}

\step*{}{a_{45}\;:\; Inverse_0(S,\cdot, e,x,x^{-1})}{}
\step*{}{a_{46}:=\wedge_1(a_{45})\;:\;x\cdot x^{-1}=e}{}
\step*{}{a_{47}:=\wedge_2(a_{45})\;:\;x^{-1}\cdot x=e}{}
\step*{}{a_{48}:=\wedge(a_{47},a_{46})\;:\;x^{-1}\cdot x=e\wedge x\cdot x^{-1}=e}{}
\step*{}{a_{48}\;:\; inverse(S,\cdot, e,x^{-1},x)}{}
\step*{}{a_{49}:=
\exists_1(\lambda y:S.inverse(S,\cdot, e,x^{-1},y),x,a_{48})
\;:\; x^{-1}\varepsilon G}{}
\conclude*{}{a_{50}:=\lambda x:S.\lambda v:x\varepsilon G.a_{49}\;:\;Closure_1(S,G, ^{-1})}{}
\step*{}{a_{51}:=\lambda x:S.\lambda v:x\varepsilon G.a_{45}\;:\;Inverse(S,G,\cdot,e, ^{-1})}{}
\step*{}{\boldsymbol{term_{\ref{example:invert-monoid}}}(S,\cdot,e,u):=
\wedge(a_{33},\wedge(a_{50},a_{51}))
\;:\;\boldsymbol{Group(S,G,\cdot,e,^{-1})}}{}
\end{flagderiv}

\subsection*{Proof of Lemma \ref{lemma:definitions}
\label{section:definitions}
}
\begin{flagderiv}
\introduce*{}{S: *_s\;|\;\;\cdot: S\rightarrow S\rightarrow S\;|\;e:S\;|\;^{-1}:S\rightarrow S\;|\;u:group(S,\cdot,e,^{-1})}{}

\step*{}{G:=full\mhyphen set(S)\;:\;ps(S)}{}

\step*{}{a_1:=\wedge_1(u)\;:\;monoid(S,\cdot,e)}{}
\step*{}{a_2:=\wedge_2(u)\;:\;(\forall x:S.inverse
(S,\cdot,e, x,x^{-1}))}{}

\step*{}{a_3:=\wedge_1(a_1)\;:\;assoc(S,\cdot)}{}
\step*{}{a_4:=\wedge_2(a_1)\;:\;identity(S,\cdot,e)}{}

\step*{}{a_5:=\lambda v:\bot.v\;:\;\neg\bot}{}
\step*{}{a_5\;:\;e\varepsilon G}{}

\introduce*{}{x:S\;|\;v:x\varepsilon G}{}
\step*{}{a_5\;:\;x^{-1}\varepsilon G}{}
\step*{}{a_6:=a_4x\;:\;x\cdot e=x\wedge e\cdot x=x}{}
\step*{}{a_7:=a_2x\;:\;x\cdot x^{-1}=e\wedge x^{-1}\cdot x=e}{}

\introduce*{}{y:S.\;|\;w:y\varepsilon G}{}
\step*{}{a_5\;:\;(x\cdot y)\;\varepsilon \;G}{}

\introduce*{}{z:S.\;|\;r:z\varepsilon G}{}
\step*{}{a_8:=a_3xyz\;:\;(x\cdot y)\cdot z=x\cdot (y\cdot z)}{}

\conclude*[2]{}{a_9:=\lambda y:S.\lambda w:y\varepsilon G.\lambda z:S.\lambda r:z\varepsilon G.a_8
\;:\;[\forall y:S.[y\varepsilon G\Rightarrow 
\forall z:S.(z\varepsilon G\Rightarrow 
(x\cdot y)\cdot z=x\cdot (y\cdot z))]}{}
\step*{}{a_{10}:=
\lambda y:S.\lambda w:y\varepsilon G.a_5\;:\;[\forall y:S.(y\varepsilon G\Rightarrow (x\cdot y)\;\varepsilon \;G)]}{}

\conclude*{}{a_{11}:=
\lambda x:S.\lambda v:x\varepsilon G.a_{10}\;:\;Closure_2(S,G,\cdot)}{}

\step*{}{a_{12}:=
\lambda x:S.\lambda v:x\varepsilon G.a_9\;:\;Assoc(S,G,\cdot)}{}

\step*{}{a_{13}:=
\lambda x:S.\lambda v:x\varepsilon G.a_7\;:\;Inverse(S,G,\cdot,e,^{-1})}{}

\step*{}{a_{14}:=
\lambda x:S.\lambda v:x\varepsilon G.a_5\;:\;Closure_1(S,G,\cdot,^{-1})}{}

\step*{}{a_{15}:=
\lambda x:S.\lambda v:x\varepsilon G.a_6\;:\;
[\forall x:S.(x\varepsilon G\Rightarrow x\cdot e=x\wedge e\cdot x=x)]}{}

\step*{}{a_{16}:=\wedge (a_5,a_{15})\;:\;Identity(S,G,\cdot,e,^{-1})}{}

\step*{}{a_{17}:=\wedge (a_{14},a_{13})\;:\;Inverse\mhyphen prop(S,G,\cdot,e,^{-1})}{}

\step*{}{a_{18}:=\wedge (a_{11},a_{12})\;:\;Semi\mhyphen group(S,G,\cdot)}{}

\step*{}{\boldsymbol {term_{\ref {lemma:definitions}}}(S,\cdot,e,^{-1},u):=\wedge (\wedge (a_{18},a_{16}),a_{17})\;:\;\boldsymbol{Group(S,G,\cdot,e,^{-1})}}{}
\end{flagderiv}

\subsection*{Proof of Lemma \ref{lemma:two-definitions}}

In this proof we use the terms derived in Theorems \ref{theorem:extend_function_1} and \ref{theorem:extend_function_2}.

\begin{flagderiv}
\introduce*{}{S: *_s\;|\;G:ps(S)\;|\;mult:\Pi x:S.[(x\varepsilon G)\rightarrow \Pi y:S.((y\varepsilon G)\rightarrow S)]\;|\;inv:\Pi x:S.((x\varepsilon G)\rightarrow S)}{}

\assume*{}{e:S\;|\;u:group(S,G,mult,e,inv)}{}
\step*{}{a_1:=\wedge_1(\wedge_1(\wedge_1( \wedge_1(u))))
\;:\; consistent_2(S, S,G,mult)}{}
\step*{}{a_2:=\wedge_2(\wedge_1(\wedge_1( \wedge_1(u))))
\;:\;closure_2(S,G,mult)}{}
\step*{}{a_3:=\wedge_2(\wedge_1( \wedge_1(u)))
\;:\;assoc(S,G,mult)}{}
\step*{}{a_4:=\wedge_1(\wedge_2( \wedge_1(u)))
\;:\;e\varepsilon G}{}
\step*{}{a_5:=\wedge_2(\wedge_2( \wedge_1(u)))
\;:\;[\forall x:S.\Pi p: x\varepsilon G.\Pi q: e\varepsilon G.(mult\; xpeq=x\wedge mult\; eqxp=x)]}{}

\step*{}{a_6:=\wedge_1(\wedge_1( \wedge_2(u)))
\;:\;consistent_1(S,S,G,inv)}{}
\step*{}{a_7:=\wedge_2(\wedge_1( \wedge_2(u)))
\;:\;closure_1(S,G,inv)}{}
\step*{}{a_8:=\wedge_2( \wedge_2(u))
\;:\;inverse_1(S, G,mult,e,inv)}{}

\step*{}{\text{Definition }\; ^{-1}:=
Ext_1(S,S,G,inv,e,a_6)\;:\;S\rightarrow S}{}

\step*{}{\text{Notation }\; x^{-1} \text{ for }^{-1}x}{}

\step*{}{a_9:=Ext\mhyphen proof_1(S,S,G,inv,e,a_6)
\;:\; Ext\mhyphen prop_1(S,S,G,inv,e,^{-1})}{}

\step*{}{\text{Definition }\; \cdot:=
Ext_2(S,S,G,mult,e,a_1)\;:\;S\rightarrow S \rightarrow S}{}
\step*{}{\text{Notation }\; x\cdot y \text{ for }\cdot xy}{}

\step*{}{a_{10}:=Ext\mhyphen proof_2(S,S,G,mult,e,a_1)
\;:\; Ext\mhyphen prop_2(S,S,G,mult,e,\cdot)}{}

\introduce*{}{x:S\;|\;v:x\varepsilon G}{}
\step*{}{a_{11}:=a_7xv\;:\;(inv\;xv)\;\varepsilon G}{}
\step*{}{a_{12}:=\wedge_1( a_9x)\;:\;(\Pi p: x\varepsilon G.x^{-1}= inv\;xp)}{}
\step*{}{a_{13}:=a_{12}v\;:\;x^{-1}= inv\;xv}{}

\step*{}{a_{14}:=eq\mhyphen subs_2(\lambda z:S.z\varepsilon G, a_{13},a_{11})\;:\;x^{-1}\varepsilon G}{}
\step*{}{a_{15}:=a_5xva_4\;:\;
mult\; xvea_4=x\wedge mult\; ea_4xv=x
}{}
\step*{}{a_{16}:=\wedge_1( a_{15})\;:\;mult\;xvea_4=x}{}
\step*{}{a_{17}:=\wedge_2( a_{15})\;:\;mult\;ea_4xv=x}{}
\step*{}{a_{18}:=a_{10}xe\;:\;[\Pi p:x\varepsilon G.\Pi q:e\varepsilon G.(x\cdot e=mult\;xpeq)\wedge (\neg(x\varepsilon G\wedge e\varepsilon G)\Rightarrow x\cdot e=e)]}{}
\step*{}{a_{19}:=\wedge_1( a_{18})\;:\;[\Pi p:x\varepsilon G.\Pi q:e\varepsilon G.(x\cdot e=mult\;xpeq)]}{}
\step*{}{a_{20}:=a_{19}va_4\;:\;x\cdot e=mult\;xvea_4}{}
\step*{}{a_{21}:=eq\mhyphen trans_1(a_{20},a_{16})
\;:\;x\cdot e=x}{}

\step*{}{a_{22}:=a_{10}ex\;:\;[\Pi q:e\varepsilon G.\Pi p:x\varepsilon G.(e\cdot x=mult\;eqxp)\wedge (\neg(e\varepsilon G\wedge x\varepsilon G)\Rightarrow e\cdot x=e)]}{}
\step*{}{a_{23}:=\wedge_1( a_{22})\;:\;\;:\;[\Pi q:e\varepsilon G.\Pi p:x\varepsilon G.(e\cdot x=mult\;eqxp)]}{}
\step*{}{a_{24}:=a_{23}a_4v\;:\;e\cdot x=mult\;ea_4xv}{}
\step*{}{a_{25}:=eq\mhyphen trans_1(a_{24},a_{17})
\;:\;e\cdot x=x}{}
\step*{}{a_{26}:=\wedge(a_{21},a_{25})\;:\;x\cdot e=x\wedge e\cdot x=x}{}
\step*{}{a_{27}:=
a_8xva_{11}\;:\;
mult\;xv(inv\;xv)a_{11}=e\wedge
mult\;(inv\;xv)a_{11}xv=e}{}
\step*{}{a_{28}:=
\wedge_1(a_{27})\;:\;
mult\;xv(inv\;xv)a_{11}=e}{}
\step*{}{a_{29}:=
\wedge_2(a_{27})\;:\;
mult\;(inv\;xv)a_{11}xv=e}{}
\step*{}{a_{30}:=a_{10}x(inv\;xv)\;:\;
[\Pi p:x\varepsilon G.\Pi q:(inv\;xv)\varepsilon G.(x\cdot (inv\;xv)=mult\;xp(inv\;xv)q)
\\\quad\quad
\wedge (\neg(x\varepsilon G\wedge (inv\;xv)\;\varepsilon \;G)\Rightarrow x\cdot (inv\;xv)=e)]
}{}
\step*{}{a_{31}:=\wedge_1(
a_{30})va_{11}\;:\;x\cdot (inv\;xv)=mult\;xv(inv\;xv)a_{11}
}{}
\step*{}{a_{32}:=eq\mhyphen trans_1(a_{31},a_{28})
\;:\;x\cdot (inv\;xv)=e}{}
\step*{}{a_{33}:=eq\mhyphen subs_2(\lambda z:S.(x\cdot z=e),a_{13},a_{32})\;:\;x\cdot x^{-1}=e}{}

\step*{}{a_{34}:=a_{10}(inv\;xv)x\;:\;
[\Pi q:(inv\;xv)\varepsilon G.\Pi p:x\varepsilon G.((inv\;xv)\cdot x=mult\;(inv\;xv)qxp)
\\\quad\quad
\wedge (\neg((inv\;xv)\;\varepsilon \;G\wedge x\varepsilon G)\Rightarrow (inv\;xv)\cdot x=e)]
}{}
\step*{}{a_{35}:=\wedge_1(
a_{34})a_{11}v\;:\;(inv\;xv)\cdot x=mult\;(inv\;xv)a_{11}xv
}{}
\step*{}{a_{36}:=eq\mhyphen trans_1(a_{35},a_{29})
\;:\;(inv\;xv)\cdot x=e}{}

\step*{}{a_{37}:=eq\mhyphen subs_2(\lambda z:S.(z\cdot x=e),a_{13},a_{36})
\;:\;x^{-1}\cdot  x=e}{}

\step*{}{a_{38}:=\wedge (a_{33},a_{37})\;:\;Inverse_0(S,\cdot,e,x, x^{-1})}{}

\introduce*{}{y:S\;|\;w:y\varepsilon G}{}
\step*{}{a_{39}:=a_2xvyw\;:\;(mult\;xvyw)\;\varepsilon \;G}{}

\step*{}{a_{40}:=a_{10}xy\;:\;
[\Pi p:x\varepsilon G.\Pi q:y\varepsilon G.(x\cdot y=mult\;xpyq)
\wedge (\neg(x\varepsilon G\wedge y\varepsilon G)\Rightarrow x\cdot y=e)]
}{}
\step*{}{a_{41}:=\wedge_1(a_{40})vw\;:\;
x\cdot y=mult\;xvyw}{}

\step*{}{a_{42}:=eq\mhyphen subs_2(\lambda z:S.z\varepsilon G,a_{42},a_{39})
\;:\;(x\cdot y)\;\varepsilon\; G}{}

\introduce*{}{z:S\;|\;t:z\varepsilon G}{}

\step*{}{a_{43}:=a_2ywzt\;:\;(mult\;ywzt)\;\varepsilon\;G}{}

\step*{}{a_{44}:=a_3xvywa_{39}zta_{43}\;:\;
mult(mult\;xvyw)a_{39}zt=
mult\;xv(mult\;ywzt)a_{43}
}{}
\step*{}{a_{45}:=a_{10}yz\;:\;
[\Pi q:y\varepsilon G.\Pi r:z\varepsilon G.(y\cdot z=mult\;yqzr)
\wedge (\neg(y\varepsilon G\wedge z\varepsilon G)\Rightarrow y\cdot z=e)]
}{}
\step*{}{a_{46}:=\wedge_1(a_{45})wt\;:\;
y\cdot z=mult\;ywzt}{}

\step*{}{a_{47}:=a_{10}(mult\;xvyw)z\;:\;
[\Pi p: (mult\;xvyw)\varepsilon G.\Pi r:z\varepsilon G.
[(mult\;xvyw)\cdot z=mult(mult\;xvyw)pzr]
\\\quad\quad
\wedge (\neg((mult\;xvyw)\varepsilon G\wedge z\varepsilon G)\Rightarrow (mult\;xvyw)\cdot z=e)]}{}

\step*{}{a_{48}:=\wedge_1(a_{47})a_{39}t\;:\;
(mult\;xvyw)\cdot z=mult(mult\;xvyw)a_{39}zt}{}

\step*{}{a_{49}:=a_{10}x(mult\;ywzt)\;:\;
[\Pi p:x\varepsilon G.
\Pi q: (mult\;ywzt)\varepsilon G.
[x\cdot (mult\;ywzt)
\\\quad\quad
=mult\;xp(mult\;ywzt)q)]
\wedge (\neg(x\varepsilon G\wedge (mult\;ywzt)\varepsilon G)\Rightarrow x\cdot (mult\;ywzt)=e)]}{}

\step*{}{a_{50}:=\wedge_1(a_{49})va_{43}\;:\;x\cdot (mult\;ywzt)=mult\;xv(mult\;ywzt)a_{43}}{}

\step*{}{a_{51}:=eq\mhyphen trans_2(eq\mhyphen trans_1(a_{48},a_{44}),a_{50})
\;:\;(mult\;xvyw)\cdot z=x\cdot (mult\;ywzt)}{}

\step*{}{a_{52}:=eq\mhyphen subs_2(\lambda m:S.(m\cdot z=
x\cdot (mult\;ywzt)),
a_{41},a_{51})
\;:\;(x\cdot y)\cdot z=x\cdot (mult\;ywzt)}{}

\step*{}{a_{53}:=eq\mhyphen subs_2(\lambda m:S.((x\cdot y)\cdot z=x\cdot m),a_{46}, a_{52})\;:\;(x\cdot y)\cdot z=x\cdot (y\cdot z)}{}

\conclude*[2]{}{a_{54}:=\lambda y:S.\lambda w:y\varepsilon G.\lambda z:S.\lambda t:z\varepsilon G.a_{53}\;:\;\forall y:S[y\varepsilon G\Rightarrow \forall z:S(z\varepsilon G\Rightarrow(x\cdot y)\cdot z=x\cdot (y\cdot z))]}{}

\step*{}{a_{55}:=\lambda y:S.\lambda w:y\varepsilon G.a_{42}\;:\;\forall y:S(y\varepsilon G\Rightarrow (x\cdot y)\;\varepsilon \;G)}{}

\conclude*{}{a_{56}:=\lambda x:S.\lambda v:x\varepsilon G.a_{55}\;:\;Closure_2(S,G,\cdot)}{}
\step*{}{a_{57}:=\lambda x:S.\lambda v:x\varepsilon G.a_{54}\;:\;Assoc(S,G,\cdot)}{}
\step*{}{a_{58}:=\lambda x:S.\lambda v:x\varepsilon G.a_{14}\;:\;Closure_1(S,G,^{-1})}{}
\step*{}{a_{59}:=\lambda x:S.\lambda v:x\varepsilon G.a_{38}\;:\;Inverse(S,G,\cdot,e,^{-1})}{}

\step*{}{a_{60}:=\lambda x:S.\lambda v:x\varepsilon G.a_{26}\;:\;
\forall x:S.[x\varepsilon G\Rightarrow (x\cdot e=x\wedge e\cdot x=x)]}{}

\step*{}{a_{61}:=\wedge(a_{58},a_{59})\;:\;Inverse\mhyphen prop(S,G,\cdot,e,^{-1})}{}

\step*{}{a_{62}:=\wedge(a_4,a_{60})\;:\;Identity(S,G,\cdot,e)}{}
\step*{}{a_{63}:=\wedge(a_{56},a_{57})\;:\;Semi\mhyphen group(S,G,\cdot)}{}
\step*{}{a_{64}:=\wedge(a_{63},a_{62})\;:\;Monoid(S,G,\cdot,e)}{}
\step*{}{a_{65}:=\wedge(a_{64},a_{61})\;:\;\boldsymbol{Group(S,G,\cdot,e,^{-1})}}{}
\end{flagderiv}

\subsection*{Proof of Lemma \ref{def:inverse}}

\begin{flagderiv}
\introduce*{}{M: *_s\;|\;f: M\rightarrow M\;|\;u:f\varepsilon Perm(M)}{}

\step*{}{\text{Notation }P:=\lambda y,x:M.fx=y\;:\;M\rightarrow M\rightarrow *_p}{}

\step*{}{u:bij(M,M,f)}{}
\step*{}{a_1:=\wedge_1(u)\;:\;inj(M,M,f)}{}
\step*{}{a_2:=\wedge_2(u)\;:\;surj(M,M,f)}{}
\introduce*{}{y:M}{}
\step*{}{a_3:=a_2y\;:\;(\exists x:M.fx=y)}{}
\step*{}{a_3\;:\;(\exists x:M.Pyx)}{}
\introduce*{}{x_1,x_2:M\;|\;v:Pyx_1\;|\;w :Pyx_2}{}

\step*{}{v:fx_1=y}{}
\step*{}{a_4:=eq\mhyphen sym(w)\;:\;y=fx_2}{}
\step*{}{a_5:=eq\mhyphen trans_1(v,a_4)\;:\;fx_1=fx_2}{}
\step*{}{a_6:= a_1x_1x_2a_5\;:\;x_1=x_2}{}
\conclude*{}{a_7:=\lambda x_1,x_2:M.\lambda v:Pyx_1.\lambda w:Pyx_2.a_6\;:\;[\forall x_1,x_2:M.(Pyx_1\Rightarrow Pyx_2 \Rightarrow (x_1=x_2))]}{}
\step*{}{a_8:=\wedge(a_3,a_7)\;:\;(\exists^1 x:M.Pyx)}{}

\step*{}{c(y):=
\iota(M,Py,a_8)\;:\;M}{}

\step*{}{d(y):=
\iota\mhyphen prop(M,Py,a_8)\;:\;Pyc(y)}{}

\conclude*{}{\boldsymbol{ invrs}(M ,f, u):=\lambda y:M.c(y)\;:\;M\rightarrow M\hspace{2cm} \textbf{Inverse of permutation f}}{}

\step*{}{\text{Notation }\; g:=invrs(M ,f, u)\;:\;M\rightarrow M}{}

\introduce*{}{y:M}{}

\step*{}{a_9:=d(y)\;:\;Pyc(y)}{}

\step*{}{a_9:Py(gy)}{}

\step*{}{a_9:f(gy)=y}{}
\step*{}{a_9:(f\circ g)y=id_My}{}

\conclude*{}{a_{10}:=\lambda y:M.a_9\;:\;f\circ g=id_M}{}

\introduce*{}{x:M}{}
\step*{}{\text{Notation }y:=fx}{}
\step*{}{a_{11}:=a_{10}y\;:\;f(gy)=y}{}

\step*{}{a_{11}\;:\;f(gy)=fx}{}

\step*{}{a_{12}:=a_1(gy)xa_{11}\;:\;gy=x}{}
\step*{}{a_{12}\;:\;g(fx)=x}{}

\step*{}{a_{12}\;:\;(g\circ f)x=id_Mx}{}
\conclude*{}{a_{13}:=\lambda x:M.a_{12}
\;:\;g\circ f=id_M}{}
\step*{}{ \boldsymbol{term_{ \ref{def:inverse}}}(M,f,u):=\wedge(a_{10},a_{13})
\;:\;\boldsymbol{inverse(M\rightarrow M,\circ,id_M, f,g)}}{}
\end{flagderiv}

\subsection*{Proof of Proposition \ref{lemma:permutations}}
Here we use the term $invrs$ from Lemma \ref{def:inverse}.

\begin{flagderiv}
\introduce*{}{M: *_s\;|\;f: M\rightarrow M}{}

\step*{}{\text{Notation }S:=
M\rightarrow M\;:\;*}{}

\assume*{}{u:invertible(S,\circ,id_M,f)}{}

\step*{}{u\;:\;(\exists g:S.(f\circ g=id_M\wedge g\circ f=id_M))}{}

\introduce*{}{g:S\;|\;v:(f\circ g=id_M\wedge g\circ f=id_M)}{}

\step*{}{a_1:=\wedge_1(v)\;:\;f\circ g=id_M}{}
\step*{}{a_2:=\wedge_2(v)\;:\;g\circ f=id_M}{}
\introduce*{}{x_1,x_2:M\;|\;w:fx_1=fx_2}{}

\step*{}{a_3:=
eq\mhyphen cong_1(\lambda h:S.hx_1,a_2)\;:\;(g\circ f)x_1=id_Mx_1}{}

\step*{}{a_4:=
eq\mhyphen cong_1(\lambda h:S.hx_2,a_2)\;:\;(g\circ f)x_2=id_Mx_2}{}

\step*{}{a_3\;:\; g(fx_1)=x_1}{}

\step*{}{a_4\;:\; g(fx_2)=x_2}{}

\step*{}{a_5:=
eq\mhyphen cong_1(g,w)\;:\;g(fx_1)=g(fx_2)}{}

\step*{}{a_6:=
eq\mhyphen trans_1(eq\mhyphen trans_3(a_3,a_5),a_4)\;:\; x_1=x_2}{}

\conclude*{}{a_7:=\lambda x_1,x_2:M.\lambda w:(fx_1= fx_2).a_6\;:\;inj(M,M,f)}{}

\conclude*{}{a_8:=\exists(u,a_7)\;:\;inj(M,M,f)}{}
\introduce*{}{y:M}{}

\step*{}{a_9:=
eq\mhyphen cong_1(\lambda h:S.hy,a_1)\;:\;(f\circ g)y=id_My}{}
\step*{}{a_9\;:\;f(gy)=y}{}

\step*{}{a_{10}:=
\exists_1(\lambda x:M.(fx=y), gy,a_9) \;:\;(\exists x:M.fx=y)}{}

\conclude*{}{a_{11}:=\lambda y:M.a_{10}\;:\;surj(M,M,f)}{}

\step*{}{a_{12}:=
\wedge(a_8,a_{11}) \;:\;bij(M,M,f)}{}

\step*{}{a_{12} \;:\;f\varepsilon Perm(M)}{}

\conclude*{}{a_{13}:=\lambda u:invertible(S,\circ,id_M,f). a_{12}\;:\;invertible(S,\circ,id_M,f)\Rightarrow f
\varepsilon Perm(M)}{}

\assume*{}{u:f
\varepsilon Perm(M)}{}

\step*{}{\text{Notation }Q:=
\lambda h:S.inverse(S,\circ,id_M,f,h)\;:\;
S\rightarrow *_p}{}

\step*{}{\text{Notation }g:=
invrs(M,f,u)\;:\;S}{}

\step*{}{a_{14}:=term_{\ref{def:inverse}}
(M,f,u)\;:\;Qg}{}

\step*{}{a_{15}:=
\exists_1(Q,g,a_{14}) \;:\;invertible(S,\circ,id_M,f)}{}

\conclude*{}{a_{16}:=\lambda u:f\varepsilon Perm(M).a_{15}\;:\;f\varepsilon Perm(M)\Rightarrow invertible(S,\circ,id_M,f)}{}

\step*{}{ \boldsymbol{term_{ \ref{lemma:permutations}}}(M,f):=\wedge(a_{13},a_{16})
\;:\;\boldsymbol{invertible(M\rightarrow M,\circ,id_M,f)\Leftrightarrow f\varepsilon Perm(M)}}{}
\end{flagderiv}

\subsection*{Proof of Theorem \ref{theorem:permutations}}

Here we use the terms $Inv$ and $Inv\mhyphen set$ from Example \ref{example:invert-monoid}.

\begin{flagderiv}
\introduce*{}{M: *_s}{}

\step*{}{G:=Inv\mhyphen set
(M\rightarrow M,\circ,id_M)\;:\;ps(M\rightarrow M)}{}

\step*{}{a_1:= term_{\ref{example: monoid_functions}}(M)\;:\;monoid(M\rightarrow M,\circ,id_M)}{}

\step*{}{^{-1}:=Inv(M\rightarrow M, \circ,id_M,a_1)\;:\;(M\rightarrow M)\rightarrow M\rightarrow M}{}

\step*{}{a_2:= term_{\ref{example:invert-monoid}}(M\rightarrow M,\circ,id_M,a_1)\;:\;Group(M\rightarrow M,G,\circ,id_M,^{-1})}{}

\introduce*{}{f: M\rightarrow M}{}
\step*{}{a_3:=term_{\ref{lemma:permutations}}(M,f)\;:\;invertible(M\rightarrow M, \circ,id_M,f)\Leftrightarrow f\varepsilon Perm(M)}{}

\step*{}{a_3\;:\;f\varepsilon G\Leftrightarrow f\varepsilon Perm(M)}{}

\step*{}{a_4:=\wedge_1(a_3) \;:\;f\varepsilon G\Rightarrow f\varepsilon Perm(M)}{}

\step*{}{a_5:=\wedge_2(a_3) \;:\;f\varepsilon Perm(M)\Rightarrow f\varepsilon G}{}

\conclude*{}{a_6:=\lambda f:M\rightarrow M.a_4\;:\;G\subseteq Perm(M)}{}

\step*{}{a_7:=\lambda f:M\rightarrow M.a_5\;:\;Perm(M)\subseteq G}{}

\step*{}{a_8:=\wedge(a_6,a_7) \;:\;G=Perm(M)}{}

\step*{}{\text{Notation }P:=\lambda Z:ps(M\rightarrow M).Group(M\rightarrow M,Z,\circ,id_M,^{-1})\;:\;ps(ps(M\rightarrow M))\rightarrow *_p}{}

\step*{}{ \boldsymbol{term_{\ref{theorem:permutations}}}(M):=eq\mhyphen subs_1(P,a_8,a_2)
\;:\;\boldsymbol{ Group(M\rightarrow M,Perm(M), \circ,id_M,^{-1})}}{}
\end{flagderiv}

\subsection*{Proof of Proposition \ref{theorem:axiom_corollary}}

\begin{flagderiv}
\introduce*{}{S: *_s\;|\;G:ps(S)\;|\;\cdot: S\rightarrow S\rightarrow S\;|\;e:S\;|\;^{-1}:S\rightarrow S\;|\;u:Group(S,G,\cdot,e,^{-1})}{}

\step*{}{a_1:=Gr_1(S,G,\cdot,e,^{-1},u)
\;:\;e\varepsilon G}{}

\step*{}{a_2:=Gr_2(S,G,\cdot,e,^{-1},u)
\;:\;Assoc(S,G,\cdot)}{}

\introduce*{}{x:S\;|\;v:x\varepsilon G}{}

\step*{}{a_3:=Gr_3(S,G,\cdot,e,^{-1},u,x,v)\;:\;x^{-1}\varepsilon G}{}

\step*{}{a_4:=Gr_7(S,G,\cdot,e,^{-1},u,x,v)
\;:\;x\cdot x^{-1}=e}{}

\step*{}{a_5:=Gr_8(S,G,\cdot,e,^{-1},u,x,v)
\;:\;x^{-1}\cdot x=e}{}

\step*{}{a_6:=Gr_4(S,G,\cdot,e,^{-1},u,(x^{-1}),a_3)
\;:\;x^{-1}\cdot e=x^{-1}}{}

\step*{}{a_7:=Gr_5(S,G,\cdot,e,^{-1},u,(x^{-1}),a_3)
\;:\;e\cdot x^{-1}=x^{-1}}{}

\introduce*{}{y:S\;|\;w:y\varepsilon G}{}

\step*{}{a_8:=Gr_3(S,G,\cdot,e,^{-1},u,y,w)\;:\;y^{-1}\varepsilon G}{}

\step*{}{a_9:=Gr_8(S,G,\cdot,e,^{-1},u,y,w)
\;:\;y^{-1}\cdot y=e}{}

\step*{}{a_{10}:= Gr_4(S,G,\cdot,e, ^{-1},u,y,w)
\;:\;y\cdot e=y}{}

\step*{}{a_{11}:= Gr_5(S,G,\cdot,e,^{-1} ,u,y,w)
\;:\;e\cdot y=y}{}

\step*{}{a_{12}:= Gr_9(S,G,\cdot,e,^{-1}, u,x,y,v,w)\;:\;(x\cdot y)\;\varepsilon \;G}{}

\step*{}{a_{13}:= a_2(x^{-1})a_3xvyw
\;:\;x^{-1}\cdot x\cdot y=x^{-1}\cdot (x\cdot y)}{}

\step*{}{a_{14}:=eq\mhyphen cong_2(\lambda t:S.(t\cdot y), a_5)\;:\;e\cdot y=x^{-1}\cdot x\cdot y}{}

\step*{}{a_{15}:=eq\mhyphen trans_3(eq\mhyphen trans_1(a_{14}, a_{13}),a_{11})\;:\;x^{-1}\cdot(x\cdot y)=y}{}

\introduce*{}{z:S\;|\;r:z\varepsilon G}{}

\assume*{}{p:(x\cdot y=z)}{} 
\step*{}{a_{16}:=eq\mhyphen cong_1(\lambda t:S.(x^{-1}\cdot t),p)\;:\;x^{-1}\cdot (x\cdot y)=x^{-1}\cdot z}{}

\step*{}{a_{17}:=eq\mhyphen trans_3( a_{15}, a_{16})\;:\; y=x^{-1}\cdot z}{}

\conclude*{}{\boldsymbol{term_{\ref{theorem:axiom_corollary}.1}}(S,G,\cdot,e,^{-1},u,
x,y,z,v,w,r):=\lambda p:(x\cdot y=z).a_{17}\;
\\\quad\quad\quad
:\;\boldsymbol{(x\cdot y=z\Rightarrow y=x^{-1}\cdot z)}\quad\quad \textsf{1) is proven}{}}

\assume*{}{p:(y\cdot x=z)}{} 

\step*{}{a_{18}:=eq\mhyphen cong_1(\lambda t:S.(t\cdot x^{-1}), p)\;:\;y\cdot x\cdot x^{-1}=z\cdot x^{-1}}{}

\step*{}{a_{19}:=eq\mhyphen cong_1(\lambda t:S.(y\cdot t),a_4)\;:\;y\cdot(x\cdot x^{-1} )=y\cdot e}{}

\step*{}{a_{20}:= a_2ywxvx^{-1}a_3
\;:\;y\cdot x\cdot x^{-1}=y\cdot(x\cdot x^{-1} )}{}

\step*{}{a_{21}:=eq\mhyphen trans_3(eq\mhyphen trans_1(eq\mhyphen trans_1(a_{20}, a_{19}),a_{10}),a_{18})\;:\; y=z\cdot x^{-1}}{}

\conclude*{}{\boldsymbol{term_{\ref{theorem:axiom_corollary}.2}}(S,G,\cdot,e,^{-1},u,
x,y,z,v,w,r):=\lambda p:(y\cdot x=z).a_{21}\;
\\\quad\quad\quad
:\;\boldsymbol{(y\cdot x=z\Rightarrow y=z\cdot x^{-1})}\quad\quad \textsf{2) is proven}}{}

\done

\assume*{}{p:(x\cdot y=e)}{}

\step*{}{a_{22}:=
term_{\ref{theorem:axiom_corollary}.1}(S,G,\cdot,e,^{-1},u,
x,y,e,v,w,a_1)p\;:\;y=x^{-1}\cdot e}{}

\step*{}{a_{23}:=eq\mhyphen trans_1(a_{22},a_6)\;:\;y=x^{-1}}{}

\conclude*{}{\boldsymbol{term_{\ref{theorem:axiom_corollary}.3}}(S,G,\cdot,e,^{-1},u,
x,y,v,w):=\lambda p:(x\cdot y=e).a_{23}
\;:\;\boldsymbol{(x\cdot y=e\Rightarrow y=x^{-1})}}{\textsf{3) is proven}}

\assume*{}{p:(y\cdot x=e)}{}

\step*{}{a_{24}:=
term_{\ref{theorem:axiom_corollary}.2}(S,G,\cdot,e,^{-1},u,
x,y,e,v,w,a_1)p\;:\;y=e\cdot x^{-1}}{}

\step*{}{a_{25}:=eq\mhyphen trans_1(a_{24}, a_7)\;:\;y=x^{-1}}{}

\conclude*{}{\boldsymbol{term_{\ref{theorem:axiom_corollary}.4}}(S,G,\cdot,e,^{-1},u,
x,y,v,w):=\lambda p:(y\cdot x=e).a_{25}
\;:\;\boldsymbol{(y\cdot x=e\Rightarrow y=x^{-1})}}{\textsf{4) is proven}}

\step*{}{a_{26}:=eq\mhyphen cong_1(\lambda t:S.(y^{-1}\cdot t), a_{15})\;:\;y^{-1}\cdot (x^{-1}\cdot (x\cdot y))=y^{-1}\cdot y}{}

\step*{}{a_{27}:= a_2(y^{-1})a_8(x^{-1}) a_3(x\cdot y)a_{12}
\;:\;y^{-1}\cdot x^{-1}\cdot (x\cdot y)=y^{-1}\cdot (x^{-1}\cdot (x\cdot y))}{}

\step*{}{a_{28}:=eq\mhyphen trans_1(eq\mhyphen trans_1( a_{27},a_{26}),a_9)\;:\;y^{-1}\cdot x^{-1}\cdot (x\cdot y)=e}{}

\step*{}{a_{29}:= Gr_9(S,G,\cdot,e,^{-1}, u,y^{-1},x^{-1},
a_8,a_3)\;:\;(y^{-1}\cdot x^{-1})\;\varepsilon \;G}{}

\step*{}{a_{30}:= term_{\ref{theorem:axiom_corollary}.4}(S,G,\cdot,e,^{-1},u,
(x\cdot y),(y^{-1}\cdot x^{-1}),a_{12},a_{29}) a_{28}\;:\;y^{-1}\cdot x^{-1}=(x\cdot y)^{-1}}{}

\step*{}{\boldsymbol{ term_{\ref{theorem:axiom_corollary}.5}}(S,G,\cdot,e,^{-1}, u,x,y,v,w):=
eq\mhyphen sym(a_{30})
\;:\;\boldsymbol{(x\cdot y)^{-1}=y^{-1}\cdot x^{-1}}}{\textsf{5) is proven}}

\done

\step*{}{a_{31}:=term_{\ref{theorem:axiom_corollary}.3}(S,G,\cdot,e,^{-1}, u,x^{-1},x,a_3,v)a_5\;:\;x=(x^{-1})^{-1}}{}

\step*{}{\boldsymbol{term_{\ref{theorem:axiom_corollary}.6}}(S,G,\cdot,e,^{-1}, u,x,v):=eq\mhyphen sym(a_{31})\;:\;\boldsymbol{(x^{-1})^{-1}= x}}{\textsf{6) is proven}}

\assume*{}{(p:x\cdot x=x)}{}

\step*{}{a_{32}:=term_{\ref{theorem:axiom_corollary}.1}(S,G,\cdot,e,^{-1}, u,x,x,x,v,v,v)p\;:\;x=x^{-1}\cdot x}{}

\step*{}{a_{33}:= eq\mhyphen trans_1(a_{32},a_5)\;:\;x=e}{}

\conclude*{}{\boldsymbol{term_{\ref{theorem:axiom_corollary}.7}}(S,G,\cdot,e,^{-1},u,x,v):=\lambda p:(x\cdot x=x).a_{33}
\;:\;\boldsymbol{(x\cdot x=x\Rightarrow x=e)}}{\textsf{7) is proven}}

\done

\step*{}{a_{34}:=Gr_4(S,G,\cdot,e,^{-1},u,e,a_1)\;:\;e\cdot e=e}{}

\step*{}{a_{35}:= term_{\ref{theorem:axiom_corollary}.3}(S,G,\cdot,e,^{-1}, u,e,e,a_1,a_1)a_{34}
\;:\;e=e^{-1}}{}

\step*{}{\boldsymbol{ term_{ \ref{theorem:axiom_corollary}.8}}(S,G,\cdot,e, ^{-1},u):=eq\mhyphen sym(a_{35})\;:\;e^{-1}=e}{\textsf{8) is proven}}
\end{flagderiv}

\subsection*{Proof of Proposition \ref{lemma:subgroup}}
\label{section:subgroup}

1)
\begin{flagderiv}
\introduce*{}{S: *_s\;|\;G:ps(S)\;|\;\cdot: S\rightarrow S\rightarrow S\;|\;e:S\;|\;^{-1}:S\rightarrow S\;|\;u:Group(S,G,\cdot,e,^{-1})}{}
\introduce*{}{H:ps(S)\;|\;v:H\leqslant G}{}
\step*{}{a_1:=\wedge_1(\wedge_1( \wedge_1(v)))\;:\;H\subseteq G}{}
\step*{}{a_2:=\wedge_2(\wedge_1( \wedge_1(v)))\;:\;e\varepsilon H}{}
\step*{}{a_3:=\wedge_2( \wedge_1(v))\;:\;Closure_1(S,H,^{-1})}{}
\step*{}{a_4:=\wedge_2(v)\;:\;Closure_2(S,H,\cdot)}{}
\introduce*{}{x:S\;|\;w:x\varepsilon H}{}
\step*{}{a_5(x,w):=a_1xw\;:\;x\varepsilon G}{}
\step*{}{a_6:=a_5(x,w)\;:\;x\varepsilon G}{}

\step*{}{a_7:=\wedge(Gr_4(S,G,\cdot,e, ^{-1},u,x,a_6),Gr_5(S,G,\cdot,e, ^{-1},u,x,a_6))\;:\;x\cdot e=x\wedge e\cdot x=x}{}
\step*{}{a_8:=Gr_6(S,G,\cdot,e, ^{-1},u,x,a_6)\;:\;Inverse_0(S,\cdot,e,x, x^{-1})}{}
\conclude*{}{a_9:=
\lambda x:S.\lambda w:x\;\varepsilon H.a_7
\;:\;\forall x:S.[x\varepsilon H\Rightarrow (x\cdot e=x\wedge e\cdot x=x)]}{}

\step*{}{a_{10}:=
\lambda x:S.\lambda w:x\;\varepsilon H.a_8
\;:\;Inverse(S,H,\cdot,e,^{-1})}{}

\step*{}{a_{11}:=
\wedge(a_2,a_9)
\;:\;Identity(S,H,\cdot,e)}{}

\step*{}{a_{12}:=
\wedge(a_3,a_{10})
\;:\;Inverse\mhyphen prop(S,H,\cdot,e,^{-1})}{}

\introduce*{}{x:S\;|\;w_1:x\varepsilon H\;|\;y:S\;|\;w_2:y\varepsilon H\;|\;z:S\;|\;w_3:z\varepsilon H}{}

\step*{}{a_{13}:= a_5(x,w_1)\;:\;x\varepsilon G}{}

\step*{}{a_{14}:= a_5(y,w_2)\;:\;y\varepsilon G}{}

\step*{}{a_{15}:= a_5(z,w_3)\;:\;z\varepsilon G}{}

\step*{}{a_{16}:=Gr_2(S,G,\cdot, e,^{-1},u)xa_{13}ya_{14}za_{15}
\;:\;
(x\cdot y)\cdot z=x\cdot(y\cdot z)}{}
\conclude*{}{a_{17}:=\lambda x:S.\lambda w_1:x\varepsilon H.\lambda y:S.\lambda w_2:y\varepsilon H.\lambda z:S.\lambda w_3:z\varepsilon H.a_{16}\;:\;Assoc(S,H,\cdot)}{}

\step*{}{\boldsymbol {term_{\ref{lemma:subgroup}.1}}(S,G,\cdot,e,^{-1},u,H,v):=\wedge(\wedge(\wedge(a_4, a_{17}), a_{11}),a_{12})\;:\;\boldsymbol{Group(S,H,\cdot,e,^{-1})}}{}
\end{flagderiv}

2)
\begin{flagderiv}
\introduce*{}{S: *_s\;|\;G:ps(S)\;|\;\cdot: S\rightarrow S\rightarrow S\;|\;e:S\;|\;^{-1}:S\rightarrow S\;|\;u:Group(S,G,\cdot,e,^{-1})}{}
\introduce*{}{B,C:ps(S)\;|\;v_1:B\leqslant G\;|\;v_2:C\leqslant G\;|\;w:C\subseteq B}{}

\step*{}{a_1:=\wedge_2(\wedge_1( \wedge_1(v_2)))\;:\;e\varepsilon C}{}

\step*{}{a_2:=\wedge_2( \wedge_1(v_2))\;:\;Closure_1(S,C,^{-1})}{}

\step*{}{a_3:=\wedge_2(v_2)\;:\;Closure_2(S,C,\cdot)}{}

\step*{}{\boldsymbol {term_{\ref{lemma:subgroup}.2}}(S,G,\cdot,e,^{-1},u,B,C,v_1,v_2,w):=\wedge(\wedge(\wedge(w,a_1), a_2),a_3)\;:\;\boldsymbol{C\leqslant B}}{}
\end{flagderiv}

\subsection*{Proof of Proposition \ref{lemma:triv-subgroups}}

1) 
\begin{flagderiv}
\introduce*{}{S: *_s\;|\;G:ps(S)\;|\;\cdot: S\rightarrow S\rightarrow S\;|\;e:S\;|\;^{-1}:S\rightarrow S\;|\;u:Group(S,G,\cdot,e,^{-1})}{}
\step*{}{a_1:=\wedge_1(\wedge_1( \wedge_1(u)))\;:\;Closure_2(S,G,\cdot)}{}

\step*{}{a_2:=\wedge_1(\wedge_2(u))\;:\;Closure_1(S,G,^{-1})}{}

\step*{}{a_3:=Gr_1(S,G,\cdot,e,^{-1},u)\;:\;e\varepsilon G}{}
\step*{}{a_4:=\lambda x:S.\lambda v:x\varepsilon G.v\;:\;G\subseteq G}{}

\step*{}{\boldsymbol{term_{\ref{lemma:triv-subgroups}.1}}(S,G,\cdot, e, ^{-1},u):=\wedge(\wedge(\wedge(a_4, a_3),a_2),a_1)\;:\;\boldsymbol{G\leqslant G}}{}
\end{flagderiv}

2)\begin{flagderiv}
\introduce*{}{S: *_s\;|\;G:ps(S)\;|\;\cdot: S\rightarrow S\rightarrow S\;|\;e:S\;|\;^{-1}:S\rightarrow S\;|\;u:Group(S,G,\cdot,e,^{-1})}{}
\step*{}{\text{Notation }H:=\{x:S\;|\;x=e\}
\;:\;ps(S)}{}
\step*{}{a_1:=eq\mhyphen refl\;:\;e=e}{}
\step*{}{a_1\;:\;e\varepsilon H}{}
\step*{}{a_2:=Gr_1(S,G,\cdot,e,^{-1},u)\;:\;e\varepsilon G}{}
\step*{}{a_3:=Gr_4(S,,G,\cdot,e,^{-1},u,e,a_2)\;:\;e\cdot e=e}{}
\step*{}{a_4:=term_{
\ref{theorem:axiom_corollary}.8}(S,,G,\cdot,e, ^{-1},u)\;:\;e^{-1}=e}{}

\introduce*{}{x:S\;|\;v:x\varepsilon H}{}
\step*{}{v:x=e}{}

\step*{}{a_5:=eq\mhyphen subs_2(\lambda z:S.z\varepsilon G,v,a_2)\;:\;x\varepsilon G}{}
\step*{}{a_6:=eq\mhyphen cong_1(^{-1},v)\;:\;x^{-1}=e^{-1}}{}
\step*{}{a_7:=eq\mhyphen trans_1(a_6,a_4)\;:\;x^{-1}=e}{}
\step*{}{a_7\;:\;x^{-1}\varepsilon H}{}
\conclude*{}{a_8:=\lambda x:S.\lambda v:x\varepsilon H.a_7\;:\;Closure_1(S,H,^{-1})}{}
\step*{}{a_9:=\lambda x:S.\lambda v:x\varepsilon H.a_5\;:\;H\subseteq G}{}

\introduce*{}{x:S\;|\; v:x\varepsilon H\;|\;y:S\;|\;w:y\varepsilon H}{}

\step*{}{v:x=e}{}
\step*{}{w:y=e}{}

\step*{}{a_{10}:=eq\mhyphen cong_1(\lambda z:S.(x\cdot z),w)\;:\;x\cdot y=x\cdot e}{}
\step*{}{a_{11}:=eq\mhyphen cong_1(\lambda z:S.(z\cdot e), v)\;:\;x\cdot e=e\cdot e}{}
\step*{}{a_{12}:=eq\mhyphen trans_1(eq\mhyphen trans_1(a_{10},a_{11}),a_3)\;:\;x\cdot y=e}{}
\step*{}{a_{12}\;:\;(x\cdot y)\;\varepsilon \;H}{}

\conclude*{}{a_{13}:=\lambda 
x:S.\lambda v:x\varepsilon H.\lambda y:S.\lambda w:y\varepsilon H.a_{12}
\;:\;Closure_2(S,H,\cdot)}{}
\step*{}{\boldsymbol{term_{\ref{lemma:triv-subgroups}.2}}(S,G, \cdot,e,^{-1},u):=\wedge(\wedge(\wedge(a_9,a_1), a_8),a_{13})\;:\;\boldsymbol{H\leqslant G}}{}
\end{flagderiv}

3) 
\begin{flagderiv}
\introduce*{}{S: *_s\;|\;G:ps(S)\;|\;\cdot: S\rightarrow S\rightarrow S\;|\;e:S\;|\;^{-1}:S\rightarrow S\;|\;u:Group(S,G,\cdot,e,^{-1})}{}

\introduce*{}{B,C:ps(S)\;|\;v:B\leqslant G\;|\;w:C\leqslant G}{}

\step*{}{a_1:=\wedge_1(\wedge_1( \wedge_1(v)))\;:\;B\subseteq G}{}
\step*{}{a_2:=\wedge_2(\wedge_1( \wedge_1(v)))\;:\;e\varepsilon B}{}

\step*{}{a_3:=\wedge_2( \wedge_1(v))\;:\; Closure_1(S,B,^{-1})}{}

\step*{}{a_4:=\wedge_2(v)\;:\;Closure_2(S,B,\cdot)}{}

\step*{}{a_5:=\wedge_2(\wedge_1( \wedge_1(w)))\;:\;e\varepsilon C}{}

\step*{}{a_6:=\wedge_2( \wedge_1(w))\;:\; Closure_1(S,C,^{-1})}{}

\step*{}{a_7:=\wedge_2(w)\;:\;Closure_2(S,C,\cdot)}{}

\step*{}{a_8:=\wedge( a_2,a_5)\;:\;e\;\varepsilon\; (B\cap C) }{}

\introduce*{}{x:S\;|\;r:x\;\varepsilon\;(B\cap C)}{}

\step*{}{a_9:=\wedge_1(r)\;:\;x\varepsilon B}{}

\step*{}{a_{10}:=\wedge_2(r)\;:\;x\varepsilon C}{}

\step*{}{a_{11}:=
a_1xa_9\;:\; x\varepsilon G}{}

\step*{}{a_{12}:=
a_3xa_9\;:\; x^{-1}\varepsilon B}{}

\step*{}{a_{13}:=
a_6xa_{10}\;:\; x^{-1}\varepsilon C}{}

\step*{}{a_{14}:=
\wedge (a_{12},a_{13})\;:\; x^{-1}\;\varepsilon\;(B\cap C)}{}

\conclude*{}{a_{15}:=\lambda x:S.\lambda r:(x\;\varepsilon\;(B\cap C)).a_{11}\;:\;B\cap C\subseteq G}{}

\step*{}{a_{16}:=\lambda x:S.\lambda r:(x\;\varepsilon\;(B\cap C)).a_{14}\;:\;Closure_1(S,(B\cap C),\;^{-1})}{}

\introduce*{}{x:S\;|\;r_1:x\;\varepsilon\;(B\cap C)\;|\;y:S\;|\;r_2:y\;\varepsilon\;(B\cap C)}{}

\step*{}{a_{17}:=\wedge_1(r_1)\;:\;x\varepsilon B}{}

\step*{}{a_{18}:=\wedge_2(r_1)\;:\;x\varepsilon C}{}

\step*{}{a_{19}:=\wedge_1(r_2)\;:\;y\varepsilon B}{}

\step*{}{a_{20}:=\wedge_2(r_2)\;:\;y\varepsilon C}{}

\step*{}{a_{21}:=
a_4xa_{17}ya_{19}\;:\; (x\cdot y)\;\varepsilon\;B}{}

\step*{}{a_{22}:=
a_7xa_{18}ya_{20}\;:\; (x\cdot y)\;\varepsilon\;C}{}

\step*{}{a_{23}:=
\wedge (a_{21},a_{22})\;:\; (x\cdot y)\;\varepsilon\;(B\cap C)}{}

\conclude*{}{a_{24}:=\lambda x:S.\lambda r_1:(x\;\varepsilon\;(B\cap C)).\lambda y:S.\lambda r_2:(y\;\varepsilon\;(B\cap C)).a_{23}\;:\;Closure_2(S,(B\cap C),\cdot)}{}

\step*{}{\boldsymbol{term_{\ref{lemma:triv-subgroups}.3}}(S,G,\cdot, e, ^{-1},u,B,C,v,w):=\wedge(\wedge(\wedge(a_{15}, a_8),a_{16}),a_{24})\;:\;\boldsymbol{B\cap C\leqslant G}}{}
\end{flagderiv}

4) 
\begin{flagderiv}
\introduce*{}{S: *_s\;|\;G:ps(S)\;|\;\cdot: S\rightarrow S\rightarrow S\;|\;e:S\;|\;^{-1}:S\rightarrow S\;|\;u:Group(S,G,\cdot,e,^{-1})}{}

\introduce*{}{U:ps(ps(S))\;|\;v:(\exists X:ps(S).X\varepsilon U)\;|\;w:[\forall X:ps(S).(X\varepsilon U\Rightarrow X\leqslant G)]}{}
\introduce*{}{X:ps(S)\;|\;r:X\varepsilon U}{}
\step*{}{a_1:=wXr:X\leqslant G}{}

\step*{}{a_2(X,r):=\wedge_1(\wedge_1( \wedge_1(a_1)))\;:\;X\subseteq G}{}

\step*{}{a_3(X,r):=\wedge_2(\wedge_1( \wedge_1(a_1)))\;:\;e\varepsilon X}{}

\step*{}{a_4(X,r):=\wedge_2( \wedge_1(a_1))\;:\;Closure_1(S,X,^{-1})}{}

\step*{}{a_5(X,r):=\wedge_2(a_1)\;:\;Closure_2(S,X,\cdot)}{}
\done

\introduce*{}{X:ps(S)\;|\;r:X\varepsilon U}{}

\step*{}{a_6:=a_3(X,r):e\varepsilon X}{}

\conclude*{}{a_7:=\lambda X:ps(S).\lambda r:X\varepsilon U.a_6\;:\; e\varepsilon (\cap U)}{}

\introduce*{}{x:S\;|\;r_1:x\varepsilon (\cap U)}{}
\introduce*{}{X:ps(S)\;|\;r_2:X\varepsilon U}{}

\step*{}{a_8:=r_1Xr_2:x\varepsilon X}{}

\step*{}{a_9:= a_2(X,r_2)xa_8\;:\;x\varepsilon G}{}

\step*{}{a_{10}:= a_4(X,r_2)xa_8\;:\;x^{-1}\varepsilon X}{}

\conclude*{}{a_{11}:=\exists_3(v,a_9)\;:\; x\varepsilon G}{}

\step*{}{a_{12}:=\lambda X:ps(S).\lambda r_2:X\varepsilon U.a_{10}\;:\; x^{-1}\varepsilon (\cap U)}{}

\conclude*{}{a_{13}:=
\lambda x:S.\lambda r_1:x\varepsilon (\cap U).a_{11}
\;:\;(\cap U) \subseteq G}{}

\step*{}{a_{14}:=\lambda x:S.\lambda r_1:x\varepsilon (\cap U).a_{12}\;:\; Closure_1(S,\cap U,\;^{-1})}{}

\introduce*{}{x:S\;|\; r_1:x\varepsilon (\cap U)\;|\;y:S\;|\; r_2:y\varepsilon (\cap U)}{}
\introduce*{}{X:ps(S)\;|\;r_3:X\varepsilon U}{}

\step*{}{a_{15}:= r_1Xr_3:x\;\varepsilon \;X}{}

\step*{}{a_{16}:= r_2Xr_3:y\;\varepsilon \;X}{}

\step*{}{a_{17}:=a_5(X,r_3)
xa_{15}ya_{16}:(x\cdot y)\;\varepsilon \;X}{}

\conclude*{}{a_{18}:=\lambda X:ps(S).\lambda r_3:X\varepsilon U.a_{17}\;:\; (x\cdot y)\;\varepsilon \;(\cap U)}{}
\conclude*{}{a_{19}:=\lambda x:S.\lambda r_1:x\varepsilon (\cap U).\lambda y:S.\lambda r_2:y\varepsilon (\cap U).a_{18}\;:\; Closure_2(S,\cap U,\cdot)}{}

\step*{}{\boldsymbol{term_{\ref{lemma:triv-subgroups}.4}}(S,G, \cdot,e,^{-1},u,U,v,w):=\wedge(\wedge( \wedge(a_{13},a_7), a_{14}),a_{19})\;:\;\boldsymbol{\cap U\leqslant G}}{}
\end{flagderiv}

\subsection*{Proof of Proposition \ref{lemma:equiv}}
\label{section:equiv}

\begin{flagderiv}
\introduce*{}{S: *_s\;|\;G:ps(S)\;|\;\cdot: S\rightarrow S\rightarrow S\;|\;e:S\;|\;^{-1}:S\rightarrow S\;|\;u:Group(S,G,\cdot,e,^{-1})}{}

\step*{}{a_1:=Gr_2(S,G, \cdot,e,^{-1},u)\;:\;Assoc(S,G,\cdot)}{}

\introduce*{}{H:ps(S)\;|\;v:H\leqslant G}{}
\step*{}{a_2:=\wedge_1(\wedge_1( \wedge_1(v)))\;:\;H\subseteq G}{}
\step*{}{a_3:=\wedge_2(\wedge_1( \wedge_1(v)))\;:\;e\varepsilon H}{}

\step*{}{a_4:=\wedge_2( \wedge_1(v))\;:\; Closure_1(S,H,^{-1})}{}

\step*{}{a_5:=\wedge_2(v)\;:\;Closure_2(S,H,\cdot)}{}

\introduce*{}{x:S\;|\;w:x\varepsilon G}{}

\step*{}{a_6:=Gr_8(S,G,\cdot, e,^{-1},u,x,w)\;:\;x^{-1}\cdot x=e}{}

\step*{}{a_7:=eq\mhyphen subs_2(\lambda z:S.z\varepsilon H,a_6,a_3)\;:\;(x^{-1}\cdot x)\;\varepsilon H}{}
\step*{}{a_7:R_Hxx}{}
\conclude*{}{a_8:=\lambda x:S.\lambda w:x\varepsilon G.a_7\;:\;refl(S,G,R_H)}{}

\introduce*{}{x:S\;|\; v_1:x\varepsilon G\;|\;y:S\;|\; v_2:y\varepsilon G\;|\;w:R_Hxy}{}
\step*{}{w:(x^{-1}\cdot y)\;\varepsilon H}{}
\step*{}{a_9(x,v_1):=Gr_3(S,G,\cdot,e,^{-1},u,x,v_1)\;:\;x^{-1}\varepsilon G}{}
\step*{}{a_{10}:= term_{\ref {theorem:axiom_corollary}.5}(S,\cdot,e,^{-1},u,x^{-1},y,
a_9(x,v_1),v_2)\;:\; (x^{-1}\cdot y)^{-1}=y^{-1}\cdot(x^{-1})^{-1}}{}
\step*{}{a_{11}:= term_{\ref {theorem:axiom_corollary}.6}(S,\cdot,e,^{-1},u,x,v_1)\;:\; (x^{-1})^{-1}=x}{}

\step*{}{a_{12}:=eq\mhyphen subs_1(\lambda z:S.((x^{-1}\cdot y)^{-1}=y^{-1}\cdot z),a_{11},a_{10})
\;:\;(x^{-1}\cdot y)^{-1}=y^{-1}\cdot x}{}

\step*{}{a_{13}:=a_4(x^{-1}\cdot y)w\;:\;(x^{-1}\cdot y)^{-1}\;\varepsilon\; H}{}

\step*{}{a_{14}:=eq\mhyphen subs_1(\lambda z:S.z\varepsilon H,a_{12},a_{13})
\;:\;(y^{-1}\cdot x)\;\varepsilon H}{}
\step*{}{a_{14}:R_Hyx}{}
\conclude*{}{a_{15}:=\lambda x:S.\lambda v_1:x\varepsilon G.\lambda y:S.\lambda v_2:y\varepsilon G.\lambda w:R_Hxy.a_{14}\;:\;sym(S,G,R_H)}{}

\introduce*{}{x:S\;|\;
v_1:x\varepsilon G\;|\;
y:S\;|\;v_2: y\varepsilon G\;|\;z:S\;|\;v_3:z\varepsilon G\;|\;w_1:R_Hxy\;|\;w_2:R_Hyz}{}
\step*{}{w_1:(x^{-1}\cdot y)\;\varepsilon H}{}
\step*{}{w_2:(y^{-1}\cdot z)\;\varepsilon H}{}

\step*{}{a_{16}:=a_2(y^{-1}\cdot z)w_2\;:\;(y^{-1}\cdot z)\;\varepsilon\;G}{}

\step*{}{a_{17}:=a_5(x^{-1}\cdot y)w_1(y^{-1}\cdot z)w_2\;:\;((x^{-1}\cdot y)\cdot (y^{-1}\cdot z))\;\varepsilon H}{}
\step*{}{a_{18}:=a_1x^{-1}a_9(x,v_1)yv_2(y^{-1}\cdot z)a_{16}\;:\;(x^{-1}\cdot  y)\cdot (y^{-1}\cdot z)=x^{-1}\cdot (y\cdot (y^{-1}\cdot z))}{}

\step*{}{a_{19}:=a_1yv_2y^{-1}a_9(y,v_2)zv_3\;:\;(y\cdot y^{-1})\cdot z=y\cdot (y^{-1}\cdot z)}{}

\step*{}{a_{20}:=Gr_7(S,G, \cdot,e,^{-1},u,y,v_2)\;:\;y\cdot y^{-1}=e}{}
\step*{}{a_{21}:=Gr_5(S,G, \cdot,e,^{-1},u,z,v_3)\;:\;e\cdot z=z}{}
\step*{}{a_{22}:=eq\mhyphen cong_1(\lambda t:S.(t\cdot z), a_{20})
\;:\;(y\cdot y^{-1})\cdot z=e\cdot z}{}

\step*{}{a_{23}:=eq\mhyphen trans_1(eq\mhyphen trans_3(a_{19},a_{22}),a_{21})\;:\;y\cdot (y^{-1}\cdot z)=z}{}
\step*{}{a_{24}:=eq\mhyphen subs_1(\lambda t:S.((x^{-1}\cdot y)\cdot(y^{-1}\cdot z)=x^{-1}\cdot t), a_{23},a_{18})
\;:\;(x^{-1}\cdot y)\cdot(y^{-1}\cdot z)=x^{-1}\cdot z}{}
\step*{}{a_{25}:=eq\mhyphen subs_1(\lambda t:S.t\varepsilon H, a_{24},a_{17})
\;:\;(x^{-1}\cdot z)\;\varepsilon H}{}
\step*{}{a_{25}:R_Hxz}{}
\conclude*{}{a_{26}:=\lambda x:S.\lambda v_1:x\varepsilon G.\lambda y:S.\lambda v_2:y\varepsilon G.\lambda z:S.\lambda v_3:z\varepsilon G.\lambda w_1:R_Hxy.\lambda w_2:R_Hyz.a_{25}\;:\;trans(S,G,R_H)}{}
\step*{}{\boldsymbol{term_{\ref{lemma:equiv}}}(S,G,\cdot,e,^{-1},u, H, v):=\wedge(\wedge (a_8,a_{15}),a_{26})\;:\;\boldsymbol{equiv\mhyphen rel(S,G,R_H)}}{}
\end{flagderiv}

\subsection*{Proof of Proposition \ref{lemma:equiv-1}}
\begin{flagderiv}
\introduce*{}{S: *_s\;|\;G:ps(S)\;|\;\cdot: S\rightarrow S\rightarrow S\;|\;e:S\;|\;^{-1}:S\rightarrow S\;|\;u:Group(S,G,\cdot,e,^{-1})}{}
\introduce*{}{H:ps(S)\;|\;v:H\leqslant G}{}

\step*{}{\text{Notation }R:=R_H\;:\; S\rightarrow S\rightarrow *_p}{}

\step*{}{a_1:= term_{\ref{lemma:equiv}}(S,G,\cdot,e,^{-1},u, H,v)\;:\; equiv\mhyphen rel(S,G,R)}{}

\step*{}{a_2:=\wedge_1(\wedge_1(a_1))\;:\;refl(S,G,R)}{}
\step*{}{a_3:=\wedge_2(\wedge_1(a_1))\;:\;sym(S,G,R)}{}
\step*{}{a_4:=\wedge_2(a_1)\;:\;trans(S,G,R)}{}
\introduce*{}{x,y:S\;|\; w_1:x\varepsilon G\;|\; w_2:y\varepsilon G}{}
\assume*{}{r_1:Rxy}{}
\step*{}{a_5:=a_3xw_1yw_2r_1\;:\;Ryx}{}
\introduce*{}{z:S\;|\; r_2:z\;\varepsilon\;(Rx\cap G)}{}
\step*{}{a_6:=\wedge_1(r_2)\;:\;z\varepsilon Rx}{}

\step*{}{a_7:=\wedge_2(r_2)\;:\;z\varepsilon G}{}
\step*{}{a_6:Rxz}{}

\step*{}{a_8:=a_4yw_2xw_1za_7a_5a_6\;:\;Ryz}{}
\step*{}{a_8:z\varepsilon Ry}{}

\step*{}{a_9:=\wedge( a_8,a_7)\;:\;z\;\varepsilon \;(Ry\cap G)}{}

\conclude*{}{a_{10}(x,y,w_1, w_2, r_1):=\lambda z:S.\lambda r_2:z\varepsilon (Rx\cap G).a_9\;:\;Rx\cap G\subseteq Ry\cap G}{}

\step*{}{a_{11}:=\wedge(a_{10}(x,y,w_1,w_2, r_1),a_{10}(y,x,w_2,w_1,a_5))\;:\;Rx\cap G=Ry\cap G}{}
\conclude*{}{a_{12}:=\lambda r_1:Rxy.a_{11}\;:\;(Rxy\Rightarrow (Rx\cap G=Ry\cap G))}{}
\assume*{}{r:Rx\cap G=Ry\cap G}{}

\step*{}{a_{13}:=a_2yw_2\;:\;Ryy}{}
\step*{}{a_{13}\;:\;y\varepsilon Ry}{}

\step*{}{a_{14}:=\wedge(a_{13}, w_2)\;:\;y\varepsilon (Ry\cap G)}{}

\step*{}{a_{15}:=\wedge_2(r)y
a_{14}\;:\;y\varepsilon (Rx\cap G)}{}
\step*{}{a_{16}:=\wedge_1(a_{15})\;:\;y\varepsilon Rx}{}
\step*{}{a_{16}\;:\;Rxy}{}

\conclude*{}{a_{17}:=\lambda r:(Rx\cap G=Ry\cap G).a_{16}\;:\;(Rx\cap G=Ry\cap G)\Rightarrow Rxy}{}

\step*{}{\boldsymbol {term_{\ref{lemma:equiv-1}}}(S,G,\cdot,e,^{-1},u, H, v,x,y,w_1,w_2):=\wedge (a_{12},a_{17})\;:\;\boldsymbol{R_Hxy\Leftrightarrow (R_Hx\cap G=R_Hy\cap G)}}{}
\end{flagderiv}

\subsection*{Proof of Lemma \ref{lemma:set-mult}}
\begin{flagderiv}
\introduce*{}{S: *_s\;|\;\cdot: S\rightarrow S\rightarrow S\;|\;B,C:ps(S)}{}

\introduce*{}{x:S\;|\;u:x\varepsilon Mt_1(S,\cdot,B,C)}{}
\step*{}{u:[\exists c:S.(c\varepsilon C\wedge x\;\varepsilon \;(B\cdot c))]}{}
\introduce*{}{c:S\;|\;v:c\varepsilon C\wedge x\;\varepsilon \;(B\cdot c)}{}
\step*{}{a_1:=\wedge_1(v)\;:\;c\varepsilon C}{}
\step*{}{a_2:=\wedge_2(v)\;:\;x\;\varepsilon \;(B\cdot c)}{}
\step*{}{a_2:(\exists b:S.(b\varepsilon B\wedge x=b\cdot c))}{}

\introduce*{}{b:S\;|\;w:b\varepsilon B\wedge x=b\cdot c}{}
\step*{}{a_3:=\wedge_1(w)\;:\;b\varepsilon B}{}
\step*{}{a_4:=\wedge_2(w)\;:\;x=b\cdot c}{}
\step*{}{a_5:=\wedge(a_1,a_4)\;:\;c\varepsilon C\wedge x=b\cdot c}{}
\step*{}{a_6:=
\exists_1(\lambda t:S.(t\varepsilon C\wedge x=b\cdot t), c,a_5)
\;:\;x\;\varepsilon\;(b\cdot C)}{}

\step*{}{a_7:=\wedge(a_3,a_6)\;:\;b\varepsilon B\wedge x\;\varepsilon\;(b\cdot C)}{}
\step*{}{a_8:=
\exists_1(\lambda t:S.(t\varepsilon B\wedge x\;\varepsilon\;(t\cdot C)),b,a_7)
\;:\;x\;\varepsilon\; Mt_2(S,\cdot,B,C)}{}
\conclude*{}{a_9:=\exists_3(a_2,a_8)\;:\;x\;\varepsilon\; Mt_2(S,\cdot,B,C)}{}
\conclude*{}{a_{10}:=\exists_3(u,a_9)\;:\;x\;\varepsilon\; Mt_2(S,\cdot,B,C)}{}
\conclude*{}{a_{11}:=
\lambda x:S.\lambda u:x\;\varepsilon\; Mt_1(S,\cdot,B,C).a_{10}
\;:\; Mt_1(S,\cdot,B,C)\subseteq Mt_2(S,\cdot,B,C)}{}

\introduce*{}{x:S\;|\;u:x\varepsilon Mt_2(S,\cdot,B,C)}{}
\step*{}{u:[\exists b:S.(b\varepsilon B\wedge x\;\varepsilon \;(b\cdot C))]}{}
\introduce*{}{b:S\;|\;v:b\varepsilon B\wedge x\;\varepsilon \;(b\cdot C)}{}
\step*{}{a_{12}:=\wedge_1(v)\;:\;b\varepsilon B}{}
\step*{}{a_{13}:=\wedge_2(v)\;:\;x\;\varepsilon \;(b\cdot C)}{}
\step*{}{a_{13}:(\exists c:S.(c\varepsilon C\wedge x=b\cdot c))}{}

\introduce*{}{c:S\;|\;w:c\varepsilon C\wedge x=b\cdot c}{}
\step*{}{a_{14}:=\wedge_1(w)\;:\;c\varepsilon C}{}
\step*{}{a_{15}:=\wedge_2(w)\;:\;x=b\cdot c}{}
\step*{}{a_{16}:=\wedge(a_{12},a_{15})\;:\;b\varepsilon B\wedge x=b\cdot c}{}
\step*{}{a_{17}:=
\exists_1(\lambda t:S.(t\varepsilon B\wedge x=t\cdot c),b,a_{16})
\;:\;x\;\varepsilon\;(B\cdot c)}{}

\step*{}{a_{18}:=\wedge(a_{14},a_{17})\;:\;c\varepsilon C\wedge x\;\varepsilon\;(B\cdot c)}{}
\step*{}{a_{19}:=
\exists_1(\lambda t:S.(t\varepsilon C\wedge x\;\varepsilon\;(B\cdot t)),c,a_{18})
\;:\;x\;\varepsilon\; Mt_1(S,\cdot,B,C)}{}
\conclude*{}{a_{20}:=\exists_3(a_{13},a_{19})\;:\;x\;\varepsilon\; Mt_1(S,\cdot,B,C)}{}
\conclude*{}{a_{21}:=\exists_3(u,a_{20})\;:\;x\;\varepsilon\; Mt_1(S,\cdot,B,C)}{}
\conclude*{}{a_{22}:=
\lambda x:S.\lambda u:x\;\varepsilon\; Mt_2(S,\cdot,B,C).a_{21}
\;:\; Mt_2(S,\cdot,B,C)\subseteq Mt_1(S,\cdot,B,C)}{}

\step*{}{\boldsymbol {term_{\ref{lemma:set-mult}}}(S,\cdot, B,C) :=\wedge(a_{11}, a_{22}) 
\;:\;\boldsymbol{Mt_1(S,\cdot,B,C)= Mt_2(S,\cdot,B,C)}}{}
\end{flagderiv}

\subsection*{Proof of Lemma \ref{lemma:mult-triv}}

\begin{flagderiv}
\introduce*{}{S: *_s}{}

\introduce*{}{^{-1}:S\rightarrow S\;|\;B:ps(S)\;|\;b:S\;|\;u:b\varepsilon B}{}

\step*{}{a_1:=eq\mhyphen refl\;:\;b^{-1}=b^{-1}}{}

\step*{}{a_2:=\wedge(u,a_1)
\;:\;b\varepsilon B\wedge b^{-1}=b^{-1}}{}

\step*{}{\boldsymbol {term_{\ref{lemma:mult-triv}.1}}(S,\;^{-1},B,b,u):=\exists_1(\lambda t:S.(t\varepsilon B\wedge b^{-1}=t^{-1}),b,a_2)
\;:\;\boldsymbol{b^{-1}\varepsilon B^{-1}}}{\textsf{1) is proven}}

\done

\introduce*{}{\cdot: S\rightarrow S\rightarrow S\;|\; B:ps(S)\;|\;g,b:S\;|\;u:b\varepsilon B}{}

\step*{}{a_3:=eq\mhyphen refl\;:\;g\cdot b=g\cdot b}{}

\step*{}{a_4:=eq\mhyphen refl\;:\;b\cdot g=b\cdot g}{}

\step*{}{a_5:=\wedge(u,a_3)
\;:\;b\varepsilon B\wedge g\cdot b=g\cdot b}{}

\step*{}{a_6:=\wedge(u,a_4)
\;:\;b\varepsilon B\wedge b\cdot g=b\cdot g}{}

\step*{}{\boldsymbol {term_{\ref{lemma:mult-triv}.2}}(S,\cdot,B,g,b,u):=\exists_1(\lambda t:S.(t\varepsilon B\wedge g\cdot b=g\cdot t),b,a_5)
\;:\;\boldsymbol{(g\cdot b)\;\varepsilon \;(g\cdot B)}}{\textsf{2) is proven}}

\step*{}{\boldsymbol {term_{\ref{lemma:mult-triv}.3}}(S,\cdot,B,g,b,u):=\exists_1(\lambda t:S.(t\varepsilon B\wedge b\cdot g=t\cdot g), b,a_6)
\;:\;\boldsymbol{(b\cdot g)\;\varepsilon \;(B\cdot g)}}{\textsf{3) is proven}}

\done

\introduce*{}{\cdot: S\rightarrow S\rightarrow S\;|\; B,C:ps(S)\;|\;b,c:S\;|\;u:b\varepsilon B\;|\;v:c\varepsilon C}{}

\step*{}{a_7:=term_{ \ref{lemma:mult-triv}.3}(S,\cdot,B,c,b,u)
\;:\;( b\cdot c)\;\varepsilon \;(B\cdot c)}{}

\step*{}{a_8:=\wedge(v,a_7)
\;:\;c\varepsilon C\wedge  (b\cdot c)\;\varepsilon \;(B\cdot c)}{}

\step*{}{\boldsymbol {term_{\ref{lemma:mult-triv}.4}}(S,\cdot,B,C,b,c,u,v):=\exists_1(\lambda t:S.(t\varepsilon C\wedge (b\cdot c)\;\varepsilon \;(B\cdot t)),c, a_8)
\;:\;\boldsymbol{(b\cdot c)\;\varepsilon \;(B\cdot C)}}{\textsf{4) is proven}}
\end{flagderiv}

\subsection*{Proof of Proposition \ref{lemma:eq-class}}

\begin{flagderiv}
\introduce*{}{S: *_s\;|\;G:ps(S)\;|\;\cdot: S\rightarrow S\rightarrow S\;|\;e:S\;|\;^{-1}:S\rightarrow S\;|\;u:Group(S,G,\cdot,e,^{-1})}{}
\step*{}{a_1:=Gr_2(S,G,\cdot, e,^{-1},u)\;:\;Assoc(S,G,\cdot)}{}

\introduce*{}{H:ps(S)\;|\;v:H\leqslant G\;|\;x:S\;|\;w:x\varepsilon G}{}
\step*{}{\text{Notation } R:=R_H\;:\;S\rightarrow S\rightarrow *_p}{}
\step*{}{a_2:=\wedge_1
\wedge_1(\wedge_1(v)))\;:\;H\subseteq G}{}

\step*{}{a_3:=Gr_3(S,G,\cdot, e,^{-1},u,x,w)\;:\;x^{-1}\varepsilon G}{}

\step*{}{a_4:=Gr_7(S,G,\cdot, e,^{-1},u,x,w)\;:\;x\cdot x^{-1}=e}{}
\introduce*{}{z:S\;|\;r_1:z\;\varepsilon\; (x\cdot H)}{}

\step*{}{r_1:(\exists h:S.(h\varepsilon H\wedge z=x\cdot h))}{}

\introduce*{}{h:S\;|\;r_2:h \varepsilon H\wedge z=x\cdot h}{}
\step*{}{a_5:=\wedge_1(r_2)\;:\;h\varepsilon H}{}
\step*{}{a_6:=\wedge_2(r_2)\;:\;z=x\cdot h}{}

\step*{}{a_7:=eq\mhyphen sym(a_6)\;:\;x\cdot h=z}{}

\step*{}{a_8:=a_2ha_5\;:\;h\varepsilon G}{}

\step*{}{a_9:=Gr_9(S,G,\cdot, e,^{-1},u,x,h, w,a_8)\;:\;(x\cdot h)\;\varepsilon \;G}{}

\step*{}{a_{10}:=eq\mhyphen subs_1(\lambda t:S.t\varepsilon G,a_7,a_9)
\;:\;z\varepsilon G}{}
\step*{}{a_{11}:=
term_{\ref{theorem:axiom_corollary}.1}(S,G,\cdot,e,^{-1},u, x,h,z,w,a_8,a_{10})a_7
\;:\;h=x^{-1}\cdot z}{}
\step*{}{a_{12}:=eq\mhyphen subs_1(\lambda t:S.t\varepsilon H,a_{11},a_5)
\;:\;(x^{-1}\cdot z)\;\varepsilon \;H}{}
\step*{}{a_{12}:Rxz}{}
\step*{}{a_{12}:z\varepsilon Rx}{}
\step*{}{a_{13}:=\wedge(a_{12},a_{10})\;:\;
z\;\varepsilon \;(Rx\cap G)}{}

\conclude*{}{a_{14}:=\exists_3(r_1,a_{13})\;:\;
z\;\varepsilon \;(Rx\cap G)}{}
\conclude*{}{a_{15}:=
\lambda z:S.\lambda
r_1:(z\;\varepsilon\; (x\cdot H)).a_{14}\;:\;(x\cdot H)\subseteq Rx\cap G}{}

\introduce*{}{z:S\;|\;r:z\;\varepsilon\; (Rx\cap G)}{}

\step*{}{a_{16}:= 
\wedge_1(r)\;:\;
z\varepsilon Rx}{}

\step*{}{a_{17}:= 
\wedge_1(r)\;:\;
z\varepsilon G}{}

\step*{}{a_{16}:Rxz}{}
\step*{}{a_{16}\;:\;
(x^{-1}\cdot z)\;\varepsilon \;H}{}

\step*{}{a_{18}:= a_1xw(x^{-1})a_3 za_{17}\;:\;
(x\cdot x^{-1})\cdot z=x\cdot (x^{-1}\cdot z)}{}

\step*{}{a_{19}:=eq\mhyphen cong_1(\lambda t:S.(t\cdot z), a_4)\;:\;(x\cdot x^{-1})\cdot z=e\cdot z}{}

\step*{}{a_{20}:=Gr_5(S,G,\cdot, e,^{-1},u,z,a_{17})\;:\;e\cdot z=z}{}

\step*{}{a_{21}:=eq\mhyphen trans_1(eq\mhyphen trans_3(a_{18},a_{19}),a_{20})
\;:\;x\cdot (x^{-1}\cdot z)=z}{}

\step*{}{a_{22}:= term_{\ref{lemma:mult-triv}.2}(S,\cdot,H,x,(x^{-1}\cdot z),a_{16})\;:\;(x\cdot (x^{-1}\cdot z))\;\varepsilon\;(x\cdot H)}{}

\step*{}{a_{23}:=eq\mhyphen subs_1(\lambda t:S.(t\;\varepsilon\;(x\cdot H)), a_{21},a_{22})\;:\;z\;\varepsilon\;(x\cdot H)}{}

\conclude*{}{a_{24}:=\lambda z:S.\lambda r:z\varepsilon (Rx\cap G).a_{23}\;:\;Rx\cap G\subseteq (x\cdot  H)}{}

\step*{}{\boldsymbol {term_{\ref{lemma:eq-class}.1}}(S,G,\cdot,e,^{-1},u, H, v,x,w):=\wedge(a_{15},a_{24})
\;:\;\boldsymbol{x\cdot H=R_Hx\cap G}}{\textsf{1) is proven}}

\introduce*{}{y:S\;|\;r:y\varepsilon G}{}

\step*{}{a_{25}:= term_{\ref{lemma:equiv-1}}(S,G,\cdot,e,^{-1},u, H, v,x,y,w,r)\;:\;Rxy\Leftrightarrow (Rx\cap G=Ry\cap G)}{}

\step*{}{a_{26}:= term_{\ref{lemma:eq-class}.1}(S,G,\cdot,e,^{-1},u, H, v,x,w)\;:\;x\cdot H=Rx\cap G}{}

\step*{}{a_{27}:= term_{\ref{lemma:eq-class}.1}(S,G,\cdot,e,^{-1},u, H, v,y,r)\;:\;y\cdot H=Ry\cap G}{}

\step*{}{a_{28}:=eq\mhyphen subs_2(\lambda Z:ps(S).\;(Rxy\Leftrightarrow Z=Ry\cap G),a_{26},a_{25})\;:\;Rxy\;\Leftrightarrow\; x\cdot H=Ry\cap G}{}

\step*{}{a_{29}:=eq\mhyphen subs_2(\lambda Z:ps(S).\;(Rxy\Leftrightarrow x\cdot H=Z),a_{27},a_{28})\;:\;Rxy\;\Leftrightarrow\; x\cdot H=y\cdot H}{}

\step*{}{\boldsymbol {term_{\ref{lemma:eq-class}.2}}(S,G,\cdot,e,^{-1},u, H,v,x,y,w,r):=eq\mhyphen sym(a_{29})
\\\quad\quad
\;:\;\boldsymbol{x\cdot H=y\cdot H\;\Leftrightarrow \;R_Hxy}}{\textsf{2) is proven}}
\end{flagderiv}

\newpage
\subsection*{Proof of Lemma \ref{lemma:mult}}

1) - 3) are proven in the following diagram.

\begin{flagderiv}
\introduce*{}{S: *_s\;|\;G:ps(S)\;|\;\cdot: S\rightarrow S\rightarrow S\;|\;e:S\;|\;^{-1}:S\rightarrow S\;|\;u:Group(S,G,\cdot,e,^{-1})}{}

\introduce*{}{B:ps(S)\;|\;v:B\subseteq G}{}
\introduce*{}{x:S\;|\;w:x\;\varepsilon\; (e\cdot B)}{}

\step*{}{w\;:\;(\exists b:S.(b\varepsilon B\wedge x=e\cdot b))}{}
\introduce*{}{b:S\;|\;r:b\varepsilon B\wedge x=e\cdot b}{}
\step*{}{a_1:=\wedge_1(r)\;:\;b\varepsilon B}{}
\step*{}{a_2:=\wedge_2(r)\;:\;x=e\cdot b}{}
\step*{}{a_3:=vba_1\;:\;b\varepsilon G}{}
\step*{}{a_4:=
Gr_5(S,G, \cdot,e,^{-1},u, b,a_3)\;:\;e\cdot b=b}{}
\step*{}{a_5:=eq\mhyphen trans_1(a_2,a_4)\;:\;x=b}{}

\step*{}{a_6:=eq\mhyphen subs_2(\lambda t:S.t\varepsilon B,a_5,a_1)
\;:\;x\varepsilon B}{}
\conclude*{}{a_7:=\exists_3(w,a_6)\;:\;x\varepsilon B}{}
\conclude*{}{a_8:=
\lambda x:S.\lambda w:x\;\varepsilon \;(e\cdot B).a_7
\;:\;(e\cdot B)\subseteq B}{}

\introduce*{}{x:S\;|\;w:x\;\varepsilon\; (B\cdot e)}{}

\step*{}{w\;:\;(\exists b:S.(b\varepsilon B\wedge x=b\cdot e))}{}
\introduce*{}{b:S\;|\;r:b\varepsilon B\wedge x=b\cdot e}{}
\step*{}{a_9:=\wedge_1(r)\;:\;b\varepsilon B}{}
\step*{}{a_{10}:=\wedge_2(r)\;:\;x=b\cdot e}{}
\step*{}{a_{11}:=vba_9\;:\;b\varepsilon G}{}
\step*{}{a_{12}:=
Gr_4(S,G, \cdot,e,^{-1},u, b,a_{11})\;:\;b\cdot e=b}{}
\step*{}{a_{13}:=eq\mhyphen trans_1(a_{10},a_{12})\;:\;x=b}{}

\step*{}{a_{14}:=eq\mhyphen subs_2(\lambda t:S.t\varepsilon B,a_{13},a_9)
\;:\;x\varepsilon B}{}
\conclude*{}{a_{15}:=\exists_3(w,a_{14})\;:\;x\varepsilon B}{}
\conclude*{}{a_{16}:=
\lambda x:S.\lambda w:x\;\varepsilon\; (B\cdot e).a_{15}
\;:\;(B\cdot e)\subseteq B}{}

\introduce*{}{x:S\;|\;w:x\;\varepsilon\; B}{}
\step*{}{a_{17}:=
Gr_4(S,G, \cdot,e,^{-1},u, x,w)\;:\;x\cdot e=x}{}
\step*{}{a_{18}:=
Gr_5(S,G, \cdot,e,^{-1},u, x,w)\;:\;e\cdot x=x}{}

\step*{}{a_{19}:= term_{\ref{lemma:mult-triv}.2}(S,\cdot,B,e,x,w)\;:\;
(e\cdot x)\;\varepsilon \;(e\cdot B)}{}

\step*{}{a_{20}:= term_{\ref{lemma:mult-triv}.3}(S,\cdot,B,e,x,w)\;:\;(x\cdot e)\;\varepsilon\;(B\cdot e)}{}

\step*{}{a_{21}:=eq\mhyphen subs_1(\lambda t:S.(t\;\varepsilon \;(e\cdot B)),a_{18},a_{19})
\;:\;x\;\varepsilon \;(e\cdot B)}{}

\step*{}{a_{22}:=eq\mhyphen subs_1(\lambda t:S.(t\;\varepsilon \;(B\cdot e)),a_{17},a_{20})
\;:\;x\;\varepsilon\;(B\cdot e)}{}

\conclude*{}{a_{23}:=
\lambda x:S.\lambda w:x\varepsilon B.a_{21}
\;:\;B\subseteq (e\cdot B)}{}

\step*{}{a_{24}:=
\lambda x:S.\lambda w:x\varepsilon B.a_{22}
\;:\;B\subseteq (B\cdot e)}{}

\step*{}{\boldsymbol {term_{ \ref{lemma:mult}.1}}(S,G,\cdot,e,^{-1},u, B,v):=\wedge(a_8,a_{23}) 
\;:\;\boldsymbol{e\cdot B=B}}{\textsf{1) is proven}}

\step*{}{\boldsymbol{ term_{\ref{lemma:mult}.2}}(S,G,\cdot,e,^{-1},u, B,v):=\wedge(a_{16},a_{24}) 
\;:\;\boldsymbol{B\cdot e=B}}{\textsf{2) is proven}}

\introduce*{}{x:S\;|\;w:x\;\varepsilon\; B^{-1}}{}

\step*{}{w\;:\;(\exists b:S.(b\varepsilon B\wedge x=b^{-1}))}{}
\introduce*{}{b:S\;|\;r:b\varepsilon B\wedge x=b^{-1}}{}
\step*{}{a_{25}:=\wedge_1(r)\;:\;b\varepsilon B}{}
\step*{}{a_{26}:=\wedge_2(r)\;:\;x=b^{-1}}{}

\step*{}{a_{27}:=vba_{25}\;:\;b\varepsilon G}{}
\step*{}{a_{28}:=
Gr_3(S,G, \cdot,e,^{-1},u, b,a_{27})\;:\;b^{-1}\varepsilon G}{}

\step*{}{a_{29}:=eq\mhyphen subs_2(\lambda t:S.t\varepsilon G,a_{26},a_{28})
\;:\;x\varepsilon G}{}

\conclude*{}{a_{30}:=\exists_3(w,a_{29})\;:\;x\varepsilon G}{}

\conclude*{}{\boldsymbol{ term_{\ref{lemma:mult}.3}}(S,G,\cdot,e,^{-1},u,B,v):=
\lambda x:S.\lambda w:x\varepsilon B^{-1}.a_{30}
\;:\;\boldsymbol{B^{-1}\subseteq G}}{\textsf{3) is proven}}
\end{flagderiv}

4) - 7) are proven in the following diagram.
 
\begin{flagderiv}
\introduce*{}{S: *_s\;|\;G:ps(S)\;|\;\cdot: S\rightarrow S\rightarrow S\;|\;e:S\;|\;^{-1}:S\rightarrow S\;|\;u:Group(S,G,\cdot,e,^{-1})}{}

\introduce*{}{B:ps(S)\;|\;v:B\subseteq G\;|\;g:S\;|\;w:g\varepsilon G}{}

\introduce*{}{x:S\;|\;r_1:x\;\varepsilon\; (g\cdot B)}{}

\step*{}{r_1\;:\;(\exists b:S.(b\varepsilon B\wedge x=g\cdot b))}{}
\introduce*{}{b:S\;|\;r_2:b\varepsilon B\wedge x=g\cdot b}{}
\step*{}{a_1:=\wedge_1(r_2)\;:\;b\varepsilon B}{}
\step*{}{a_2:=\wedge_2(r_2)\;:\;x=g\cdot b}{}

\step*{}{a_3:=vba_1\;:\;b\varepsilon G}{}

\step*{}{a_4:=Gr_9(S,G, \cdot,e,^{-1},u,g, b,w,a_3)
\;:\;(g\cdot b)\;\varepsilon \;G}{}

\step*{}{a_5:=eq\mhyphen subs_2(\lambda t:S.t\varepsilon G, a_2, a_4)
\;:\;x\varepsilon G}{}

\conclude*{}{a_6:=\exists_3(r_1,a_5)\;:\;x\varepsilon G}{}
\conclude*{}{\boldsymbol {term_{ \ref{lemma:mult}.4}}(S,G,\cdot,e,^{-1},u, B,v,g,w):=\lambda x:S.\lambda r_1:x\;\varepsilon\;(g\cdot B).a_6
\;:\;\boldsymbol{(g\cdot B)\subseteq G}}{\textsf{4) is proven}}

\introduce*{}{x:S\;|\;r_1:x\;\varepsilon\; (B\cdot g)}{}

\step*{}{r_1\;:\;(\exists b:S.(b\varepsilon B\wedge x=b\cdot g))}{}
\introduce*{}{b:S\;|\;r_2:b\varepsilon B\wedge x=b\cdot g}{}
\step*{}{a_7:=\wedge_1(r_2)\;:\;b\varepsilon B}{}
\step*{}{a_8:=\wedge_2(r_2)\;:\;x=b\cdot g}{}

\step*{}{a_9:=vba_7\;:\;b\varepsilon G}{}
\step*{}{a_{10}:=
Gr_9(S,G, \cdot,e,^{-1},u, b,g,a_9,w)\;:\;(b\cdot g)\;\varepsilon \;G}{}
\step*{}{a_{11}:=eq\mhyphen subs_2(\lambda t:S.t\varepsilon G, a_8, a_{10})
\;:\;x\varepsilon G}{}

\conclude*{}{a_{12}:=\exists_3(r_1,a_{11})\;:\;x\varepsilon G}{}
\conclude*{}{\boldsymbol {term_{ \ref{lemma:mult}.5}}(S,G,\cdot,e,^{-1},u, B,v,g,w):=\lambda x:S.\lambda r_1:x\;\varepsilon\;(B\cdot g).a_{12}
\;:\;\boldsymbol{(B\cdot g)\subseteq G}}{\textsf{5) is proven}}

\introduce*{}{c,d:S\;|\;r_1:c\varepsilon G\;|\;r_2:d\varepsilon G}{}
\step*{}{a_{13}(c,d, r_1,r_2):= term_{\ref{theorem:axiom_corollary}.5}(S,G,\cdot, e,^{-1},u,c,d, r_1,r_2)\;:\;(c\cdot d)^{-1}=d^{-1}\cdot c^{-1}}{}

\introduce*{}{r_3:h=c\cdot d\;|\;r_4:x=h^{-1}}{}

\step*{}{a_{14}:= eq\mhyphen cong_1(^{-1},r_3)\;:\;h^{-1}=(c\cdot d)^{-1}}{}

\step*{}{a_{15}(c,d,h,x, r_1,r_2,r_3,r_4):= eq\mhyphen trans_1(eq\mhyphen trans_1(r_4, a_{14}),a_{13}(c,d, r_1,r_2)) \;:\;x=d^{-1}\cdot c^{-1}}{}

\done[2]

\introduce*{}{x:S\;|\;r_1:x\;\varepsilon\; (g\cdot B)^{-1}}{}
\step*{}{r_1:(\exists h:S.(h\;\varepsilon\; (g\cdot B)\wedge x=h^{-1})}{}

\introduce*{}{h:S\;|\;r_2:h\;\varepsilon \;(g\cdot B)\wedge x=h^{-1}}{}
\step*{}{a_{16}:=\wedge_1(r_2)\;:\;h\;\varepsilon\;(g\cdot B)}{}
\step*{}{a_{17}:=\wedge_2(r_2)\;:\;x=h^{-1}}{}
\step*{}{a_{16}:(\exists b:S.(b\varepsilon B\wedge h=g\cdot b))}{}

\introduce*{}{b:S\;|\;r_3:b\varepsilon B\wedge h=g\cdot b}{}

\step*{}{a_{18}:=\wedge_1(r_3)\;:\;b\varepsilon B}{}
\step*{}{a_{19}:=\wedge_2(r_3)\;:\;h=g\cdot b}{}

\step*{}{a_{20}:= vba_{18}\;:\;b\varepsilon G}{}

\step*{}{a_{21}:= a_{15}(g,b,h,x,w,a_{20},a_{19},a_{17})\;:\;x=b^{-1}\cdot g^{-1}}{}

\step*{}{a_{22}:= term_{\ref{lemma:mult-triv}.1}(S,\;^{-1},B,b,a_{18})\;:\;b^{-1}\;\varepsilon\;B^{-1}}{}

\step*{}{a_{23}:= term_{\ref{lemma:mult-triv}.3}(S,\cdot,B^{-1},g^{-1}, b^{-1},a_{22})\;:\;(b^{-1}\cdot g^{-1})\;\varepsilon\;(B^{-1}\cdot g^{-1})}{}

\step*{}{a_{24}:=eq\mhyphen subs_2(\lambda t:S.(t\;\varepsilon\;(B^{-1}\cdot g^{-1})),a_{21},a_{23})
\;:\;x\;\varepsilon\;(B^{-1}\cdot g^{-1})}{}

\conclude*{}{a_{25}:=\exists_3(a_{16},a_{24}) \;:\; x\;\varepsilon\;(B^{-1}\cdot g^{-1})}{}

\conclude*{}{a_{26}:=\exists_3(r_1,a_{25}) \;:\; x\;\varepsilon\;(B^{-1}\cdot g^{-1})}{}

\conclude*{}{a_{27}:=
\lambda x:S.\lambda r_1:
x\;\varepsilon\; (g\cdot B)^{-1}.a_{26}\;:\;
(g\cdot B)^{-1}\subseteq (B^{-1}\cdot g^{-1})}{}

\introduce*{}{x:S\;|\;r_1:x\;\varepsilon\; (B\cdot g)^{-1}}{}
\step*{}{r_1:[\exists h:S.(h\varepsilon (B\cdot g)\wedge x=h^{-1})]}{}

\introduce*{}{h:S\;|\;r_2:h\varepsilon (B\cdot g)\wedge x=h^{-1}}{}
\step*{}{a_{28}:=\wedge_1(r_2)\;:\;h\varepsilon (B\cdot g)}{}
\step*{}{a_{29}:=\wedge_2(r_2)\;:\;x=h^{-1}}{}
\step*{}{a_{28}:(\exists b:S.(b\varepsilon B\wedge h=b\cdot g))}{}

\introduce*{}{b:S\;|\;r_3:b\varepsilon B\wedge h=b\cdot g}{}

\step*{}{a_{30}:=\wedge_1(r_3)\;:\;b\varepsilon B}{}
\step*{}{a_{31}:=\wedge_2(r_3)\;:\;h=b\cdot g}{}

\step*{}{a_{32}:= vba_{30}\;:\;b\varepsilon G}{}

\step*{}{a_{33}:= a_{15}(b,g,h,x,a_{32},w,a_{31},a_{29})\;:\;x=g^{-1}\cdot b^{-1}}{}

\step*{}{a_{34}:= term_{\ref{lemma:mult-triv}.1}(S,\;^{-1},B,b,a_{30})\;:\;b^{-1}\;\varepsilon\;B^{-1}}{}

\step*{}{a_{35}:= term_{\ref{lemma:mult-triv}.2}(S,\cdot,B^{-1},g^{-1}, b^{-1},a_{34}) \;:\;(g^{-1}\cdot b^{-1})\;\varepsilon\;(g^{-1}\cdot B^{-1})}{}

\step*{}{a_{36}:=eq\mhyphen subs_2(\lambda t:S.(t\;\varepsilon\;(g^{-1}\cdot B^{-1})),a_{33},a_{35})
\;:\;x\;\varepsilon\;(g^{-1}\cdot B^{-1})}{}

\conclude*{}{a_{37}:=\exists_3(a_{28},a_{36}) \;:\; x\;\varepsilon\;(g^{-1}\cdot B^{-1})}{}

\conclude*{}{a_{38}:=\exists_3(r_1,a_{37}) \;:\; x\;\varepsilon\;(g^{-1}\cdot B^{-1})}{}

\conclude*{}{a_{39}:=
\lambda x:S.\lambda r_1:
x\;\varepsilon\; (B\cdot g)^{-1}.a_{38}\;:\;
(B\cdot g)^{-1}\subseteq (g^{-1}\cdot B^{-1})}{}

\introduce*{}{x:S\;|\;r_1:x\;\varepsilon\; (B^{-1}\cdot g^{-1})}{}
\step*{}{r_1:(\exists h:S.(h\varepsilon B^{-1}\wedge x=h\cdot g^{-1}))}{}

\introduce*{}{h:S\;|\;r_2:h\varepsilon B^{-1}\wedge x=h\cdot g^{-1}}{}
\step*{}{a_{40}:=\wedge_1(r_2)\;:\;h\varepsilon B^{-1}}{}
\step*{}{a_{41}:=\wedge_2(r_2)\;:\;x=h\cdot g^{-1}}{}
\step*{}{a_{40}:(\exists b:S.(b\varepsilon B\wedge h=b^{-1}))}{}

\introduce*{}{b:S\;|\;r_3:b\varepsilon B\wedge h=b^{-1}}{}

\step*{}{a_{42}:=\wedge_1(r_3)\;:\;b\varepsilon B}{}
\step*{}{a_{43}:=\wedge_2(r_3)\;:\;h=b^{-1}}{}

\step*{}{a_{44}:= vba_{42}\;:\;b\varepsilon G}{}

\step*{}{a_{45}:= a_{13}(g,b,w,a_{44})\;:\;(g\cdot b)^{-1}=b^{-1}\cdot g^{-1}}{}

\step*{}{a_{46}:=eq\mhyphen cong_1(\lambda t:S.(t\cdot g^{-1}), a_{43})\;:\;h\cdot g^{-1}=b^{-1}\cdot g^{-1}}{}

\step*{}{a_{47}:=eq\mhyphen trans_2(eq\mhyphen trans_1(a_{41},a_{46}),a_{45})
\;:\;x=(g\cdot b)^{-1}}{}

\step*{}{a_{48}:= term_{\ref{lemma:mult-triv}.2}(S,\cdot,B,g,b,a_{42})\;:\;(g\cdot b)\;\varepsilon\;(g\cdot B)}{}

\step*{}{a_{49}:= term_{\ref{lemma:mult-triv}.1}(S,\;^{-1},(g\cdot B),(g\cdot b) ,a_{48}) \;:\;(g\cdot b)^{-1}\;\varepsilon\;(g\cdot B)^{-1}}{}

\step*{}{a_{50}:=eq\mhyphen subs_2(\lambda t:S.(t\;\varepsilon\;(g\cdot B)^{-1}), a_{47}, a_{49})
\;:\;x\;\varepsilon\;(g\cdot B)^{-1}}{}

\conclude*{}{a_{51}:=\exists_3(a_{40},a_{50}) \;:\; x\;\varepsilon\;(g\cdot B)^{-1})}{}

\conclude*{}{a_{52}:=\exists_3(r_1,a_{51}) \;:\; x\;\varepsilon\;(g\cdot B)^{-1})}{}

\conclude*{}{a_{53}:=
\lambda x:S.\lambda r_1:
(x\;\varepsilon\;(B^{-1}\cdot g^{-1})).a_{52}\;:\;
(B^{-1}\cdot g^{-1})\subseteq (g\cdot B)^{-1}}{}

\step*{}{\boldsymbol {term_{ \ref{lemma:mult}.6}}(S,G,\cdot,e,^{-1},u, B,v,g,w):=\wedge(a_{27},a_{53})
\;:\;\boldsymbol{(g\cdot B)^{-1}=B^{-1}\cdot g^{-1}}}{\textsf{6) is proven}}

\introduce*{}{x:S\;|\;r_1:x\;\varepsilon\; (g^{-1}\cdot B^{-1})}{}
\step*{}{r_1:(\exists h:S.(h\varepsilon B^{-1}\wedge x=g^{-1}\cdot h))}{}

\introduce*{}{h:S\;|\;r_2:h\varepsilon B^{-1}\wedge x=g^{-1}\cdot h}{}
\step*{}{a_{54}:=\wedge_1(r_2)\;:\;h\varepsilon B^{-1}}{}
\step*{}{a_{55}:=\wedge_2(r_2)\;:\;x=g^{-1}\cdot h}{}
\step*{}{a_{54}:(\exists b:S.(b\varepsilon B\wedge h=b^{-1}))}{}

\introduce*{}{b:S\;|\;r_3:b\varepsilon B\wedge h=b^{-1}}{}

\step*{}{a_{56}:=\wedge_1(r_3)\;:\;b\varepsilon B}{}
\step*{}{a_{57}:=\wedge_2(r_3)\;:\;h=b^{-1}}{}

\step*{}{a_{58}:= vba_{56}\;:\;b\varepsilon G}{}

\step*{}{a_{59}:= a_{13}(b,g,a_{58},w)\;:\;(b\cdot g)^{-1}=g^{-1}\cdot b^{-1}}{}

\step*{}{a_{60}:=eq\mhyphen cong_1(\lambda t:S.(g^{-1}\cdot t), a_{57})\;:\;g^{-1}\cdot h=g^{-1}\cdot b^{-1}}{}

\step*{}{a_{61}:=eq\mhyphen trans_2(eq\mhyphen trans_1(a_{55},a_{60}),a_{59})
\;:\;x=(b\cdot g)^{-1}}{}

\step*{}{a_{62}:= term_{\ref{lemma:mult-triv}.3}(S,\cdot,B,g,b,a_{56})\;:\;(b\cdot g)\;\varepsilon\;(B\cdot g)}{}

\step*{}{a_{63}:= term_{\ref{lemma:mult-triv}.1}(S,\;^{-1},(B\cdot g),(b\cdot g), a_{62}) \;:\;(b\cdot g)^{-1}\;\varepsilon\;(B\cdot g)^{-1}}{}

\step*{}{a_{64}:=eq\mhyphen subs_2(\lambda t:S.(t\;\varepsilon\;(B\cdot g)^{-1}), a_{61}, a_{63})
\;:\;x\;\varepsilon\;(B\cdot g)^{-1}}{}

\conclude*{}{a_{65}:=\exists_3(a_{54},a_{64}) \;:\; x\;\varepsilon\;(B\cdot g)^{-1})}{}

\conclude*{}{a_{66}:=\exists_3(r_1,a_{65}) \;:\;x\;\varepsilon\;(B\cdot g)^{-1})}{}

\conclude*{}{a_{67}:=
\lambda x:S.\lambda r_1:
(x\;\varepsilon\;(g^{-1}\cdot B^{-1})).a_{66}\;:\;
(g^{-1}\cdot B^{-1})\subseteq (B\cdot g)^{-1}}{}

\step*{}{\boldsymbol {term_{ \ref{lemma:mult}.7}}(S,G,\cdot,e,^{-1},u, B,v,g,w):=\wedge(a_{39},a_{67})
\;:\;\boldsymbol{(B\cdot g)^{-1}=g^{-1}\cdot B^{-1}}}{\textsf{7) is proven}}
\end{flagderiv}

8) and 9) are proven in the following diagram.
 
\begin{flagderiv}
\introduce*{}{S: *_s\;|\;G:ps(S)\;|\;\cdot: S\rightarrow S\rightarrow S\;|\;e:S\;|\;^{-1}:S\rightarrow S\;|\;u:Group(S,G,\cdot,e,^{-1})}{}

\introduce*{}{B,C:ps(S)\;|\;v:B\subseteq G\;|\;w:C\subseteq G}{}
\introduce*{}{x:S\;|\;r_1:x\;\varepsilon\; (B\cdot C)}{}

\step*{}{r_1\;:\;[\exists c:S.(c\varepsilon C\wedge x\;\varepsilon\;(B\cdot c))]}{}
\introduce*{}{c:S\;|\;r_2:c\varepsilon C\wedge x\;\varepsilon\;(B\cdot c))}{}
\step*{}{a_1:=\wedge_1(r_2)\;:\;c\varepsilon C}{}
\step*{}{a_2:=\wedge_2(r_2)\;:\;x\;\varepsilon\;(B\cdot c)}{}
\step*{}{a_3:=wca_1\;:\;c\varepsilon G}{}
\step*{}{a_4:=
term_{\ref{lemma:mult}.5}(S,G, \cdot,e,^{-1},u, B,v,c,a_3)\;:\;(B\cdot c)\subseteq G}{}

\step*{}{a_5:=a_4xa_2
\;:\;x\varepsilon G}{}

\conclude*{}{a_6:=\exists_3(r_1,a_5)\;:\;x\varepsilon G}{}
\conclude*{}{\boldsymbol {term_{ \ref{lemma:mult}.8}}(S,G,\cdot,e,^{-1},u, B,C,v,w):=\lambda x:S.\lambda r_1:x\;\varepsilon\;(B\cdot C).a_6
\;:\;\boldsymbol{(B\cdot C)\subseteq G}}{\textsf{8) is proven}}

\introduce*{}{x:S\;|\;r_1:x\;\varepsilon\; (B\cdot C)^{-1}}{}

\step*{}{r_1\;:\;[\exists h:S.(h\;\varepsilon\;(B\cdot C)\wedge x=h^{-1})]}{}
\introduce*{}{h:S\;|\;r_2:h\;\varepsilon\;(B\cdot C)\wedge x=h^{-1}}{}
\step*{}{a_7:=\wedge_1(r_2)\;:\;h\;\varepsilon\;(B\cdot C)}{}
\step*{}{a_8:=\wedge_2(r_2)\;:\;x=h^{-1}}{}

\step*{}{a_7\;:\;[\exists c:S.(c\varepsilon C\wedge
h\;\varepsilon\;(B\cdot c))]}{}
\introduce*{}{c:S\;|\;r_3:c\varepsilon C\wedge
h\;\varepsilon\;(B\cdot c)}{}
\step*{}{a_9:=\wedge_1(r_3)\;:\;c\varepsilon C}{}
\step*{}{a_{10}:=\wedge_2(r_3)\;:\;h\;\varepsilon\;(B\cdot c)}{}
\step*{}{a_{11}:=wca_9
\;:\;c\varepsilon G}{}

\step*{}{a_{12}:= term_{\ref{lemma:mult-triv}.1}(S,\;^{-1},(B\cdot c),h,a_{10})\;:\;h^{-1}\;\varepsilon\;(B\cdot c)^{-1}}{}

\step*{}{a_{13}:=eq\mhyphen subs_2(\lambda t:S.(t\;\varepsilon\; (B\cdot c)^{-1}), a_8,a_{12})\;:\;x\;\varepsilon\;(B\cdot c)^{-1}}{}

\step*{}{a_{14}:=
term_{\ref{lemma:mult}.7}(S,G, \cdot,e,^{-1},u, B,v,c,a_{11})\;:\;(B\cdot c)^{-1}=c^{-1}\cdot B^{-1}}{}

\step*{}{a_{15}:=\wedge_1( a_{14})a_{13}\;:\;x\;\varepsilon\;(c^{-1}\cdot B^{-1})}{}

\step*{}{a_{16}:= term_{\ref{lemma:mult-triv}.1}(S,\;^{-1},C,c,a_9) \;:\;c^{-1}\varepsilon C^{-1}}{}

\step*{}{a_{17}:=\wedge(a_{16},a_{15})\;:\;c^{-1}\varepsilon C^{-1}\wedge x\;\varepsilon\;(c^{-1}\cdot B^{-1})}{}

\step*{}{a_{18}:=\exists_1(\lambda t:S.(t\varepsilon C^{-1}\wedge x\;\varepsilon\;(t\cdot B^{-1})), c^{-1} , a_{17}) \;:\; x\;\varepsilon\;(C^{-1}\cdot B^{-1})}{}

\conclude*{}{a_{19}:=\exists_3(a_7,a_{18})\;:\;x\;\varepsilon\;(C^{-1}\cdot B^{-1})}{}
\conclude*{}{a_{20}:=\exists_3(r_1,a_{19})\;:\;x\;\varepsilon\;(C^{-1}\cdot B^{-1})}{}

\conclude*{}{a_{21}:=\lambda x:S.\lambda r_1:x\;\varepsilon\;(B\cdot C)^{-1}.a_{20}
\;:\;(B\cdot C)^{-1}\subseteq C^{-1}\cdot B^{-1}}{}

\introduce*{}{x:S\;|\;r_1:x\;\varepsilon\; C^{-1}\cdot B^{-1}}{}

\step*{}{r_1\;:\;[\exists h:S.(h\;\varepsilon\;B^{-1}\wedge x\;\varepsilon\;(C^{-1}\cdot h))]}{}
\introduce*{}{h:S\;|\;r_2:h\;\varepsilon\;B^{-1}\wedge x\;\varepsilon\;(C^{-1}\cdot h)}{}
\step*{}{a_{22}:=\wedge_1(r_2)\;:\;h\;\varepsilon\;B^{-1}}{}
\step*{}{a_{23}:=\wedge_2(r_2)\;:\;x\;\varepsilon\;(C^{-1}\cdot h)}{}

\step*{}{a_{22}\;:\;(\exists b:S.(b\varepsilon B\wedge
h=b^{-1}))}{}
\introduce*{}{b:S\;|\;r_3:b\varepsilon B\wedge
h=b^{-1})}{}
\step*{}{a_{24}:=\wedge_1(r_3)\;:\;b\varepsilon B}{}
\step*{}{a_{25}:=\wedge_2(r_3)\;:\;h=b^{-1}}{}
\step*{}{a_{26}:=vba_{24}
\;:\;b\varepsilon G}{}

\step*{}{a_{27}:=eq\mhyphen cong_1(\lambda t:S.(C^{-1}\cdot t), a_{25})\;:\;C^{-1}\cdot h= C^{-1}\cdot b^{-1}}{}

\step*{}{a_{28}:=\wedge_1(a_{27})xa_{23}\;:\;x\;\varepsilon\;(C^{-1}\cdot b^{-1})}{}

\step*{}{a_{29}:=
term_{\ref{lemma:mult}.6}(S,G, \cdot,e,^{-1},u,C,w,b,a_{26})\;:\;(b\cdot C)^{-1}=C^{-1}\cdot b^{-1}}{}

\step*{}{a_{30}:=\wedge_2(a_{29})xa_{28}\;:\;x\;\varepsilon\;(b\cdot C)^{-1}}{}

\step*{}{a_{30}\;:\;
[\exists y:S.(y\;\varepsilon\;(b\cdot C)\wedge x=y^{-1})]}{}
\introduce*{}{y:S\;|\;r_4:y\;\varepsilon\;(b\cdot C)\wedge x=y^{-1}}{}
\step*{}{a_{31}:=\wedge_1(r_4)\;:\;y\;\varepsilon\;(b\cdot C)}{}
\step*{}{a_{32}:=\wedge_2(r_4)\;:\;x=y^{-1}}{}

\step*{}{a_{33}:=\wedge(a_{24},a_{31})\;:\;b\varepsilon B\wedge y\;\varepsilon\;(b\cdot C)}{}

\step*{}{a_{34}:=\exists_1(\lambda t:S.(t\varepsilon B\wedge y\;\varepsilon\;(t\cdot C)),b, a_{33}) \;:\;y\;\varepsilon\;(B\cdot C)}{}

\step*{}{a_{35}:= term_{\ref{lemma:mult-triv}.1}(S,\;^{-1},(B\cdot C),y,a_{34})\;:\;y^{-1}\;\varepsilon\;(B\cdot C)^{-1}}{}

\step*{}{a_{36}:=eq\mhyphen subs_2(\lambda t:S.(t\;\varepsilon\; (B\cdot C)^{-1}), a_{32},a_{35})\;:\;x\;\varepsilon\;(B\cdot C)^{-1}}{}

\conclude*{}{a_{37}:=\exists_3(a_{30},a_{36})\;:\;x\;\varepsilon\;(B\cdot C)^{-1}}{}
\conclude*{}{a_{38}:=\exists_3(a_{22},a_{37})\;:\;x\;\varepsilon\;(B\cdot C)^{-1}}{}
\conclude*{}{a_{39}:=\exists_3(r_1,a_{38})\;:\;x\;\varepsilon\;(B\cdot C)^{-1}}{}

\conclude*{}{a_{40}:=\lambda x:S.\lambda r_1:x\;\varepsilon\;(C^{-1}\cdot B^{-1}).a_{39}
\;:\;(C^{-1}\cdot B^{-1})\subseteq (B\cdot C)^{-1}}{}

\step*{}{\boldsymbol {term_{ \ref{lemma:mult}.9}}(S,G,\cdot,e,^{-1},u, B,C,v,w):=\wedge(a_{21},a_{40})
\;:\;\boldsymbol{(B\cdot C)^{-1}=C^{-1}\cdot B^{-1}}}{\textsf{9) is proven}}
\end{flagderiv}

\subsection*{Proof of Proposition \ref{lemma:set-assoc}}

1) - 3) are proven in the following diagram.

\begin{flagderiv}
\introduce*{}{S: *_s\;|\;G:ps(S)\;|\;\cdot: S\rightarrow S\rightarrow S\;|\;e:S\;|\;^{-1}:S\rightarrow S\;|\;u:Group(S,G,\cdot,e,^{-1})}{}
\step*{}{a_1:=Gr_2(S,G,\cdot,e,^{-1},u)\;:\;Assoc(S,G,\cdot)}{}
\introduce*{}{B:ps(S)\;|\;v:B\subseteq G\;|\;g,h:S\;|\;w_1:g\varepsilon G\;|\;w_2:h\varepsilon G}{}

\introduce*{}{x:S\;|\;r_1:x\;\varepsilon\; ((B\cdot g)\cdot h)}{}
\step*{}{r_1\;:\;[\exists c:S.(c\;\varepsilon \;(B\cdot g)\wedge x=c\cdot h)]}{}
\introduce*{}{c:S\;|\;r_2:c\;\varepsilon \;(B\cdot g)\wedge x=c\cdot h}{}
\step*{}{a_2:=\wedge_1(r_2)\;:\;c\;\varepsilon \;(B\cdot g)}{}
\step*{}{a_3:=\wedge_2(r_2)\;:\;x=c\cdot h}{}

\step*{}{a_2\;:\;[\exists b:S.(b\varepsilon B\wedge c=b\cdot g)]}{}
\introduce*{}{b:S\;|\;r_3:b\varepsilon B\wedge c=b\cdot g}{}
\step*{}{a_4:=\wedge_1(r_3)\;:\;b\varepsilon B}{}
\step*{}{a_5:=\wedge_2(r_3)\;:\;c=b\cdot g}{}
\step*{}{a_6:=vba_4\;:\;b\varepsilon G}{}
\step*{}{a_7:=eq\mhyphen cong_1(\lambda t:S.(t\cdot h), a_5)
\;:\;c\cdot h=(b\cdot g)\cdot h}{}
\step*{}{a_8:=
a_1ba_6gw_1hw_2
\;:\;(b\cdot g)\cdot h=b\cdot (g\cdot h)}{}
\step*{}{a_9:=eq\mhyphen trans_1(eq\mhyphen trans_1(a_3, a_7),a_8)
\;:\;x=b\cdot (g\cdot h)}{}
\step*{}{a_{10}:=\wedge(a_4, a_9)
\;:\;b\varepsilon B\wedge x=b\cdot (g\cdot h)}{}
\step*{}{a_{11}:=\exists_1(\lambda t:S.(t\varepsilon B\wedge x=t\cdot (g\cdot h)),b, a_{10})\;:\;x\;\varepsilon\;(B\cdot(g\cdot h))}{}
\conclude*{}{a_{12}:=\exists_3(a_2,a_{11})\;:\;x\;\varepsilon\;(B\cdot(g\cdot h))}{}
\conclude*{}{a_{13}:=\exists_3(r_1,a_{12})\;:\;x\;\varepsilon\;(B\cdot(g\cdot h))}{}
\conclude*{}{a_{14}:=
\lambda x:S.\lambda r_1:(x\;\varepsilon\;((B\cdot g)\cdot h)).a_{13}
\;:\;((B\cdot g)\cdot h)\subseteq (B\cdot(g\cdot h))}{}

\introduce*{}{x:S\;|\;r_1:x\;\varepsilon\; (B\cdot(g\cdot h))}{}
\step*{}{r_1\;:\;[\exists b:S.(b\varepsilon B\wedge x=b\cdot(g\cdot h))]}{}
\introduce*{}{b:S\;|\;r_2:b\varepsilon B\wedge x=b\cdot (g\cdot h)}{}
\step*{}{a_{15}:=\wedge_1(r_2)\;:\;b\varepsilon B}{}
\step*{}{a_{16}:=\wedge_2(r_2)\;:\;x=b\cdot (g\cdot h)}{}
\step*{}{a_{17}:= vba_{15}\;:\;b\varepsilon G}{}
\step*{}{a_{18}:= a_1ba_{17}gw_1hw_2
\;:\;(b\cdot g)\cdot h=b\cdot (g\cdot h)}{}

\step*{}{a_{19}:=eq\mhyphen trans_2(a_{16}, a_{18})\;:\;x= (b\cdot g)\cdot h}{}

\step*{}{a_{20}:= term_{\ref{lemma:mult-triv}.3}(S,\cdot,B,g,b, a_{15}) \;:\;(b\cdot g)\;\varepsilon\;(B\cdot g)}{}

\step*{}{a_{21}:=\wedge(a_{20}, a_{19})
\;:\;(b\cdot g)\;\varepsilon\;(B\cdot g)\wedge x=(b\cdot g)\cdot h}{}

\step*{}{a_{22}:=\exists_1(\lambda t:S.(t\;\varepsilon\;(B\cdot g)\wedge x=t\cdot h),(b\cdot g), a_{21})\;:\;x\;\varepsilon\;((B\cdot g)\cdot h)}{}

\conclude*{}{a_{23}:=\exists_3(r_1,a_{22})\;:\;x\;\varepsilon\;((B\cdot g)\cdot h)}{}

\conclude*{}{a_{24}:=
\lambda x:S.\lambda r_1:(x\;\varepsilon\;(B\cdot(g\cdot h))).a_{23}
\;:\;(B\cdot(g\cdot h))\subseteq ((B\cdot g)\cdot h)}{}

\step*{}{\boldsymbol {term_{ \ref{lemma:set-assoc}.1}}(S,G,\cdot,e,^{-1},u, B,v,g,h,w_1,w_2):=\wedge(a_{14},a_{24})
\;:\;\boldsymbol{(B\cdot g)\cdot h=B\cdot(g\cdot h)}}{\textsf{1) is proven}}

\introduce*{}{x:S\;|\;r_1:x\;\varepsilon\; ((g\cdot B)\cdot h)}{}
\step*{}{r_1\;:\;[\exists c:S.(c\;\varepsilon \;(g\cdot B)\wedge x=c\cdot h)]}{}
\introduce*{}{c:S\;|\;r_2:c\;\varepsilon \;(g\cdot B)\wedge x=c\cdot h}{}
\step*{}{a_{25}:=\wedge_1(r_2)\;:\;c\;\varepsilon \;(g\cdot B)}{}
\step*{}{a_{26}:=\wedge_2(r_2)\;:\;x=c\cdot h}{}

\step*{}{a_{25}\;:\;[\exists b:S.(b\varepsilon B\wedge c=g\cdot b)]}{}
\introduce*{}{b:S\;|\;r_3:b\varepsilon B\wedge c=g\cdot b}{}
\step*{}{a_{27}:=\wedge_1(r_3)\;:\;b\varepsilon B}{}
\step*{}{a_{28}:=\wedge_2(r_3)\;:\;c=g\cdot b}{}
\step*{}{a_{29}:=vba_{27}\;:\;b\varepsilon G}{}
\step*{}{a_{30}:=eq\mhyphen cong_1(\lambda t:S.(t\cdot h), a_{28})
\;:\;c\cdot h=(g\cdot b)\cdot h}{}
\step*{}{a_{31}:=
a_1gw_1ba_{29}hw_2
\;:\;(g\cdot b)\cdot h=g\cdot (b\cdot h)}{}
\step*{}{a_{32}:=eq\mhyphen trans_1(eq\mhyphen trans_1(a_{26}, a_{30}),a_{31})
\;:\;x=g\cdot (b\cdot h)}{}

\step*{}{a_{33}:= term_{\ref{lemma:mult-triv}.3}(S,\cdot,B,h,b,a_{27})\;:\;(b\cdot h)\;\varepsilon\;(B\cdot h)}{}

\step*{}{a_{34}:=\wedge(a_{33}, a_{32})
\;:\;(b\cdot h)\;\varepsilon\;(B\cdot h)\wedge x=g\cdot (b\cdot h)}{}

\step*{}{a_{35}:=\exists_1(\lambda t:S.(t\;\varepsilon\;(B\cdot h)\wedge x=g\cdot t),(b\cdot h), a_{34})\;:\;x\;\varepsilon\;(g\cdot (B\cdot h))}{}

\conclude*{}{a_{36}:=\exists_3(a_{25},a_{35})\;:\;x\;\varepsilon\;(g\cdot(B\cdot h))}{}
\conclude*{}{a_{37}:=\exists_3(r_1,a_{36})\;:\;x\;\varepsilon\;(g\cdot(B\cdot h))}{}
\conclude*{}{a_{38}:=
\lambda x:S.\lambda r_1:(x\;\varepsilon\;((g\cdot B)\cdot h)).a_{37}
\;:\;((g\cdot B)\cdot h)\subseteq (g\cdot(B\cdot h))}{}

\introduce*{}{x:S\;|\;r_1:x\;\varepsilon\; (g\cdot (B\cdot h))}{}
\step*{}{r_1\;:\;[\exists c:S.(c\;\varepsilon \;(B\cdot h)\wedge x=g\cdot c)]}{}
\introduce*{}{c:S\;|\;r_2:c\;\varepsilon \;(B\cdot h)\wedge x=g\cdot c}{}
\step*{}{a_{39}:=\wedge_1(r_2)\;:\;c\;\varepsilon \;(B\cdot h)}{}
\step*{}{a_{40}:=\wedge_2(r_2)\;:\;x=g\cdot c}{}

\step*{}{a_{39}\;:\;[\exists b:S.(b\varepsilon B\wedge c=b\cdot h)]}{}
\introduce*{}{b:S\;|\;r_3:b\varepsilon B\wedge c=b\cdot h}{}
\step*{}{a_{41}:=\wedge_1(r_3)\;:\;b\varepsilon B}{}
\step*{}{a_{42}:=\wedge_2(r_3)\;:\;c=b\cdot h}{}
\step*{}{a_{43}:=vba_{41}\;:\;b\varepsilon G}{}
\step*{}{a_{44}:=eq\mhyphen cong_1(\lambda t:S.(g\cdot t), a_{42})
\;:\;g\cdot c=g\cdot( b\cdot h)}{}
\step*{}{a_{45}:=
a_1gw_1ba_{43}hw_2
\;:\;(g\cdot b)\cdot h=g\cdot (b\cdot h)}{}

\step*{}{a_{46}:=eq\mhyphen trans_2(eq\mhyphen trans_1(a_{40}, a_{44}),a_{45})
\;:\;x=(g\cdot b)\cdot h}{}

\step*{}{a_{47}:= term_{\ref{lemma:mult-triv}.2}(S,\cdot,B,g,b,a_{41})\;:\;(g\cdot b)\;\varepsilon\;(g\cdot B)}{}

\step*{}{a_{48}:=\wedge(a_{47}, a_{46})
\;:\;(g\cdot b)\;\varepsilon\;(g\cdot B)\wedge x=(g\cdot b)\cdot h}{}

\step*{}{a_{49}:=\exists_1(\lambda t:S.(t\;\varepsilon\;(g\cdot B)\wedge x=t\cdot h),(g\cdot b), a_{48})\;:\;x\;\varepsilon\;((g\cdot B)\cdot h)}{}

\conclude*{}{a_{50}:=\exists_3(a_{39},a_{49})\;:\;x\;\varepsilon\;((g\cdot B)\cdot h)}{}
\conclude*{}{a_{51}:=\exists_3(r_1,a_{50})\;:\;x\;\varepsilon\;((g\cdot B)\cdot h)}{}
\conclude*{}{a_{52}:=
\lambda x:S.\lambda r_1:(x\;\varepsilon\;(g\cdot (B\cdot h))).a_{51}
\;:\;(g\cdot(B\cdot h))\subseteq ((g\cdot B)\cdot h)
}{}

\step*{}{\boldsymbol {term_{ \ref{lemma:set-assoc}.2}}(S,G,\cdot,e,^{-1},u, B,v,g,h,w_1,w_2):=\wedge(a_{38},a_{52})
\;:\;\boldsymbol{(g\cdot B)\cdot h=g\cdot(B\cdot h)}}{\textsf{2) is proven}}

\introduce*{}{x:S\;|\;r_1:x\;\varepsilon\; ((g\cdot h)\cdot B)}{}
\step*{}{r_1\;:\;[\exists b:S.(b\;\varepsilon B\wedge x=(g\cdot h)\cdot b)]}{}
\introduce*{}{b:S\;|\;r_2:b\;\varepsilon B\wedge x=(g\cdot h)\cdot b}{}
\step*{}{a_{53}:=\wedge_1(r_2)\;:\;b\;\varepsilon B}{}
\step*{}{a_{54}:=\wedge_2(r_2)\;:\;x=(g\cdot h)\cdot b}{}

\step*{}{a_{55}:=vba_{53}\;:\;b\varepsilon G}{}

\step*{}{a_{56}:=
a_1gw_1hw_2ba_{55}
\;:\;(g\cdot h)\cdot b=g\cdot (h\cdot b)}{}

\step*{}{a_{57}:=eq\mhyphen trans_1(a_{54},a_{56})
\;:\;x=g\cdot (h\cdot b)}{}

\step*{}{a_{58}:= term_{\ref{lemma:mult-triv}.2}(S,\cdot,B,h,b,a_{53})\;:\;(h\cdot b)\;\varepsilon\;(h\cdot B)}{}

\step*{}{a_{59}:=\wedge(a_{58}, a_{57})
\;:\;(h\cdot b)\;\varepsilon\;(h\cdot B)\wedge x=g\cdot (h\cdot b)}{}

\step*{}{a_{60}:=\exists_1(\lambda t:S.(t\;\varepsilon\;(h\cdot B)\wedge x=g\cdot t),(h\cdot b), a_{59})\;:\;x\;\varepsilon\;(g\cdot (h\cdot B))}{}

\conclude*{}{a_{61}:=\exists_3(r_1,a_{60})\;:\;x\;\varepsilon\;(g\cdot(h\cdot B))}{}

\conclude*{}{a_{62}:=
\lambda x:S.\lambda r_1:(x\;\varepsilon\;(g\cdot h)\cdot B)).a_{61}
\;:\;((g\cdot h)\cdot B)\subseteq (g\cdot(h\cdot B))}{}

\introduce*{}{x:S\;|\;r_1:x\;\varepsilon\; (g\cdot (h\cdot B))}{}
\step*{}{r_1\;:\;[\exists c:S.(c\;\varepsilon \;(h\cdot B)\wedge x=g\cdot c)]}{}
\introduce*{}{c:S\;|\;r_2:c\;\varepsilon \;(h\cdot B)\wedge x=g\cdot c}{}
\step*{}{a_{63}:=\wedge_1(r_2)\;:\;c\;\varepsilon \;(h\cdot B)}{}
\step*{}{a_{64}:=\wedge_2(r_2)\;:\;x=g\cdot c}{}

\step*{}{a_{63}\;:\;(\exists b:S.(b\varepsilon B\wedge c=h\cdot b))}{}
\introduce*{}{b:S\;|\;r_3:b\varepsilon B\wedge c=h\cdot b}{}
\step*{}{a_{65}:=\wedge_1(r_3)\;:\;b\varepsilon B}{}
\step*{}{a_{66}:=\wedge_2(r_3)\;:\;c=h\cdot b}{}
\step*{}{a_{67}:= vba_{65}\;:\;b\varepsilon G}{}
\step*{}{a_{68}:=eq\mhyphen cong_1(\lambda t:S.(g\cdot t), a_{66})
\;:\;g\cdot c=g\cdot( h\cdot b)}{}
\step*{}{a_{69}:=
a_1gw_1hw_2ba_{67}
\;:\;(g\cdot h)\cdot b=g\cdot (h\cdot b)}{}

\step*{}{a_{70}:=eq\mhyphen trans_2(eq\mhyphen trans_1(a_{64}, a_{68}),a_{69})
\;:\;x=(g\cdot h)\cdot b}{}

\step*{}{a_{71}:=\wedge(a_{65}, a_{70})
\;:\;b\varepsilon B\wedge x=(g\cdot h)\cdot b}{}
\step*{}{a_{72}:=\exists_1(\lambda t:S.(t\varepsilon B\wedge x=(g\cdot h)\cdot t),b, a_{71})\;:\;x\;\varepsilon\;((g\cdot h)\cdot B)}{}

\conclude*{}{a_{73}:=\exists_3(a_{63},a_{72})\;:\;x\;\varepsilon\;((g\cdot h)\cdot B)}{}
\conclude*{}{a_{74}:=\exists_3(r_1,a_{73})\;:\;x\;\varepsilon\;((g\cdot h)\cdot B)}{}
\conclude*{}{a_{75}:=
\lambda x:S.\lambda r_1:x\;\varepsilon\;(g\cdot (h\cdot B)).a_{74}
\;:\;(g\cdot(h\cdot B))\subseteq ((g\cdot h)\cdot B)}{}

\step*{}{\boldsymbol {term_{ \ref{lemma:set-assoc}.3}}(S,G,\cdot,e, ^{-1},u, B,v,g,h,w_1, w_2):=\wedge(a_{62},a_{75})
\;:\;\boldsymbol{(g\cdot h)\cdot B=g\cdot(h\cdot B)}}{\textsf{3) is proven}}
\end{flagderiv}

4) - 6) are proven in the following diagram.

\begin{flagderiv}
\introduce*{}{S: *_s\;|\;G:ps(S)\;|\;\cdot: S\rightarrow S\rightarrow S\;|\;e:S\;|\;^{-1}:S\rightarrow S\;|\;u:Group(S,G,\cdot,e,^{-1})}{}
\introduce*{}{B,C:ps(S)\;|\;v_1:B\subseteq G\;|\;v_2:C\subseteq G\;|\;g:S\;|\;w:g\varepsilon G}{}

\introduce*{}{x:S\;|\;r_1:x\;\varepsilon\; ((B\cdot C)\cdot g)}{}
\step*{}{r_1\;:\;[\exists f:S.(f\;\varepsilon \;(B\cdot C)\wedge x=f\cdot g)]}{}
\introduce*{}{f:S\;|\;r_2:f\;\varepsilon \;(B\cdot C)\wedge x=f\cdot g}{}
\step*{}{a_1:=\wedge_1(r_2)\;:\;f\;\varepsilon \;(B\cdot C)}{}
\step*{}{a_2:=\wedge_2(r_2)\;:\;x=f\cdot g}{}

\step*{}{a_1\;:\;(\exists c:S.(c\varepsilon C\wedge f\;\varepsilon\; (B\cdot c))}{}
\introduce*{}{c:S\;|\;r_3:c\varepsilon C\wedge f\;\varepsilon\; (B\cdot c)}{}
\step*{}{a_3:=\wedge_1(r_3)\;:\;c\varepsilon C}{}
\step*{}{a_4:=\wedge_2(r_3)\;:\;f\;\varepsilon\; (B\cdot c)}{}
\step*{}{a_5:=v_2ca_3\;:\;c\varepsilon G}{}

\step*{}{a_6:=\wedge(a_4, a_2)
\;:\;f\;\varepsilon\; (B\cdot c)\wedge x=f\cdot g}{}
\step*{}{a_7:=\exists_1(\lambda t:S.(t\;\varepsilon\; (B\cdot c)\wedge x=t\cdot g),f, a_6)\;:\;x\;\varepsilon\;((B\cdot c)\cdot g)}{}

\step*{}{a_8:=
term_{\ref{lemma:set-assoc}.1}(S,G,\cdot,e, ^{-1},u,B,v_1,c,g,a_5, w)\;:\;(B\cdot c)\cdot g=B\cdot(c\cdot g)}{}
\step*{}{a_9:=\wedge_1(a_8)xa_7
\;:\;x\;\varepsilon\; (B\cdot(c\cdot g))}{}

\step*{}{a_{10}:= term_{\ref{lemma:mult-triv}.3}(S,\cdot,C,g,c,a_3)\;:\;(c\cdot g)\;\varepsilon\;(C\cdot g)}{}

\step*{}{a_{11}:=\wedge(a_{10},a_9)
\;:\;(c\cdot g)\;\varepsilon\;(C\cdot g)\wedge x\;\varepsilon\; (B\cdot(c\cdot g))}{}

\step*{}{a_{12}:=\exists_1(\lambda t:S.(t\;\varepsilon\;(C\cdot g)\wedge x\;\varepsilon\; (B\cdot t)),(c\cdot g), a_{11})\;:\;x\;\varepsilon\; (B\cdot(C\cdot g))}{}

\conclude*{}{a_{13}:=\exists_3(a_1,a_{12})\;:\;x\;\varepsilon\; (B\cdot(C\cdot g))}{}

\conclude*{}{a_{14}:=\exists_3(r_1,a_{13})\;:\;x\;\varepsilon\; (B\cdot(C\cdot g))}{}

\conclude*{}{a_{15}:=
\lambda x:S.\lambda r_1:(x\;\varepsilon\;((B\cdot C)\cdot g)).a_{14}
\;:\;((B\cdot C)\cdot g)\subseteq (B\cdot(C\cdot g))}{}

\introduce*{}{x:S\;|\;r_1:x\;\varepsilon\; (B\cdot (C\cdot g))}{}
\step*{}{r_1\;:\;[\exists b:S.(b\varepsilon B\wedge x\;\varepsilon\; (b\cdot (C\cdot g)))]}{}
\introduce*{}{b:S\;|\;r_2:b\varepsilon B\wedge x\;\varepsilon\; (b\cdot (C\cdot g))}{}
\step*{}{a_{16}:=\wedge_1(r_2)\;:\;b\varepsilon B}{}
\step*{}{a_{17}:=\wedge_2(r_2)\;:\;x\;\varepsilon\; (b\cdot (C\cdot g))}{}

\step*{}{a_{18}:=v_1ba_{16}\;:\;b\varepsilon G}{}

\step*{}{a_{19}:=
term_{\ref{lemma:set-assoc}.2}(S,G,\cdot,e, ^{-1},u,C,v_2,b,g, a_{18}, w)\;:\;(b\cdot C)\cdot g=b\cdot(C\cdot g)}{}

\step*{}{a_{20}:=\wedge_2( a_{19})xa_{17}
\;:\;x\;\varepsilon\;((b\cdot C)\cdot g)}{}

\step*{}{a_{20}\;:\;[\exists f:S.(f\;\varepsilon\;(b\cdot C)\wedge x=f\cdot g)]}{}
\introduce*{}{f:S\;|\;r_3:f\;\varepsilon\;(b\cdot C)\wedge x=f\cdot g}{}
\step*{}{a_{21}:=\wedge_1(r_3)\;:\;f\;\varepsilon\;(b\cdot C)}{}
\step*{}{a_{22}:=\wedge_2(r_3)\;:\;x=f\cdot g}{}

\step*{}{a_{23}:=\wedge(a_{16}, a_{21})
\;:\;b\varepsilon B\wedge f\;\varepsilon\;(b\cdot C)}{}
\step*{}{a_{24}:=\exists_1(\lambda t:S.(t\varepsilon B\wedge f\;\varepsilon\;(t\cdot C)),b, a_{23})\;:\;f\;\varepsilon\;(B\cdot C)}{}

\step*{}{a_{25}:=\wedge(a_{24},a_{22})
\;:\;f\;\varepsilon\;(B\cdot C)\wedge x=f\cdot g}{}
\step*{}{a_{26}:=\exists_1(\lambda t:S.(t\;\varepsilon\;(B\cdot C)\wedge x=t\cdot g),f, a_{25})\;:\;x\;\varepsilon\;((B\cdot C)\cdot g)}{}

\conclude*{}{a_{27}:=\exists_3(a_{20},a_{26})\;:\;x\;\varepsilon\;((B\cdot C)\cdot g)}{}

\conclude*{}{a_{28}:=\exists_3(r_1,a_{27})\;:\;x\;\varepsilon\;((B\cdot C)\cdot g)}{}

\conclude*{}{a_{29}:=
\lambda x:S.\lambda r_1:(x\;\varepsilon\;(B\cdot (C\cdot g))). a_{28}
\;:\;(B\cdot(C\cdot g))\subseteq ((B\cdot C)\cdot g)}{}

\step*{}{\boldsymbol {term_{ \ref{lemma:set-assoc}.4}}(S,G,\cdot,e, ^{-1},u, B,C,v_1, v_2,g, w):=\wedge(a_{15}, a_{29})
\;:\;\boldsymbol{(B\cdot C)\cdot g=B\cdot(C\cdot g)}}{\textsf{4) is proven}}

\introduce*{}{x:S\;|\;r_1:x\;\varepsilon\; ((B\cdot g)\cdot C)}{}
\step*{}{r_1\;:\;[\exists c:S.(c\varepsilon C\wedge x\;\varepsilon\; ((B\cdot g)\cdot c))]}{}
\introduce*{}{c:S\;|\;r_2:c\varepsilon C\wedge x\;\varepsilon\; ((B\cdot g)\cdot c)}{}
\step*{}{a_{30}:=\wedge_1(r_2)\;:\;c\varepsilon C}{}
\step*{}{a_{31}:=\wedge_2(r_2)\;:\;x\;\varepsilon\; ((B\cdot g)\cdot c)}{}

\step*{}{a_{32}:=
v_2ca_{30}
\;:\;c\varepsilon G}{}

\step*{}{a_{33}:=
term_{\ref{lemma:set-assoc}.1}(S,G,\cdot,e, ^{-1},u,B, v_1,g,c,w,a_{32})
\;:\;(B\cdot g)\cdot c=B\cdot(g\cdot c)}{}

\step*{}{a_{34}:=
term_{\ref{lemma:mult-triv}.2}(S,\cdot,C,g,c,a_{30})\;:\;(g\cdot c)\;\varepsilon\;
(g\cdot C)}{}

\step*{}{a_{35}:=\wedge_1( a_{33})xa_{31}
\;:\;x\;\varepsilon\;(B\cdot (g\cdot c))}{}

\step*{}{a_{36}:=\wedge(a_{34},a_{35})
\;:\;(g\cdot c)\;\varepsilon\;(g\cdot C)\wedge x\;\varepsilon\;(B\cdot (g\cdot c))}{}

\step*{}{a_{37}:=\exists_1(\lambda t:S.(t\;\varepsilon\;(g\cdot C)\wedge x\;\varepsilon\;(B\cdot t)),(g\cdot c), a_{36})\;:\;x\;\varepsilon\; (B\cdot(g\cdot C))}{}

\conclude*{}{a_{38}:=\exists_3(r_1,a_{37})\;:\;x\;\varepsilon\; (B\cdot(g\cdot C))}{}

\conclude*{}{a_{39}:=
\lambda x:S.\lambda r_1:(x\;\varepsilon\;((B\cdot g)\cdot C)).a_{38}
\;:\;((B\cdot g)\cdot C)\subseteq (B\cdot(g\cdot C))}{}

\introduce*{}{x:S\;|\;r_1:x\;\varepsilon\; (B\cdot (g\cdot C))}{}
\step*{}{r_1\;:\;[\exists b:S.(b\varepsilon B\wedge x\;\varepsilon\;(b\cdot (g\cdot C))]}{}
\introduce*{}{b:S\;|\;r_2:b\varepsilon B\wedge x\;\varepsilon\;(b\cdot (g\cdot C)}{}
\step*{}{a_{40}:=\wedge_1(r_2)\;:\;b\varepsilon B}{}
\step*{}{a_{41}:=\wedge_2(r_2)\;:\;x\;\varepsilon\;(b\cdot (g\cdot C)}{}

\step*{}{a_{42}:=v_1ba_{40}\;:\;b\varepsilon G}{}

\step*{}{a_{43}:=
term_{\ref{lemma:set-assoc}.3}(S,G,\cdot,e, ^{-1},u,C,v_2,b,g,a_{42},w)\;:\;(b\cdot g)\cdot C=b\cdot(g\cdot C)}{}

\step*{}{a_{44}:=\wedge_2(a_{43})xa_{41}
\;:\;x\;\varepsilon\;((b\cdot g)\cdot C)}{}

\step*{}{a_{45}:=
term_{\ref{lemma:mult-triv}.3}(S,\cdot,B,g,b,a_{40})\;:\;(b\cdot g)\;\varepsilon\;(B\cdot g)}{}

\step*{}{a_{46}:=\wedge (a_{45},a_{44})\;:\;(b\cdot g)\;\varepsilon\;(B\cdot g)\wedge x\;\varepsilon\;((b\cdot g)\cdot C)}{}

\step*{}{a_{47}:=\exists_1(\lambda t:S.(t\varepsilon\;(B\cdot g)\wedge x\;\varepsilon\;(t\cdot C)),(b\cdot g), a_{46})\;:\;x\;\varepsilon\;((B\cdot g)\cdot C)}{}

\conclude*{}{a_{48}:=\exists_3(r_1,a_{47})\;:\;x\;\varepsilon\;((B\cdot g)\cdot C)}{}

\conclude*{}{a_{49}:=
\lambda x:S.\lambda r_1:(x\;\varepsilon\;(B\cdot (g\cdot C))). a_{48}
\;:\;(B\cdot(g\cdot C))\subseteq ((B\cdot g)\cdot C)}{}

\step*{}{\boldsymbol {term_{ \ref{lemma:set-assoc}.5}}(S,G,\cdot,e, ^{-1},u, B,C,v_1, v_2,g, w):=\wedge(a_{39}, a_{49})\;:\;\boldsymbol{(B\cdot g)\cdot C=B\cdot(g\cdot C)}}{\textsf{5) is proven}}

\introduce*{}{x:S\;|\;r_1:x\;\varepsilon\; ((g\cdot B)\cdot C)}{}
\step*{}{r_1\;:\;[\exists c:S.(c\varepsilon C\wedge x\;\varepsilon\; ((g\cdot B)\cdot c))]}{}
\introduce*{}{c:S\;|\;r_2:c\varepsilon C\wedge x\;\varepsilon\; ((g\cdot B)\cdot c)}{}
\step*{}{a_{50}:=\wedge_1(r_2)\;:\;c\varepsilon C}{}
\step*{}{a_{51}:=\wedge_2(r_2)\;:\;x\;\varepsilon\; ((g\cdot B)\cdot c)}{}

\step*{}{a_{52}:=
v_2ca_{50}
\;:\;c\varepsilon G}{}

\step*{}{a_{53}:=
term_{\ref{lemma:set-assoc}.2}(S,G,\cdot,e, ^{-1},u,B,v_1,g,c,w, a_{52})\;:\;(g\cdot B)\cdot c=g\cdot(B\cdot c)}{}
\step*{}{a_{54}:=\wedge_1(a_{53})xa_{51}
\;:\;x\;\varepsilon\;(g\cdot(B\cdot c))}{}

\step*{}{a_{54}\;:\;[\exists f:S.(f\;\varepsilon\; (B\cdot c)\wedge x=g\cdot f)]}{}
\introduce*{}{f:S\;|\;r_3:f\;\varepsilon\; (B\cdot c)\wedge x=g\cdot f}{}
\step*{}{a_{55}:=\wedge_1(r_3)\;:\;f\;\varepsilon\; (B\cdot c)}{}
\step*{}{a_{56}:=\wedge_2(r_3)\;:\;x=g\cdot f}{}

\step*{}{a_{57}:=\wedge(a_{50},a_{55})
\;:\;c\varepsilon C\wedge f\;\varepsilon\; (B\cdot c)}{}

\step*{}{a_{58}:=\exists_1(\lambda t:S.(t\varepsilon C\wedge f\;\varepsilon\; (B\cdot t)),c, a_{57})\;:\;f\;\varepsilon\; (B\cdot C)}{}

\step*{}{a_{59}:=\wedge(a_{58},a_{56})
\;:\;f\;\varepsilon\; (B\cdot C)\wedge x=g\cdot f}{}

\step*{}{a_{60}:=\exists_1(\lambda t:S.(t\;\varepsilon\; (B\cdot C)\wedge x=g\cdot t),f, a_{59})\;:\;x\;\varepsilon\;( g\cdot(B\cdot C))}{}

\conclude*{}{a_{61}:=\exists_3(a_{54},a_{60})\;:\;x\;\varepsilon\;( g\cdot(B\cdot C))}{}

\conclude*{}{a_{62}:=\exists_3(r_1,a_{61})\;:\;x\;\varepsilon\;( g\cdot(B\cdot C))}{}

\conclude*{}{a_{63}:=
\lambda x:S.\lambda r_1:(x\;\varepsilon\;((g\cdot B)\cdot C)).a_{62}
\;:\;((g\cdot B)\cdot C)\subseteq (g\cdot(B\cdot C))}{}

\introduce*{}{x:S\;|\;r_1:x\;\varepsilon\; (g\cdot(B\cdot C))}{}
\step*{}{r_1\;:\;[\exists f:S.(f\;\varepsilon \;(B\cdot C)\wedge x=g\cdot f)]}{}
\introduce*{}{f:S\;|\;r_2:f\;\varepsilon \;(B\cdot C)\wedge x=g\cdot f}{}
\step*{}{a_{64}:=\wedge_1(r_2)\;:\;f\;\varepsilon \;(B\cdot C)}{}
\step*{}{a_{65}:=\wedge_2(r_2)\;:\;x=g\cdot f}{}

\step*{}{a_{64}\;:\;[\exists c:S.(c\varepsilon  C\wedge f\;\varepsilon \;(B\cdot c))]}{}
\introduce*{}{c:S\;|\;r_3:c\varepsilon  C\wedge f\;\varepsilon \;(B\cdot c)}{}
\step*{}{a_{66}:=\wedge_1(r_3)\;:\;c\varepsilon  C}{}
\step*{}{a_{67}:=\wedge_2(r_3)\;:\;f\;\varepsilon \;(B\cdot c)}{}

\step*{}{a_{68}:= v_2ca_{66}\;:\;c\varepsilon G}{}

\step*{}{a_{69}:=\wedge(a_{67},a_{65})
\;:\;f\;\varepsilon \;(B\cdot c)\wedge x=g\cdot f}{}

\step*{}{a_{70}:=\exists_1(\lambda t:S.(t\;\varepsilon \;(B\cdot c)\wedge x=g\cdot t),f, a_{69})\;:\;x\;\varepsilon \;(g\cdot(B\cdot c))}{}

\step*{}{a_{71}:=
term_{\ref{lemma:set-assoc}.2}(S,G,\cdot,e, ^{-1},u,B,v_1,g,c,w, a_{68})\;:\;(g\cdot B)\cdot c=g\cdot(B\cdot c)}{}

\step*{}{a_{72}:=\wedge_2(a_{71})xa_{70}
\;:\;x\;\varepsilon\;((g\cdot B)\cdot c)}{}

\step*{}{a_{73}:=\wedge(a_{66},a_{72})
\;:\;c\varepsilon C\wedge x\;\varepsilon\;((g\cdot B)\cdot c)}{}

\step*{}{a_{74}:=\exists_1(\lambda t:S.(t\varepsilon C\wedge x\;\varepsilon\;((g\cdot B)\cdot t)),c, a_{73})\;:\;x\;\varepsilon\;((g\cdot B)\cdot C)}{}

\conclude*{}{a_{75}:=\exists_3(a_{64},a_{74})\;:\;x\;\varepsilon\;((g\cdot B)\cdot C)}{}

\conclude*{}{a_{76}:=\exists_3(r_1,a_{75})\;:\;x\;\varepsilon\;((g\cdot B)\cdot C)}{}

\conclude*{}{a_{77}:=
\lambda x:S.\lambda r_1:(x\;\varepsilon\;(g\cdot (B\cdot C))). a_{76}
\;:\;(g\cdot(B\cdot C))\subseteq ((g\cdot B)\cdot C)}{}

\step*{}{\boldsymbol {term_{ \ref{lemma:set-assoc}.6}}(S,G,\cdot,e, ^{-1},u, B,C,v_1, v_2,g, w):=\wedge(a_{63}, a_{77})
\;:\;\boldsymbol{(g\cdot B)\cdot C=g\cdot(B\cdot C)}}{\textsf{6) is proven}}
\end{flagderiv}
 
7) 
\vspace{-0.2cm}

\begin{flagderiv}
\introduce*{}{S: *_s\;|\;G:ps(S)\;|\;\cdot: S\rightarrow S\rightarrow S\;|\;e:S\;|\;^{-1}:S\rightarrow S\;|\;u:Group(S,G,\cdot,e,^{-1})}{}
\introduce*{}{B,C,D:ps(S)\;|\;v_1:B\subseteq G\;|\;v_2:C\subseteq G\;|\;v_3:D\subseteq G}{}

\introduce*{}{x:S\;|\;r_1:x\;\varepsilon\; ((B\cdot C)\cdot D)}{}
\step*{}{r_1\;:\;[\exists d:S.(d\varepsilon D\wedge x\;\varepsilon \;(B\cdot C)\cdot d)]}{}
\introduce*{}{d:S\;|\;r_2:d\varepsilon D\wedge x\;\varepsilon \;((B\cdot C)\cdot d)}{}
\step*{}{a_1:=\wedge_1(r_2)\;:\;d\varepsilon D}{}
\step*{}{a_2:=\wedge_1(r_2)\;:\;x\;\varepsilon \;((B\cdot C)\cdot d)}{}

\step*{}{a_3:=v_3da_1\;:\;d\varepsilon G}{}
\step*{}{a_4:=
term_{\ref{lemma:set-assoc}.4}(S,G,\cdot,e, ^{-1},u,B,C,v_1,v_2,d,a_3)\;:\;(B\cdot C)\cdot d=B\cdot(C\cdot d)}{}
\step*{}{a_5:=\wedge_1(a_4)xa_2
\;:\;x\;\varepsilon\;(B\cdot(C\cdot d))}{}
\step*{}{a_5\;:\;[\exists f:S.(f\;\varepsilon \;(C\cdot d)\wedge x\;\varepsilon\; (B\cdot f))}{}
\introduce*{}{f:S\;|\;r_3:f\;\varepsilon \;(C\cdot d)\wedge x\;\varepsilon\; (B\cdot f)}{}
\step*{}{a_6:=\wedge_1(r_3)\;:\;f\;\varepsilon \;(C\cdot d)}{}
\step*{}{a_7:=\wedge_2(r_3)\;:\;x\;\varepsilon\; (B\cdot f)}{}
\step*{}{a_8:=\wedge(a_1, a_6)
\;:\;d\varepsilon D\wedge f\;\varepsilon \;(C\cdot d)}{}

\step*{}{a_9:=\exists_1(\lambda t:S.(t\varepsilon D\wedge f\;\varepsilon \;(C\cdot t)),d, a_8)\;:\;f\;\varepsilon\;(C\cdot D)}{}

\step*{}{a_{10}:=\wedge(a_9, a_7)
\;:\;f\;\varepsilon\;(C\cdot D)\wedge x\;\varepsilon\; (B\cdot f)}{}

\step*{}{a_{11}:=\exists_1(\lambda t:S.(t\;\varepsilon\;(C\cdot D)\wedge x\;\varepsilon\; (B\cdot t)),f, a_{10})\;:\;x\;\varepsilon\;(B\cdot(C\cdot D))}{}

\conclude*{}{a_{12}:=\exists_3(a_5,a_{11})\;:\;x\;\varepsilon\;(B\cdot(C\cdot D))}{}

\conclude*{}{a_{13}:=\exists_3(r_1,a_{12})\;:\;x\;\varepsilon\;(B\cdot(C\cdot D))}{}

\conclude*{}{a_{14}:=
\lambda x:S.\lambda r_1:(x\;\varepsilon\;((B\cdot C)\cdot D)).a_{13}
\;:\;((B\cdot C)\cdot D)\subseteq (B\cdot(C\cdot D))}{}

\introduce*{}{x:S\;|\;r_1:x\;\varepsilon\; (B\cdot (C\cdot D))}{}
\step*{}{r_1\;:\;[\exists b:S.(b\varepsilon B\wedge x\;\varepsilon\; (b\cdot (C\cdot D)))]}{}
\introduce*{}{b:S\;|\;r_2:b\varepsilon B\wedge x\;\varepsilon\; (b\cdot (C\cdot D))}{}
\step*{}{a_{15}:=\wedge_1(r_2)\;:\;b\varepsilon B}{}
\step*{}{a_{16}:=\wedge_2(r_2)\;:\;x\;\varepsilon\; (b\cdot (C\cdot D))}{}

\step*{}{a_{17}:= v_1ba_{15}\;:\;b\varepsilon G}{}

\step*{}{a_{18}:=
term_{\ref{lemma:set-assoc}.6}(S,G,\cdot,e, ^{-1},u,C,D,v_2,v_3,b, a_{17})\;:\;(b\cdot C)\cdot D=b\cdot(C\cdot D)}{}

\step*{}{a_{19}:=\wedge_2(a_{18})xa_{16}
\;:\;x\;\varepsilon\;((b\cdot C)\cdot D)}{}

\step*{}{a_{19}\;:\;[\exists f:S.(f\;\varepsilon\;(b\cdot C)\wedge x\;\varepsilon\;(f\cdot D))]}{}
\introduce*{}{f:S\;|\;r_3:f\;\varepsilon\;(b\cdot C)\wedge x\;\varepsilon\;(f\cdot D)}{}
\step*{}{a_{20}:=\wedge_1(r_3)\;:\;f\;\varepsilon\;(b\cdot C)}{}
\step*{}{a_{21}:=\wedge_2(r_3)\;:\;x\;\varepsilon\;(f\cdot D)}{}

\step*{}{a_{22}:=\wedge(a_{15}, a_{20})
\;:\;b\varepsilon B\wedge f\;\varepsilon\;(b\cdot C)}{}
\step*{}{a_{23}:=\exists_1(\lambda t:S.(t\varepsilon B\wedge f\;\varepsilon\;(t\cdot C)),b, a_{22})\;:\;f\;\varepsilon\;(B\cdot C)}{}

\step*{}{a_{24}:=\wedge(a_{23},a_{21})
\;:\;f\;\varepsilon\;(B\cdot C)\wedge x\;\varepsilon\;(f\cdot D)}{}
\step*{}{a_{25}:=\exists_1(\lambda t:S.(t\;\varepsilon\;(B\cdot C)\wedge x\;\varepsilon\;(t\cdot D)),f, a_{24})\;:\;x\;\varepsilon\;((B\cdot C)\cdot D)}{}

\conclude*{}{a_{26}:=\exists_3(a_{19},a_{25})\;:\;x\;\varepsilon\;((B\cdot C)\cdot D)}{}

\conclude*{}{a_{27}:=\exists_3(r_1,a_{26})\;:\;x\;\varepsilon\;((B\cdot C)\cdot D)}{}

\conclude*{}{a_{28}:=
\lambda x:S.\lambda r_1:x\;\varepsilon\;(B\cdot (C\cdot D)). a_{27}
\;:\;B\cdot(C\cdot D)\subseteq (B\cdot C)\cdot D}{}

\step*{}{\boldsymbol {term_{ \ref{lemma:set-assoc}.7}}(S,G,\cdot,e, ^{-1},u, B,C,D,v_1, v_2,v_3):=\wedge(a_{14}, a_{28})
\;:\;\boldsymbol{(B\cdot C)\cdot D=B\cdot(C\cdot D)}}{}
\end{flagderiv}

\subsection*{Proof of Proposition \ref{lemma:coset}}
1) - 3) are proven in the following diagram.

\begin{flagderiv}
\introduce*{}{S: *_s\;|\;G:ps(S)\;|\;\cdot: S\rightarrow S\rightarrow S\;|\;e:S\;|\;^{-1}:S\rightarrow S\;|\;u:Group(S,G,\cdot,e,^{-1})}{}
\introduce*{}{H:ps(S)\;|\;v:H\leqslant G}{}
\step*{}{a_1:=\wedge_1(\wedge_1( \wedge_1(v)))\;:\;H\subseteq G}{}
\step*{}{a_2:=\wedge_2(\wedge_1( \wedge_1(v)))\;:\;e\varepsilon H}{}

\step*{}{a_3:=\wedge_2( \wedge_1(v))\;:\; Closure_1(S,H,^{-1})}{}

\step*{}{a_4:=\wedge_2(v)\;:\;Closure_2(S,H,\cdot)}{}

\introduce*{}{x:S\;|\;w:x\varepsilon H}{}
\step*{}{a_5:=a_1xw\;:\;x\varepsilon G}{}

\step*{}{a_6:=Gr_4(S,G,\cdot, e,^{-1},u,x,a_5)\;:\;x\cdot e=x}{}

\step*{}{a_7:=term_{ \ref{theorem:axiom_corollary}.6}(S,G,\cdot,e,^{-1},u,x,a_5)\;:\;(x^{-1})^{-1}=x}{}

\step*{}{a_8:=a_3xw\;:\;x^{-1}\varepsilon H}{}

\step*{}{a_9:= term_{\ref{lemma:mult-triv}.1}(S,\;^{-1},H,(x^{-1}),a_8)\;:\;(x^{-1})^{-1}\;\varepsilon\;H^{-1}}{}

\step*{}{a_{10}:=eq\mhyphen subs_1(\lambda t:S.(t\;\varepsilon\; H^{-1}), a_7,a_9)\;:\; x\;\varepsilon\;H^{-1}}{}

\step*{}{a_{11}:= term_{\ref{lemma:mult-triv}.4}(S,\cdot,H,H,x,e,w,a_2) \;:\;(x\cdot e)\;\varepsilon\;(H\cdot H)}{}

\step*{}{a_{12}:=eq\mhyphen subs_1(\lambda t:S.(t\;\varepsilon\; (H\cdot H)), a_6,a_{11})\;:\; x\;\varepsilon\;(H\cdot H)}{}

\conclude*{}{a_{13}:=\lambda x:S.\lambda w:x\varepsilon H.a_{10}\;:\;
H\subseteq H^{-1}}{}

\step*{}{a_{14}:=\lambda x:S.\lambda w:x\varepsilon H.a_{12}\;:\;
H\subseteq (H\cdot H)}{}

\step*{}{a_{15}:=term_{ \ref{lemma:subgroup}.1}(S,G,\cdot,e,^{-1},u, H, v)\;:\;Group(S,H,\cdot,e,^{-1})}{}

\step*{}{a_{16}:=\lambda x:S.\lambda w:x\varepsilon H.w\;:\;
H\subseteq H}{}

\step*{}{a_{17}:=term_{ \ref{lemma:mult}.3}(S,H,\cdot,e,^{-1},a_{15},H, a_{16})\;:\;H^{-1}\subseteq H}{}

\step*{}{a_{18}:=term_{ \ref{lemma:mult}.8}(S,H,\cdot,e,^{-1},a_{15},H,H, a_{16},a_{16})\;:\;(H\cdot H)\subseteq H}{}

\step*{}{\boldsymbol {term_{\ref{lemma:coset}.1}}(S,G,\cdot,e,^{-1},u, H, v):=\wedge(a_{17}, a_{13})\;:\;\boldsymbol{H^{-1}=H}}{\textsf{1) is proven}}

\step*{}{\boldsymbol {term_{\ref{lemma:coset}.2}}(S,G,\cdot,e,^{-1},u, H, v):=\wedge(a_{18}, a_{14})\;:\;\boldsymbol{H\cdot H=H}}{\textsf{2) is proven}}

\introduce*{}{D:ps(S)\;|\;w:D\leqslant G}{}
\step*{}{a_{19}:=\wedge_1( \wedge_1( \wedge_1(w)))\;:\;D\subseteq G}{}
\step*{}{a_{20}:=term_{ \ref{lemma:mult}.9}(S,G,\cdot,e,^{-1},u,H,D,a_1,a_{19})\;:\;(H\cdot D)^{-1}=D^{-1}\cdot H^{-1}}{}

\step*{}{a_{21}:=term_{ \ref{lemma:coset}.1}(S,G,\cdot,e,^{-1},u,H,v)\;:\;H^{-1}=H}{}
\step*{}{a_{22}:=term_{ \ref{lemma:coset}.1}(S,G,\cdot,e,^{-1},u,D,w)\;:\;D^{-1}=D}{}
\step*{}{a_{23}:=eq\mhyphen cong_1(\lambda Z:ps(S).(D^{-1}\cdot Z),a_{21})\;:\;D^{-1}\cdot H^{-1}=D^{-1}\cdot H}{}

\step*{}{a_{24}:=eq\mhyphen cong_1(\lambda Z:ps(S).(Z\cdot H), a_{22})\;:\;D^{-1}\cdot H=D\cdot H}{}

\step*{}{\boldsymbol {term_{\ref{lemma:coset}.3}}(S,G,\cdot,e,^{-1},u,H,D,v, w):=eq\mhyphen trans_1(eq\mhyphen trans_1(a_{20},a_{23}),a_{24})
\\\quad\quad
\;:\;\boldsymbol{(H\cdot D)^{-1}=D\cdot H}\hspace{6cm}\textsf{3) is proven}}{}
\end{flagderiv}

4) - 6) are proven in the following diagram.

\begin{flagderiv}
\introduce*{}{S: *_s\;|\;G:ps(S)\;|\;\cdot: S\rightarrow S\rightarrow S\;|\;e:S\;|\;^{-1}:S\rightarrow S\;|\;u:Group(S,G,\cdot,e,^{-1})}{}
\introduce*{}{H:ps(S)\;|\;v:H\leqslant G\;|\;g:S\;|\;w:g\varepsilon G}{}
\step*{}{a_1:=\wedge_1(\wedge_1(\wedge_1 (v)))\;:\;H\subseteq G}{}

\step*{}{a_2:=
term_{ \ref{lemma:coset}.1}(S,G,\cdot,e,^{-1},H,v)
\;:\;H^{-1}=H}{}
\step*{}{a_3:=
term_{ \ref{lemma:mult}.6}(S,G,\cdot,e,^{-1},H,a_1,g,w)
\;:\;(g\cdot H)^{-1}=H^{-1}\cdot g^{-1}}{}
\step*{}{a_4:=
term_{ \ref{lemma:mult}.7}(S,G,\cdot,e,^{-1},H,a_1,g,w)
\;:\;(H\cdot g)^{-1}=g^{-1}\cdot H^{-1}}{}

\step*{}{a_5:=eq\mhyphen cong_1(\lambda Z:ps(S).(Z\cdot g^{-1}),a_2)
\;:\;H^{-1}\cdot g^{-1}=H\cdot g^{-1}}{}
\step*{}{a_6:=eq\mhyphen cong_1(\lambda Z:ps(S).(g^{-1}\cdot Z),a_2)
\;:\;g^{-1}\cdot H^{-1}=g^{-1}\cdot H}{}

\step*{}{\boldsymbol {term_{\ref{lemma:coset}.4}}(S,G,\cdot,e,^{-1},u, H, v,g,w):=eq\mhyphen trans_1(a_3,a_5)
\;:\;\boldsymbol{(g\cdot H)^{-1}=H\cdot g^{-1}}\hspace{1cm} \textsf{4) is proven}}{}

\step*{}{\boldsymbol {term_{\ref{lemma:coset}.5}}(S,G,\cdot,e,^{-1},u, H, v,g,w):=eq\mhyphen trans_1(a_4,a_6)
\;:\;\boldsymbol{(H\cdot g)^{-1}=g^{-1}\cdot H}\hspace{1cm} \textsf{5) is proven}}{}

\step*{}{a_7:=\wedge_2
\wedge_1(\wedge_1(v)))\;:\;e\varepsilon H}{}
\step*{}{a_8:=Gr_3(S,G,\cdot, e,^{-1},u,g,w)\;:\;g^{-1}\varepsilon G}{}

\step*{}{a_9:= term_{\ref{lemma:mult}.4}(S,G,\cdot,e,^{-1},u,H, a_1, g^{-1},a_8)\;:\; (g^{-1}\cdot H)\subseteq G}{}

\step*{}{a_{10}:= term_{\ref{lemma:mult}.5}(S,G,\cdot,e,^{-1},u,H, a_1, g,w)\;:\; (H\cdot g)\subseteq G}{}

\step*{}{a_{11}:= term_{\ref{lemma:mult}.5}(S,G,\cdot,e,^{-1},u,(g^{-1}\cdot H), a_9,g,w)\;:\; (g^{-1}\cdot H\cdot g)\subseteq G}{}

\step*{}{a_{12}:=Gr_4(S,G,\cdot, e,^{-1},u,g^{-1},a_8)\;:\;g^{-1}\cdot e=g^{-1}}{}

\step*{}{a_{13}:=Gr_8(S,G,\cdot, e,^{-1},u,g,w)\;:\;g^{-1}\cdot g=e}{}

\step*{}{a_{14}:= term_{\ref{lemma:mult-triv}.2}(S,\cdot,H,g^{-1},e,a_7) \;:\;(g^{-1}\cdot e)\;\varepsilon\;(g^{-1}\cdot H)}{}

\step*{}{a_{15}:=eq\mhyphen subs_1(\lambda t:S.(t\;\varepsilon\;(g^{-1}\cdot H)), a_{12}, a_{14})\;:\; g^{-1}\;\varepsilon\;(g^{-1}\cdot H)}{}

\step*{}{a_{16}:= term_{\ref{lemma:mult-triv}.3}(S,\cdot,(g^{-1}\cdot H), g, g^{-1}, a_{15}) \;:\;(g^{-1}\cdot g)\;\varepsilon\;(g^{-1}\cdot H\cdot g)}{}

\step*{}{a_{17}:=eq\mhyphen subs_1(\lambda t:S.(t\;\varepsilon\;(g^{-1}\cdot H\cdot g)), a_{13}, a_{16})\;:\;e \;\varepsilon\;(g^{-1}\cdot H\cdot g)}{}

\step*{}{a_{18}:= term_{\ref{lemma:mult}.7}(S,G,\cdot,e,^{-1},u,(g^{-1}\cdot H), a_9,g,w)\;:\; (g^{-1}\cdot H\cdot g)^{-1}=g^{-1}\cdot (g^{-1}\cdot H)^{-1}}{}

\step*{}{a_{19}:= term_{\ref{lemma:coset}.4}(S,G,\cdot,e,^{-1},u,H,v,g^{-1},a_8)\;:\; (g^{-1}\cdot H)^{-1}=H\cdot (g^{-1})^{-1}}{}

\step*{}{a_{20}:= term_{\ref{theorem:axiom_corollary}.6}(S,G,\cdot, e,^{-1},u,g,w)\;:\;(g^{-1})^{-1}=g}{}

\step*{}{a_{21}:=eq\mhyphen cong_1(\lambda t:S.(H\cdot t), a_{20})\;:\;H\cdot (g^{-1})^{-1}=H\cdot g}{}

\step*{}{a_{22}:=eq\mhyphen trans_1(a_{19},a_{21})\;:\;(g^{-1}\cdot H)^{-1}=H\cdot g}{}
\step*{}{a_{23}:=eq\mhyphen cong_1(\lambda Z:ps(S).(g^{-1}\cdot Z), a_{22})\;:\; g^{-1}\cdot (g^{-1}\cdot H)^{-1}=g^{-1}\cdot (H\cdot g)}{}
\step*{}{a_{24}:= term_{\ref{lemma:set-assoc}.2}(S,G,\cdot,e,^{-1},u,H, a_1,g^{-1},g,a_8,w)\;:\; g^{-1}\cdot H\cdot g=g^{-1}\cdot (H\cdot g)}{}

\step*{}{a_{25}:=eq\mhyphen trans_2(eq\mhyphen trans_1(a_{18},a_{23}),a_{24})\;:\;(g^{-1}\cdot H\cdot g)^{-1}=g^{-1}\cdot H\cdot g}{}

\step*{}{a_{26}:=
term_{\ref{lemma:set-assoc}.5}(S,G,\cdot, e,^{-1},u,(g^{-1}\cdot H),(g^{-1}\cdot H\cdot g),a_9,a_{11},g,w) 
\\\quad\quad
 \;:\;(g^{-1} \cdot H\cdot g)\cdot(g^{-1} \cdot H\cdot g)=(g^{-1} \cdot H)\cdot(g\cdot(g^{-1} \cdot H\cdot g))}{}

\step*{}{a_{27}:=
term_{\ref{lemma:set-assoc}.2}(S,G,\cdot, e,^{-1},u,(g^{-1}\cdot H),a_9,g,g,w,w) \;:\; g\cdot(g^{-1} \cdot H)\cdot g=g\cdot(g^{-1} \cdot H\cdot g)}{}

\step*{}{a_{28}:=
term_{\ref{lemma:set-assoc}.3}(S,G,\cdot, e,^{-1},u,H,a_1,g, g^{-1},w,a_8) 
 \;:\;(g\cdot g^{-1})\cdot H =g\cdot (g^{-1}\cdot H)}{}

\step*{}{a_{29}:=eq\mhyphen cong_1(\lambda t:S.(t\cdot H),
a_{13})\;:\; (g\cdot g^{-1})\cdot H=e\cdot H}{}

\step*{}{a_{30}:=
term_{\ref{lemma:mult}.1}(S,G,\cdot, e,^{-1},u,H,a_1) 
 \;:\;e\cdot H=H}{}
 
\step*{}{a_{31}:=eq\mhyphen trans_1(eq\mhyphen trans_3(a_{28},a_{29}),a_{30})\;:\; g\cdot (g^{-1}\cdot H)=H}{} 
 
\step*{}{a_{32}:=eq\mhyphen cong_1(\lambda Z:ps(S).(Z\cdot g), a_{31})\;:\;(g\cdot (g^{-1}\cdot H))\cdot g=H\cdot g}{}
 
\step*{}{a_{33}:=eq\mhyphen trans_3(a_{27},a_{32})\;:\;g\cdot(g^{-1} \cdot H\cdot g)=H\cdot g}{} 
 
\step*{}{a_{34}:=eq\mhyphen cong_1(\lambda Z:ps(S).(g^{-1} \cdot H\cdot Z), a_{33})\;:\;(g^{-1} \cdot H)(g\cdot(g^{-1} \cdot H\cdot g))=
(g^{-1} \cdot H)\cdot(H\cdot g)}{} 
 
\step*{}{a_{35}:=
term_{\ref{lemma:set-assoc}.6}(S,G,\cdot, e,^{-1},u,H,(H\cdot g),a_1,a_{10}, g^{-1},a_8) 
 \;:\;(g^{-1} \cdot H)\cdot(H\cdot g)=g^{-1}\cdot(H\cdot(H\cdot g))}{}
 
\step*{}{a_{36}:=
term_{\ref{lemma:set-assoc}.4}(S,G,\cdot, e,^{-1},u,H,H, a_1,a_1,g, w) 
 \;:\;(H\cdot H)\cdot g =H\cdot (H\cdot g)}{}

\step*{}{a_{37}:=
term_{\ref{lemma:coset}.2}(S,G,\cdot, e,^{-1},u,H,v) 
 \;:\;H\cdot H=H}{}

\step*{}{a_{38}:=eq\mhyphen cong_1(\lambda Z:ps(S).(Z\cdot g), a_{37}) \;:\;
(H\cdot H)\cdot g=H\cdot g}{}

\step*{}{a_{39}:=eq\mhyphen trans_3(a_{36}, a_{38}) \;:\;
H\cdot(H\cdot g)=H\cdot g}{}

\step*{}{a_{40}:=eq\mhyphen cong_1(\lambda Z:ps(S).(g^{-1}\cdot Z),a_{39}) \;:\;
g^{-1}\cdot (H\cdot (H\cdot g))=g^{-1}\cdot (H\cdot g) }{}

\step*{}{a_{41}:=
term_{\ref{lemma:set-assoc}.2}(S,G,\cdot, e,^{-1},u,H,a_1, g^{-1},g,a_8,w) 
\;:\;g^{-1}\cdot H\cdot g=g^{-1}\cdot (H\cdot g)}{}

\step*{}{a_{42}:=eq\mhyphen trans_2(eq\mhyphen trans_1(eq\mhyphen trans_1(eq\mhyphen trans_1(a_{26}, a_{34}),a_{35}),a_{40}),a_{41})
\\\quad\quad
\;:\; (g^{-1}\cdot H\cdot g)\cdot(g^{-1}\cdot H\cdot g) =g^{-1}\cdot H\cdot g}{} 

\introduce*{}{x:S\;|\;r:x\;\varepsilon\; (g^{-1}\cdot H\cdot g)}{}

\step*{}{a_{43}:= term_{\ref{lemma:mult-triv}.1}(S,\cdot,(g^{-1}\cdot H\cdot g),x,r) \;:\;x^{-1}\;\varepsilon\;(g^{-1}\cdot H\cdot g)^{-1}}{}

\step*{}{a_{44}:=\wedge_1(a_{25})(x^{-1}) a_{43}\;:\;x^{-1}\;\varepsilon\;(g^{-1}\cdot H\cdot g)}{}

\conclude*{}{a_{45}:=
\lambda x:S.\lambda r:(x\;\varepsilon\;(g^{-1} \cdot H\cdot g)).a_{44}
\;:\;Closure_1(S,(g^{-1} \cdot H\cdot g),\;^{-1})}{}

\introduce*{}{x:S\;|\;r_1:x\;\varepsilon\; (g^{-1}\cdot H\cdot g)\;|\;y:S\;|\;r_2:y\;\varepsilon\; (g^{-1}\cdot H\cdot g)}{}

\step*{}{a_{46}:= term_{\ref{lemma:mult-triv}.4}(S,\cdot,(g^{-1}\cdot H\cdot g),(g^{-1}\cdot H\cdot g),x,y,  r_1,r_2)
 \;:\;(x\cdot y)\;\varepsilon\;((g^{-1}\cdot H\cdot g)\cdot (g^{-1}\cdot H\cdot g))}{}

\step*{}{a_{47}:=\wedge_1(a_{42})(x\cdot y)
a_{46}\;:\;(x\cdot y)\;\varepsilon\;(g^{-1}\cdot H\cdot g)}{}

\conclude*{}{a_{48}:=
\lambda x:S.\lambda r_1:(x\;\varepsilon\;(g^{-1} \cdot H\cdot g)).\lambda y:S.\lambda r_2:(y\;\varepsilon\;(g^{-1} \cdot H\cdot g)).a_{47}
\;:\;Closure_2(S,(g^{-1} \cdot H\cdot g),\;\cdot)}{}

\step*{}{\boldsymbol{
term_{\ref{lemma:coset}.6}}(S,G,\cdot, e,^{-1},u,H,v,g,w) 
:=\wedge(\wedge(\wedge(a_{11}, a_{17}),a_{45}),a_{48})
\;:\;\boldsymbol{
(g^{-1} \cdot H\cdot g)\leqslant G}\hspace{1cm} \textsf{6) is proven}}{}
\end{flagderiv}

7)
\begin{flagderiv}
\introduce*{}{S: *_s\;|\;G:ps(S)\;|\;\cdot: S\rightarrow S\rightarrow S\;|\;e:S\;|\;^{-1}:S\rightarrow S\;|\;u:Group(S,G,\cdot,e,^{-1})}{}

\introduce*{}{H:ps(S)\;|\;v:H\leqslant G\;|\;g:S\;|\;w:g\varepsilon G}{}
\step*{}{a_1:=\wedge_1(\wedge_1( \wedge_1(v)))\;:\;H\subseteq G}{}
\step*{}{a_2:=\wedge_2(\wedge_1( \wedge_1(v)))\;:\;e\varepsilon H}{}

\step*{}{a_3:=\wedge_2( \wedge_1(v))\;:\; Closure_1(S,H,^{-1})}{}

\step*{}{a_4:=\wedge_2(v)\;:\;Closure_2(S,H,\cdot)}{}

\step*{}{a_5:=Gr_3(S,G,\cdot, e,^{-1},u,g,w)\;:\;g^{-1}\varepsilon G}{}

\step*{}{a_6:=Gr_4(S,G,\cdot, e,^{-1},u,g,w)\;:\;g\cdot e=g}{}

\step*{}{a_7:=Gr_7(S,G,\cdot, e,^{-1},u,g,w)\;:\;g\cdot g^{-1}=e}{}

\step*{}{\text{Notation }R:=R_H\;:\; S\rightarrow S\rightarrow *_p}{}

\step*{}{a_8:= term_{\ref{lemma:eq-class}.1}(S,G,\cdot,e,^{-1}, u,H,v,g,w) \;:\;g\cdot H=Rg\cap G}{}

\assume*{}{r:(g\cdot H=H)}{}

\step*{}{a_9:= term_{\ref{lemma:mult-triv}.2}(S,\cdot,H,g,e,a_2) \;:\;(g\cdot e)\;\varepsilon\;(g\cdot H)}{}

\step*{}{a_{10}:=eq\mhyphen subs_1(\lambda t:S.(t\;\varepsilon\;(g\cdot H)),a_6,a_9) \;:\;
g\;\varepsilon\;(g\cdot H)}{}

\step*{}{a_{11}:=\wedge_1(r)ga_{10}\;:\;
g\varepsilon H}{}

\conclude*{}{a_{12}:=\lambda r:
(g\cdot H=H).a_{11}
\;:\;
(g\cdot H=H)\Rightarrow g\varepsilon H}{}

\assume*{}{r_1:g\varepsilon H}{}

\introduce*{}{z:S\;|\;r_2:z\;\varepsilon\;(g\cdot H)}{}

\step*{}{r_2:(\exists h:S.(h\varepsilon H\wedge z=g\cdot h))}{}

\introduce*{}{h:S\;|\;r_3:h\varepsilon H\wedge z=g\cdot h}{}

\step*{}{a_{13}:=\wedge_1(r_3)\;:\;h\varepsilon H}{}

\step*{}{a_{14}:=\wedge_2(r_3)\;:\;z=g\cdot h}{}

\step*{}{a_{15}:=a_4gr_1ha_{13} \;:\;(g\cdot h)\;\varepsilon \;H}{}

\step*{}{a_{16}:=eq\mhyphen subs_2(\lambda t:S.t\varepsilon H,a_{14},a_{15}) \;:\;
z\varepsilon H}{}

\conclude*{}{a_{17}:=\exists_3(r_2,a_{16}) \;:\;
z\varepsilon H}{}

\conclude*{}{a_{18}:=\lambda z:S.\lambda r_2:(z\;\varepsilon\;(g\cdot H)).a_{17}
\;:\;(g\cdot H)\subseteq H}{}

\introduce*{}{z:S\;|\;r_2:z\varepsilon H}{}

\step*{}{a_{19}:=a_1zr_2\;:\;z\varepsilon G}{}

\step*{}{a_{20}:=a_3gr_1\;:\;g^{-1}\varepsilon H}{}

\step*{}{a_{21}:= a_4(g^{-1})a_{20}zr_2\;:\;(g^{-1}\cdot z)\;\varepsilon \;H}{}

\step*{}{a_{21}\;:\;z\varepsilon Rg}{}

\step*{}{a_{22}:=\wedge(a_{21}, a_{19})\;:\;z\varepsilon (Rg\cap G)}{}

\step*{}{a_{23}:=\wedge_2(a_8)za_{22}\;:\;z\;\varepsilon\;(g\cdot H)}{}

\conclude*{}{a_{24}:=\lambda z:S.\lambda r_2:z\varepsilon H.a_{23}
\;:\;H\subseteq (g\cdot H)}{}

\step*{}{a_{25}:=\wedge(a_{18},a_{24})\;:\;(g\cdot H)=H}{}

\conclude*{}{a_{26}:=\lambda r_1:g\varepsilon H.a_{25}
\;:\;g\varepsilon H\Rightarrow
((g\cdot H)=H)}{}

\step*{}{\boldsymbol{
term_{\ref{lemma:coset}.7}}(S,G,\cdot, e,^{-1},u,H,v,g,w) 
:=\wedge(a_{12},a_{26})
\;:\;\boldsymbol{
(g\cdot H=H)\;\Leftrightarrow\;g\varepsilon H}}{}
\end{flagderiv}

\subsection*{Proof of Theorem \ref{theorem:permutable}}
\begin{flagderiv}
\introduce*{}{S: *_s\;|\;G:ps(S)\;|\;\cdot: S\rightarrow S\rightarrow S\;|\;e:S\;|\;^{-1}:S\rightarrow S\;|\;u:Group(S,G,\cdot,e,^{-1})}{}

\step*{}{a_1:=Gr_1(S,G,\cdot, e,^{-1},u)\;:\;e\varepsilon G}{}
\step*{}{a_2:=Gr_4(S,G,\cdot, e,^{-1},u,e,a_1)\;:\;e\cdot e=e}{}

\introduce*{}{B,C:ps(S)\;|\;v:B\leqslant G\;|\;w:C\leqslant G}{}
\step*{}{a_3:=\wedge_1(\wedge_1( \wedge_1(v)))\;:\;B\subseteq G}{}
\step*{}{a_4:=\wedge_2(\wedge_1( \wedge_1(v)))\;:\;e\varepsilon B}{}

\step*{}{a_5:=\wedge_1(\wedge_1( \wedge_1(w)))\;:\;C\subseteq G}{}
\step*{}{a_6:=\wedge_2(\wedge_1( \wedge_1(w)))\;:\;e\varepsilon C}{}
\step*{}{a_7:= term_{\ref{lemma:mult}.8}(S,G,\cdot,e,^{-1},u,B,C, a_3,a_5)\;:\;(B\cdot C)\subseteq G}{}

\step*{}{a_8:= term_{\ref{lemma:coset}.1}(S,G,\cdot,e,^{-1},u,B,v)\;:\;B^{-1}=B}{}

\step*{}{a_9:= term_{\ref{lemma:coset}.3}(S,G,\cdot,e,^{-1},u,B,C,v,w)\;:\;(B\cdot C)^{-1}=C\cdot B}{}

\step*{}{a_{10}:= term_{\ref{lemma:mult-triv}.4}(S,\cdot,B,C,e,e,a_4,a_6) \;:\;(e\cdot e)\;\varepsilon\;(B\cdot C)}{}

\step*{}{a_{11}:=eq\mhyphen subs_1(\lambda t:S.(t\;\varepsilon\; (B\cdot C)), a_2,a_{10})\;:\; e\;\varepsilon\;(B\cdot C)}{}

\assume*{}{r:(B\cdot C)\leqslant G}{}

\step*{}{a_{12}:= term_{\ref{lemma:coset}.1}(S,G,\cdot,e,^{-1},u,(B\cdot C), r)\;:\;(B\cdot C)^{-1}=B\cdot C}{}

\step*{}{a_{13}:=eq\mhyphen trans_3(a_{12},a_9)\;:\;B\cdot C=C\cdot B}{}

\conclude*{}{a_{14}:=\lambda 
r:((B\cdot C)\leqslant G).a_{13}
\;:\;(B\cdot C)\leqslant G\Rightarrow B\cdot C=C\cdot B}{}

\assume*{}{r_1:B\cdot C=C\cdot B}{}

\step*{}{a_{15}:=eq\mhyphen trans_2(a_9,r_1)\;:\;(B\cdot C)^{-1}=B\cdot C}{}

\step*{}{a_{16}:=
term_{\ref{lemma:set-assoc}.7}(S,G,\cdot, e,^{-1},u,B,B,C,a_3,a_3, a_5) 
 \;:\;(B\cdot B)\cdot C=B\cdot (B\cdot C)}{}

\step*{}{a_{17}:=
term_{\ref{lemma:set-assoc}.7}(S,G,\cdot, e,^{-1},u,B,C,C,a_3,a_5, a_5) 
 \;:\;(B\cdot C)\cdot C=B\cdot (C\cdot C)}{}

\step*{}{a_{18}:= term_{\ref{lemma:coset}.2}(S,G,\cdot,e,^{-1},u,B,v)\;:\;B\cdot B=B}{}

\step*{}{a_{19}:= term_{\ref{lemma:coset}.2}(S,G,\cdot,e,^{-1},u,C,w)\;:\;C\cdot C=C}{}

\step*{}{a_{20}:=eq\mhyphen cong_1(\lambda Z:ps(S).(Z\cdot C), a_{18}) \;:\;(B\cdot B)\cdot C=B\cdot C}{}

\step*{}{a_{21}:=eq\mhyphen cong_1(\lambda Z:ps(S).(B\cdot Z),a_{19}) \;:\;B\cdot(C\cdot C)=B\cdot C}{}

\step*{}{a_{22}:=eq\mhyphen trans_3(a_{16},a_{20})\;:\;B\cdot (B\cdot C)=B\cdot C}{}

\step*{}{a_{23}:=eq\mhyphen trans_1(a_{17},a_{21})\;:\;(B\cdot C)\cdot C=B\cdot C}{}

\step*{}{a_{24}:=eq\mhyphen cong_2(\lambda Z:ps(S).(Z\cdot C),r_1) \;:\;(C\cdot B)\cdot C=(B\cdot C)\cdot C}{}

\step*{}{a_{25}:=eq\mhyphen trans_1(a_{24},a_{23})\;:\;(C\cdot B)\cdot C=B\cdot C}{}

\step*{}{a_{26}:=
term_{\ref{lemma:set-assoc}.7}(S,G,\cdot, e,^{-1},u,C,B,C,a_5,a_3, a_5) 
 \;:\;(C\cdot B)\cdot C=C\cdot(B\cdot C)}{}

\step*{}{a_{27}:=eq\mhyphen trans_3(a_{26},a_{25})\;:\;C\cdot(B\cdot C)=B\cdot C}{}

\step*{}{a_{28}:=eq\mhyphen cong_1(\lambda Z:ps(S).(B\cdot Z),a_{27}) \;:\;B\cdot(C\cdot (B\cdot C))=B\cdot (B\cdot C)}{}

\step*{}{a_{29}:=
term_{\ref{lemma:set-assoc}.7}(S,G,\cdot, e,^{-1},u,B,C,( B\cdot C),a_3,a_5, a_7) 
 \;:\;(B\cdot C)\cdot (B\cdot C)=B\cdot (C\cdot(B\cdot C))}{}
\step*{}{a_{30}:=eq\mhyphen trans_1(eq\mhyphen trans_1(a_{29},a_{28}),a_{22})\;:\;(B\cdot C)\cdot (B\cdot C)=B\cdot C}{}
\introduce*{}{x:S\;|\;r_2:x\;\varepsilon\; (B\cdot C)}{}

\step*{}{a_{31}:= term_{\ref{lemma:mult-triv}.1}(S,\;^{-1},(B\cdot C),x,r_2) \;:\;x^{-1}\;\varepsilon\;(B\cdot C)^{-1}}{}

\step*{}{a_{32}:=\wedge_1( a_{15})xa_{31} \;:\; \;x^{-1}\;\varepsilon\;(B\cdot C)}{}

\conclude*{}{a_{33}:=\lambda x:S.\lambda r_2:(x\;\varepsilon\; (B\cdot C)).a_{32} \;:\;Closure_1(S,(B\cdot C),\,^{-1})
}{}

\introduce*{}{x:S\;|\;r_2:x\;\varepsilon\; (B\cdot C)\;|\;y:S\;|\;r_3:y\;\varepsilon\; (B\cdot C)}{}

\step*{}{a_{34}:= term_{\ref{lemma:mult-triv}.4}(S,\cdot,(B\cdot C),(B\cdot C),x,y, r_2,r_3) \;:\;(x\cdot y)\;\varepsilon\;(B\cdot C)\cdot (B\cdot C)}{}

\step*{}{a_{35}:=\wedge_1( a_{30})(x\cdot y)a_{34} \;:\;(x\cdot y)\;\varepsilon\;(B\cdot C)}{}

\conclude*{}{a_{36}:=\lambda x:S.\lambda r_2:(x\;\varepsilon\; (B\cdot C)).\lambda y:S.\lambda r_3:(y\;\varepsilon\; (B\cdot C)).a_{35} \;:\;Closure_2(S,(B\cdot C),\,\cdot)}{}

\step*{}{a_{37}:=\wedge(\wedge(\wedge(a_7, a_{11}),a_{33}),a_{36})\;:\;(B\cdot C)\leqslant G}{}

\conclude*{}{a_{38}:= \lambda r_1:(B\cdot C=C\cdot B).a_{37}\;:\;(B\cdot C=C\cdot B\;\Rightarrow\;
(B\cdot C)\leqslant G)}{}

\step*{}{\boldsymbol{
term_{\ref{theorem:permutable}}}(S,G,\cdot, e,^{-1},u,B,C,v, w):=\wedge(a_{14}, a_{38})
\;:\;\boldsymbol{(B\cdot C)\leqslant G\Leftrightarrow B\cdot C=C\cdot B}}{}
\end{flagderiv}

\subsection*{Proof of Proposition \ref{prop:conjugate}}
\begin{flagderiv}
\introduce*{}{S: *_s\;|\;G:ps(S)}{}

\step*{}{\text{Definition  }Subs(S,G,\cdot,e,^{-1}):=
\{B:ps(S)|B\subseteq G\}
\;:\;ps(ps((S))}{}

\step*{}{\text{Notation }M\;\text{ for }Subs(S,G,\cdot,e,^{-1})}{}

\introduce*{}{\cdot: S\rightarrow S\rightarrow S\;|\;^{-1}:S\rightarrow S}{}

\step*{}{\text{Definition  }R_c:=\lambda x,y:S.\exists g:S.(g\varepsilon G\wedge y=g^{-1}\cdot x\cdot g)\;:\;S\rightarrow S\rightarrow *_p}{}

\step*{}{\text{Definition  }R_s:=\lambda B,C:ps(S).\exists g:S.(g\varepsilon G\wedge C=g^{-1}\cdot B\cdot g)\;:\;ps(S)\rightarrow ps(S)\rightarrow *_p}{}

\introduce*{}{e:S}{}

\step*{}{\text{Definition  }Subg(S,G,\cdot,e,^{-1}):=
\{B:ps(S)\;|\;B\leqslant G\}
\;:\;ps(ps((S))}{}

\step*{}{\text{Notation }K\;\text{ for }Subg(S,G,\cdot,e,^{-1})}{}

\assume*{}{u:Group(S,G,\cdot,e,^{-1})}{}

\step*{}{a_1:=Gr_1(S,G,\cdot,e,^{-1},u)\;:\;e\varepsilon G}{}

\step*{}{a_2:=Gr_2(S,G,\cdot,e,^{-1},u)\;:\;Assoc(S,G,\cdot)}{}

\step*{}{a_3:=term_{ \ref{theorem:axiom_corollary}.8}(S,G,\cdot,e,^{-1},u)\;:\;e^{-1}=e}{}

\introduce*{}{x:S\;|\;v:x\varepsilon G}{}

\step*{}{a_4:=eq\mhyphen cong_1(\lambda t:S.(t\cdot x), a_3)
\;:\;e^{-1}\cdot x=e\cdot x}{}

\step*{}{a_5:=Gr_5(S,G,\cdot,e,^{-1},u,x,v)\;:\;e\cdot x=x}{}

\step*{}{a_6:=eq\mhyphen trans_1(a_4,a_5)
\;:\;e^{-1}\cdot x=x}{}

\step*{}{a_7:=eq\mhyphen cong_1(\lambda t:S.(t\cdot e), a_6)
\;:\;e^{-1}\cdot x\cdot e=x\cdot e}{}

\step*{}{a_8:=Gr_4(S,G,\cdot,e,^{-1},u,x,v)\;:\;x\cdot e=x}{}

\step*{}{a_9:=eq\mhyphen trans_1(a_7,a_8)
\;:\;e^{-1}\cdot x\cdot e=x}{}

\step*{}{a_{10}:=eq\mhyphen sym(a_9)
\;:\;x=e^{-1}\cdot x\cdot e}{}

\step*{}{a_{11}:=\wedge(a_1,a_9)
\;:\;e\varepsilon G\wedge x=e^{-1}\cdot x\cdot e}{}

\step*{}{a_{12}:=\exists_1(\lambda t:S.(t\varepsilon G\wedge x=t^{-1}\cdot x\cdot t),e,a_{11)}
 \;:\; R_cxx}{}

\conclude*{}{a_{13}:=\lambda x:S.\lambda v:x\varepsilon G.a_{12}
\;:\;refl(S,G,R_c)}{}

\introduce*{}{x:S\;|\;v_1:x\varepsilon G\;|\;y:S\;|\;v_2:y\varepsilon G\;|\;w:R_cxy}{}

\step*{}{w
\;:\;(\exists g:S.(g\varepsilon G\wedge y=g^{-1}\cdot x\cdot g))}{}

\introduce*{}{g:S\;|\;r:g\varepsilon G\wedge y=g^{-1}\cdot x\cdot g}{}

\step*{}{a_{14}:=\wedge_1(r)
\;:\;g\varepsilon G}{}

\step*{}{a_{15}:=\wedge_2(r)
\;:\;y=g^{-1}\cdot x\cdot g}{}

\step*{}{a_{16}:=Gr_3(S,G,\cdot,e,^{-1},u,g,a_{14})\;:\;g^{-1}\varepsilon G}{}

\step*{}{a_{17}:=eq\mhyphen cong_1(\lambda t:S.(g\cdot t), a_{15})
\;:\;g\cdot y=g\cdot(g^{-1}\cdot x\cdot g)}{}

\step*{}{a_{18}:=Gr_9(S,G,\cdot,e,^{-1},u,x,g,v_1,a_{14})\;:\;(x\cdot g)\;\varepsilon\; G}{}

\step*{}{a_{19}:=a_2ga_{14}(g^{-1})a_{16}(x\cdot g)a_{18}
\;:\;(g\cdot g^{-1})\cdot (x\cdot g)=g\cdot(g^{-1}\cdot (x\cdot g))}{}

\step*{}{a_{20}:=a_2(g^{-1})a_{16}xv_1ga_{14}
\;:\;g^{-1}\cdot x\cdot g= g^{-1}\cdot (x\cdot g)}{}

\step*{}{a_{21}:=eq\mhyphen cong_1(\lambda t:S.(g\cdot t),a_{20})
\;:\;g\cdot (g^{-1}\cdot x\cdot g)=g\cdot(g^{-1}\cdot (x\cdot g))}{}

\step*{}{a_{22}:=Gr_7(S,G,\cdot,e,^{-1},u,g,a_{14})\;:\;g\cdot g^{-1}=e}{}

\step*{}{a_{23}:=eq\mhyphen cong_1(\lambda t:S.(t\cdot (x\cdot g)), a_{22})
\;:\;g\cdot g^{-1}\cdot (x\cdot g)=e\cdot (x\cdot g)}{}

\step*{}{a_{24}:=Gr_5(S,G,\cdot,e,^{-1},u, (x\cdot g),a_{18})\;:\;e\cdot (x\cdot g)=x\cdot g}{}

\step*{}{a_{25}:=eq\mhyphen trans_1(eq\mhyphen trans_1(eq\mhyphen trans_2(eq\mhyphen trans_1(a_{17},a_{21}), a_{19}),a_{23}),a_{24})
\;:\;g\cdot y=x\cdot g}{}

\step*{}{a_{26}:=eq\mhyphen cong_1(\lambda t:S.(t\cdot g^{-1}), a_{25})
\;:\;g\cdot y\cdot g^{-1}=x\cdot g\cdot g^{-1}}{}

\step*{}{a_{27}:=a_2xv_1ga_{14}(g^{-1})a_{16}
\;:\;x\cdot g\cdot g^{-1}=x\cdot(g\cdot g^{-1})}{}

\step*{}{a_{28}:=eq\mhyphen cong_1(\lambda t:S.(x\cdot t), a_{22})
\;:\;x\cdot(g\cdot g^{-1})=x\cdot e}{}

\step*{}{a_{29}:=Gr_4(S,G,\cdot,e,^{-1},u,x,v_1)\;:\;x\cdot e=x}{}

\step*{}{a_{30}:=term_{ \ref{theorem:axiom_corollary}.6}(S,G,\cdot,e,^{-1}, u,g,a_{14})\;:\;(g^{-1})^{-1}=g}{}

\step*{}{a_{31}:=eq\mhyphen subs_2(\lambda t:S.(t\cdot y\cdot g^{-1}=x\cdot g\cdot g^{-1}), a_{30},a_{26})
\;:\;(g^{-1})^{-1}\cdot y\cdot g^{-1}=x\cdot g\cdot g^{-1}}{}

\step*{}{a_{32}:=eq\mhyphen trans_1(eq\mhyphen trans_1(eq\mhyphen trans_1(a_{31}, a_{27}), a_{28}),a_{29})
\;:\;(g^{-1})^{-1}\cdot y\cdot g^{-1}=x}{}

\step*{}{a_{33}:=eq\mhyphen sym(a_{32})
\;:\;x=(g^{-1})^{-1}\cdot y\cdot g^{-1}}{}

\step*{}{a_{34}:=\wedge(a_{16},a_{33})
\;:\;g^{-1}\varepsilon G\wedge x=(g^{-1})^{-1}\cdot y\cdot g^{-1}}{}

\step*{}{a_{35}:=\exists_1(\lambda t:S.(t\varepsilon G\wedge x=t^{-1}\cdot y\cdot t),g^{-1},a_{34})
 \;:\; R_cyx}{}

\conclude*{}{a_{36}:=\exists_3(w,a_{35})
 \;:\; R_cyx}{}

\conclude*{}{a_{37}:=
\lambda x:S.\lambda v_1:x\varepsilon G.\lambda y:S.\lambda v_2:y\varepsilon G.\lambda w:R_cxy.a_{36}
 \;:\;sym(S,G,R_c)}{}

\introduce*{}{x:S\;|\;v_1:x\varepsilon G\;|\;y:S\;|\;v_2:y\varepsilon G\;|\;z:S\;|\;v_3:z\varepsilon G\;|\;w_1:R_cxy\;|\;w_2:R_cyz}{}

\step*{}{w_1
\;:\;(\exists g:S.(g\varepsilon G\wedge y=g^{-1}\cdot x\cdot g))}{}

\introduce*{}{g:S\;|\;r_1:g\varepsilon G\wedge y=g^{-1}\cdot x\cdot g}{}

\step*{}{a_{38}:=\wedge_1(r_1)
\;:\;g\varepsilon G}{}

\step*{}{a_{39}:=\wedge_2(r_1)
\;:\;y=g^{-1}\cdot x\cdot g}{}

\step*{}{a_{40}:=Gr_3(S,G,\cdot,e,^{-1},u,g,a_{38})\;:\;g^{-1}\varepsilon G}{}

\step*{}{w_2
\;:\;(\exists h:S.(h\varepsilon G\wedge z=h^{-1}\cdot y\cdot h))}{}

\introduce*{}{h:S\;|\;r_2:h\varepsilon G\wedge z=h^{-1}\cdot y\cdot h}{}

\step*{}{a_{41}:=\wedge_1(r_2)
\;:\;h\varepsilon G}{}

\step*{}{a_{42}:=\wedge_2(r_2)
\;:\;z=h^{-1}\cdot y\cdot h}{}

\step*{}{a_{43}:=Gr_3(S,G,\cdot,e,^{-1},u,h,a_{41})\;:\;h^{-1}\varepsilon G}{}

\step*{}{a_{44}:=term_{ \ref{theorem:axiom_corollary}.5}(S,G,\cdot,e,^{-1}, u,g,h,a_{38},a_{41})\;:\;(g\cdot h)^{-1}=h^{-1}\cdot g^{-1}}{}

\step*{}{a_{45}:=eq\mhyphen cong_2(\lambda t:S.(t\cdot x), a_{44}) \;:\;h^{-1}\cdot g^{-1}\cdot x=(g\cdot h)^{-1}\cdot x}{}

\step*{}{a_{46}:= a_2(h^{-1})a_{43}(g^{-1}) a_{40}xv_1
\;:\;h^{-1}\cdot g^{-1}\cdot x=h^{-1}\cdot(g^{-1}\cdot x)}{}

\step*{}{a_{47}:= eq\mhyphen trans_3(a_{46},a_{45})
\;:\;h^{-1}\cdot(g^{-1}\cdot x)=(g\cdot h)^{-1}\cdot x}{}

\step*{}{a_{48}:=Gr_9(S,G,\cdot,e,^{-1},u,g^{-1},x,a_{40},v_1)\;:\;(g^{-1}\cdot x)\;\varepsilon \;G}{}

\step*{}{a_{49}:= a_2(h^{-1})a_{43}(g^{-1}\cdot x)a_{48}ga_{38}
\;:\;h^{-1}\cdot(g^{-1}\cdot x)\cdot g=h^{-1}\cdot (g^{-1}\cdot x\cdot g)}{}

\step*{}{a_{50}:=eq\mhyphen cong_2(\lambda t:S.(t\cdot h), a_{49}) \;:\;h^{-1}\cdot (g^{-1}\cdot x\cdot g)\cdot h= h^{-1}\cdot (g^{-1}\cdot x)\cdot g\cdot h}{}

\step*{}{a_{51}:=eq\mhyphen cong_1(\lambda t:S.(t\cdot g\cdot h), a_{47}) \;:\;h^{-1}\cdot (g^{-1}\cdot x)\cdot g\cdot h=(g\cdot h)^{-1}\cdot x\cdot g\cdot h}{}

\step*{}{a_{52}:=Gr_9(S,G,\cdot,e,^{-1},u,g,h,a_{38},a_{41})\;:\;(g\cdot h)\;\varepsilon \;G}{}

\step*{}{a_{53}:=Gr_3(S,G,\cdot,e,^{-1},u,(g\cdot h), a_{52})\;:\;(g\cdot h)^{-1}\;\varepsilon \;G}{}

\step*{}{a_{54}:=Gr_9(S,G,\cdot,e,^{-1},u,(g\cdot h)^{-1},x,a_{53},v_1)\;:\;((g\cdot h)^{-1}\cdot x)\;\varepsilon\; G}{}

\step*{}{a_{55}:= a_2((g\cdot h)^{-1}\cdot x)a_{54}ga_{38}ha_{41}
\;:\;(g\cdot h)^{-1}\cdot x\cdot g\cdot h=(g\cdot h)^{-1}\cdot x\cdot (g\cdot h)}{}

\step*{}{a_{56}:=eq\mhyphen subs_1(\lambda t:S.(z=h^{-1}\cdot t\cdot h), a_{39},a_{42})
\;:\;z=h^{-1}\cdot (g^{-1}\cdot
x\cdot g)\cdot h}{}

\step*{}{a_{57}:=eq\mhyphen trans_1(eq\mhyphen trans_1(eq\mhyphen trans_1(a_{56},a_{50}), a_{51}),a_{55})
\;:\;z=(g\cdot h)^{-1}\cdot 
x\cdot (g\cdot h)}{}

\step*{}{a_{58}:=\wedge(a_{52},a_{57})
\;:\;(g\cdot h)\;\varepsilon \;G\wedge z=(g\cdot h)^{-1}\cdot 
x\cdot (g\cdot h)}{}

\step*{}{a_{59}:=\exists_1(\lambda t:S.(t\varepsilon G\wedge z=t^{-1}\cdot x\cdot t),(g\cdot h),a_{58})
 \;:\; R_cxz}{}

\conclude*{}{a_{60}:=\exists_3(w_2,a_{59})
 \;:\; R_cxz}{}
 
\conclude*{}{a_{61}:=\exists_3(w_1,a_{60})
 \;:\; R_cxz}{}

\conclude*{}{a_{62}:=
\lambda x:S.\lambda v_1:x\varepsilon G.\lambda y:S.\lambda v_2:y\varepsilon G.\lambda z:S.\lambda v_3:z\varepsilon G.\lambda w_1:R_cxy.\lambda w_2:R_cyz.a_{61}
 \;:\;trans(S,G,R_c)}{}

\step*{}{\boldsymbol {term_{\ref{prop:conjugate}.1}}(S,G,\cdot,e,^{-1},u):= 
\wedge(\wedge(a_{13},a_{37}), a_{62})
\;:\;\boldsymbol{equiv\mhyphen rel(S,G,R_c)}\hspace{2cm}\textsf{1) is proven}}{}

\introduce*{}{B:ps(S)\;|\;v:B\varepsilon M}{}

\step*{}{v:B\subseteq G}{}

\step*{}{a_{63}:=term_{ \ref{lemma:mult}.1}(S,G,\cdot,e,^{-1},u,B,v)
\;:\;e\cdot B=B}{}

\step*{}{a_{64}:=term_{ \ref{lemma:mult}.2}(S,G,\cdot,e,^{-1},u,B,v)
\;:\;B\cdot e=B}{}

\step*{}{a_{65}:=eq\mhyphen cong_1(\lambda t:S.(t\cdot B), a_3)
\;:\;e^{-1}\cdot B=e\cdot B}{}

\step*{}{a_{66}:=eq\mhyphen trans_1(a_{65},a_{63})\;:\;e^{-1}\cdot B=B}{}
\step*{}{a_{67}:=eq\mhyphen cong_1(\lambda Z:ps(S).(Z\cdot e),a_{66})\;:\;e^{-1}\cdot B\cdot e=B\cdot e}{}

\step*{}{a_{68}:=eq\mhyphen trans_1(a_{67},a_{64})\;:\;e^{-1}\cdot B\cdot e=B}{}

\step*{}{a_{69}:=eq\mhyphen sym(a_{68})\;:\;B=e^{-1}\cdot B\cdot e}{}

\step*{}{a_{70}:=\wedge(a_1,a_{69})
\;:\;e\varepsilon G\wedge B=e^{-1}\cdot B\cdot e}{}

\step*{}{a_{71}:=\exists_1(\lambda t:S.(t\varepsilon G\wedge B=t^{-1}\cdot B\cdot t),e,a_{70})
 \;:\; R_sBB}{}

\conclude*{}{a_{72}:=\lambda B:ps(S).\lambda v:B\varepsilon M.a_{71}\;:\;refl(ps(S),M,R_s)}{}

\introduce*{}{B:ps(S)\;|\;v_1:B\varepsilon M\;|\;C:ps(S)\;|\;v_2:C\varepsilon M\;|\;w:R_sBC}{}

\step*{}{v_1
\;:\;B\subseteq G}{}

\step*{}{w\;:\; (\exists g:S.(g\varepsilon G\wedge C=g^{-1}\cdot B\cdot g))}{}

\introduce*{}{g:S\;|\;r:g\varepsilon G\wedge C=g^{-1}\cdot B\cdot g}{}

\step*{}{a_{73}:=\wedge_1(r)
\;:\;g\varepsilon G}{}

\step*{}{a_{74}:=\wedge_2(r)
\;:\;C=g^{-1}\cdot B\cdot g}{}

\step*{}{a_{75}:=Gr_3(S,G,\cdot,e,^{-1},u,g,a_{73})\;:\;g^{-1}\varepsilon G}{}

\step*{}{a_{76}:=eq\mhyphen cong_1(\lambda Z:ps(S).(g\cdot Z),a_{74})
\;:\;g\cdot C=g\cdot(g^{-1}\cdot B\cdot g)}{}

\step*{}{a_{77}:=term_{ \ref{lemma:mult}.5}(S,G,\cdot,e,^{-1},u,B,v_1,g,a_{73})\;:\;(B\cdot g)\;\subseteq\; G}{}

\step*{}{a_{78}:=term_{ \ref{lemma:set-assoc}.3}(S,G,\cdot,e,^{-1},u,(B\cdot g),a_{77},g,g^{-1},a_{73}, a_{75})
\;:\; (g\cdot g^{-1})\cdot (B\cdot g)=g\cdot(g^{-1}\cdot (B\cdot g))}{}

\step*{}{a_{79}:= term_{ \ref{lemma:set-assoc}.2}(S,G,\cdot,e,^{-1},u,B,v_1,g^{-1} ,g,a_{75}, a_{73})
\;:\;g^{-1}\cdot B\cdot g= g^{-1}\cdot (B\cdot g)}{}

\step*{}{a_{80}:=eq\mhyphen cong_1(\lambda Z:ps(S).(g\cdot Z),a_{79})
\;:\;g\cdot (g^{-1}\cdot B\cdot g)=g\cdot(g^{-1}\cdot (B\cdot g))}{}

\step*{}{a_{81}:=Gr_7(S,G,\cdot,e,^{-1},u,g,a_{73})\;:\;g\cdot g^{-1}=e}{}

\step*{}{a_{82}:=eq\mhyphen cong_1(\lambda t:S.(t\cdot (B\cdot g)),a_{81})
\;:\;g\cdot g^{-1}\cdot (B\cdot g)=e\cdot (B\cdot g)}{}

\step*{}{a_{83}:=term_{ \ref{lemma:mult}.1}(S,G,\cdot,e,^{-1},u,(B\cdot g), a_{77})
\;:\;e\cdot (B\cdot g)=B\cdot g}{}

\step*{}{a_{84}:=eq\mhyphen trans_1(eq\mhyphen trans_1(eq\mhyphen trans_2(eq\mhyphen trans_1(a_{76},a_{80}), a_{78}),a_{82}),a_{83})
\;:\;g\cdot C=B\cdot g}{}

\step*{}{a_{85}:=eq\mhyphen cong_1(\lambda t:S.(t\cdot g^{-1}),a_{84})
\;:\;g\cdot C\cdot g^{-1}=B\cdot g\cdot g^{-1}}{}

\step*{}{a_{86}:=term_{ \ref{lemma:set-assoc}.1}(S,G,\cdot,e,^{-1},u,B,v_1,g,g^{-1} ,a_{73}, a_{75})
\;:\;B\cdot g\cdot g^{-1}=B\cdot(g\cdot g^{-1})}{}

\step*{}{a_{87}:=eq\mhyphen cong_1(\lambda t:S.(B\cdot t), a_{81})
\;:\;B\cdot(g\cdot g^{-1})=B\cdot e}{}

\step*{}{a_{88}:=term_{ \ref{lemma:mult}.2}(S,G,\cdot,e,^{-1},u,B,v_1)\;:\;B\cdot e=B}{}

\step*{}{a_{89}:=term_{ \ref{theorem:axiom_corollary}.6}(S,G,\cdot,e,^{-1}, u,g,a_{73})\;:\;(g^{-1})^{-1}=g}{}

\step*{}{a_{90}:=eq\mhyphen subs_2(\lambda t:S.(t\cdot C\cdot g^{-1}=B\cdot g\cdot g^{-1}), a_{89},a_{85})
\;:\;(g^{-1})^{-1}\cdot C\cdot g^{-1}=B\cdot g\cdot g^{-1}}{}

\step*{}{a_{91}:=eq\mhyphen trans_1(eq\mhyphen trans_1(eq\mhyphen trans_1(a_{90},a_{86}), a_{87}),a_{88})
\;:\;(g^{-1})^{-1}\cdot C\cdot g^{-1}=B}{}

\step*{}{a_{92}:=eq\mhyphen sym(a_{91})
\;:\;B=(g^{-1})^{-1}\cdot C\cdot g^{-1}}{}

\step*{}{a_{93}:=\wedge(a_{75},a_{92})
\;:\;g^{-1}\varepsilon G\wedge B=(g^{-1})^{-1}\cdot C\cdot g^{-1}}{}

\step*{}{a_{94}:=\exists_1(\lambda t:S.(t\varepsilon G\wedge B=t^{-1}\cdot C\cdot t),g^{-1},a_{93}
) \;:\; R_sCB}{}

\conclude*{}{a_{95}:=\exists_3(w,a_{94})
 \;:\; R_sCB}{}

\conclude*{}{a_{96}:=
\lambda B:ps(S).\lambda v_1:B\varepsilon M.\lambda C:ps(S).\lambda v_2:C\varepsilon M.\lambda w:R_sBC.a_{95}
 \;:\;sym(ps(S),M,R_s)}{}

\introduce*{}{B:ps(S)\;|\;v_1:B\varepsilon M\;|\;C:ps(S)\;|\;v_2:C\varepsilon M\;|\;D:ps(S)\;|\;v_3:D\varepsilon M\;|\;w_1:R_sBC\;|\;w_2:R_sCD}{}

\step*{}{v_1:B\subseteq G}{}

\step*{}{w_1
\;:\;(\exists g:S.(g\varepsilon G\wedge C=g^{-1}\cdot B\cdot g))}{}

\introduce*{}{g:S\;|\;r_1:g\varepsilon G\wedge C=g^{-1}\cdot B\cdot g}{}

\step*{}{a_{97}:=\wedge_1(r_1)
\;:\;g\varepsilon G}{}

\step*{}{a_{98}:=\wedge_2(r_1)
\;:\;C=g^{-1}\cdot B\cdot g}{}

\step*{}{a_{99}:=Gr_3(S,G,\cdot,e,^{-1},u,g,a_{97})\;:\;g^{-1}\varepsilon G}{}

\step*{}{w_2
\;:\;(\exists h:S.(h\varepsilon G\wedge D=h^{-1}\cdot C\cdot h))}{}

\introduce*{}{h:S\;|\;r_2:h\varepsilon G\wedge D=h^{-1}\cdot C\cdot h}{}

\step*{}{a_{100}:=\wedge_1(r_2)
\;:\;h\varepsilon G}{}

\step*{}{a_{101}:=\wedge_2(r_2)
\;:\;D=h^{-1}\cdot C\cdot h}{}

\step*{}{a_{102}:=Gr_3(S,G,\cdot,e,^{-1},u,h,a_{100})\;:\;h^{-1}\varepsilon G}{}

\step*{}{a_{103}:=term_{ \ref{theorem:axiom_corollary}.5}(S,G,\cdot,e,^{-1}, u,g,h,a_{97},a_{100})\;:\;(g\cdot h)^{-1}=h^{-1}\cdot g^{-1}}{}

\step*{}{a_{104}:=eq\mhyphen cong_2(\lambda t:S.(t\cdot B), a_{103}) \;:\;h^{-1}\cdot g^{-1}\cdot B=(g\cdot h)^{-1}\cdot B}{}

\step*{}{a_{105}:= term_{ \ref{lemma:set-assoc}.3}(S,G,\cdot,e,^{-1},u,B,v_1,h^{-1} ,g^{-1},a_{102}, a_{99})
\;:\;h^{-1}\cdot g^{-1}\cdot B=h^{-1}\cdot(g^{-1}\cdot B)}{}

\step*{}{a_{106}:= eq\mhyphen trans_3(a_{105},a_{104})
\;:\;h^{-1}\cdot(g^{-1}\cdot B)=(g\cdot h)^{-1}\cdot B}{}

\step*{}{a_{107}:=
term_{ \ref{lemma:mult}.4}( S,G,\cdot,e,^{-1},u,B,v_1, g^{-1},a_{99})
\;:\;(g^{-1}\cdot B)\subseteq G}{}

\step*{}{a_{108}:= term_{ \ref{lemma:set-assoc}.2}(S,G,\cdot,e,^{-1},u,(g^{-1}\cdot B),a_{107},h^{-1}, g, a_{102}, a_{97})
\\\quad\quad
\;:\;h^{-1}\cdot(g^{-1}\cdot B)\cdot g=h^{-1}\cdot (g^{-1}\cdot B\cdot g)}{}

\step*{}{a_{109}:=eq\mhyphen cong_2(\lambda Z:ps(S).(Z\cdot h), a_{108}) \;:\;h^{-1}\cdot (g^{-1}\cdot B\cdot g)\cdot h= h^{-1}\cdot (g^{-1}\cdot B)\cdot g\cdot h}{}

\step*{}{a_{110}:=eq\mhyphen cong_1(\lambda Z:ps(S).(Z\cdot g\cdot h), a_{106}) \;:\;h^{-1}\cdot (g^{-1}\cdot B)\cdot g\cdot h=(g\cdot h)^{-1}\cdot B\cdot g\cdot h}{}

\step*{}{a_{111}:=Gr_9(S,G,\cdot,e,^{-1},u,g,h,a_{97},a_{100})\;:\;(g\cdot h)\;\varepsilon\;G}{}

\step*{}{a_{112}:=Gr_3(S,G,\cdot,e,^{-1},u,(g\cdot h), a_{111})\;:\;(g\cdot h)^{-1}\varepsilon G}{}

\step*{}{a_{113}:=term_{ \ref{lemma:mult}.4}(S,G,\cdot,e,^{-1},u,B,v_1,(g\cdot h)^{-1},a_{112})
\;:\;((g\cdot h)^{-1}\cdot B)\subseteq G}{}

\step*{}{a_{114}:= term_{ \ref{lemma:set-assoc}.1}(S,G,\cdot,e,^{-1},u,((g\cdot h)^{-1}\cdot B),a_{113},g,h,a_{97}, a_{100})
\\\quad\quad
\;:\;(g\cdot h)^{-1}\cdot B\cdot g\cdot h=(g\cdot h)^{-1}\cdot B\cdot (g\cdot h)}{}

\step*{}{a_{115}:=eq\mhyphen subs_1(\lambda Z:ps(S).(D=h^{-1}\cdot Z\cdot h), a_{98}, a_{101})
\;:\;D=h^{-1}\cdot (g^{-1}\cdot
B\cdot g)\cdot h}{}

\step*{}{a_{116}:=eq\mhyphen trans_1(eq\mhyphen trans_1(eq\mhyphen trans_1(a_{115},a_{109}), a_{110}),a_{114})
\;:\;D=(g\cdot h)^{-1}\cdot 
B\cdot (g\cdot h)}{}

\step*{}{a_{117}:=\wedge(a_{111},a_{116})
\;:\;(g\cdot h)\;\varepsilon \;G\wedge D=(g\cdot h)^{-1}\cdot 
B\cdot (g\cdot h)}{}

\step*{}{a_{118}:=\exists_1(\lambda t:S.(t\varepsilon G\wedge D=t^{-1}\cdot B\cdot t),(g\cdot h),a_{117})
 \;:\; R_sBD}{}

\conclude*{}{a_{119}:=\exists_3(w_2,a_{118})
 \;:\; R_sBD}{}
 
\conclude*{}{a_{120}:=\exists_3(w_1,a_{119})
 \;:\; R_sBD}{}

\conclude*{}{a_{121}:=
\lambda B:ps(S).\lambda v_1:B\varepsilon M.\lambda C:ps(S).\lambda v_2:C\varepsilon M.\lambda D:ps(S).\lambda v_3:D\varepsilon M.\lambda w_1:R_sBC.\lambda w_2:R_sCD.a_{120}
\\\hspace{10cm}
 \;:\;trans(ps(S),M,R_s)}{}

\step*{}{\boldsymbol {term_{\ref{prop:conjugate}.2}}(S,G,\cdot,e,^{-1},u):= 
\wedge(\wedge(a_{72},a_{96}), a_{121})
\;:\;\boldsymbol{equiv\mhyphen rel(ps(S),M,R_s)}\hspace{2cm}\textsf{2) is proven}}{}

\introduce*{}{B:ps(S)\;|\;v:B\varepsilon K}{}

\step*{}{v:B\leqslant G}{}

\step*{}{a_{122}:= 
\wedge_1(\wedge_1 (\wedge_1(v)))\;:\;B\subseteq G}{}

\step*{}{a_{122}\;:\;B\varepsilon M}{}

\step*{}{a_{123}:= 
a_{72}Ba_{122}\;:\;R_sBB}{}

\conclude*{}{a_{124}:= 
\lambda B:ps(S).\lambda v:
B\varepsilon K.a_{122}\;:\;K\subseteq M}{}

\step*{}{a_{125}:= 
\lambda B:ps(S).\lambda v:
B\varepsilon K.a_{123}\;:\;refl(ps(S),K,R_s)}{}

\introduce*{}{B:ps(S)\;|\;v_1:B\varepsilon K\;|\;C:ps(S)\;|\;v_2:C\varepsilon K\;|\;w:R_sBC}{}

\step*{}{a_{126}:= 
a_{124}Bv_1\;:\;B\varepsilon M}{}

\step*{}{a_{127}:= 
a_{124}Cv_2\;:\;C\varepsilon M}{}

\step*{}{a_{128}:= 
a_{96}Ba_{126}Ca_{127}w\;:\;R_sCB}{}

\conclude*{}{a_{129}:= 
\lambda B:ps(S).\lambda v_1:B\varepsilon  K.
\lambda C:ps(S).\lambda v_2:C\varepsilon  K.\lambda w:R_sBC. a_{128}\;:\;sym(ps(S),K,R_s)}{}

\introduce*{}{B:ps(S)\;|\;v_1:B\varepsilon K\;|\;C:ps(S)\;|\;v_2:C\varepsilon K\;|\;D:ps(S)\;|\;v_3:D\varepsilon K \;|\;w_1:R_sBC\;|\;w_2:R_sCD}{}

\step*{}{a_{130}:= 
a_{124}Bv_1\;:\;B\varepsilon M}{}

\step*{}{a_{131}:= 
a_{124}Cv_2\;:\;C\varepsilon M}{}

\step*{}{a_{132}:= 
a_{124}Dv_3\;:\;D\varepsilon M}{}

\step*{}{a_{133}:= 
a_{121}Ba_{130}Ca_{131}Da_{132}w_1w_2\;:\;R_sBD}{}

\conclude*{}{a_{134}:= 
\lambda B:ps(S).\lambda v_1:B\varepsilon  K.
\lambda C:ps(S).\lambda v_2:C\varepsilon  K.\lambda D:ps(S).\lambda v_3:D\varepsilon  K.\lambda w_1:R_sBC.\lambda w_2:R_sCD.
a_{133}
\\\quad\quad
\;:\;trans(ps(S),K,R_s)}{}

\step*{}{\boldsymbol {term_{\ref{prop:conjugate}.3}}(S,G,\cdot,e,^{-1},u):= 
\wedge(\wedge(a_{125},a_{129}), a_{134})
\;:\;\boldsymbol{equiv\mhyphen rel(ps(S),K,R_s)}\hspace{2cm}\textsf{3) is proven}}{}
\end{flagderiv}

\subsection*{Proof of Proposition \ref{prop:normal-triv}}

1)
\vspace{-0.2cm}

\begin{flagderiv}
\introduce*{}{S: *_s\;|\;G:ps(S)\;|\;\cdot: S\rightarrow S\rightarrow S\;|\;e:S\;|\;^{-1}:S\rightarrow S\;|\;u:Group(S,G,\cdot,e,^{-1})}{}

\step*{}{a_1:= term_{ \ref{lemma:triv-subgroups}.1}(S,G,\cdot,e,^{-1},u)
\;:\;G\leqslant G}{}

\introduce*{}{g:S\;|\;v:g\varepsilon G\;|\;h:S\;|\;w:h\varepsilon G}{}

\step*{}{a_2:=Gr_3(S,G,\cdot,e,^{-1},u,g,v)
\;:\;g^{-1}\;\varepsilon\; G}{}

\step*{}{a_3:=Gr_9(S,G,\cdot,e,^{-1},u,g^{-1},h,a_2,w)
\;:\;(g^{-1}\cdot h)\;\varepsilon\; G}{}

\step*{}{a_4:=Gr_9(S,G,\cdot,e,^{-1},u,(g^{-1}\cdot h),g,a_3,v)
\;:\;(g^{-1}\cdot h\cdot g)\;\varepsilon\; G}{}

\conclude*{}{a_5:=
\lambda g:S.\lambda v:g\varepsilon G.\lambda h:S.\lambda w:h\varepsilon G.a_4
\;:\;\forall g:S.[g\varepsilon G \Rightarrow \forall h:S.(h\varepsilon G \Rightarrow 
(g^{-1}\cdot h\cdot g)\;\varepsilon\; G)]}{}

\step*{}{\boldsymbol {term_{\ref{prop:normal-triv}.1}}(S,G,\cdot,e,^{-1},u):= 
\wedge(a_1,a_5)
\;:\;\boldsymbol{G\triangleleft G}}{}
\end{flagderiv}

2)
\vspace{-0.2cm}

\begin{flagderiv}
\introduce*{}{S: *_s\;|\;G:ps(S)\;|\;\cdot: S\rightarrow S\rightarrow S\;|\;e:S\;|\;^{-1}:S\rightarrow S\;|\;u:Group(S,G,\cdot,e,^{-1})}{}

\step*{}{\text{Notation }H:=\{x:S\;|\;x=e\}
\;:\;ps(S)}{}

\step*{}{a_1:= term_{ \ref{lemma:triv-subgroups}.2}(S,G,\cdot,e,^{-1},u)
\;:\;H\leqslant G}{}

\introduce*{}{g:S\;|\;v:g\varepsilon G\;|\;h:S\;|\;w:h\varepsilon H}{}

\step*{}{w\;:\;h=e}{}

\step*{}{a_2:=Gr_3(S,G,\cdot,e,^{-1},u,g,v)
\;:\;g^{-1}\;\varepsilon\; G}{}

\step*{}{a_3:=Gr_4(S,G,\cdot,e,^{-1},u,g^{-1},a_2)
\;:\;g^{-1}\cdot e=g^{-1}}{}

\step*{}{a_4:=Gr_8(S,G,\cdot,e,^{-1},u,g,v)
\;:\;g^{-1}\cdot g=e}{}

\step*{}{a_5:=eq\mhyphen cong_1(\lambda t:S.(g^{-1}\cdot t\cdot g),w)
\;:\;g^{-1}\cdot h\cdot g=g^{-1}\cdot e\cdot g}{}

\step*{}{a_6:=eq\mhyphen cong_1(\lambda t:S.(t\cdot g), a_3)
\;:\;g^{-1}\cdot e\cdot g=g^{-1}\cdot g}{}

\step*{}{a_7:=eq\mhyphen trans_1(eq\mhyphen trans_1(a_5,a_6),a_4)
\;:\;g^{-1}\cdot h\cdot g=e}{}

\step*{}{a_7\;:\;(g^{-1}\cdot h\cdot g)\;\varepsilon\;H}{}

\conclude*{}{a_8:=
\lambda g:S.\lambda v:g\varepsilon G.\lambda h:S.\lambda w:h\varepsilon H.a_7
\;:\;\forall g:S.[g\varepsilon G \Rightarrow \forall h:S.(h\varepsilon H \Rightarrow 
(g^{-1}\cdot h\cdot g)\;\varepsilon\; H)]}{}

\step*{}{\boldsymbol {term_{\ref{prop:normal-triv}.2}}(S,G,\cdot,e,^{-1},u):= 
\wedge(a_1,a_8)
\;:\;\boldsymbol{H\triangleleft G}}{}
\end{flagderiv}

\subsection*{Proof of Proposition \ref{prop:normal-abel}}

\begin{flagderiv}
\introduce*{}{S: *_s\;|\;G:ps(S)\;|\;\cdot: S\rightarrow S\rightarrow S\;|\;e:S\;|\;^{-1}:S\rightarrow S\;|\;u:Abelian\mhyphen group(S,G,\cdot,e,^{-1})}{}

\step*{}{a_1:=\wedge_1(u)
\;:\;Group(S,G,\cdot,e,^{-1})}{}

\step*{}{a_2:=\wedge_2(u)
\;:\;Commut(S,G,\cdot)}{}

\step*{}{a_3:=Gr_2(S,G,\cdot,e,^{-1},a_1)\;:\; Assoc(S,G,\cdot)}{}

\introduce*{}{H:ps(S)\;|\;v:H\leqslant G}{}

\step*{}{a_4:=\wedge_1(\wedge_1(\wedge_1(v)))
\;:\;H\subseteq G}{}

\introduce*{}{g:S\;|\;w_1:g\varepsilon G\;|\;h:S\;|\;w_2:h\varepsilon H}{}

\step*{}{a_5:=Gr_3(S,G,\cdot,e,^{-1},a_1,g,w_1)\;:\; g^{-1}\varepsilon G}{}

\step*{}{a_6:=a_4hw_2\;:\;h\varepsilon G}{}

\step*{}{a_7:= a_2(g^{-1})a_5ha_6\;:\;g^{-1}\cdot h=h\cdot g^{-1}}{}

\step*{}{a_8:=eq\mhyphen cong_1(\lambda t:S.(t\cdot g), a_7)
\;:\;g^{-1}\cdot h\cdot g=h\cdot g^{-1}\cdot g
}{}

\step*{}{a_9:= a_3ha_6(g^{-1})a_5gw_1\;:\;h\cdot g^{-1}\cdot g=h\cdot (g^{-1}\cdot g)}{}

\step*{}{a_{10}:=Gr_8(S,G,\cdot,e,^{-1},a_1,g,w_1)\;:\; g^{-1}\cdot g=e}{}

\step*{}{a_{11}:=eq\mhyphen cong_1(\lambda t:S.(h\cdot t), a_{10})
\;:\;h\cdot (g^{-1}\cdot g)=h\cdot e}{}

\step*{}{a_{12}:=Gr_4(S,G,\cdot,e,^{-1},a_1,h,a_6)\;:\; h\cdot e=h}{}

\step*{}{a_{13}:=eq\mhyphen trans_1(eq\mhyphen trans_1(eq\mhyphen trans_1(a_8, a_9),a_{11}),a_{12})
\;:\;g^{-1}\cdot h\cdot g=h}{}

\step*{}{a_{14}:=eq\mhyphen subs_2(\lambda t:S.t\varepsilon H,a_{13},w_2)
\;:\;(g^{-1}\cdot h\cdot g)\;\varepsilon \;H}{}

\conclude*{}{a_{15}:=
\lambda g:S.\lambda w_1:g\varepsilon G.\lambda h:S.\lambda w_2:h\varepsilon H.a_{14}
\;:\;\forall g:S.[g\varepsilon G\Rightarrow \forall h:S.(h\varepsilon H\Rightarrow 
(g^{-1}\cdot h\cdot g)\;\varepsilon \;H)]}{}

\step*{}{\boldsymbol {term_{\ref{prop:normal-abel}}}(S,G,\cdot,e,^{-1},u,H,v):= 
\wedge(v,a_{15})
\;:\;\boldsymbol{H\triangleleft G}}{}
\end{flagderiv}

\newpage
\subsection*{Proof of Proposition \ref{prop:normal-inters}}

\begin{flagderiv}
\introduce*{}{S: *_s\;|\;G:ps(S)\;|\;\cdot: S\rightarrow S\rightarrow S\;|\;e:S\;|\;^{-1}:S\rightarrow S\;|\;u:Group(S,G,\cdot,e,^{-1})}{}

\step*{}{a_1:=Gr_1(S,G,\cdot,e,^{-1},u)\;:\; e\varepsilon G}{}

\step*{}{a_2:=Gr_2(S,G,\cdot,e,^{-1},u)\;:\; Assoc(S,G,\cdot)}{}

\introduce*{}{H:ps(S)\;|\;v:H\leqslant G}{}

\step*{}{U:=\{Z:ps(S)\;|\;\exists x:S.(x\varepsilon G\wedge Z=x^{-1}\cdot H\cdot x)\}\;:\;ps(ps(S))}{}

\step*{}{N:=\cap U: ps(S)}{}

\step*{}{a_3:=\wedge_1(\wedge_1( \wedge_1(v)))\;:\;H\subseteq G}{}

\step*{}{a_4:=\wedge_2( \wedge_1(v))\;:\;Closure_1(S,H,^{-1})}{}

\step*{}{a_5:=\wedge_2(v)\;:\;Closure_2(S,H,\cdot)}{}

\step*{}{a_6:=
eq\mhyphen refl\;:\;e^{-1}\cdot H\cdot e=e^{-1}\cdot H\cdot e}{}

\step*{}{a_7:=\wedge(a_1,a_6)
\;:\;e\varepsilon G
\wedge e^{-1}\cdot H\cdot e=e^{-1}\cdot H\cdot e}{}

\step*{}{a_8:=\exists_1(\lambda t:S.(t\varepsilon G\wedge e^{-1}\cdot H\cdot e=t^{-1}\cdot H\cdot t),e, a_7)\;:\;(e^{-1}\cdot H\cdot e)\;\varepsilon\;U}{}

\step*{}{a_9:=\exists_1(\lambda Z:ps(S).Z\varepsilon U,(e^{-1}\cdot H\cdot e), a_8)\;:\;(\exists Z:ps(S).Z\varepsilon U)}{}

\introduce*{}{Z:ps(S)\;|\;w:Z\;\varepsilon\;U}{}

\step*{}{w\;:\;(\exists x:S.(x\varepsilon G\wedge Z=x^{-1}\cdot H\cdot x))}{}

\introduce*{}{x:S\;|\;r:x\varepsilon G\wedge Z=x^{-1}\cdot H\cdot x}{}

\step*{}{a_{10}:=\wedge_1(r)\;:\;x\varepsilon G}{}

\step*{}{a_{11}:=\wedge_2(r)\;:\;Z=x^{-1}\cdot H\cdot x}{}

\step*{}{a_{12}:=
term_{\ref{lemma:coset}.6}(S,G,\cdot,e ,^{-1},u,H,v,x,a_{10})
\;:\;(x^{-1}\cdot H\cdot x)\leqslant G}{}

\step*{}{a_{13}:=
eq\mhyphen subs_2(\lambda X:ps(S).X\leqslant G,a_{11}, a_{12})\;:\;Z\leqslant G}{}

\conclude*{}{a_{14}:=
\exists_3(w,a_{13})\;:\;Z\leqslant G}{}

\conclude*{}{a_{15}:=
\lambda Z:ps(S).\lambda w:Z\varepsilon U.a_{14}
\;:\;[\forall Z:ps(S).(Z\varepsilon U\Rightarrow Z\leqslant G)]}{}

\step*{}{a_{16}:=
term_{\ref{lemma:triv-subgroups}.4}(S,G,\cdot,e ,^{-1},u,U,a_9,a_{15})
\;:\;\cap U\leqslant G}{}

\step*{}{a_{16}\;:\;N\leqslant G}{}

\introduce*{}{g:S\;|\;w_1:g\varepsilon G\;|\;h:S\;|\;w_2:h\varepsilon N}{}

\step*{}{a_{17}:=Gr_3(S,G,\cdot,e,^{-1},u,g,w_1)\;:\; g^{-1}\varepsilon G}{}

\introduce*{}{Z:ps(S)\;|\;r_1:Z\;\varepsilon\;U}{}

\step*{}{r_1\;:\;(\exists x:S.(x\varepsilon G\wedge Z=x^{-1}\cdot H\cdot x))}{}

\introduce*{}{x:S\;|\;r_2:x\varepsilon G\wedge Z=x^{-1}\cdot H\cdot x}{}

\step*{}{a_{18}:=\wedge_1(r_2)\;:\;x\varepsilon G}{}

\step*{}{a_{19}:=\wedge_2(r_2)\;:\;Z=x^{-1}\cdot H\cdot x}{}

\step*{}{\text{Notation }\;
y:=x\cdot g^{-1}\;:\;S}{}

\step*{}{a_{20}:=Gr_9(S,G,\cdot,e,^{-1},u,x,g^{-1},a_{18}, a_{17})\;:\;y\varepsilon G}{}

\step*{}{a_{21}:=Gr_3(S,G,\cdot,e,^{-1},u,y, a_{20}) \;:\;y^{-1}\;\varepsilon\; G}{}

\step*{}{\text{Notation }\;
X:=y^{-1}\cdot H\cdot y\;:\;ps(S)}{}

\step*{}{a_{22}:=eq\mhyphen refl
\;:\;X=y^{-1}\cdot H\cdot y}{}

\step*{}{a_{23}:=\wedge(a_{20},a_{22})
\;:\;y\varepsilon G
\wedge X=y^{-1}\cdot H\cdot y}{}

\step*{}{a_{24}:=\exists_1(\lambda t:S.(t\varepsilon G\wedge X=t^{-1}\cdot H\cdot t),y, a_{23})\;:\;X\varepsilon U}{}

\step*{}{a_{25}:=w_2Xa_{24}\;:\;h\varepsilon X}{}

\step*{}{a_{25}\;:\;h\;\varepsilon\;(y^{-1}\cdot H\cdot y)}{}

\step*{}{a_{26}:=Gr_8(S,G,\cdot,e,^{-1},u,g,w_1) \;:\;g^{-1}\cdot g=e}{}

\step*{}{a_{27}:=a_2xa_{18}g^{-1}a_{17}gw_1
 \;:\;(x\cdot g^{-1})\cdot g=x\cdot (g^{-1}\cdot g)}{}

\step*{}{a_{28}:=eq\mhyphen cong_1(\lambda t:S.(x\cdot t), a_{26})
\;:\;x\cdot (g^{-1}\cdot g)=x\cdot e}{}

\step*{}{a_{29}:=Gr_4(S,G,\cdot,e,^{-1},u,x,a_{18}) \;:\;x\cdot e=x}{}

\step*{}{a_{30}:=eq\mhyphen trans_1(eq\mhyphen trans_1(a_{27}, a_{28}),a_{29})
\;:\;y\cdot g=x}{}

\step*{}{a_{31}:=
term_{\ref{theorem:axiom_corollary}.5}(S,G,\cdot,e ,^{-1},u, y,g,a_{20},w_1)
\;:\;(y\cdot g)^{-1}=g^{-1}\cdot y^{-1}}{}

\step*{}{a_{32}:=eq\mhyphen cong_1(^{-1},a_{30})
\;:\;(y\cdot g)^{-1}=x^{-1}}{}

\step*{}{a_{33}:=eq\mhyphen trans_3(a_{31}, a_{32})
\;:\;g^{-1}\cdot y^{-1}=x^{-1}}{}

\step*{}{a_{34}:=
term_{ \ref{lemma:mult}.4}(S,G,\cdot,e ,^{-1},u, H,a_3,y^{-1},a_{21})
\;:\;(y^{-1}\cdot H)\subseteq G}{}

\step*{}{a_{35}:=
term_{ \ref{lemma:mult}.5}(S,G,\cdot,e ,^{-1},u, (y^{-1}\cdot H), a_{34},y,a_{20})
\;:\;(y^{-1}\cdot H\cdot y)\subseteq G}{}

\step*{}{a_{36}:=
term_{ \ref{lemma:mult-triv}.2}(S,\cdot,(y^{-1}\cdot H\cdot y),g^{-1},h,a_{25})
\;:\;(g^{-1}\cdot h)\;\varepsilon\;(g^{-1}\cdot (y^{-1}\cdot H\cdot y))}{}

\step*{}{a_{37}:=
a_{35}ha_{25}
\;:\;h\varepsilon G}{}

\step*{}{a_{38}:=Gr_3(S,G,\cdot,e,^{-1},u,h,a_{37}) \;:\;h^{-1}\;\varepsilon\;G}{}

\step*{}{a_{39}:=
term_{ \ref{lemma:set-assoc}.3}(S,G,\cdot,e ,^{-1},u, H,a_3,g^{-1}, y^{-1},a_{17},a_{21})
\;:\;g^{-1}\cdot y^{-1}\cdot H =g^{-1}\cdot (y^{-1}\cdot H)}{}

\step*{}{a_{40}:=eq\mhyphen cong_1(\lambda Z:ps(S).(Z\cdot y), a_{39})
\;:\;g^{-1}\cdot y^{-1}\cdot H\cdot y=g^{-1}\cdot (y^{-1}\cdot H)\cdot y}{}

\step*{}{a_{41}:=
term_{ \ref{lemma:set-assoc}.2}(S,G,\cdot,e ,^{-1},u, (y^{-1}\cdot H), a_{34},g^{-1},y,a_{17},a_{20})
\\\quad\quad
\;:\;g^{-1}\cdot (y^{-1}\cdot H)\cdot y=g^{-1}\cdot (y^{-1}\cdot H\cdot y)}{}

\step*{}{a_{42}:=eq\mhyphen cong_1(\lambda t:S.(t\cdot H\cdot y), a_{33})
\;:\;g^{-1}\cdot y^{-1}\cdot H\cdot y=x^{-1}\cdot H\cdot y}{}

\step*{}{a_{43}:=eq\mhyphen trans_3(eq\mhyphen trans_1(a_{40}, a_{41}), a_{42})
\;:\;g^{-1}\cdot (y^{-1}\cdot H\cdot y)=x^{-1}\cdot H\cdot y}{}

\step*{}{a_{44}:=\wedge_1(a_{43})(g^{-1}\cdot h)a_{36}
\;:\;(g^{-1}\cdot h)\;\varepsilon\;(x^{-1}\cdot H\cdot y)}{}

\step*{}{a_{45}:=Gr_3(S,G,\cdot,e,^{-1},u,x,a_{18}) \;:\;x^{-1}\;\varepsilon\;G}{}

\step*{}{a_{46}:=
term_{ \ref{lemma:mult}.4}(S,G,\cdot,e ,^{-1},u,H,a_3, x^{-1}, a_{45})
\;:\;(x^{-1}\cdot H)\subseteq G}{}

\step*{}{a_{47}:=
term_{ \ref{lemma:mult-triv}.3}(S,\cdot,(x^{-1}\cdot H\cdot y),g,(g^{-1}\cdot h),a_{44})
\;:\;(g^{-1}\cdot h\cdot g)\;\varepsilon\;(x^{-1}\cdot H\cdot y\cdot g)}{}

\step*{}{a_{48}:=
term_{ \ref{lemma:set-assoc}.1}(S,G,\cdot,e ,^{-1},u,(x^{-1}\cdot H),a_{46},y,g, a_{20},w_1)
\;:\;x^{-1}\cdot H\cdot y\cdot g=x^{-1}\cdot H\cdot (y\cdot g)}{}

\step*{}{a_{49}:=eq\mhyphen cong_1(\lambda t:S.(x^{-1}\cdot H\cdot t), a_{30})
\;:\;x^{-1}\cdot H\cdot (y\cdot g)=x^{-1}\cdot H\cdot x}{}

\step*{}{a_{50}:=eq\mhyphen trans_1( a_{48},a_{49})
\;:\;x^{-1}\cdot H\cdot y\cdot g=x^{-1}\cdot H\cdot x}{}

\step*{}{a_{51}:=\wedge_1( a_{50})(g^{-1}\cdot h\cdot g)a_{47}
\;:\;(g^{-1}\cdot h\cdot g)\;\varepsilon\;(x^{-1}\cdot H\cdot x)}{}

\step*{}{a_{52}:=\wedge_2( a_{19})(g^{-1}\cdot h\cdot g)a_{51}
\;:\;(g^{-1}\cdot h\cdot g)\;\varepsilon\;Z}{}

\conclude*{}{a_{53}:=\exists_3(r_1,a_{52})
\;:\;(g^{-1}\cdot h\cdot g)\;\varepsilon\;Z}{}

\conclude*{}{a_{54}:=
\lambda Z:ps(S).\lambda r_1:Z\varepsilon U.a_{53}
\;:\;(g^{-1}\cdot h\cdot g)\;\varepsilon\;N}{}

\conclude*{}{a_{55}:=
\lambda g:S.\lambda w_1:g\varepsilon G.\lambda h:S.\lambda w_2:h\varepsilon N.a_{54}\;:\;
\forall g:S.[g\varepsilon G\Rightarrow \forall h:S.(h\varepsilon N\Rightarrow 
(g^{-1}\cdot h\cdot g)\;\varepsilon\;N)]}{}

\step*{}{\boldsymbol {term_{\ref{prop:normal-inters}}}(S,G,\cdot,e,^{-1},u,H,v):= 
\wedge(a_{16},a_{55})
\;:\;\boldsymbol{N\triangleleft G}}{}
\end{flagderiv}

\subsection*{Proof of Proposition \ref{prop:normal-criteria}}

\begin{flagderiv}
\introduce*{}{S: *_s\;|\;G:ps(S)\;|\;\cdot: S\rightarrow S\rightarrow S\;|\;e:S\;|\;^{-1}:S\rightarrow S\;|\;u:Group(S,G,\cdot,e,^{-1})}{}

\step*{}{a_1:=Gr_2(S,G,\cdot,e,^{-1},u)\;:\; Assoc(S,G,\cdot)}{}

\introduce*{}{g:S\;|\;v:g\varepsilon G}{}

\step*{}{a_2(g,v):=Gr_3(S,G,\cdot,e,^{-1},u,g,v)\;:\; g^{-1}\varepsilon G}{}

\step*{}{a_3(g,v):=Gr_8(S,G,\cdot,e,^{-1},u,g,v)\;:\; g^{-1}\cdot g=e}{}

\introduce*{}{F:ps(S)\;|\;w:F\subseteq G}{}

\step*{}{a_4:=eq\mhyphen cong_1(\lambda t:S.(F\cdot t), a_3(g,v))
\;:\;F\cdot (g^{-1}\cdot g)=F\cdot e}{}

\step*{}{a_5:=
term_{ \ref{lemma:mult}.2}(S,G,\cdot,e ,^{-1},u,F,w)\;:\;F\cdot e=F}{}

\step*{}{a_6:=
term_{ \ref{lemma:set-assoc}.1}(S,G,\cdot,e ,^{-1},u,F,w,g^{-1}, g, a_2(g,v), v)\;:\;F\cdot g^{-1}\cdot g=F\cdot (g^{-1}\cdot g)}{}

\step*{}{a_7(g,v,F,w):=eq\mhyphen trans_1(eq\mhyphen trans_1(a_6,a_4), a_5)\;:\;F\cdot g^{-1}\cdot g=F}{}

\done[2]

\introduce*{}{H:ps(S)\;|\;v:H\leqslant G}{}

\step*{}{\text{Notation }\;
A:=\forall g:S.(g\varepsilon G
\Rightarrow (g^{-1}\cdot H\cdot g)=H)\;:\;*_p}{}

\step*{}{\text{Notation }\;
B:=\forall g:S.(g\varepsilon G
\Rightarrow (g\cdot H\cdot g^{-1})=H)\;:\;*_p}{}

\step*{}{\text{Notation }\;
C:=\forall g:S.(g\varepsilon G
\Rightarrow g\cdot H=H\cdot g)\;:\;*_p}{}

\step*{}{\text{Notation }\;
D:=\forall g:S.(g\varepsilon G
\Rightarrow (g^{-1}\cdot H\cdot g)\subseteq H)\;:\;*_p}{}

\step*{}{a_8:=\wedge_1(\wedge_1( \wedge_1(v)))\;:\;H\subseteq G}{}

\introduce*{}{g:S\;|\;w:g\varepsilon G\;|\;h:S\;|\;r:h\varepsilon H}{}

\step*{}{a_9:=a_2(g,w)\;:\; g^{-1}\varepsilon G}{}

\step*{}{a_{10}:=
term_{ \ref{lemma:mult-triv}.2}(S,\cdot,H,g^{-1},h,r)\;:\;(g^{-1}\cdot h)\;\varepsilon\;(g^{-1}\cdot H)}{}

\step*{}{a_{11}(g,h,w,r):=
term_{ \ref{lemma:mult-triv}.3}(S,\cdot,(g^{-1}\cdot H),g, (g^{-1}\cdot h),a_{10})\;:\;(g^{-1}\cdot h\cdot g)\;\varepsilon\;(g^{-1}\cdot H\cdot g)}{}

\done

\assume*{}{w:H\triangleleft G\;|\;g:S\;|\;r_1:g\varepsilon G}{}

\assume*{}{x:S\;|\;r_2:x\;\varepsilon \;(g^{-1}\cdot H\cdot g)}{}

\step*{}{r_2\;:\;[\exists b:S.(b\;\varepsilon \;(g^{-1}\cdot H)\wedge x=b\cdot g)]}{}

\introduce*{}{b:S\;|\;r_3:b\;\varepsilon \;(g^{-1}\cdot H)\wedge x=b\cdot g}{}

\step*{}{a_{12}:=\wedge_1(r_3)\;:\;b\;\varepsilon \;(g^{-1}\cdot H)}{}

\step*{}{a_{13}:=\wedge_2(r_3)\;:\;x=b\cdot g}{}

\step*{}{a_{12}\;:\;(\exists h:S.(h\varepsilon H\wedge b=g^{-1}\cdot h)]}{}

\introduce*{}{h:S\;|\;r_4:h\varepsilon H\wedge b=g^{-1}\cdot h}{}

\step*{}{a_{14}:=\wedge_1(r_4)\;:\;h\varepsilon H}{}

\step*{}{a_{15}:=\wedge_2(r_4)\;:\;b=g^{-1}\cdot h}{}

\step*{}{a_{16}:=
eq\mhyphen cong_1(\lambda t:S.(t\cdot g),a_{15})\;:\;b\cdot g=g^{-1}\cdot h\cdot g}{}

\step*{}{a_{17}:=
eq\mhyphen trans_1(a_{13},a_{16})
\;:\;x=g^{-1}\cdot h\cdot g}{}

\step*{}{a_{18}:=
\wedge_2(w)gr_1ha_{14}
\;:\;(g^{-1}\cdot h\cdot g)\;\varepsilon\;H}{}

\step*{}{a_{19}:=
eq\mhyphen subs_2(\lambda t:S.t\varepsilon H,a_{17}, a_{18})\;:\;x\varepsilon H}{}

\conclude*{}{a_{20}:=
\exists_3(a_{12}, a_{19})\;:\;x\varepsilon H}{}

\conclude*{}{a_{21}:=
\exists_3(r_2, a_{20})\;:\;x\varepsilon H}{}

\conclude*{}{a_{22}:=
\lambda x:S.\lambda r_2:x\;\varepsilon \;(g^{-1}\cdot H\cdot g).a_{21}\;:\;(g^{-1}\cdot H\cdot g)\subseteq H}{}

\assume*{}{h:S\;|\;r_2:h\varepsilon H}{}

\step*{}{a_{23}:=
a_2(g,r_1)\;:\;g^{-1}\varepsilon G}{}

\step*{}{a_{24}:=
term_{\ref{theorem:axiom_corollary}.6}(S,G,\cdot,e ,^{-1},u,g,r_1)\;:\;(g^{-1})^{-1}=g}{}

\step*{}{a_{25}:=
a_8hr_2\;:\;h\varepsilon G}{}

\step*{}{x:=
g\cdot h\cdot g^{-1}}{}

\step*{}{a_{26}:=eq\mhyphen cong_1(\lambda t:S.(t\cdot h\cdot g^{-1}), a_{24})
\;:\;(g^{-1})^{-1}\cdot h\cdot g^{-1}=g\cdot h\cdot g^{-1}}{}

\step*{}{a_{26}
\;:\;(g^{-1})^{-1}\cdot h\cdot g^{-1}=x}{}

\step*{}{a_{27}:=\wedge_2(w)(g^{-1})a_{23}hr_2
\;:\;((g^{-1})^{-1}\cdot h\cdot g^{-1})\;\varepsilon\;H}{}

\step*{}{a_{28}:=eq\mhyphen subs_1(\lambda t:S.t\varepsilon H, a_{26},a_{27})
\;:\;x\varepsilon H}{}

\step*{}{a_{29}:=
a_{11}(g,x,r_1,a_{28})\;:\;(g^{-1}\cdot x\cdot g)\;\varepsilon\; (g^{-1}\cdot H\cdot g)}{}

\step*{}{a_{30}:=eq\mhyphen refl\;:\;x=g\cdot h\cdot g^{-1}}{}

\step*{}{a_{31}:=eq\mhyphen cong_1(\lambda t:S.(t\cdot g), a_{30})
\;:\;x\cdot g=g\cdot h\cdot g^{-1}\cdot g}{}

\step*{}{a_{32}:=Gr_9(S,G,\cdot,e,^{-1},u,g,h,r_1,a_{25}
)\;:\; (g\cdot h)\;\varepsilon\; G}{}

\step*{}{a_{33}:=a_1(g\cdot h) a_{32}(g^{-1})a_{23}gr_1
\;:\;g\cdot h\cdot g^{-1}\cdot g=g\cdot h\cdot (g^{-1}\cdot g)}{}

\step*{}{a_{34}:=a_3(g,r_1) 
\;:\;g^{-1}\cdot g=e}{}

\step*{}{a_{35}:=eq\mhyphen cong_1(\lambda t:S.(g\cdot h\cdot t), a_{34})
\;:\;g\cdot h\cdot (g^{-1}\cdot g)=g\cdot h\cdot e}{}

\step*{}{a_{36}:=Gr_4(S,G,\cdot,e,^{-1},u,(g\cdot h), a_{32})\;:\;g\cdot h\cdot e=g\cdot h}{}

\step*{}{a_{37}:=
eq\mhyphen trans_1(eq\mhyphen trans_1(eq\mhyphen trans_1(a_{31},a_{33}),a_{35}), a_{36})
\;:\;x\cdot g=g\cdot h}{}

\step*{}{a_{38}:=eq\mhyphen cong_1(\lambda t:S.(g^{-1}\cdot t), a_{37})
\;:\;g^{-1}(x\cdot g)=g^{-1}(g\cdot h)}{}

\step*{}{a_{39}:=a_8xa_{28}
\;:\;x\varepsilon G}{}

\step*{}{a_{40}:= a_1(g^{-1})a_{23}xa_{39}gr_1
\;:\;g^{-1}x\cdot g=g^{-1}(x\cdot g)}{}

\step*{}{a_{41}:= a_1(g^{-1})a_{23}gr_1ha_{25}
\;:\;g^{-1}g\cdot h=g^{-1}(g\cdot h)}{}

\step*{}{a_{42}:=eq\mhyphen cong_1(\lambda t:S.(t\cdot h), a_{34})
\;:\;g^{-1}\cdot g\cdot h=e\cdot h}{}

\step*{}{a_{43}:=Gr_5(S,G,\cdot,e,^{-1},u,h, a_{25})\;:\;e\cdot h= h}{}

\step*{}{a_{44}:=
eq\mhyphen trans_1(eq\mhyphen trans_1(eq\mhyphen trans_2(eq\mhyphen trans_1(a_{40},a_{38}), a_{41}), a_{42}),a_{43})
\;:\;g^{-1}\cdot x\cdot g=h}{}

\step*{}{a_{45}:=eq\mhyphen subs_1(\lambda t:S.(t\;\varepsilon\; (g^{-1}\cdot H\cdot g)), a_{44},a_{29})
\;:\;h\;\varepsilon\; (g^{-1}\cdot H\cdot g)}{}

\conclude*{}{a_{46}:=\lambda h:S.\lambda r_2:h\varepsilon H.a_{45}\;:\;H\subseteq (g^{-1}\cdot H\cdot g)}{}

\step*{}{a_{47}:=
\wedge(a_{22},a_{46})
\;:\;g^{-1}\cdot H\cdot g=H}{}

\conclude*{}{\boldsymbol {term_{\ref{prop:normal-criteria}.1}}(S,G,\cdot,e,^{-1},u,H,v):= 
\lambda w:H\triangleleft G.
\lambda g:S.\lambda r_1:g\varepsilon G.a_{47}\;:\;\boldsymbol {H\triangleleft G\Rightarrow A}\hspace{1cm}1)\Rightarrow 2)\textsf{  is proven}}{}

\assume*{}{w:A\;|\;g:S\;|\;r:g\;\varepsilon G}{}

\step*{}{a_{48}:=
a_2(g,r)\;:\;g^{-1}\varepsilon G}{}

\step*{}{a_{49}:=
w(g^{-1})a_{48}
\;:\;(g^{-1})^{-1}\cdot H\cdot g^{-1}=H}{}

\step*{}{a_{50}:=
term_{\ref{theorem:axiom_corollary}.6}(S,G,\cdot, e ,^{-1},u,g,r)\;:\;(g^{-1})^{-1}=g}{}

\step*{}{a_{51}:=eq\mhyphen subs_1(\lambda t:S.(t\cdot H\cdot g^{-1}=H), a_{50}, a_{49})
\;:\;g\cdot H\cdot g^{-1}=H}{}

\conclude*{}{\boldsymbol {term_{\ref{prop:normal-criteria}.2}}(S,G,\cdot,e,^{-1},u,H,v):= 
\lambda w:A.
\lambda g:S.\lambda r:g\varepsilon G.a_{51}\;:\;\boldsymbol {A\Rightarrow B}\hspace{2cm}2)\Rightarrow 3)\textsf{  is proven}}{}

\assume*{}{w:B\;|\;g:S\;|\;r:g\;\varepsilon G}{}

\step*{}{a_{52}:=
wgr\;:\;g\cdot H\cdot g^{-1}=H}{}

\step*{}{a_{53}:=eq\mhyphen cong_1(\lambda Z:ps(S).(Z\cdot g), a_{52})
\;:\;g\cdot H\cdot g^{-1}\cdot g=H\cdot g}{}

\step*{}{a_{54}:=
term_{ \ref{lemma:mult}.4}(S,G,\cdot,e ,^{-1},u,H, a_8,g,r)\;:\;(g\cdot H)\subseteq G}{}

\step*{}{a_{55}:=a_7(g,r,(g\cdot H),a_{54})
\;:\;g\cdot H\cdot g^{-1}\cdot g=g\cdot H}{}

\step*{}{a_{56}:=eq\mhyphen trans_3(a_{55},a_{53})
\;:\;g\cdot H=H\cdot g}{}

\conclude*{}{\boldsymbol {term_{\ref{prop:normal-criteria}.3}}(S,G,\cdot,e,^{-1},u,H,v):= 
\lambda w:B.
\lambda g:S.\lambda r:g\varepsilon G.a_{56}\;:\;\boldsymbol {B\Rightarrow C}\hspace{2cm}3)\Rightarrow 4)\textsf{  is proven}}{}

\assume*{}{w:C\;|\;g:S\;|\;r:g\;\varepsilon G}{}

\step*{}{a_{57}:=
a_2(g,r)\;:\;g^{-1}\;\varepsilon\; G}{}

\step*{}{a_{58}:=
w(g^{-1})a_{57}\;:\;g^{-1}\cdot H=H\cdot g^{-1}}{}

\step*{}{a_{59}:=eq\mhyphen cong_1(\lambda Z:ps(S).(Z\cdot g), a_{58})
\;:\;g^{-1}\cdot H\cdot g=H\cdot g^{-1}\cdot g}{}

\step*{}{a_{60}:=a_7(g,r,H,a_8)
\;:\;H\cdot g^{-1}\cdot g=H}{}

\step*{}{a_{61}:=eq\mhyphen trans_1(a_{59},a_{60})
\;:\;g^{-1}\cdot H\cdot g=H}{}

\step*{}{a_{62}:=\wedge_1(a_{61})
\;:\;(g^{-1}\cdot H\cdot g)\subseteq H}{}

\conclude*{}{\boldsymbol {term_{\ref{prop:normal-criteria}.4}}(S,G,\cdot,e,^{-1},u,H,v):= 
\lambda w:C.
\lambda g:S.\lambda r:g\varepsilon G.a_{62}\;:\;\boldsymbol {C\Rightarrow D}\hspace{2cm}4)\Rightarrow 5)\textsf{  is proven}}{}

\assume*{}{w:D}{}

\introduce*{}{g:S\;|\;r_1:g\;\varepsilon G\;|\;h:S\;|\;r_2:h\;\varepsilon H}{}

\step*{}{a_{63}:=
a_{11}(g,h,r_1,r_2)\;:\;(g^{-1}\cdot h\cdot g)\;\varepsilon\; (g^{-1}\cdot H\cdot g)}{}

\step*{}{a_{64}:=
wgr_1\;:\;(g^{-1}\cdot H\cdot g)\subseteq H}{}

\step*{}{a_{65}:=
a_{64}(g^{-1}\cdot h\cdot g)a_{63}\;:\;(g^{-1}\cdot h\cdot g)\;\varepsilon\; H}{}

\conclude*{}{a_{66}:=\lambda g:S.\lambda r_1:g\varepsilon G.\lambda h:S.\lambda r_2:h\varepsilon H.a_{65}\;:\;\forall g:S.[g\varepsilon G\Rightarrow\forall h:S.(h\varepsilon H\Rightarrow (g^{-1}\cdot h\cdot g) \;\varepsilon\;H)]}{}

\step*{}{a_{67}:=
\wedge(v,a_{66})\;:\;H\triangleleft G}{}

\conclude*{}{\boldsymbol {term_{\ref{prop:normal-criteria}.5}}(S,G,\cdot,e,^{-1},u,H,v):= 
\lambda w:D.a_{67}\;:\;\boldsymbol {D\Rightarrow H\triangleleft G}
\hspace{3.7cm}5)\Rightarrow 1)\textsf{  is proven}}{}
\end{flagderiv}

\subsection*{Proof of Corollary \ref{corol:normal}}

\begin{flagderiv}
\introduce*{}{S: *_s\;|\;G:ps(S)\;|\;\cdot: S\rightarrow S\rightarrow S\;|\;e:S\;|\;^{-1}:S\rightarrow S\;|\;u:Group(S,G,\cdot,e,^{-1})}{}

\introduce*{}{H:ps(S)}{}

\step*{}{\text{Notation }\;
A:=\forall g:S.(g\varepsilon G
\Rightarrow (g^{-1}\cdot H\cdot g)=H)\;:\;*_p}{}

\step*{}{\text{Notation }\;
B:=\forall g:S.(g\varepsilon G
\Rightarrow (g\cdot H\cdot g^{-1})=H)\;:\;*_p}{}

\step*{}{\text{Notation }\;
C:=\forall g:S.(g\varepsilon G
\Rightarrow g\cdot H=H\cdot g)\;:\;*_p}{}

\step*{}{\text{Notation }\;
D:=\forall g:S.(g\varepsilon G
\Rightarrow (g^{-1}\cdot H\cdot g)\subseteq H)\;:\;*_p}{}

\assume*{}{v:H\triangleleft G}{}

\step*{}{a_1:=\wedge_1(v)\;:\;H\leqslant G}{}

\step*{}{a_2:=
term_{ \ref{prop:normal-criteria}.1}(S,G,\cdot,e ,^{-1},u,H,a_1)v\;:\;A}{}

\step*{}{a_3:=
term_{ \ref{prop:normal-criteria}.2}(S,G,\cdot,e ,^{-1},u,H,a_1)a_2\;:\;B}{}

\step*{}{\boldsymbol {term_{\ref{corol:normal}.1}}(S,G,\cdot,e,^{-1},u,H,v):=
term_{ \ref{prop:normal-criteria}.3}(S,G,\cdot,e ,^{-1},u,H,a_1)a_3\;:\;
\boldsymbol {[\forall g:S.(g\varepsilon G
\Rightarrow g\cdot H=H\cdot g)]}
\\\hspace{13cm}\textsf{1) is proven}}{}

\done

\assume*{}{v:H\leqslant G\;|\;w:[\forall g:S.(g\varepsilon G\Rightarrow g\cdot H=H\cdot g)]}{}

\step*{}{w\;:\;C}{}

\step*{}{a_4:=
term_{ \ref{prop:normal-criteria}.4}(S,G,\cdot,e ,^{-1},u,H,v)w\;:\;D}{}

\step*{}{\boldsymbol {term_{\ref{corol:normal}.2}}(S,G,\cdot,e,^{-1},u,H,v,w):=
term_{ \ref{prop:normal-criteria}.5}(S,G,\cdot,e ,^{-1},u,H,v)a_4\;:\;\boldsymbol {H\triangleleft G}\\\hspace{13cm}\textsf{2) is proven}}{}
\end{flagderiv}

\subsection*{Proof of Lemma \ref{lemma:normal-subgroup}}

\begin{flagderiv}
\introduce*{}{S: *_s\;|\;G:ps(S)\;|\;\cdot: S\rightarrow S\rightarrow S\;|\;e:S\;|\;^{-1}:S\rightarrow S\;|\;u:Group(S,G,\cdot,e,^{-1})}{}

\introduce*{}{H:ps(S)\;|\;v:H\triangleleft G}{}

\step*{}{a_1:=\wedge_1(v)\;:\;H\leqslant G}{}

\step*{}{a_2:=\wedge_1(\wedge_1( \wedge_1(a_1)))\;:\;H\subseteq G}{}

\step*{}{a_3:=
term_{ \ref{lemma:coset}.1}(S,G,\cdot,e ,^{-1},u,H,a_1)\;:\;H^{-1}=H}{}

\step*{}{a_4:=
term_{ \ref{lemma:coset}.2}(S,G,\cdot,e ,^{-1},u,H,a_1)\;:\;H\cdot H=H}{}

\introduce*{}{x:S\;|\;w:x\varepsilon G}{}

\step*{}{a_5:=
term_{ \ref{lemma:coset}.4}(S,G,\cdot,e ,^{-1},u,H,a_1,x,w)\;:\;(x\cdot H)^{-1}=H\cdot x^{-1}}{}

\step*{}{a_6:=Gr_3(S,G,\cdot,e,^{-1},u,x,w)\;:\;x^{-1}\varepsilon G}{}

\step*{}{a_7:=
term_{ \ref{corol:normal}.1}(S,G,\cdot,e ,^{-1},u,H,v)x^{-1}a_6\;:\;x^{-1}\cdot H=H\cdot x^{-1}}{}

\step*{}{\boldsymbol {term_{\ref{lemma:normal-subgroup}.1}}(S,G,\cdot,e,^{-1},u,H,v,x,w):= 
eq\mhyphen trans_2(a_5,a_7)\;:\;\boldsymbol {(x\cdot H)^{-1}=x^{-1}\cdot H}}{\textsf{1) is proven}}

\done

\introduce*{}{x,y:S\;|\;w_1:x\varepsilon G\;|\;w_2:y\varepsilon G}{}

\step*{}{a_8:=
term_{ \ref{corol:normal}.1}(S,G,\cdot,e,^{-1},u,H,v) yw_2\;:\;y\cdot H=H\cdot y}{}

\step*{}{a_9:= 
eq\mhyphen cong_1(\lambda Z:ps(S).(x\cdot H\cdot Z),a_8)\;:\;x\cdot H\cdot(y\cdot H)=
x\cdot H\cdot(H\cdot y)}{}

\step*{}{a_{10}:=
term_{ \ref{lemma:mult}.4}(S,G,\cdot,e ,^{-1},u,H,a_2,x,w_1)\;:\;(x\cdot H)\subseteq G}{}

\step*{}{a_{11}:=
term_{ \ref{lemma:set-assoc}.4}(S,G,\cdot,e ,^{-1},u,(x\cdot H),H,a_{10},a_2,y,w_2) \;:\;x\cdot H\cdot H\cdot y=
x\cdot H\cdot (H\cdot y)}{}

\step*{}{a_{12}:=
term_{ \ref{lemma:set-assoc}.6}(S,G,\cdot,e ,^{-1},u,H,H,a_2, a_2,x,w_1) \;:\;x\cdot H\cdot H=x\cdot (H\cdot H)}{}

\step*{}{a_{13}:= 
eq\mhyphen cong_1(\lambda Z:ps(S).(x\cdot Z),a_4)\;:\;x\cdot (H\cdot H)=
x\cdot H}{}

\step*{}{a_{14}:= 
eq\mhyphen trans_1(a_{12},a_{13})\;:\;x\cdot H\cdot H=x\cdot H}{}

\step*{}{a_{15}:= 
eq\mhyphen cong_1(\lambda Z:ps(S).(Z\cdot y),a_{14})\;:\;x\cdot H\cdot H\cdot y=x\cdot H\cdot y}{}

\step*{}{a_{16}:=
term_{ \ref{lemma:set-assoc}.2}(S,G,\cdot,e ,^{-1},u,H,a_2, x,y,w_1,w_2) \;:\;x\cdot H\cdot y=x\cdot (H\cdot y)}{}

\step*{}{a_{17}:= 
eq\mhyphen cong_1(\lambda Z:ps(S).(x\cdot Z),a_8)\;:\;x\cdot (y\cdot H)=x\cdot (H\cdot y)}{}

\step*{}{a_{18}:=
term_{ \ref{lemma:set-assoc}.3}(S,G,\cdot,e ,^{-1},u,H,a_2, x,y,w_1,w_2) \;:\;x\cdot y\cdot H=x\cdot (y\cdot H)}{}

\step*{}{\boldsymbol {term_{\ref{lemma:normal-subgroup}.2}}(S,G,\cdot,e,^{-1},u,H,v,x,y,w_1, w_2):
\\= 
eq\mhyphen trans_2(
eq\mhyphen trans_2(
eq\mhyphen trans_1(
eq\mhyphen trans_1(
eq\mhyphen trans_2(a_9,a_{11}), a_{15}),a_{16}),a_{17}),a_{18})
\\\hspace{6cm}
\;:\;\boldsymbol {(x\cdot H)\cdot(y\cdot H)=x\cdot y\cdot H}\hspace{2cm}\textsf{2) is proven}}{}
\end{flagderiv}

\subsection*{Proof of Theorem \ref{theorem:product-subnormal}}

\begin{flagderiv}
\introduce*{}{S: *_s\;|\;G:ps(S)\;|\;\cdot: S\rightarrow S\rightarrow S\;|\;e:S\;|\;^{-1}:S\rightarrow S\;|\;u:Group(S,G,\cdot,e,^{-1})}{}

\introduce*{}{B,H:ps(S)\;|\;v:B\leqslant G\;|\;w:H\triangleleft G}{}

\step*{}{a_1:=\wedge_1(w)
\;:\;H\leqslant G}{}

\step*{}{a_2:=\wedge_2(w)
\;:\;\forall g:S.[g\varepsilon G\Rightarrow\forall h:S.(h\varepsilon H\Rightarrow (g^{-1}\cdot h\cdot g) \;\varepsilon\;H)]}{}

\step*{}{a_3:=\wedge_1(\wedge_1( \wedge_1(v)))\;:\;B\subseteq G}{}

\step*{}{a_4:=\wedge_2( \wedge_1(v))\;:\;Closure_1(S,B,^{-1})}{}

\step*{}{a_5:=\wedge_2(v)\;:\;Closure_2(S,B,\cdot)}{}

\step*{}{a_6:=
term_{ \ref{lemma:triv-subgroups}.3}(S,G,\cdot,e ,^{-1},u,B,H,v,a_1)\;:\;(B\cap H)\leqslant G}{}

\step*{}{a_7:=\lambda x:S.\lambda r:x\;\varepsilon\;(B\cap H).\wedge_1(r)
\;:\;(B\cap H)\subseteq B}{}

\step*{}{a_8:=
term_{ \ref{lemma:subgroup}.2}(S,G,\cdot,e ,^{-1},u,B,(B\cap H),v,a_6,a_7)\;:\;(B\cap H)\leqslant B}{}

\introduce*{}{b:S\;|\;r_1:b\varepsilon B\;|\;h:S\;|\;r_2:h\;\varepsilon\;(B\cap H)}{}

\step*{}{a_9:=\wedge_1(r_2)
\;:\;h\varepsilon B}{}

\step*{}{a_{10}:=\wedge_2(r_2)
\;:\;h\varepsilon H}{}

\step*{}{a_{11}:=a_3br_1
\;:\;b\varepsilon G}{}

\step*{}{a_{12}:=a_2ba_{11}ha_{10}
\;:\;(b^{-1}\cdot h\cdot b)\;\varepsilon\; H}{}

\step*{}{a_{13}:=a_4br_1
\;:\;b^{-1}\;\varepsilon\; B}{}

\step*{}{a_{14}:= a_5(b^{-1})a_{13}ha_9
\;:\;(b^{-1}\cdot h)\;\varepsilon\; B}{}

\step*{}{a_{15}:= a_5(b^{-1}\cdot h)a_{14}br_1
\;:\;(b^{-1}\cdot h\cdot b)\;\varepsilon\; B}{}

\step*{}{a_{16}:=\wedge(a_{15},a_{12})
\;:\;(b^{-1}\cdot h\cdot b)\;\varepsilon\;(B\cap H)}{}

\conclude*{}{a_{17}:=\lambda b:S.\lambda r_1:b\varepsilon B.\lambda h:S.\lambda r_2:h\;\varepsilon\;(B\cap H).a_{16}
\\\quad\quad
\;:\;[\forall b:S.[b\varepsilon B\Rightarrow\forall h:S.(h\;\varepsilon\;(B\cap H)\Rightarrow (b^{-1}\cdot h\cdot b) \;\varepsilon\;(B\cap H)]]}{}

\step*{}{\boldsymbol {term_{\ref{theorem:product-subnormal}.1}}(S,G,\cdot,e,^{-1},u,B,H,v,w):= 
\wedge(a_8,a_{17})\;:\;\boldsymbol {(B\cap H)\triangleleft B}\hspace{2cm}\textsf{1) is proven}}{}

\introduce*{}{x:S\;|\;r_1:x\;\varepsilon\;(B\cdot H)}{}

\step*{}{r_1:[\exists b:S.(b\varepsilon B\wedge x\;\varepsilon\;(b\cdot H))]}{}

\introduce*{}{b:S\;|\;r_2:b\varepsilon B\wedge x\;\varepsilon\;(b\cdot H)}{}

\step*{}{a_{18}:=\wedge_1(r_2)
\;:\;b\varepsilon B}{}

\step*{}{a_{19}:=\wedge_2(r_2)
\;:\;x\;\varepsilon\;(b\cdot H)}{}

\step*{}{a_{20}:=a_3ba_{18}
\;:\;b\varepsilon G}{}

\step*{}{a_{21}:=term_{ \ref{corol:normal}.1}(S,G,\cdot,e ,^{-1},u,H,w)ba_{20}
\;:\;b\cdot H=H\cdot b}{}

\step*{}{a_{22}:=\wedge_1(a_{21})xa_{19}
\;:\;x\;\varepsilon\;(H\cdot b)}{}

\step*{}{a_{23}:=\wedge(a_{18},a_{22})
\;:\;b\varepsilon B\wedge
x\;\varepsilon\;(H\cdot b)}{}

\step*{}{a_{24}:=\exists_1(\lambda t:S.(t\varepsilon B\wedge
x\;\varepsilon\;(H\cdot t)),b, a_{23})\;:\;x\;\varepsilon\;(H\cdot B)}{}

\conclude*{}{a_{25}:=\exists_3(r_1, a_{24})\;:\;x\;\varepsilon\;(H\cdot B)}{}

\conclude*{}{a_{26}:=
\lambda x:S.\lambda r_1:x\;\varepsilon\;(B\cdot H).a_{25}
\;:\;(B\cdot H)\subseteq(H\cdot B)}{}

\introduce*{}{x:S\;|\;r_1:x\;\varepsilon\;(H\cdot B)}{}

\step*{}{r_1:[\exists b:S.(b\varepsilon B\wedge x\;\varepsilon\;(H\cdot b))]}{}

\introduce*{}{b:S\;|\;r_2:b\varepsilon B\wedge x\;\varepsilon\;(H\cdot b)}{}

\step*{}{a_{27}:=\wedge_1(r_2)
\;:\;b\varepsilon B}{}

\step*{}{a_{28}:=\wedge_2(r_2)
\;:\;x\;\varepsilon\;(H\cdot b)}{}

\step*{}{a_{29}:=a_3ba_{27}
\;:\;b\varepsilon G}{}

\step*{}{a_{30}:=term_{ \ref{corol:normal}.1}(S,G,\cdot,e ,^{-1},u,H,w)ba_{29}
\;:\;(b\cdot H)=(H\cdot b)}{}

\step*{}{a_{31}:=\wedge_2(a_{30})xa_{28}
\;:\;x\;\varepsilon\;(b\cdot H)}{}

\step*{}{a_{32}:=\wedge(a_{27},a_{31})
\;:\;b\varepsilon B\wedge
x\;\varepsilon\;(b\cdot H)}{}

\step*{}{a_{33}:=\exists_1(\lambda t:S.(t\varepsilon B\wedge
x\;\varepsilon\;(t\cdot H)),b, a_{32})\;:\;x\;\varepsilon\;(B\cdot H)}{}

\conclude*{}{a_{34}:=\exists_3(r_1, a_{33})\;:\;x\;\varepsilon\;(B\cdot H)}{}

\conclude*{}{a_{35}:=
\lambda x:S.\lambda r_1:x\;\varepsilon\;(H\cdot B).a_{34}
\;:\;(H\cdot B)\subseteq(B\cdot H)}{}

\step*{}{a_{36}:=\wedge(a_{26},a_{35})\;:\;B\cdot H=H\cdot B}{}

\step*{}{\boldsymbol {term_{\ref{theorem:product-subnormal}.2}}(S,G,\cdot,e,^{-1},u,B,H,v,w):= 
a_{36}\;:\;\boldsymbol {B\cdot H=H\cdot B}\hspace{4.2cm}\textsf{2) is proven}}{}

\step*{}{\boldsymbol {term_{\ref{theorem:product-subnormal}.3}}(S,G,\cdot,e, ^{-1},u, B,H,v,w):= 
\wedge_2(term_{
\ref{theorem:permutable}}(S,G,\cdot,e, ^{-1},u, B,H,v, a_1))a_{36}
\\\hspace{10cm}
\;:\;\boldsymbol {(B\cdot H)\leqslant G}\hspace{2cm}\textsf{3) is proven}}{}
\end{flagderiv}

\subsection*{Proof of Theorem \ref{theorem:product-normal}}

\begin{flagderiv}
\introduce*{}{S: *_s\;|\;G:ps(S)\;|\;\cdot: S\rightarrow S\rightarrow S\;|\;e:S\;|\;^{-1}:S\rightarrow S\;|\;u:Group(S,G,\cdot,e,^{-1})}{}

\step*{}{a_1:=Gr_2(S,G,\cdot,e,^{-1},u)\;:\;Assoc(S,G,u)}{}

\introduce*{}{B,C:ps(S)\;|\;v:B\triangleleft G\;|\;w:C\triangleleft G}{}

\step*{}{a_2:=\wedge_1(v)
\;:\;B\leqslant G}{}

\step*{}{a_3:=\wedge_1(\wedge_1(\wedge_1(a_2)))
\;:\;B\subseteq G}{}

\step*{}{a_4:=\wedge_2(\wedge_1(a_2))
\;:\;Closure_1(S,B,^{-1})}{}

\step*{}{a_5:=\wedge_2(a_2)
\;:\;Closure_2(S,B,\cdot)}{}

\step*{}{a_6:=\wedge_1(w)
\;:\;C\leqslant G}{}

\step*{}{a_7:=\wedge_1(\wedge_1(\wedge_1(a_6)))
\;:\;C\subseteq G}{}

\step*{}{a_8:=\wedge_2(\wedge_1(a_6))
\;:\;Closure_1(S,C,^{-1})}{}

\step*{}{a_9:=\wedge_2(a_6)
\;:\;Closure_2(S,C,\cdot)}{}

\step*{}{a_{10}:=
term_{ \ref{theorem:product-subnormal}.3}(S,G,\cdot,e ,^{-1},u,B,C,a_2,w)\;:\;(B\cdot C)\leqslant G}{}

\introduce*{}{g:S\;|\;r:g\varepsilon G}{}

\step*{}{a_{11}:=
term_{\ref{lemma:set-assoc}.6}(S,G,\cdot,e ,^{-1},u,B,C,a_3,a_7,g,r)\;:\;g\cdot B\cdot C=g\cdot(B\cdot C)}{}

\step*{}{a_{12}:=
term_{ \ref{corol:normal}.1}(S,G,\cdot,e ,^{-1},u,B,v)gr\;:\;g\cdot B=B\cdot g}{}

\step*{}{a_{13}:=
term_{ \ref{corol:normal}.1}(S,G,\cdot,e ,^{-1},u,C,w)gr\;:\;g\cdot C=C\cdot g}{}

\step*{}{a_{14}:=
eq\mhyphen cong_1(\lambda Z:ps(S).(Z\cdot C), a_{12})
\;:\;g\cdot B\cdot C=B\cdot g\cdot C}{}

\step*{}{a_{15}:=
term_{\ref{lemma:set-assoc}.5}(S,G,\cdot,e ,^{-1},u,B,C,a_3,a_7,g,r)\;:\;B\cdot g\cdot C=B\cdot (g\cdot C)}{}

\step*{}{a_{16}:=
eq\mhyphen cong_1(\lambda Z:ps(S).(B\cdot Z), a_{13})
\;:\;B\cdot (g\cdot C)=B\cdot (C\cdot g)}{}

\step*{}{a_{17}:=
term_{ \ref{lemma:set-assoc}.4}(S,G,\cdot,e ,^{-1},u,B,C,a_3,a_7,g,r)\;:\;B\cdot C\cdot g=B\cdot (C\cdot g)}{}

\step*{}{a_{18}:=
eq\mhyphen trans_2(eq\mhyphen trans_1(eq\mhyphen trans_1(eq\mhyphen trans_3(a_{11}, a_{14}),a_{15}), a_{16}),a_{17})
\;:\;g\cdot (B\cdot C)=B\cdot C\cdot g}{}

\conclude*{}{a_{19}:=
\lambda g:S.\lambda r:g\varepsilon G.a_{18}
\;:\;[\forall g:S.(g\varepsilon G\Rightarrow g\cdot (B\cdot C)=(B\cdot C)\cdot g)]}{}

\step*{}{\boldsymbol {term_{\ref{theorem:product-normal}.1}}(S,G,\cdot,e,^{-1},u,B,C,v,w):= 
term_{ \ref{corol:normal}.2}(S,G,\cdot,e ,^{-1},u,(B\cdot C), a_{10}, a_{19})\;:\;\boldsymbol{(B\cdot C)\triangleleft G}
\\
\hspace{12cm} \textsf{1) is proven}}{}

\assume*{}{r_1:[\forall g:S.(
g\varepsilon B\Rightarrow g\varepsilon C\Rightarrow g=e)]}{}

\introduce*{}{b:S\;|\;r_2:b\varepsilon B\;|\;c:S\;|\;r_3:c\varepsilon C}{}

\step*{}{a_{20}:=a_3br_2
\;:\;b\varepsilon G}{}

\step*{}{a_{21}:=a_7cr_3
\;:\;c\varepsilon G}{}

\step*{}{a_{22}:=a_4br_2
\;:\;b^{-1}\varepsilon B}{}

\step*{}{a_{23}:=a_8cr_3
\;:\;c^{-1}\varepsilon C}{}

\step*{}{a_{24}:=a_3(b^{-1})
a_{22}
\;:\;(b^{-1})\;\varepsilon \;G}{}

\step*{}{a_{25}:=a_7(c^{-1})
a_{23}
\;:\;(c^{-1})\;\varepsilon \;G}{}

\step*{}{a_{26}:=\wedge_2(v)c
a_{21}br_2
\;:\;(c^{-1}\cdot b\cdot c)\;\varepsilon \;B}{}

\step*{}{a_{27}:=\wedge_2(w)b
a_{20}(c^{-1})a_{23}
\;:\;(b^{-1}\cdot c^{-1}\cdot b)\;\varepsilon \;C}{}

\step*{}{a_{28}:=a_5(b^{-1})
a_{22}(c^{-1}\cdot b\cdot c)a_{26}
\;:\;(b^{-1}\cdot (c^{-1}\cdot b\cdot c))\;\varepsilon \;B}{}

\step*{}{a_{29}:=a_9(b^{-1}\cdot c^{-1}\cdot b)a_{27}cr_3
\;:\;(b^{-1}\cdot c^{-1}\cdot b\cdot c)\;\varepsilon \;C}{}

\step*{}{a_{30}:=
term_{\ref{theorem:axiom_corollary}.5}(S,G,\cdot,e ,^{-1},u,c,b,a_{21},a_{20})\;:\;(c\cdot b)^{-1}=b^{-1}\cdot c^{-1}}{}

\step*{}{a_{31}:=a_1(b^{-1})
a_{24}(c^{-1})a_{25}ba_{20}
\;:\;b^{-1}\cdot c^{-1}\cdot b=b^{-1}\cdot (c^{-1}\cdot b)}{}

\step*{}{a_{32}:=
eq\mhyphen cong_1(\lambda t:S.(t\cdot c), a_{31})
\;:\;b^{-1}\cdot c^{-1}\cdot b\cdot c=b^{-1}\cdot (c^{-1}\cdot b)\cdot c}{}

\step*{}{a_{33}:=Gr_9(S,G,\cdot,e,^{-1},u,c^{-1},b, a_{25}, a_{20})\;:\;(c^{-1}\cdot b)\;\varepsilon\;G}{}

\step*{}{a_{34}:=a_1(b^{-1})
a_{24}(c^{-1}\cdot b)a_{33}ca_{21}
\;:\;b^{-1}\cdot (c^{-1}\cdot b)\cdot c=b^{-1}\cdot (c^{-1}\cdot b\cdot c)}{}

\step*{}{a_{35}:=
eq\mhyphen trans_1(a_{32},a_{34})
\;:\;b^{-1}\cdot c^{-1}\cdot b\cdot c=b^{-1}\cdot (c^{-1}\cdot b\cdot c)}{}

\step*{}{a_{36}:=
eq\mhyphen subs_2(\lambda t:S.t\varepsilon B, a_{35},a_{28})
\;:\;(b^{-1}\cdot c^{-1}\cdot b\cdot c)\;\varepsilon\;B}{}

\step*{}{a_{37}:=
r_1(b^{-1}\cdot c^{-1}\cdot b\cdot c)a_{36}a_{29}
\;:\;b^{-1}\cdot c^{-1}\cdot b\cdot c=e}{}

\step*{}{a_{38}:=
eq\mhyphen cong_1(\lambda t:S.(t\cdot b\cdot c), a_{30})
\;:\;(c\cdot b)^{-1}\cdot b\cdot c=b^{-1}\cdot c^{-1}\cdot b\cdot c}{}

\step*{}{a_{39}:=
eq\mhyphen trans_1(a_{38},a_{37})
\;:\;(c\cdot b)^{-1}\cdot b\cdot c=e}{}

\step*{}{a_{40}:=Gr_9(S,G,\cdot,e,^{-1},u,b,c,a_{20}, a_{21})\;:\;(b\cdot c)\;\varepsilon\;G}{}

\step*{}{a_{41}:=Gr_9(S,G,\cdot,e,^{-1},u,c,b,a_{21}, a_{20})\;:\;(c\cdot b)\;\varepsilon\;G}{}

\step*{}{\text{Notation }\;
x:=(c\cdot b)\;:\;S}{}

\step*{}{a_{41}\;:\;x\varepsilon G}{}

\step*{}{a_{39}\;:\;x^{-1}\cdot b\cdot c=e}{}

\step*{}{a_{42}:=Gr_3(S,G,\cdot,e,^{-1},u,x,a_{41})\;:\;x^{-1}\;\varepsilon\;G}{}

\step*{}{a_{43}:=Gr_7(S,G,\cdot,e,^{-1},u,x,a_{41})\;:\;x\cdot x^{-1}=e}{}

\step*{}{a_{44}:= a_1(x^{-1})
a_{42}ba_{20}ca_{21}
\;:\;x^{-1}\cdot b\cdot c=x^{-1}\cdot (b\cdot c)}{}

\step*{}{a_{45}:=
eq\mhyphen cong_1(\lambda t:S.(x\cdot t), a_{44})
\;:\;x\cdot (x^{-1}\cdot b\cdot c)=x\cdot (x^{-1}\cdot (b\cdot c))}{}

\step*{}{a_{46}:=a_1x
a_{41} (x^{-1})a_{42}(b\cdot c)a_{40}
\;:\;x\cdot x^{-1}\cdot (b\cdot c)=x\cdot (x^{-1}\cdot (b\cdot c))}{}

\step*{}{a_{47}:=
eq\mhyphen cong_1(\lambda t:S.(t\cdot(b\cdot c)), a_{43})\;:\; x\cdot x^{-1}\cdot (b\cdot c)=e\cdot (b\cdot c)}{}

\step*{}{a_{48}:= Gr_5(S,G,\cdot,e,^{-1}, u,(b\cdot c),a_{40})\;:\;e\cdot (b\cdot c)=b\cdot c}{}

\step*{}{a_{49}:= a_1(x^{-1})
a_{42}ba_{20}ca_{21}
\;:\;x^{-1}\cdot b\cdot c=x^{-1}\cdot (b\cdot c)}{}

\step*{}{a_{50}:=
eq\mhyphen cong_1(\lambda t:S.(x\cdot t), a_{49})
\;:\;x\cdot(x^{-1}\cdot b\cdot c)=x\cdot (x^{-1}\cdot (b\cdot c))}{}

\step*{}{a_{51}:=
eq\mhyphen cong_1(\lambda t:S.(x\cdot t), a_{39})
\;:\;x\cdot(x^{-1}\cdot b\cdot c)=x\cdot e}{}

\step*{}{a_{52}:= Gr_4(S,G,\cdot,e,^{-1},u,x,a_{41})\;:\;x\cdot e=x}{}

\step*{}{\boldsymbol {term_{\ref{theorem:product-normal}.2}}(S,G,\cdot,e,^{-1},u,B,C,v,w,r_1, b,c,r_2,r_3)
\\\quad := 
eq\mhyphen trans_1(eq\mhyphen trans_1(eq\mhyphen trans_2( eq\mhyphen trans_3(eq\mhyphen trans_1( a_{47},a_{48}),a_{46}), a_{50}),a_{51}),a_{52})
\;:\;\boldsymbol {b\cdot c=c\cdot b}
\\\hspace{12cm}
\textsf{2) is proven}}{}
\end{flagderiv}

\subsection*{Proof of Theorem \ref{theorem:quotient}}

\begin{flagderiv}
\introduce*{}{S: *_s\;|\;G:ps(S)\;|\;\cdot: S\rightarrow S\rightarrow S\;|\;e:S\;|\;^{-1}:S\rightarrow S\;|\;u:Group(S,G,\cdot,e,^{-1})}{}

\introduce*{}{H:ps(S)\;|\;v:H\triangleleft G}{}

\step*{}{a_1:= Gr_1(S,G,\cdot,e,^{-1},u)\;:\;e\varepsilon G}{}

\step*{}{a_2:= \wedge_1(v)\;:\;H\leqslant G}{}

\step*{}{a_3:=\wedge_1(\wedge_1( \wedge_1(a_2)))\;:\;H\subseteq G}{}

\step*{}{a_4:=\wedge_2(\wedge_1( \wedge_1(a_2)))\;:\;e\varepsilon H}{}

\step*{}{a_4:=eq\mhyphen refl\;:\; E=e\cdot H}{}

\step*{}{a_5:=\wedge(a_1,a_4)\;:\;
e\varepsilon G\wedge E=e\cdot H}{}

\step*{}{a_6:=\exists_1(\lambda t:S.(t\varepsilon G\wedge E=t\cdot H),e, a_5)\;:\;E\;\varepsilon\;(G/H)}{}

\introduce*{}{X:ps(S)\;|\;r_1:X\;\varepsilon\;(G/H)}{}

\step*{}{r_1\;:\;[\exists x:S.(x\varepsilon G\wedge X=x\cdot H)]}{}

\introduce*{}{x:S\;|\;r_2:x\varepsilon G\wedge X=x\cdot H}{}

\step*{}{a_7:=\wedge_1(r_2)\;:\;x\varepsilon G}{}

\step*{}{a_8:=\wedge_2(r_2)\;:\;X=x\cdot H}{}

\step*{}{a_9:=
term_{\ref{lemma:normal-subgroup}.2}(S,G,\cdot,e ,^{-1},u,H,v,x,e,a_7,a_1)
\;:\;(x\cdot H)\cdot(e\cdot H)=x\cdot e\cdot H}{}

\step*{}{a_{10}:=
term_{\ref{lemma:normal-subgroup}.2}(S,G,\cdot,e ,^{-1},u,H,v,e,x,a_1,a_7)
\;:\;(e\cdot H)\cdot(x\cdot H)=e\cdot x\cdot H}{}

\step*{}{a_{11}:=
Gr_4(S,G,\cdot,e, ^{-1},u,x, a_7)\;:\;x\cdot e=x}{}

\step*{}{a_{12}:=
Gr_5(S,G,\cdot,e, ^{-1},u,x,a_7)
\;:\;e\cdot x=x}{}

\step*{}{a_{13}:=
eq\mhyphen cong_1(\lambda t:S.(t\cdot H), a_{11})
\;:\;x\cdot e\cdot H=x\cdot H}{}

\step*{}{a_{14}:=
eq\mhyphen cong_1(\lambda t:S.(t\cdot H), a_{12})
\;:\;e\cdot x\cdot H=x\cdot H}{}

\step*{}{a_{15}:=
eq\mhyphen trans_1(a_9,a_{13})
\;:\;(x\cdot H)\cdot E=x\cdot H}{}

\step*{}{a_{16}:=
eq\mhyphen trans_1(a_{10},a_{14})
\;:\;E\cdot (x\cdot H)=x\cdot H}{}

\step*{}{a_{17}:=
eq\mhyphen subs_2(\lambda U:ps(S).(U\cdot E=U),a_8, a_{15})\;:\;X\cdot E=X}{}

\step*{}{a_{18}:=
eq\mhyphen subs_2(\lambda U:ps(S).(E\cdot U=U),a_8, a_{16})\;:\;E\cdot X=X}{}

\step*{}{a_{19}:=
\wedge(a_{17},a_{18})\;:\;
X\cdot E=X\wedge E\cdot X=X}{}

\step*{}{a_{20}:=
term_{\ref{lemma:normal-subgroup}.1}(S,G,\cdot,e ,^{-1},u,H,v,x,a_7)
\;:\;(x\cdot H)^{-1}=x^{-1}\cdot H}{}

\step*{}{a_{21}:=
eq\mhyphen cong_1(^{-1}, a_8)
\;:\;X^{-1}=(x\cdot H)^{-1}}{}

\step*{}{a_{22}:=
eq\mhyphen trans_1(a_{21},a_{20})\;:\;X^{-1}=x^{-1}\cdot H}{}

\step*{}{a_{23}:=
Gr_3(S,G,\cdot,e, ^{-1},u,x,a_7)
\;:\;x^{-1}\varepsilon G}{}

\step*{}{a_{24}:=
\wedge(a_{23},a_{22})\;:\;
x^{-1}\varepsilon G\wedge X^{-1}=x^{-1}\cdot H}{}

\step*{}{a_{25}:=\exists_1(\lambda t:S.(t\varepsilon G\wedge X^{-1}=t\cdot H),x^{-1}, a_{24})\;:\;X^{-1}\;\varepsilon\;(G/H)}{}

\step*{}{a_{26}:=
term_{\ref{lemma:normal-subgroup}.2}(S,G,\cdot,e ,^{-1},u,H,v,x,x^{-1},a_7 ,a_{23})\;:\;(x\cdot H)\cdot (x^{-1}\cdot H)=x\cdot x^{-1}\cdot H}{}

\step*{}{a_{27}:=
term_{\ref{lemma:normal-subgroup}.2}(S,G,\cdot,e ,^{-1},u,H,v,x^{-1},x, a_{23},a_7)\;:\;(x^{-1}\cdot H)\cdot (x\cdot H)=x^{-1}\cdot x\cdot H}{}

\step*{}{a_{28}:=
Gr_7(S,G,\cdot,e, ^{-1},u,x,a_7)
\;:\;x\cdot x^{-1}=e}{}

\step*{}{a_{29}:=
Gr_8(S,G,\cdot,e, ^{-1},u,x,a_7)
\;:\;x^{-1}\cdot x=e}{}

\step*{}{a_{30}:=
eq\mhyphen cong_1(\lambda t:S.(t\cdot H), a_{28})
\;:\;x\cdot x^{-1}\cdot H=e\cdot H}{}

\step*{}{a_{31}:=
eq\mhyphen cong_1(\lambda t:S.(t\cdot H), a_{29})
\;:\;x^{-1}\cdot x\cdot H=e\cdot H}{}

\step*{}{a_{32}:=
eq\mhyphen trans_1(a_{26},a_{30})\;:\;(x\cdot H)\cdot (x^{-1}\cdot H)=E}{}

\step*{}{a_{33}:=
eq\mhyphen trans_1(a_{27},a_{31})\;:\;(x^{-1}\cdot H)\cdot (x\cdot H)=E}{}

\step*{}{a_{34}:=
eq\mhyphen subs_2(\lambda U:ps(S).(x\cdot H\cdot U=E),a_{22}, a_{32})\;:\;x\cdot H\cdot X^{-1}=E}{}

\step*{}{a_{35}:=
eq\mhyphen subs_2(\lambda U:ps(S).(U\cdot(x\cdot H)=E),a_{22}, a_{33})\;:\;X^{-1}\cdot(x\cdot H)=E}{}

\step*{}{a_{36}:=
eq\mhyphen subs_2(\lambda U:ps(S).(U\cdot X^{-1}=E),a_8, a_{34})\;:\;X\cdot X^{-1}=E}{}

\step*{}{a_{37}:=
eq\mhyphen subs_2(\lambda U:ps(S).(X^{-1}\cdot U=E), a_8, a_{35})\;:\;X^{-1}\cdot X=E}{}

\step*{}{a_{38}:=
\wedge(a_{36},a_{37})\;:\;
Inverse_0(ps(S),\cdot,E,X,X^{-1})}{}

\introduce*{}{Y:ps(S)\;|\;r_3:Y\;\varepsilon\;(G/H)}{}

\step*{}{r_3\;:\;[\exists y:S.(y\varepsilon G\wedge Y=y\cdot H)]}{}

\introduce*{}{y:S\;|\;r_4:y\varepsilon G\wedge Y=y\cdot H}{}

\step*{}{a_{39}:=\wedge_1(r_4)\;:\;y\varepsilon G}{}

\step*{}{a_{40}:=\wedge_2(r_4)\;:\;Y=y\cdot H}{}

\step*{}{a_{41}:=
term_{\ref{lemma:normal-subgroup}.2}(S,G,\cdot,e ,^{-1},u,H,v,x,y,a_7,a_{39})
\;:\;(x\cdot H)\cdot(y\cdot H)=(x\cdot y)\cdot H}{}

\step*{}{a_{42}:=
Gr_9(S,G,\cdot,e, ^{-1},u,x,y,a_7,a_{39})
\;:\;(x\cdot y)\;\varepsilon \;G}{}

\step*{}{a_{43}:=
eq\mhyphen subs_2(\lambda U:ps(S).((x\cdot H)\cdot U=(x\cdot y)\cdot H), a_{40}, a_{41})\;:\;(x\cdot H)\cdot Y=(x\cdot y)\cdot H}{}

\step*{}{a_{44}:=
eq\mhyphen subs_2(\lambda U:ps(S).(U\cdot Y=(x\cdot y)\cdot H), a_8, a_{43})\;:\;X\cdot Y=(x\cdot y)\cdot H}{}

\step*{}{a_{45}:=
\wedge(a_{42},a_{44})\;:\;
(x\cdot y)\;\varepsilon\;G
\wedge X\cdot Y=(x\cdot y)\cdot H}{}

\step*{}{a_{46}:=\exists_1(\lambda t:S.(t\varepsilon G\wedge X\cdot Y=t\cdot H),(x\cdot y), a_{45})\;:\;(X\cdot Y)\;\varepsilon\;(G/H)}{}

\introduce*{}{Z:ps(S)\;|\;r_5:Z\;\varepsilon\;(G/H)}{}

\step*{}{r_5\;:\;[\exists z:S.(z\varepsilon G\wedge Z=z\cdot H)]}{}

\introduce*{}{z:S\;|\;r_6:z\varepsilon G\wedge Z=z\cdot H}{}

\step*{}{a_{47}:=\wedge_1(r_6)\;:\;z\varepsilon G}{}

\step*{}{a_{48}:=\wedge_2(r_6)\;:\;Z=z\cdot H}{}

\step*{}{a_{49}:=
term_{\ref{lemma:mult}.4}(S,G,\cdot,e ,^{-1},u,H,a_3,x,a_7)
\;:\;(x\cdot H)\subseteq G}{}

\step*{}{a_{50}:=
term_{\ref{lemma:mult}.4}(S,G,\cdot,e ,^{-1},u,H,a_3,y,a_{39})
\;:\;(y\cdot H)\subseteq G}{}

\step*{}{a_{51}:=
term_{\ref{lemma:mult}.4}(S,G,\cdot,e ,^{-1},u,H,a_3,z,a_{47})
\;:\;(z\cdot H)\subseteq G}{}

\step*{}{a_{52}:=
eq\mhyphen subs_2(\lambda U:ps(S).U\subseteq G, a_8, a_{49})\;:\;X\subseteq G}{}

\step*{}{a_{53}:=
eq\mhyphen subs_2(\lambda U:ps(S).U\subseteq G, a_{40}, a_{50})\;:\;Y\subseteq G}{}

\step*{}{a_{54}:=
eq\mhyphen subs_2(\lambda U:ps(S).U\subseteq G,a_{48}, a_{51})\;:\;Z\subseteq G}{}

\step*{}{a_{55}:=
term_{\ref{lemma:set-assoc}.7}(S,G,\cdot,e ,^{-1},u,X,Y,Z,a_{52}, a_{53},a_{54})
\;:\;(X\cdot Y)\cdot Z =X\cdot (Y\cdot Z)}{}

\conclude*{}{a_{56}:=
\exists_3(r_5,a_{55})
\;:\;(X\cdot Y)\cdot Z =X\cdot (Y\cdot Z)}{}

\conclude*{}{a_{57}:=
\lambda Z:ps(S).\lambda r_5:Z\;\varepsilon\;(G/H).a_{56}
\;:\;
[\forall Z:ps(S).(Z\;\varepsilon\;(G/H)\Rightarrow
(X\cdot Y)\cdot Z =X\cdot (Y\cdot Z))]}{}

\conclude*{}{a_{58}:=
\exists_3(r_3,a_{57})
\;:\;
[\forall Z:ps(S).(Z\;\varepsilon\;(G/H)\Rightarrow
(X\cdot Y)\cdot Z =X\cdot (Y\cdot Z))]}{}

\step*{}{a_{59}:=
\exists_3(r_3,a_{46})
\;:\;
(X\cdot Y)\;\varepsilon\;(G/H)}{}

\conclude*{}{a_{60}:=
\lambda Y:ps(S).\lambda r_3:Y\;\varepsilon\;(G/H).a_{59}
\;:\;[\forall Y:ps(S).(Y\;\varepsilon\;(G/H)\Rightarrow
(X\cdot Y)\;\varepsilon\;(G/H))]}{}

\step*{}{a_{61}:=
\lambda Y:ps(S).\lambda r_3:Y\;\varepsilon\;(G/H).a_{58}
\\\quad\quad
\;:\;
[\forall Y:ps(S).[Y\;\varepsilon\;(G/H)\Rightarrow
\forall Z:ps(S).(Z\;\varepsilon\;(G/H)\Rightarrow
(X\cdot Y)\cdot Z =X\cdot (Y\cdot Z))]]}{}

\conclude*{}{a_{62}:=
\exists_3(r_1,a_{61})
\\\quad\quad
\;:\;
[\forall Y:ps(S).[Y\;\varepsilon\;(G/H)\Rightarrow
\forall Z:ps(S).(Z\;\varepsilon\;(G/H)\Rightarrow
(X\cdot Y)\cdot Z =X\cdot (Y\cdot Z))]]}{}

\step*{}{a_{63}:=
\exists_3(r_1,a_{60})
\;:\;
[\forall Y:ps(S).(Y\;\varepsilon\;(G/H)\Rightarrow
(X\cdot Y)\;\varepsilon\;(G/H))]}{}

\step*{}{a_{64}:=
\exists_3(r_1,a_{25})
\;:\;
X^{-1}\;\varepsilon\;(G/H)}{}

\step*{}{a_{65}:=
\exists_3(r_1,a_{19})
\;:\;
X\cdot E=X\wedge E\cdot X=X}{}

\step*{}{a_{66}:=
\exists_3(r_1,a_{38})
\;:\;Inverse_0(ps(S),\cdot,E,X,X^{-1})}{}

\conclude*{}{a_{67}:=
\lambda X:ps(S).\lambda r_1:X\;\varepsilon\;(G/H).a_{62}
\;:\;Assoc(ps(S),(G/H),\cdot)}{}

\step*{}{a_{68}:=
\lambda X:ps(S).\lambda r_1:X\;\varepsilon\;(G/H).a_{63}
\;:\;Closure_2(ps(S),(G/H),\cdot)}{}

\step*{}{a_{69}:=
\lambda X:ps(S).\lambda r_1:X\;\varepsilon\;(G/H).a_{64}
\;:\;Closure_1(ps(S),(G/H), ^{-1})}{}

\step*{}{a_{70}:=
\lambda X:ps(S).\lambda r_1:X\;\varepsilon\;(G/H).a_{66}
\;:\;Inverse(ps(S),(G/H),\cdot,E,^{-1})}{}

\step*{}{a_{71}:=
\lambda X:ps(S).\lambda r_1:X\;\varepsilon\;(G/H).a_{65}
\;:\;
[\forall X:ps(S).(X\;\varepsilon\;(G/H)\Rightarrow
X\cdot E=X\wedge E\cdot X=X)]}{}

\step*{}{a_{72}:=
\wedge(a_6,a_{71})\;:\;
Identity(ps(S),(G/H),\cdot,E)}{}

\step*{}{a_{73}:=
\wedge(\wedge(a_{68},a_{67}), a_{72})\;:\;
Monoid(ps(S),(G/H),\cdot,E)}{}

\step*{}{a_{74}:=
\wedge(a_{69},a_{70})\;:\;
Inverse\mhyphen prop(ps(S),(G/H),\cdot,E,\,^{-1})}{}

\step*{}{\boldsymbol {term_{\ref{theorem:quotient}}}(S,G,\cdot,e,^{-1},u,H,v):= 
\wedge(a_{73},a_{74})
\;:\;\boldsymbol{Group(ps(S),(G/H),\cdot,E,\,^{-1})}}{}
\end{flagderiv}

\subsection*{Proof of Theorem \ref{theorem:correspondence}}
1)
\vspace{-0.2cm}

\begin{flagderiv}
\introduce*{}{S: *_s\;|\;G:ps(S)\;|\;\cdot: S\rightarrow S\rightarrow S\;|\;e:S\;|\;^{-1}:S\rightarrow S\;|\;u:Group(S,G,\cdot,e,^{-1})}{}

\introduce*{}{H,B:ps(S)\;|\;v_1:H\triangleleft G\;|\;
v_2:B\leqslant G
\;|\;w:H\subseteq B}{}

\step*{}{a_1:=\wedge_1(\wedge_1( \wedge_1(v_2)))\;:\;B\subseteq G}{}

\step*{}{a_2:=\wedge_2(\wedge_1( \wedge_1(v_2)))\;:\;e\varepsilon B}{}

\step*{}{a_3:=\wedge_2( \wedge_1(v_2))\;:\; Closure_1(S,B,^{-1})}{}

\step*{}{a_4:=\wedge_2(v_2)\;:\;Closure_2(S,B,\cdot)}{}

\step*{}{a_5:=
term_{\ref{theorem:product-subnormal}.1}(S,G,\cdot,e ,^{-1},u, B,H,v_2,v_1)
\;:\;(B\cap H)\triangleleft B}{}

\step*{}{a_6:=
\lambda x:s.\lambda r:x\varepsilon (B\cap H).\wedge_2(r)
\;:\;(B\cap H)\subseteq H}{}

\introduce*{}{x:S\;|\;r:x\varepsilon H}{}
\step*{}{a_7:=wxr\;:\;x\varepsilon B}{}

\step*{}{a_8:=\wedge(r,a_7)\;:\;x\;\varepsilon\;(B\cap H)}{}

\conclude*{}{a_9:=\lambda x:S.\lambda r:x\varepsilon H.a_8\;:\;H\subseteq(B\cap H)}{}

\step*{}{a_{10}:=\wedge(a_6,a_9)\;:\;B\cap H=H}{}

\step*{}{a_{11}:=
eq\mhyphen subs_1(\lambda Z:ps(S).Z\triangleleft B,a_{10},a_5)
\;:\;H\triangleleft B}{}

\introduce*{}{X:ps(S)\;|\;r_1:X\;\varepsilon\;(B/H)}{}

\step*{}{r_1\;:\;[\exists x:S.(x\varepsilon B\wedge X=x\cdot H)]}{}

\introduce*{}{x:S\;|\;r_2:x\varepsilon B\wedge X=x\cdot H}{}

\step*{}{a_{12}:=\wedge_1(r_2)\;:\;x\varepsilon B}{}
\step*{}{a_{13}:=\wedge_2(r_2)\;:\;X=x\cdot H}{}
\step*{}{a_{14}:=a_3xa_{12}\;:\;x^{-1}\varepsilon B}{}

\step*{}{a_{15}:=a_1xa_{12}\;:\;x\varepsilon G}{}

\step*{}{a_{16}:=\wedge( a_{15},a_{13})\;:\;x\varepsilon G\wedge X=x\cdot H}{}

\step*{}{a_{17}:=\exists_1(\lambda t:S.(t\varepsilon G\wedge X=t\cdot H), x, a_{16})\;:\;X\;\varepsilon\;(G/H)}{}

\step*{}{a_{18}:=
term_{\ref{lemma:normal-subgroup}.1}(S,G,\cdot,e ,^{-1},u,H,v_1,x,a_{15})
\;:\;(x\cdot H)^{-1}=x^{-1}\cdot H}{}

\step*{}{a_{19}:=
eq\mhyphen cong_1(^{-1},a_{13})
\;:\;X^{-1}=(x\cdot H)^{-1}}{}

\step*{}{a_{20}:=
eq\mhyphen trans_1(a_{19},a_{18})
\;:\;X^{-1}=x^{-1}\cdot H}{}
\step*{}{a_{21}:=\wedge( a_{14},a_{20})\;:\;x^{-1}\varepsilon B\wedge X^{-1}=x^{-1}\cdot H}{}

\step*{}{a_{22}:=\exists_1(\lambda t:S.(t\varepsilon B\wedge X^{-1}=t\cdot H),x^{-1}, a_{21})\;:\;X^{-1}\;\varepsilon\;(B/H)}{}

\introduce*{}{Y:ps(S)\;|\;r_3:Y\;\varepsilon\;(B/H)}{}

\step*{}{r_3\;:\;[\exists y:S.(y\varepsilon B\wedge Y=y\cdot H)]}{}
\introduce*{}{y:S\;|\;r_4:y\varepsilon B\wedge Y=y\cdot H}{}
\step*{}{a_{23}:=\wedge_1(r_4)\;:\;y\varepsilon B}{}
\step*{}{a_{24}:=\wedge_2(r_4)\;:\;Y=y\cdot H}{}

\step*{}{a_{25}:=a_1ya_{23}\;:\;y\varepsilon G}{}

\step*{}{a_{26}:=
eq\mhyphen cong_1(\lambda Z:ps(S).(Z\cdot Y),a_{13})
\;:\;X\cdot Y=x\cdot H\cdot Y}{}

\step*{}{a_{27}:=
eq\mhyphen cong_1(\lambda Z:ps(S).(x\cdot H\cdot Z), a_{24})
\;:\;x\cdot H\cdot Y=x\cdot H\cdot (y\cdot H)}{}

\step*{}{a_{28}:=
term_{\ref{lemma:normal-subgroup}.2}(S,G,\cdot,e ,^{-1},u,H,v_1,x,y,a_{15}, a_{25})
\;:\;(x\cdot H)\cdot(y\cdot H)=(x\cdot y)\cdot H}{}

\step*{}{a_{29}:=
eq\mhyphen trans_1(eq\mhyphen trans_1(a_{26},a_{27}),a_{28})
\;:\;X\cdot Y=(x\cdot y)\cdot H}{}

\step*{}{a_{30}:=
a_4xa_{12}ya_{23}
\;:\;(x\cdot y)\;\varepsilon \;B}{}

\step*{}{a_{31}:=
\wedge(a_{30},a_{29})
\;:\;(x\cdot y)\;\varepsilon \;B\wedge X\cdot Y=(x\cdot y)\cdot H}{}

\step*{}{a_{32}:=\exists_1(\lambda t:S.(t\varepsilon B\wedge X\cdot Y=t\cdot H),(x\cdot y), a_{31})\;:\;(X\cdot Y)\;\varepsilon\;(B/H)}{}

\conclude*{}{a_{33}:=\exists_3(r_3, a_{32})\;:\;(X\cdot Y)\;\varepsilon\;(B/H)}{}

\conclude*{}{a_{34}:=
\lambda Y:ps(S).\lambda r_3:Y\;\varepsilon\;(B/H)
.a_{33}\;:\;[\forall Y:ps(S).(Y\;\varepsilon\;(B/H)\Rightarrow(X\cdot Y)\;\varepsilon\;(B/H))]}{}

\conclude*{}{a_{35}:=\exists_3(r_1, a_{34})\;:\;[\forall Y:ps(S).(Y\;\varepsilon\;(B/H)\Rightarrow(X\cdot Y)\;\varepsilon\;(B/H))]}{}

\step*{}{a_{36}:=\exists_3(r_1, a_{22})\;:\;X^{-1}\;\varepsilon\;(B/H)}{}

\step*{}{a_{37}:=\exists_3(r_1, a_{17})\;:\;X\;\varepsilon\;(G/H)}{}

\conclude*{}{a_{38}:=
\lambda X:ps(S).\lambda r_1:X\;\varepsilon\;(B/H)
.a_{37}\;:\;(B/H)\subseteq(G/H)}{}

\step*{}{a_{39}:=
\lambda X:ps(S).\lambda r_1:X\;\varepsilon\;(B/H)
.a_{36}\;:\;Closure_1(ps(S),(B/H),^{-1})}{}

\step*{}{a_{40}:=
\lambda X:ps(S).\lambda r_1:X\;\varepsilon\;(B/H)
.a_{35}\;:\;Closure_2(ps(S),(B/H),\cdot)}{}

\step*{}{a_{41}:=
eq\mhyphen refl
\;:\;E=e\cdot H}{}

\step*{}{a_{42}:=
\wedge(a_2,a_{41})
\;:\;e\varepsilon \;B\wedge E=e\cdot H}{}

\step*{}{a_{43}:=\exists_1(\lambda t:S.(t\varepsilon B\wedge E=t\cdot H),e, a_{42})\;:\;E\;\varepsilon\;(B/H)}{}

\step*{}{a_{44}:=
\wedge(\wedge(\wedge( a_{43},a_{38}),a_{39}), a_{40})\;:\;
B/H\leqslant G/H}{}

\step*{}{\boldsymbol {term_{ \ref{theorem:correspondence}.1}}(S,G,\cdot,e,^{-1},u,H,B,v_1, v_2,w):= 
\wedge(a_{11}, a_{44})
\;:\;\boldsymbol{(H\triangleleft B)
\wedge (B/H\leqslant G/H)}}{}
\end{flagderiv}

2)
\vspace{-0.2cm}
\begin{flagderiv}
\introduce*{}{S: *_s\;|\;G:ps(S)\;|\;\cdot: S\rightarrow S\rightarrow S\;|\;e:S\;|\;^{-1}:S\rightarrow S\;|\;u:Group(S,G,\cdot,e,^{-1})}{}

\step*{}{a_1:= Gr_1(S,G,\cdot,e,^{-1},u)\;:\;e\varepsilon G}{}

\introduce*{}{H:ps(S)\;|\;v:H\triangleleft G\;|\;
C:ps(ps(S))
\;|\;w:C\leqslant G/H
}{}

\step*{}{\text{Notation }\;
B:=\{x:S\;|\;x\varepsilon G\wedge (x\cdot H)\;\varepsilon\;C\}\;:\;ps(S)}{}

\step*{}{a_2:= \wedge_1(v)\;:\;H\leqslant G}{}

\step*{}{a_3:=\wedge_1(\wedge_1( \wedge_1(a_2)))\;:\;H\subseteq G}{}

\step*{}{a_4:=\wedge_1(\wedge_1( \wedge_1(w)))\;:\;C\subseteq G/H}{}

\step*{}{a_5:=\wedge_2(\wedge_1( \wedge_1(w)))\;:\;E\varepsilon C}{}

\step*{}{a_6:=\wedge_2( \wedge_1(w))\;:\; Closure_1(ps(S),C,^{-1})}{}

\step*{}{a_7:=\wedge_2(w)\;:\;Closure_2(ps(S),C,\cdot)}{}

\step*{}{a_8:=\wedge(a_1,a_5)\;:\; e\varepsilon G\wedge (e\cdot H)\;\varepsilon \;C}{}

\step*{}{a_8\;:\; e\varepsilon B}{}

\step*{}{a_9:=
\lambda x:S.\lambda r:x \varepsilon B.\wedge_1(r)
\;:\; B\subseteq G}{}

\introduce*{}{x:S\;|\;r_1:x\varepsilon B}{}

\step*{}{a_{10}:=\wedge_1(r_1)\;:\;x\varepsilon G}{}

\step*{}{a_{11}:=\wedge_2(r_1)\;:\;(x\cdot H)\;\varepsilon\;C}{}

\step*{}{a_{12}:=
Gr_3(S,G,\cdot,e,^{-1},u,x,a_{10})
\;:\;x^{-1}\varepsilon G}{}

\step*{}{a_{13}:=a_6(x\cdot H)a_{11}\;:\;
(x\cdot H)^{-1}\;\varepsilon \;C}{}

\step*{}{a_{14}:=
term_{\ref{lemma:normal-subgroup}.1}(S,G,\cdot,e ,^{-1},u,H,v,x,a_{10})
\;:\;(x\cdot H)^{-1}=x^{-1}\cdot H}{}

\step*{}{a_{15}:=
eq\mhyphen subs_1(\lambda Z:ps(S).Z\varepsilon C,a_{14},a_{13})
\;:\;(x^{-1}\cdot H)\;\varepsilon\; C}{}

\step*{}{a_{16}:=
\wedge(a_{12},a_{15})
\;:\;x^{-1}\varepsilon G\wedge
(x^{-1}\cdot H)\varepsilon C}{}

\step*{}{a_{16}\;:\;
x^{-1} \varepsilon B}{}

\introduce*{}{y:S\;|\;r_2:y\varepsilon B}{}

\step*{}{a_{17}:=\wedge_1(r_2)\;:\;y\varepsilon G}{}

\step*{}{a_{18}:=\wedge_2(r_2)\;:\;(y\cdot H)\;\varepsilon\;C}{}

\step*{}{a_{19}:=
Gr_9(S,G,\cdot,e,^{-1},u,x,y,a_{10}, a_{17})\;:\;
(x\cdot y)\;\varepsilon\;G}{}

\step*{}{a_{20}:=a_7(x\cdot H)a_{11}(y\cdot H)a_{18}\;:\;
((x\cdot H)\cdot(y\cdot H))
\;\varepsilon \;C}{}

\step*{}{a_{21}:=
term_{\ref{lemma:normal-subgroup}.2}(S,G,\cdot,e ,^{-1},u,H,v,x,y,a_{10}, a_{17})\;:\;
(x\cdot H)\cdot(y\cdot H)=
(x\cdot y)\cdot H}{}

\step*{}{a_{22}:=
eq\mhyphen subs_1(\lambda Z:ps(S).Z\varepsilon C,a_{21},a_{20})
\;:\;((x\cdot y)\cdot H)\;\varepsilon\;C}{}

\step*{}{a_{23}:=
\wedge(a_{19},a_{22})
\;:\;(x\cdot y)\;\varepsilon\;G\wedge
((x\cdot y)\cdot H)\;\varepsilon\;C}{}

\step*{}{a_{23}\;:\;
(x\cdot y)\;\varepsilon\;B}{}

\conclude*[2]{}{a_{24}:=
\lambda x:S.\lambda r_1:x \varepsilon B.\lambda y:S.\lambda r_2:y \varepsilon B.a_{23}
\;:\;Closure_2(S,B,\cdot)}{}

\step*{}{a_{25}:=
\lambda x:S.\lambda r_1:x \varepsilon B.a_{16}
\;:\;Closure_1(S,B,^{-1})}{}

\step*{}{a_{26}:=
\wedge(\wedge(\wedge(a_9 ,a_8),a_{25}),a_{24})
\;:\;B\leqslant G}{}

\step*{}{a_{27}:=
term_{\ref{lemma:mult}.1}(S,G,\cdot,e ,^{-1},u,H,a_3)\;:\;
e\cdot H=H}{}

\introduce*{}{x:S\;|\;r:x\varepsilon H}{}

\step*{}{a_{28}:=
a_3xr
\;:\;x\varepsilon G}{}

\step*{}{a_{29}:=
term_{\ref{lemma:coset}.7}(S,G,\cdot,e ,^{-1},u,H,a_2, x,a_{28})\;:\;x\cdot H=H\;\Leftrightarrow\;x\varepsilon H}{}

\step*{}{a_{30}:=\wedge_2(a_{29})r
\;:\;x\cdot H=H}{}

\step*{}{a_{31}:=
eq\mhyphen trans_2(a_{27}, a_{30})
\;:\;E=x\cdot H}{}

\step*{}{a_{32}:=
eq\mhyphen subs_1(\lambda Z:ps(S).Z\varepsilon C,a_{31},a_5)
\;:\;(x\cdot H)\;\varepsilon\;C}{}

\step*{}{a_{33}:= 
\wedge(a_{28},a_{32})
\;:\;x\varepsilon G\wedge (x\cdot H)\;\varepsilon\;C}{}

\step*{}{a_{33}
\;:\;x\varepsilon B}{}

\conclude*{}{a_{34}:=
\lambda x:S.\lambda r:x\varepsilon H.a_{33}\;:\; H\subseteq B}{}

\step*{}{a_{35}:=\wedge_1( 
term_{ \ref{theorem:correspondence}.1}(S,G,\cdot,e,^{-1},u,H,B,v, a_{29},a_{34}))\;:\;H\triangleleft B}{}

\introduce*{}{X:ps(S)\;|\;r_1:X\varepsilon C}{}

\step*{}{a_{36}:=
a_4Xr_1
\;:\;X\;\varepsilon\;(G/H)}{}

\step*{}{a_{36}\;:\;[\exists x:S.(x\varepsilon G\wedge X=x\cdot H)]}{}
\introduce*{}{x:S\;|\;r_2:x\varepsilon G\wedge X=x\cdot H}{}
\step*{}{a_{37}:=\wedge_1(r_2)\;:\;x\varepsilon G}{}
\step*{}{a_{38}:=\wedge_2(r_2)\;:\;X=x\cdot H}{}

\step*{}{a_{39}:=
eq\mhyphen subs_1(\lambda Z:ps(S).Z\varepsilon C, a_{38},r_1)
\;:\;(x\cdot H)\;\varepsilon\;C}{}

\step*{}{a_{40}:=\wedge( a_{37},a_{39})\;:\;x\varepsilon G\wedge (x\cdot H)\;\varepsilon\;C}{}

\step*{}{a_{40}\;:\;x\varepsilon B}{}

\step*{}{a_{41}:=\wedge( a_{40},a_{38})\;:\;x\varepsilon B\wedge X=x\cdot H}{}

\step*{}{a_{42}:=\exists_1(\lambda t:S.(t\varepsilon B\wedge X=t\cdot H),x, a_{41})\;:\;X\;\varepsilon\;(B/H)}{}

\conclude*{}{a_{43}:=\exists_3(a_{36}, a_{42})\;:\;X\;\varepsilon\;(B/H)}{}

\conclude*{}{a_{44}:=
\lambda X:ps(S).\lambda r_1:X\varepsilon C
.a_{43}\;:\;C\subseteq(B/H)}{}

\introduce*{}{X:ps(S)\;|\;r_1:X\;\varepsilon\;(B/H)}{}

\step*{}{r_1\;:\;[\exists x:S.(x\varepsilon B\wedge X=x\cdot H)]}{}

\introduce*{}{x:S\;|\;r_2:x\varepsilon B\wedge X=x\cdot H}{}

\step*{}{a_{45}:=\wedge_1(r_2)\;:\;x\varepsilon B}{}

\step*{}{a_{46}:=\wedge_2(r_2)\;:\;X=x\cdot H}{}

\step*{}{a_{47}:=\wedge_2(a_{45})\;:\;(x\cdot H)\;\varepsilon\;C}{}

\conclude*{}{a_{48}:=
eq\mhyphen subs_2(\lambda Z:ps(S).Z\varepsilon C, a_{46},a_{47})
\;:\;X\varepsilon C}{}

\step*{}{a_{49}:=\exists_3(r_1, a_{48})\;:\;X\varepsilon C}{}

\conclude*{}{a_{50}:=
\lambda X:ps(S).\lambda r_1:X\;\varepsilon\;(B/H)
.a_{49}\;:\;(B/H)\subseteq C}{}

\step*{}{a_{51}:=\wedge(a_{44},a_{50})\;:\;C=B/H}{}

\step*{}{\boldsymbol {term_{ \ref{theorem:correspondence}.2}}(S,G,\cdot,e,^{-1},u,H,C,v,w)
:=\wedge(\wedge(a_{26}, a_{35}),a_{51})
\;:\;\boldsymbol{B\leqslant G\wedge (H\triangleleft B)
\wedge (C=B/H)}}{}
\end{flagderiv}

\bibliographystyle{plain}
\bibliography{farida}

\end{document}